\newcommand{\pr}[0]{\ensuremath{\textup{pr}}}
\newcommand{\pos}[0]{\ensuremath{\textup{pos}}}
\newcommand{\obs}[0]{\ensuremath{\textup{obs}}}
\newcommand{\var}[0]{\ensuremath{\textup{Var}}}
\newcommand{\opt}[0]{\ensuremath{\textup{opt}}}
\newcommand{\y}[0]{\ensuremath{\textup{y}}}
\begin{document}

\title{Optimal low-rank posterior mean and distribution approximation in linear Gaussian inverse problems on Hilbert spaces}

\author[]{Giuseppe Carere\thanks{~giuseppe.carere@uni-potsdam.de, ORCID ID: 0000-0001-9955-4115} }
\author[]{Han Cheng Lie\thanks{~han.lie@uni-potsdam.de, ORCID ID: 0000-0002-6905-9903}}
\affil[]{~Institut f\"ur Mathematik, Universit\"at Potsdam, Potsdam OT Golm 14476, Germany}
\renewcommand\Affilfont{\small}

\date{}
\maketitle

\begin{abstract}
	We construct optimal low-rank approximations for the Gaussian posterior distribution in linear Gaussian inverse problems with possibly infinite-dimensional separable Hilbert parameter spaces and finite-dimensional data spaces. We first consider approximate posteriors in which the means vary and the posterior covariance is kept fixed, for all possible realisations of the data simultaneously. We give necessary and sufficient conditions for these approximating posteriors to be equivalent to the exact posterior. For such approximations, we measure the data-averaged approximation error with the Kullback--Leibler, R\'enyi and Amari $\alpha$-divergences for $\alpha\in(0,1)$, and the Hellinger distance. With the loss in Kullback--Leibler and R\'enyi divergences, we find the optimal approximations and formulate an equivalent condition for their uniqueness, extending the work in finite dimensions of Spantini et al.\ (SIAM J.\ Sci.\ Comput.\ 2015). We then consider joint low-rank approximation of the mean and covariance. For the reverse Kullback--Leibler divergence, the optimal approximations of the mean and of the covariance yield an optimal joint approximation of the mean and covariance. We interpret one such joint approximation in terms of an optimal projector in parameter space, and show that this approximation amounts to solving a Bayesian inverse problem with projected forward model. Extensive numerical examples demonstrate some of our theoretical findings.
\end{abstract}


\vskip2ex
\textbf{Keywords}: Nonparametric linear Bayesian inverse problems, Gaussian measure, low-rank operator approximation, equivalent measure approximation, projected inverse problem
\vskip1ex
\noindent
\textbf{MSC codes}: Primary: 60G15, 62F15, 62G05; Secondary: 28C20, 47A58


\section{Introduction}
\label{sec:introduction}

Linear inverse problems are characterised by a linear map $G$ that encodes the underlying model and the observation process of the problem at hand. That is, $G$ describes the known relationship between the unknown parameter $x^\dagger$ to be inferred and the data, which is a noisy observation of $Gx^\dagger$. The parameter $x^\dagger$ is often a function, such as a diffusivity field in a partial differential equation. 

Inference on $x^\dagger$ essentially amounts to inverting the operator $G$. Such inversion is typically an ill-posed operation, due to the smoothing nature of $G$. For example, if $G$ involves application of an elliptic partial differential equation, then $G$ typically has quickly decaying spectrum, since the inverse Laplacian has quickly decaying spectrum. Furthermore, inference of a function $x^\dagger$ based on a finite amount of observations need not be uniquely possible. For these reasons, regularisation is required.
Bayesian methods can be seen as a way to regularise the inverse problem, and also naturally allow for uncertainty quantification. To quantify the uncertainty, the posterior covariance operator is essential.

The Bayesian method for inferring $x^\dagger$ involves considering $x^\dagger$ as a random variable $X$ with specified distribution and finding the conditional distribution of $X$ given the data. The prior distribution is the chosen distribution of $X$ and the posterior distribution is the resulting conditional distribution of $X$ given the data. The spread of the posterior distribution can then be interpreted as a quantification of uncertainty.

For linear inverse problems, a Gaussian prior is a convenient choice because in this case the posterior is also Gaussian with explicit expressions for its mean and covariance. We choose a nondegenerate prior distribution $X\sim\mathcal{N}(m_\pr,\mathcal{C}_\pr)$ and assume the data $y$ is obtained via the linear observation model
\begin{equation}
	\label{eqn:observation_model}
	 Y=GX+\zeta,\quad \zeta\sim \mathcal{N}(0,\mathcal{C}_\obs),
\end{equation}
where $\mathcal{N}(0,\mathcal{C}_\obs)$ is nondegenerate observation noise with known covariance $\mathcal{C}_\obs$ and zero mean, and $Y$ takes values in $\R^n$. For a given realisation $y\in\R^n$ of $Y$, the posterior distribution then is $\mathcal{N}(m_\pos,\mathcal{C}_\pos)$, where
 \begin{align*}
	 m_\pos = m_\pr+ \mathcal{C}_{\pos}G^*\mathcal{C}_{\obs}^{-1}(y-Gm_\pr),\quad	 \mathcal{C}_{\pos} = \mathcal{C}_{\pr} - \mathcal{C}_{\pr} G^*(\mathcal{C}_{\obs}+G\mathcal{C}_{\pr} G^*)^{-1}G\mathcal{C}_{\pr},
\end{align*}
see \cite[Example 6.23]{Stuart2010}. The posterior covariance $\mathcal{C}_\pos$ is independent of $y$; only the posterior mean $m_\pos$ depends on the realisation of the data.

These explicit expressions hold both in the case that $X$ is an element of a finite-dimensional or infinite-dimensional Hilbert space.
In the latter case, however, a computational solution of the problem requires its discretisation, after which the resulting finite-dimensional Bayesian inverse problem can be solved numerically.

For such finite-dimensional posterior distributions, various works have studied its approximation, which for tractability in terms of computation and storage may be essential. The update from prior to posterior distribution is determined by the choice of prior, by the structure \eqref{eqn:observation_model} of the inverse problem and by the observed data $y$. Low dimensionality of this update lies at the core of approximation procedures considered in \cite{Flath2011,Cui2014,Zahm2022,Li2024a,Li2024b,Cui2016b,Spantini2015}. In \cite{Flath2011}, low-rank approximation for Gaussian linear inverse problems is considered, while \cite{Spantini2015} proves optimality for low-rank approximations of posterior mean and covariance. Low-rank approximation for nonlinear and non-Gaussian problems is studied in \cite{Cui2014,Cui2016b,Zahm2022,Li2024a,Li2024b}. The work of \cite{Cui2016b} describes an algorithm which exploits the low-rank structure of the prior-to-posterior update for certain nonlinear problems based on the ideas developed in finite dimensions, but which can also target infinite-dimensional posteriors. A common feature of these approximations is that they exploit the low-rank structure of the Bayesian prior-to-posterior update, and not just low-rank structure of the prior or forward model. Also other approximation methods exist, such as variational methods, e.g.\ \cite{Pinski2015}.

The optimality of specific low-rank approximations of the posterior mean in finite-dimensional linear Gaussian inverse problems is studied in \cite{Spantini2015}. Such an approximation may prove useful in a many-query setting, in which the posterior mean has to be recomputed for many different realisations of the data. In \cite[Section 4]{Spantini2015}, the approximation error is quantified by considering a Bayes risk, which averages over the data. A goal-oriented version is constructed in \cite{Spantini2017}. The approximation method developed in \cite{Li2024a} also targets approximation of the posterior distribution, and hence the posterior mean, but does so for a specific realisation of $y$.

Instead of discretising the problem, optimal low-rank approximations can also be studied directly for the infinite-dimensional posterior. In order to show consistency of the optimal low-rank approximations constructed for discretised versions of the inverse problem, an optimal low-rank approximation problem in the infinite-dimensional setting is required. Then, once a specific approximation scheme is chosen for a given inverse problem, this infinite-dimensional optimal approximation can be used to show discretisation independence of the approximation method. This is similar in spirit to how \cite{Bui-Thanh2016} shows dimension independence of a sampling scheme for a finite element based discretisation of certain partial differential equations, using the infinite-dimensional results on sampling methods established in \cite{Cotter2013,Beskos2011}. Numerical evidence that discretisation independence should hold in a specific setting was found in \cite{Bui-Thanh2013}.

In this work we aim to analyse and provide such optimal low-rank approximations for the posterior mean directly in the Hilbert space formulation. Furthermore, using the results of \cite{PartI} on optimal low-rank posterior covariance approximations in Hilbert spaces, we also identify low-rank joint approximations of the posterior mean and covariance. This allows us to obtain discretisation-independent and dimension-independent optimal low-rank posterior approximations.

\subsection{Challenges of posterior mean approximation in infinite dimensions}

Technical difficulties arise for posterior mean approximation in infinite dimensions. As for posterior covariance approximations, these are in part due to the fact that the Cameron--Martin space $\ran{\mathcal{C}_\pr^{1/2}}$ is a proper subspace of $\H$. That is, $\mathcal{C}_\pr^{1/2}$ is not surjective, and neither is $\mathcal{C}_\pr$ since $\ran{\mathcal{C}_\pr}\subset\ran{\mathcal{C}_\pr^{1/2}}$. Furthermore, if $\mathcal{C}_\pr$ and $\mathcal{C}_\pr^{1/2}$ are injective, then we can define the inverses as unbounded operators which are only defined on a dense subspace, c.f.\ \Cref{lemma:covariance_properties}\ref{item:covariance_properties_2}. This is in contrast with the finite-dimensional setting, in which all the operators involved are bounded and defined everywhere.

Even if the posterior covariance is kept fixed, an approximation of the posterior mean can result in an approximate posterior distribution which need not be equivalent to the exact posterior distribution, in the sense that the approximate distribution is not absolutely continuous with respect to the exact posterior distribution. 
In fact, when the approximate and exact posterior are not equivalent, they are mutually singular by the Feldman--Hajek theorem. If the approximate posterior is mutually singular with respect to the exact posterior measure, then the approximate posterior assigns positive probability only to events that have zero probability under the exact posterior, and events which have positive posterior probability have zero probability under the approximate measure. 
The issue of equivalence to the exact posterior for almost every realisation of the data is also present in the case of joint approximation of the mean and covariance.

In the finite-dimensional setting of [31, Section 4], the Bayes risk is used to measure the error  of the approximate posterior mean. Since the same Bayes risk is infinite in the infinite-dimensional setting, an alternative measurement of the error of the approximate posterior mean is required.

\subsection{Contributions}

We formulate two types of low-rank posterior mean approximations: structure-preserving and structure-ignoring approximations. One type preserves the structure of the prior-to-posterior mean update as a function of the data, while the other does not. Keeping the exact posterior covariance fixed, the posterior mean approximations lead to approximate posterior distributions. Not every low-rank posterior mean update retains equivalence between the corresponding approximate posterior distribution and the exact posterior. In fact, direct generalisation to infinite dimensions of the low-rank updates of \cite[Section 4]{Spantini2015} leads to nonequivalent approximations in general. In \Cref{prop:mean_approximation_problem_equivalence}, we characterise, for both the structure-preserving and structure-ignoring posterior mean approximations, which approximations satisfy this equivalence property. Here, equivalence holds not only for one realisation of the data $y$, but for all realisations in a set of probability 1. This is the first main contribution of the paper.

The second main contribution is to solve the Gaussian measure approximation problems for approximating the posterior mean using the low-rank update classes mentioned in the previous paragraph. We keep the exact posterior covariance fixed and quantify the accuracy of an approximation using the R\'enyi, Amari, Hellinger, and forward and reverse Kullback-Leibler divergences, averaged over the data distribution. That is, we consider approximations of the mean that are accurate on average, rather than for a specific realisation of $y$. These losses are related to the weighted Bayes risk considered in the finite-dimensional case of \cite{Spantini2015} and are a natural generalisation to infinite dimensions. The approximation problems rely on a generalisation of the result on reduced-rank matrix approximation by \cite{Sondermann1986} and \cite{Friedland2007} to infinite dimensions, which can be found in \cite{CarereLie2024}. The solutions and the corresponding minimal losses in Kullback--Leibler and R\'enyi divergences are identified in \Cref{thm:optimal_low_rank_mean_approx,thm:optimal_low_rank_update_mean_approx}, and upper bounds for the Hellinger distance and Amari $\alpha$-divergences are obtained in \Cref{cor:optimal_mean_for_amari_and_hellinger}. The resulting optimal approximations share the property with $m_\pos$ that they lie in $\ran{\mathcal{C}_\pos}$ with probability 1, and hence in $\ran{\mathcal{C}_\pr^{}}$ with probability 1, since $\ran{\mathcal{C}_\pos}=\ran{\mathcal{C}_\pr}$ for Gaussian linear inverse problems, see \cite[eq.\ (6.13a)]{Stuart2010}. \Cref{thm:optimal_low_rank_mean_approx,thm:optimal_low_rank_update_mean_approx} and \Cref{cor:optimal_mean_for_amari_and_hellinger} thus extend the results of \cite[Section 4]{Spantini2015} to an infinite-dimensional setting, and also give necessary and sufficient conditions for uniqueness of the optimal approximations. 

The third main contribution is to consider the family of measure approximation problems where both the posterior mean and posterior covariance are jointly approximated. We construct approximations of the posterior which are equivalent to the exact posterior, for all realisations of $Y$ in a set of probability 1. We measure the error in terms of the reverse Kullback--Leibler divergence, averaged over $Y$. 
The reverse Kullback--Leibler divergence is given by $\int \log(\frac{\d{\widetilde{\mu}_\pos}}{\d{\mu_\pos}})\d{\widetilde{\mu}_\pos}$, where $\widetilde{\mu}_\pos$ and $\mu_\pos=\mathcal{N}(m_\pos,\mathcal{C}_\pos)$ denote the approximate posterior and exact posterior respectively.
This divergence is important in variational approximation methods, see e.g.\  \cite[Theorem 5]{Ray2022}. 
In \Cref{prop:optimal_joint_approximation}, we exploit the Pythagorean structure of the expression of the Kullback--Leibler divergence between Gaussians. This allows us to show that the problem of finding an optimal low-rank joint approximation of the mean and covariance can be solved by combining an optimal solution of the low-rank covariance approximation problem in \cite[Theorem 4.21]{PartI} with an optimal solution of the low-rank mean approximation problem given in \Cref{thm:optimal_low_rank_mean_approx,thm:optimal_low_rank_update_mean_approx} below. The mean, covariance and joint approximation problems have the same necessary and sufficient condition for uniqueness of solutions. 
The optimal joint approximation result of \Cref{prop:optimal_joint_approximation} and its interpretation via optimal projection given in \Cref{prop:optimal_projector} provide a perspective on low-rank posterior Gaussian measure approximation which combines the insights obtained in the separate mean and covariance approximation procedures.

As shown in \cite{PartI} and recalled below in \Cref{prop:bayesian_feldman_hajek}, the Bayesian prior-to-posterior update occurs only on a finite-dimensional subspace of the parameter space. The optimal joint approximation to the posterior only differs significantly from the prior in \emph{a few} directions of the parameter space, if the optimal approximation is accurate. This follows from \Cref{prop:optimal_projector}, which shows that the optimal approximate posterior that results from the structure-ignoring posterior mean approximation can be obtained as the exact posterior corresponding to a projected version of the Bayesian inverse problem \eqref{eqn:observation_model}, in which $G$ is precomposed by a low-rank projector in parameter space. Thus, if the low-rank approximation is accurate, the prior-to-posterior update on the infinite-dimensional parameter space essentially occurs on a low-dimensional subspace of the parameter space.

\subsection{Outline}

Background concepts and key notation are summarised in \Cref{sec:notation}. \Cref{sec:formulation} presents the linear Bayesian inverse problem and introduces the approximation families we consider for posterior mean approximation. In \Cref{sec:equivalence_and_divergences_between_gaussian_measures} we describe the divergences which are used to measure approximation errors. This section also describes the notion of equivalence of Gaussian measures and expands on the relevant operators for the analysis of the Bayesian update. Certain aspects of low-rank posterior covariance approximation are briefly recalled in \Cref{sec:optimal_approximation_covariance}. In this section we also interpret the prior-to-posterior update in terms of variance reduction. Optimal low-rank posterior mean approximation is considered in \Cref{sec:optimal_approximation_mean}. Joint posterior mean and covariance approximation is discussed in \Cref{sec:optimal_joint_approximation}, and in \Cref{sec:optimal_projector} we interpret the results of the previous section in terms of an optimal projection of the likelihood function on a low-dimensional subspace in parameter space. In \Cref{sec:examples_short}, we consider two examples of linear Gaussian inverse problems, namely, deconvolution and inferring the initial condition of a heat equation, for which we identify the operators relevant for the low-rank approximations. An example involving the heat equation on a two-dimensional spatial domain is implemented in \Cref{sec:numerical_example}, in which we verify numerically several aspects of our theoretical findings. We conclude in \Cref{sec:conclusion}. Auxiliary results required in the analysis are summarised in \Cref{sec:theoretical_facts}. Proofs can be found in \Cref{sec:proofs_of_results}. \Cref{sec:examples} provides detailed calculations for the examples in \Cref{sec:examples_short}.

\subsection{Notation}
\label{sec:notation}

To introduce the notation, we let $\mathcal{H}$ and $\mathcal{K}$ be separable Hilbert spaces, that is, complete inner product spaces with a countable orthonormal basis (ONB). We denote the linear spaces of linear operators defined with domain $\H$ and codomain $\mathcal{K}$ which are bounded, compact and finite-rank by, respectively, $\B(\H,\mathcal{K})$, $\B_0(\mathcal{H},\mathcal{K})$ and $\B_{00}(\mathcal{H},\mathcal{K})$. A linear operator is said to have `finite rank' if it is bounded and its range is finite-dimensional. The set of finite-rank operators which have rank at most $r\in\N$ is denoted by $\B_{00,r}(\H,\mathcal{K})$. The above sets are all endowed with the operator norm $\norm{\cdot}$ defined by $\norm{T}\coloneqq \sup\{\norm{Th}:\norm{h}\leq 1\}$. The trace-class and Hilbert--Schmidt operators are compact operators with summable and square-summable eigenvalue sequence respectively, and are denoted by $L_1(\mathcal{H},\mathcal{K})$ and $L_2(\mathcal{H},\mathcal{K})$ respectively. Their respective norms are denoted by $\norm{\cdot}_{L_1(\H,\mathcal{K})}$ and $\norm{\cdot}_{L_2(\H,\mathcal{K})}$. Thus, $\norm{T}_{L_1(\H,\mathcal{K})}$ and $\norm{T}_{L_2(\H,\mathcal{K})}^2$ are computed by summing respectively the absolute values and squares of the singular values of $T$. If $\H=\mathcal{K}$, then we write $\B(\H)$ instead of $\B(\H,\mathcal{K})$, and similarly for the other sets above. We have the inclusion of sets $\B_{00,r}(\H)\subset\B_{00}(\H)\subset L_1(\H)\subset L_2(\H)\subset\B_0(\H)\subset\B(\H)$.

The operator $T^*\in\B(\mathcal{K},\H)$ denotes the adjoint of $T\in\B(\H,\mathcal{K})$. By $\B(\mathcal{H})_\R$ we denote the subspace of $\B(\H)$ that consists of self-adjoint operators. We similarly define the spaces $\mathcal{B}_0(\H)_\R$, $\mathcal{B}_{00}(\H)_\R$, $L_1(\H)_\R$ and $L_2(\H)_\R$, and the set $\B_{00,r}(\H)_\R$. 

If $T\in\B(\H)$, then we call $T$ `nonnegative' or `positive' if $\langle Th,h\rangle \geq 0$ or $\langle Th,h\rangle>0$ for all nonzero $h\in\H$ respectively, and write $T\geq 0$ and $T> 0$ respectively. For self-adjoint and nonnegative $T$, there exists a self-adjoint and nonnegative square root $T^{1/2}\in\B(\H)_\R$. If $T>0$, then $T^{1/2}>0$.

For $h\in\H$ and $k\in\mathcal{K}$, the tensor product $h\otimes k\in\B_{00,1}(\H,\mathcal{K})$ denotes the rank-1 operator $(k\otimes h)(z)=\langle h,z\rangle k$, $z\in\H$. Any $T\in\B_0(\H,\mathcal{K})$ has a singular value decomposition (SVD) $T=\sum_{i}^{}\sigma_i k_i\otimes h_i$, where $(\sigma_i)_i$ is a nonnegative and nonincreasing sequence converging to zero and $(h_i)_i$ and $(k_i)_i$ are orthonormal sequences in $\H$ and $\mathcal{K}$ respectively, c.f.\ \Cref{lemma:operator_svd}.

For $T\in\B(\H)$, we denote by $T^\dagger$ the Moore--Penrose inverse of $T$, also known as the generalised inverse and pseudo-inverse of $T$, c.f.\ \cite[Definition 2.2]{engl_regularization_1996}, \cite[Section B.2]{da_prato_stochastic_2014} or \cite[Definition 3.5.7]{Hsing2015}. It holds that $T^\dagger$ is bounded if and only if $\ran{T}$ is closed, c.f.\ \cite[Proposition 2.4]{engl_regularization_1996}. If $T$ is injective, then $T^\dagger=T^{-1}$ on $\ran{T}$.

We also briefly introduce the notion of an unbounded operator $T$ between $\H$ and $\mathcal{K}$. Such an operator is defined on a dense, possibly proper subspace $\dom{T}$ of $\H$, and is not necessarily bounded. We write $T:\H\rightarrow\mathcal{K}$ or $T:\dom{T}\subset\H\rightarrow\mathcal{K}$ or $T:\dom{T}\rightarrow\mathcal{K}$ for such unbounded operators $T$. Note that the term `unbounded operator' encompasses the bounded operators as well.
Sums and compositions of unbounded operators are defined as follows. If $T:\mathcal{H}\rightarrow\mathcal{K}$, $S:\mathcal{H}\rightarrow\mathcal{K}$ and $U:\mathcal{K}\rightarrow\mathcal{Z}$ for some separable Hilbert space $\mathcal{Z}$, then $T+S:\dom{T+S}\subset\mathcal{H}\rightarrow\mathcal{K}$ with $\dom{T+S}\coloneqq\dom{T}\cap\dom{S}$ and $UT:\dom{UT}\subset\mathcal{H}\rightarrow\mathcal{Z}$ with $\dom{UT}\coloneqq T^{-1}(\dom{U})$.

If $T\in\B(\mathcal{H})$ is positive and self-adjoint, then the norm $\norm{\cdot}_{T^{-1}}$ on $\ran{T}$ is defined by $\norm{h}_{T^{-1}}=\norm{T^{-1/2}h}$, for $h\in\ran{T}$. Here $T^{-1/2}:\ran{T^{1/2}}\subset\H\rightarrow\H$ is the unbounded inverse of $T^{1/2}$.

Two measures $\mu$ and $\nu$ are equivalent, i.e.\ $\mu\sim\nu$, if they are absolutely continuous with respect to each other. That is $\mu(A)=0$ implies $\nu(A)=0$ for every measurable set $A$, and vice versa. Thus, $\mu$ has a density with respect to $\nu$ and vice versa. We denote the support of a measure $\mu$ by $\supp{\mu}$.

If a random variable $X$ has distribution $\mu$, we write $X\sim \mu$. We write $X\sim\mathcal{N}(m,\mathcal{C})$ if $\langle X,h\rangle\sim\mathcal{N}(\langle m,h\rangle,\langle \mathcal{C}h,h\rangle)$ for every $h\in\H$. In this case, we say that $X$ has a Gaussian distribution on $\H$ with mean $m$, covariance $\mathcal{C}$, and precision $\mathcal{C}^{-1}$, where $m=\mathbb{E}X$ and $\langle\mathcal{C}h,k\rangle=\mathbb{E}\langle h,X-m\rangle\langle X-m,k\rangle$ for all $h,k\in\H$. 

By $\ell^2(I)$ we denote the space of square-summable sequences on a non-empty interval $I\subset\R$. That is, $\ell^2(I)\coloneqq\{(x_i)_{i\in\N}\subset I\ :\ \sum_{i\in\N} \abs{x_i}^2<\infty\}$.

A statement that depends on a random variable is said to hold `almost surely', or `a.s.', if it holds with probability 1 with respect to the distribution of that random variable.

We indicate the replacement of $a$ with $b$ by `$a\leftarrow b$'.

\section{Low-rank posterior mean approximations}
\label{sec:formulation}

We consider a possibly infinite-dimensional parameter space $\H$, which is assumed to be a separable Hilbert space. In the Bayesian framework, the unknown parameter $X$ is an $\H$-valued random variable. We assume that the prior distribution $\mu_\pr$ of $X$ satisfies the following.

\begin{assumption}
	\label{ass:nondegeneracy}
	We assume $\mu_\pr$ is a nondegenerate and centered Gaussian measure on $\H$.
\end{assumption}

Hence, $X$ is distributed according to $X\sim\mu_\pr=\mathcal{N}(0,\mathcal{C}_\pr)$, where the prior covariance $\mathcal{C}_\pr$ is a self-adjoint operator. The data constitutes a finite amount of noisy observations of linear functions of $X$. That is, there exists an $n\in\N$, a linear and continuous map $G\in\B(\H,\R^n)$ known as the `forward model', and a multivariate normal random variable $\zeta$ on $\R^n$ such that the model \eqref{eqn:observation_model} is satisfied. Here, $n$, $G$, and the noise covariance $\mathcal{C}_\obs$ are all assumed to be known. We assume that $\mathcal{C}_\obs$ is invertible, so that $\zeta$ has a probability density on $\R^n$. We also assume that $\zeta$ and $X$ are statistically independent. In practice, only one realisation $y\in\R^n$ of $Y$ is observed,  and the Bayesian inverse problem amounts to finding the distribution of $X\vert Y=y$ on $\H$. This is called the posterior distribution and is indicated by $\mu_\pos(y)$. 

We have thus specified the distribution of the random variable $(X,Y)$ by prescribing the marginal distribution of $X$, i.e.\ the prior distribution, and by prescribing the distribution of $Y\vert X=x$ for any $x\in\H$ via \eqref{eqn:observation_model}. The latter distribution admits a probability density function on $\R^n$, known as the `likelihood', which is proportional to $y\mapsto\exp{(-\frac{1}{2}\norm{\mathcal{C}_\obs^{-1/2}(Gx-y)}^2)}$. As a function of $x$, the negative log-likelihood has a Hessian $H$ given by
\begin{align}
	\label{eqn:hessian}
	H = G^*\mathcal{C}_\obs^{-1}G\in\B_{00,n}(\H)_\R.
\end{align}
In statistics, $H$ is also known as the Fisher information operator, but we shall refer to it as ``the Hessian''. We have $H=(\mathcal{C}_\obs^{-1/2}G)^*(\mathcal{C}_\obs^{-1/2}G)$ and hence $H$ is self-adjoint and nonnegative. Furthermore, by \Cref{lemma:range_of_square_of_finite_rank} and the invertibility of $\mathcal{C}_\obs^{-1/2}$, $\rank{H}=\rank{(\mathcal{C}_\obs^{-1/2}G)^*}=\rank{\mathcal{C}_\obs^{-1/2}G}=\rank{G}$. 

With the chosen distributions of $X$ and $Y\vert X$, we have also specified the distributions of $Y$ and $X\vert Y=y$, i.e.\ the data distribution and the posterior distribution. They are both Gaussian: $Y\sim\mathcal{N}(0,\mathcal{C}_\obs+G\mathcal{C}_\pr G^*)$ and $X\vert Y=y\sim\mathcal{N}(m_\pos,\mathcal{C}_\pos)$, where by \cite[Example 6.23]{Stuart2010},
\begin{subequations}
	\label{eqn:update_equations}
	\begin{align}
		\label{eqn:pos_mean}
		m_\pos = m_{\pos}(y) &= \mathcal{C}_{\pos}G^*\mathcal{C}_{\obs}^{-1}y\in\ran{\mathcal{C}_\pos},\\
		\label{eqn:pos_covariance}
		\mathcal{C}_{\pos} &= \mathcal{C}_{\pr} - \mathcal{C}_{\pr} G^*(\mathcal{C}_{\obs}+G\mathcal{C}_{\pr} G^*)^{-1}G\mathcal{C}_{\pr},\\
		\label{eqn:pos_precision}
		\mathcal{C}_\pos^{-1} &= \mathcal{C}_\pr^{-1} + G^*\mathcal{C}_\obs^{-1}G = \mathcal{C}_\pr^{-1}+H.
	\end{align}
\end{subequations}
The posterior mean depends on $y$ and lies in $\ran{\mathcal{C}_\pos}$, by \eqref{eqn:pos_mean}. The posterior covariance is independent of $y$, as \eqref{eqn:pos_covariance} shows. 

Equation \eqref{eqn:pos_precision} requires some interpretation. Since $\mu_\pr$ is nondegenerate by \Cref{ass:nondegeneracy}, $\supp{\mu_\pr}=\H$, c.f.\ \cite[Definition 3.6.2]{bogachev_gaussian_1998} and $\mathcal{C}_\pr$ is positive, hence injective, c.f.\ \Cref{lemma:covariance_properties,lemma:positive_is_injective_nonnegative}. Therefore, we can invert $\mathcal{C}_\pr$ on its range $\ran{\mathcal{C}_\pr}$. Also $\mathcal{C}_\pr^{1/2}$ is injective, and hence $\ran{\mathcal{C}_\pr^{1/2}}$ is dense in $\H$, see \Cref{lemma:kernel_of_square,lemma:kernel_range}. For a fixed $y$, the measures $\mu_\pr$ and $\mu_\pos(y)$ are equivalent, see \cite[Theorem 6.31]{Stuart2010}. Thus, by the Feldman--Hajek theorem, which is recalled in \Cref{thm:feldman--hajek}, also $\ran{\mathcal{C}_\pos^{1/2}}$ is dense in $\H$. We conclude that also $\mathcal{C}_\pos$ and $\mathcal{C}_\pos^{1/2}$ are injective, and $\mathcal{C}_\pos^{-1}$ is a densely-defined operator with $\dom{\mathcal{C}_\pos^{-1}}=\ran{\mathcal{C}_\pos}$. Equation \eqref{eqn:pos_precision} now states that $\dom{\mathcal{C}_\pos^{-1}}=\dom{\mathcal{C}_\pr^{-1}+H}$. Since $H=G^*\mathcal{C}_\obs^{-1}G\in\B(\H)$, c.f.\ \eqref{eqn:hessian}, is defined on all of $\H$, this shows $\dom{\mathcal{C}_\pos^{-1}}=\dom{\mathcal{C}_\pr^{-1}}$. Hence, $\ran{\mathcal{C}_\pos}=\ran{\mathcal{C}_\pr}$, and this subspace forms the domain of definition of \eqref{eqn:pos_precision}.

In infinite dimensions, $\mathcal{C}_\pr^{-1}:\ran{\mathcal{C}_\pr}\rightarrow\H$ and $\mathcal{C}_\pr^{-1/2}:\ran{\mathcal{C}_\pr^{1/2}}\rightarrow\H$ are unbounded operators, i.e.\ discontinuous linear functions. We have the range inclusion $\ran{\mathcal{C}_\pr}\subset\ran{\mathcal{C}_\pr^{1/2}}$. Furthermore, the ranges $\ran{\mathcal{C}_\pr^{1/2}}$ and $\ran{\mathcal{C}_\pr}$ take a central role in the Bayesian inverse problem. They are called the `Cameron--Martin space' and `pre-Cameron--Martin space' of the prior respectively, and are both proper subspaces of $\H$. These spaces are endowed with the Cameron--Martin norm $\norm{\cdot}_{\mathcal{C}_\pr^{-1}}$ defined by $\norm{h}_{\mathcal{C}_\pr^{-1}}=\norm{\mathcal{C}_\pr^{-1/2}h}$. Since the Cameron--Martin space characterises a Gaussian measure, equivalence between Gaussian measures depends on their Cameron--Martin spaces. Furthermore, as discussed in the previous paragraph, these spaces are also involved in the update equations \eqref{eqn:update_equations}. For both reasons, the analysis of posterior approximations will therefore make use of these spaces.

In this work we mostly focus on the approximation of the posterior mean in \eqref{eqn:pos_mean}. We shall construct approximations $\widetilde{m}_\pos(y)$ of the exact posterior mean $m_\pos(y)$, such that the resulting approximate posterior $\mathcal{N}(\widetilde{m}_\pos(y),\mathcal{C}_\pos)$ and the exact posterior $\mathcal{N}(m_\pos(y),\mathcal{C}_\pos)$ are equivalent.  This equivalence should not only hold for one fixed $y$, but for every possible realisation $y$ of $Y$ in a set of probability 1 with respect to the distribution of $Y$, so that equivalence is guaranteed prior to observing the data.

For approximations of the posterior mean, we observe from \eqref{eqn:pos_mean} that the posterior mean is the result of applying an operator to the data $y$.
This motivates the following classes of operators:
\begin{subequations}
	\label{eqn:class_approx_means}
	\begin{align}
		\label{eqn:class_approx_mean_with_covariance_update}
		\mathscr{M}^{(1)}_r \coloneqq& \{(\mathcal{C}_\pr-B)G^*\mathcal{C}_\obs^{-1}:\ B\in\B_{00,r}(\H),\ \mathcal{N}( (\mathcal{C}_\pr-B)G^*\mathcal{C}_\obs^{-1}Y,\mathcal{C}_\pos)\sim\mu_\pos(Y)\quad\text{a.s.}\},
		\\
		\label{eqn:class_approx_mean_without_covariance_update}
		\mathscr{M}^{(2)}_r \coloneqq& \{A\in\B_{00,r}(\R^n,\H):\ \mathcal{N}( AY,\mathcal{C}_\pos)\sim\mu_\pos(Y)\quad \text{a.s.}\}.
	\end{align}
\end{subequations}
In this way, we ensure that by approximating the posterior mean by $Ay$ for $A\in\mathscr{M}^{(i)}_r$, the equivalence with $\mu_{\pos}(y)$ is maintained for all $y$ in a set of probability 1 with respect to the distribution of $Y$. We stress that $A$ is constructed before a specific realisation $y$ of $Y$ is observed.
The structure-preserving class in \eqref{eqn:class_approx_mean_with_covariance_update} takes into account properties of the posterior mean and covariance that are implied by \eqref{eqn:pos_mean}-\eqref{eqn:pos_covariance}.
In contrast, the structure-ignoring class in \eqref{eqn:class_approx_mean_without_covariance_update} ignores these properties and only requires that the posterior mean is a linear transformation of the data and that the resulting approximate posterior approximation is equivalent to the exact posterior.
We note that the rank-$r$ update $-B$ of $\mathcal{C}_\pr$ in \eqref{eqn:class_approx_mean_with_covariance_update} is not required to be self-adjoint. However, as we shall see in \Cref{sec:optimal_approximation_mean}, the posterior mean approximations of the form \eqref{eqn:class_approx_mean_with_covariance_update} which are optimal, in the sense specified in \Cref{sec:optimal_approximation_mean}, do in fact correspond to self-adjoint updates $-B$.

By \eqref{eqn:pos_mean}, it follows that there exists $r_0\leq n$ such that $m_\pos\in\mathscr{M}^{(1)}_r \cap \mathscr{M}^{(2)}_r$ for all $r\geq r_0$.
Indeed, if $r\geq \rank{G^*}=\rank{G}$, then $(\mathcal{C}_\pr-B)G^*\mathcal{C}_\obs^{-1}\in \B_{00,r}(\R^n,\H)$ for every $B\in\B_{00,r}(\H)_\R$. Thus, $\mathscr{M}^{(1)}_r\subset \mathscr{M}^{(2)}_r$ for $r\geq \rank{G}$. Since $\mathcal{C}_{\pr} G^*(\mathcal{C}_{\obs}+G\mathcal{C}_{\pr} G^*)^{-1}G\mathcal{C}_{\pr}$ has rank at most $\rank{G}$, \eqref{eqn:pos_mean}-\eqref{eqn:pos_covariance} show $m_\pos\in\mathscr{M}^{(1)}_r\subset\mathscr{M}^{(2)}_r$ for $r\geq \rank{G}$.

Because the rank of $A$ and $B$ in \eqref{eqn:class_approx_mean_with_covariance_update} and \eqref{eqn:class_approx_mean_without_covariance_update} are restricted and may be much smaller than $n$, we refer to $Ay$ for $A\in\mathscr{M}^{(i)}_r$, $i=1,2$, as a `low-rank' approximation of $m_\pos(y)$. If $\dim{\H}<\infty$, then $\mathscr{M}^{(i)}_r$ coincides with the approximation classes considered in \cite[Section 4]{Spantini2015}.

\section{Equivalent Gaussian measures and Bayesian inference}
\label{sec:equivalence_and_divergences_between_gaussian_measures}

%

We quantify posterior approximation errors using various divergences. Let $\nu_2$ be a target measure on $\H$ and $\nu_1$ an approximation of $\nu_2$ satisfying $\nu_1\sim\nu_2$. We use the $\rho$-R\'enyi divergence, the forward Kullback-Leibler (KL) divergence, the Amari $\alpha$-divergence for $\alpha\in(0,1)$ and the Hellinger distance, defined respectively by,
\begin{align*}
	D_{\kl}(\nu_2\Vert\nu_1) &\coloneqq  \int_\mathcal{H}\log{\frac{\d \nu_2}{\d \nu_1}}\d\nu_2,\\
	D_{\ren,\rho}(\nu_2\Vert\nu_1) &\coloneqq -\frac{1}{\rho(1-\rho)}\log{\int_\mathcal{H}\left(\frac{\d\nu_2}{\d\nu_1}\right)^\rho\d\nu_1},\\
	D_{\am,\alpha}(\nu_2\Vert\nu_1) &\coloneqq\frac{-1}{\alpha(1-\alpha)}\left( \int_{\mathcal{H}}^{}\left( \frac{\d\nu_2}{\d\nu_1} \right)^\alpha\d\nu_1-1 \right),\\
	D_\hel(\nu_2,\nu_1)^2 &\coloneqq \int_{\mathcal{H}}^{}\left( 1-\sqrt{\frac{\d\nu_2}{\d\nu_1}} \right)^2\d\nu_1 = 2-2\int_{\mathcal{H}}^{}\sqrt{\frac{\d\nu_2}{\d\nu_1}}\d\nu_1.
\end{align*}
We refer to $D_\kl(\nu_1\Vert\nu_2)$ as the `reverse KL divergence'.
We do not distinguish between forward R\'enyi divergences $D_{\ren,\rho}(\nu_2\Vert\nu_1)$ and reverse R\'enyi divergences $D_{\ren,\rho}(\nu_1\Vert\nu_2)$, because of the `skew symmetry' of the R\'enyi divergence: $D_{\ren,\rho}(\nu_1\Vert \nu_2) = D_{\ren,1-\rho}(\nu_2\Vert \nu_1)$, c.f.\ \cite[Proposition 2]{vanErven2014}.  
\begin{remark}[Hellinger and Amari divergences]
	\label{rmk:hellinger_and_amari_divergence}
	We note that
	\begin{align}
		\label{eqn:amari_renyi_relation}
		D_{\am,\alpha}(\nu_2\Vert\nu_1) =& \frac{-1}{\alpha(1-\alpha)}\left(\exp(-\alpha(1-\alpha)D_{\ren,\alpha}(\nu_2\Vert\nu_1))-1\right)\\
		\label{eqn:hellinger_renyi_relation}
		D_\hel(\nu_2,\nu_1)^2=&-2\left(1-\exp\left(\frac{1}{4}D_{\ren,1/2}(\nu_2\Vert\nu_1)\right)\right),
	\end{align}
	where \eqref{eqn:hellinger_renyi_relation} follows by \cite[eqs.\ (134)--(135)]{minh_regularized_2021}. It is then straightforward to show, c.f.\ \cite[Remarks 3.10 and 3.11]{PartI} that minimising the Amari-$\alpha$ divergence over $\nu_1$ is equivalent to minimising the $\alpha$-R\'enyi divergence over $\nu_1$. Furthermore, minimising the Hellinger distance over $\nu_1$ is equivalent to minimising the $\frac{1}{2}$-R\'enyi divergence over $\nu_1$. The divergence $\frac{1}{4}D_{\ren,\frac{1}{2}}$ is also known as the Bhattacharyya distance, and is a metric.
\end{remark}

If a divergence between Gaussian measures $\nu_1$ and $\nu_2$ requires access to the density $\frac{\d\nu_2}{\d\nu_1}$, then $\nu_1$ and $\nu_2$ must be equivalent. This is shown by the Feldman--Hajek theorem below. The Feldman--Hajek theorem also characterises which Gaussian measures are equivalent in terms of their means and covariance. For statistical inference, it is often important that the posterior has a density with respect to the prior. This further motivates the need to construct approximate posterior measures that are equivalent to $\mu_\pos$ and $\mu_\pr$. 


\begin{theorem}[Feldman--Hajek]
	\label{thm:feldman--hajek}
	Let $\H$ be a Hilbert space and $\mu=\mathcal{N}(m_1,\mathcal{C}_1)$ and $\nu=\mathcal{N}(m_2,\mathcal{C}_2)$ be Gaussian measures on $\H$.
	Then $\mu$ and $\nu$ are either singular or equivalent, and $\mu$ and $\nu$ are equivalent if and only if the following conditions hold:
	\begin{enumerate}
		\item 
			\label{item:fh_ranges}
			$\ran{\mathcal{C}_1^{1/2}} = \ran{\mathcal{C}_2^{1/2}}$,
		\item 
			\label{item:fh_means}
			$m_1-m_2\in\ran{\mathcal{C}_1^{1/2}}$ , and
		\item 
			\label{item:fh_covariance}
			$(\mathcal{C}_1^{-1/2}\mathcal{C}_2^{1/2})(\mathcal{C}_1^{-1/2}\mathcal{C}_2^{1/2})^*-I\in L_2(\H)$.
	\end{enumerate}
\end{theorem}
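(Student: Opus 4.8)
Since this is the classical Feldman--Hajek dichotomy, the cleanest option is to cite a standard reference, e.g.\ \cite[Theorem 2.7.2]{bogachev_gaussian_1998} or \cite[Section B.2]{da_prato_stochastic_2014}; what follows is the argument one would reproduce. The plan is to treat the pure-translation part and the pure-covariance part separately and then glue them. First I would recall the Cameron--Martin theorem in dichotomy form: for a fixed covariance $\mathcal{C}$, the measures $\mathcal{N}(m_1,\mathcal{C})$ and $\mathcal{N}(m_2,\mathcal{C})$ are equivalent when $m_1-m_2\in\ran{\mathcal{C}^{1/2}}$ and mutually singular otherwise. This settles the translation subproblem and supplies the tool needed for condition \ref{item:fh_means}.

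Next I would handle the centred case $m_1=m_2=0$. Necessity of \ref{item:fh_ranges} follows because the Cameron--Martin space $\ran{\mathcal{C}^{1/2}}$ coincides with the set of admissible shifts $\{v:\tau_v\mathcal{N}(0,\mathcal{C})\sim\mathcal{N}(0,\mathcal{C})\}$, which is preserved under equivalence since equivalence is preserved by the bijections $\tau_v$; hence equivalent centred Gaussians share their Cameron--Martin space. Granting \ref{item:fh_ranges}, the closed graph theorem shows that $R\coloneqq\mathcal{C}_1^{-1/2}\mathcal{C}_2^{1/2}$ extends to a bounded operator with bounded inverse, so $T\coloneqq RR^*=\mathcal{C}_1^{-1/2}\mathcal{C}_2\mathcal{C}_1^{-1/2}$ is bounded, self-adjoint, and strictly positive with bounded inverse. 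Transporting the comparison by the measurable linear map associated with $\mathcal{C}_1^{-1/2}$ (equivalently, passing to the abstract Wiener space picture) reduces it to comparing a cylindrical standard Gaussian with the Gaussian of covariance $T$; diagonalising $T=\sum_i\lambda_i\,e_i\otimes e_i$ expresses both as countable products of one-dimensional centred Gaussians, $\prod_i\mathcal{N}(0,1)$ versus $\prod_i\mathcal{N}(0,\lambda_i)$. Kakutani's product-measure dichotomy then gives equivalence iff $\sum_i(1-\rho_i)<\infty$, where $\rho_i=(2\sqrt{\lambda_i}/(1+\lambda_i))^{1/2}$ is the Hellinger affinity of $\mathcal{N}(0,1)$ and $\mathcal{N}(0,\lambda_i)$. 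Since the bounded invertibility of $T$ forces $\lambda_i$ into a compact subinterval of $(0,\infty)$, a Taylor expansion near $\lambda_i=1$ gives $1-\rho_i\asymp(\lambda_i-1)^2$, so the criterion becomes $\sum_i(\lambda_i-1)^2<\infty$, i.e.\ $T-I\in L_2(\H)$, which is exactly \ref{item:fh_covariance} because $(\mathcal{C}_1^{-1/2}\mathcal{C}_2^{1/2})(\mathcal{C}_1^{-1/2}\mathcal{C}_2^{1/2})^*=T$.

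Finally I would assemble the full statement using transitivity of equivalence and the fact that \emph{any} pushforward preserves absolute continuity, hence equivalence. For the \emph{if} direction: conditions \ref{item:fh_ranges} and \ref{item:fh_covariance} give $\mathcal{N}(m_2,\mathcal{C}_1)\sim\mathcal{N}(m_2,\mathcal{C}_2)$ by the centred case, condition \ref{item:fh_means} gives $\mathcal{N}(m_1,\mathcal{C}_1)\sim\mathcal{N}(m_2,\mathcal{C}_1)$ by Cameron--Martin, and chaining yields $\mathcal{N}(m_1,\mathcal{C}_1)\sim\mathcal{N}(m_2,\mathcal{C}_2)$. For the \emph{only if} direction: from $\mathcal{N}(m_1,\mathcal{C}_1)^{\otimes2}\sim\mathcal{N}(m_2,\mathcal{C}_2)^{\otimes2}$ on $\H\times\H$, pushing forward along $(x_1,x_2)\mapsto(x_1-x_2)/\sqrt2$ gives $\mathcal{N}(0,\mathcal{C}_1)\sim\mathcal{N}(0,\mathcal{C}_2)$, whence \ref{item:fh_ranges} and \ref{item:fh_covariance} from the centred analysis; then $\mathcal{N}(m_2,\mathcal{C}_1)\sim\mathcal{N}(m_2,\mathcal{C}_2)\sim\mathcal{N}(m_1,\mathcal{C}_1)$ and the Cameron--Martin dichotomy for equal covariances force $m_1-m_2\in\ran{\mathcal{C}_1^{1/2}}$, i.e.\ \ref{item:fh_means}. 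The dichotomy "singular or equivalent" itself is a consequence of the Gaussian zero--one law applied to the measurable linear subspace on which a candidate density is defined. The main obstacle is the covariance step in the centred case: making rigorous the reduction through the unbounded map $\mathcal{C}_1^{-1/2}$ and the diagonalisation at the level of cylinder sets, together with the sharp two-sided estimate $1-\rho_i\asymp(\lambda_i-1)^2$ that converts Kakutani's Hellinger-integral criterion into the Hilbert--Schmidt condition \ref{item:fh_covariance}.
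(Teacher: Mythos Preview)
The paper does not actually prove this theorem: immediately after the statement it simply writes ``For a proof, see e.g.\ \cite[Corollary 6.4.11]{bogachev_gaussian_1998} or \cite[Theorem 2.25]{da_prato_stochastic_2014}.'' Your proposal therefore already matches the paper at the level of what is strictly required, since you also begin by noting that citing a standard reference is the cleanest option.

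Your sketch goes well beyond the paper by outlining the classical proof itself, and the outline is sound: Cameron--Martin handles the mean shift, Kakutani's product dichotomy applied to the diagonalisation of $T=RR^*$ yields the Hilbert--Schmidt condition in the centred case, and the symmetrisation trick $(x_1,x_2)\mapsto (x_1-x_2)/\sqrt{2}$ on the product space cleanly extracts the centred comparison from the general one. You also correctly flag the one genuinely delicate point, namely making rigorous the transport through the unbounded $\mathcal{C}_1^{-1/2}$; in a full write-up this is typically handled either via the measurable linear extension of $\mathcal{C}_1^{-1/2}$ or by working directly with cylinder sets and finite-dimensional marginals, as in Bogachev. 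Nothing in your sketch is wrong, and nothing essential is missing for a sketch at this level of detail.
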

For a proof, see e.g.\ \cite[Corollary 6.4.11]{bogachev_gaussian_1998} or \cite[Theorem 2.25]{da_prato_stochastic_2014}.
For injective covariances $\mathcal{C}_1$ and $\mathcal{C}_2$ such that \cref{item:fh_ranges,item:fh_covariance} in \Cref{thm:feldman--hajek} hold, we define
\begin{align}
	\label{eqn:feldman_hajek_operator}
	R(\mathcal{C}_2\Vert \mathcal{C}_1) \coloneqq \mathcal{C}_1^{-1/2}\mathcal{C}_2^{1/2}(\mathcal{C}_1^{-1/2}\mathcal{C}_2^{1/2})^* - I.
\end{align}
Note that two Gaussian measures $\mathcal{N}(m,\mathcal{C}_1)$ and $\mathcal{N}(m,\mathcal{C}_2)$ are equal if $R(\mathcal{C}_2\Vert\mathcal{C}_1)=0$. On the other hand, if these measures are mutually singular, then $R(\mathcal{C}_2\Vert\mathcal{C}_1)$ does not have a square-summable eigenvalue sequence. If the eigenvalues are square-summable, then a faster decay implies the Gaussian measures are more similar. Hence, $R(\cdot\Vert\cdot)$ describes the amount of similarity between Gaussian measures with the same means.

%

If $\nu_1$ and $\nu_2$ are Gaussian measures, then the above divergences can be expressed explicitly in terms of the means and covariances of $\nu_1$ and $\nu_2$ using $R(\cdot\Vert\cdot)$ defined in \eqref{eqn:feldman_hajek_operator}. These formulations rely on a generalisation of the determinant to infinite-dimensional Hilbert spaces. For $A\in L_1(\H)$, the so-called `Fredholm determinant' $\det(I+A)$ can be defined, and if only $A\in L_2(\H)$, then the notion of `Hilbert--Carleman determinant' $\det_2(I+A)$ can be used. The Fredholm and Hilbert--Carleman determinants are defined on respectively trace-class and Hilbert--Schmidt perturbations of the identity. In finite dimensions, every operator is a trace-class and Hilbert--Schmidt perturbation of the identity, and hence these generalised determinants are defined everywhere in this case. In fact, the Fredholm determinant agrees with the standard determinant in this case. We refer to \cite[Theorem 3.2, Theorem 6.2]{simon_notes_1977} or \cite[Lemma 3.3, Theorem 9.2]{Simon2005} for details.

The following result holds when $\H$ is a separable Hilbert space of finite or infinite dimension. The proof is a direct application of \cite[Theorems 14 and 15]{minh_regularized_2021}.

\begin{theorem}[{\cite[Theorem 3.8]{PartI}}]
	\label{thm:gaussian_divergences}
	Let $m_1,m_2\in\mathcal{H}$ and $\mathcal{C}_1,\mathcal{C}_2\in L_2(\mathcal{H})_\R$ be positive. If $\mathcal{N}(m_1,\mathcal{C}_1)\sim\mathcal{N}(m_2,\mathcal{C}_2)$, then
	\begin{subequations}
		\label{eqn:divergences}
		\begin{align}
			\label{eqn:kullback_leibler_divergence}
			D_{\kl}(\mathcal{N}(m_2,\mathcal{C}_2)\Vert \mathcal{N}(m_1,\mathcal{C}_1)) \coloneqq &\frac{1}{2}\Norm{\mathcal{C}_1^{-1/2}(m_2-m_1)}^2-\frac{1}{2}\log\det_2(I+R(\mathcal{C}_2\Vert\mathcal{C}_1)),\\
			\label{eqn:renyi_divergence_order_rho}
			\begin{split}
				D_{\ren,\rho}(\mathcal{N}(m_2,\mathcal{C}_2)\Vert \mathcal{N}(m_1,\mathcal{C}_1)) \coloneqq &\frac{1}{2}\Norm{\bigr(\rho I+(1-\rho)(I+R(\mathcal{C}_2\Vert\mathcal{C}_1))\bigr)^{-1/2}\mathcal{C}_1^{-1/2}(m_2-m_1)}^2\\
				&+\frac{\log\det\left[\bigl(I+R(\mathcal{C}_2\Vert\mathcal{C}_1)\bigr)^{\rho-1}\bigl(\rho I+(1-\rho)(I+R(\mathcal{C}_2\Vert\mathcal{C}_1))\bigr)\right]}{2\rho(1-\rho)}.
			\end{split}
		\end{align}
	\end{subequations}
	Furthermore, 
	\begin{align*}
		\lim_{\rho\rightarrow 1} D_{\ren,\rho}(\mathcal{N}(m_2,\mathcal{C}_2)\Vert \mathcal{N}(m_1,\mathcal{C}_1))= D_{\kl}(\mathcal{N}(m_2,\mathcal{C}_2)\Vert \mathcal{N}(m_1,\mathcal{C}_1)),\\
		\lim_{\rho\rightarrow 0} D_{\ren,\rho}(\mathcal{N}(m_2,\mathcal{C}_2)\Vert \mathcal{N}(m_1,\mathcal{C}_1))= D_{\kl}(\mathcal{N}(m_1,\mathcal{C}_1)\Vert \mathcal{N}(m_2,\mathcal{C}_2)).
	\end{align*}
\end{theorem}

The prior and posterior distributions in \eqref{eqn:observation_model} are equivalent, for every realisation $y$ in a set of probability 1, c.f.\ \cite[Theorem 6.31]{Stuart2010}. The Hessian $H$ defined in \eqref{eqn:hessian} has rank $n$, hence the posterior precision is a finite-rank update of the prior by \eqref{eqn:pos_precision}. Using the operators $R(\mathcal{C}_\pr\Vert \mathcal{C}_\pos)$ and $R(\mathcal{C}_\pos\Vert\mathcal{C}_\pr)$ and \Cref{thm:feldman--hajek}, we can obtain the following relations between the prior-preconditioned Hessian $\mathcal{C}_\pr^{1/2}H\mathcal{C}_\pr^{1/2}$ in \eqref{eqn:prior_preconditioned_Hessian}, the posterior-preconditioned Hessian in \eqref{eqn:posterior_preconditioned_Hessian}, and the `pencil' defined by the prior and the posterior covariance in \eqref{eqn:bayesian_cov_pencil}. The prior-preconditioned Hessian combines prior covariance information with information contained in the Hessian, i.e.\ information on the forward map, noise covariance, and data dimension. Recall the notation $v\otimes w$ for $u,w\in\H$ from \Cref{sec:notation}.

\begin{proposition}[{\cite[Proposition 3.7]{PartI}}]
	\label{prop:bayesian_feldman_hajek}
	There exists a nondecreasing sequence $(\lambda_i)_i\in\ell^2( (-1,0] )$ consisting of exactly $\rank{H}$ nonzero elements and ONBs $(w_i)_i$ and $(v_i)_i$ of $\mathcal{H}$ such that $w_i,v_i\in\ran{\mathcal{C}_\pr^{1/2}}$ and $v_i = \sqrt{1+\lambda_i}\mathcal{C}_\pos^{-1/2}\mathcal{C}_\pr^{1/2}w_i$ for every $i\in \N$, and
	\begin{subequations}
		\begin{align}
			R(\mathcal{C}_\pos\Vert \mathcal{C}_\pr) &= \sum_{i}^{}\lambda_i w_i\otimes w_i, \nonumber \\
			\label{eqn:prior_preconditioned_Hessian}
			\mathcal{C}_{\pr}^{1/2}H\mathcal{C}_{\pr}^{1/2}
			&= (\mathcal{C}_\pos^{-1/2}\mathcal{C}_\pr^{1/2})^* (\mathcal{C}_\pos^{-1/2}\mathcal{C}_\pr^{1/2}) - I
			=\sum_{i}\frac{-\lambda_i}{1+\lambda_i}w_i\otimes w_i, \\
			\label{eqn:posterior_preconditioned_Hessian}
			\mathcal{C}_{\pos}^{1/2}H\mathcal{C}_{\pos}^{1/2}	
			&= I-(\mathcal{C}_\pr^{-1/2}\mathcal{C}_\pos^{1/2})^* (\mathcal{C}_\pr^{-1/2}\mathcal{C}_\pos^{1/2})
			=\sum_{i}(-\lambda_i) v_i\otimes v_i,\\
			\label{eqn:bayesian_cov_pencil}
			\mathcal{C}_\pos^{1/2}\mathcal{C}_\pr^{-1/2}w_i 
			&= (1+\lambda_i)\mathcal{C}_\pos^{-1/2}\mathcal{C}_\pr^{1/2}w_i,\quad \forall i\in\N.
		\end{align}
	\end{subequations}
\end{proposition}

\begin{remark}
	\label{rmk:spectral_comparison}
	We note that the eigenvalues $(\frac{-\lambda_i}{1+\lambda_i})_i$ of \eqref{eqn:prior_preconditioned_Hessian} relate to the eigenvalues $(\delta_i^2)_i$ of \cite[eq.\ (2.8)]{Spantini2015} via the transformation $\lambda_i=\eta(\delta_i^2)$, $\delta_i^2=\eta(\lambda_i)$ with $\eta(x)=\frac{-x}{1+x}$ for $x\in(-1,\infty)$.
\end{remark}

From \Cref{prop:bayesian_feldman_hajek}, the following interpretation of the eigenpairs $(\lambda_i,w_i)_i$ of \Cref{prop:bayesian_feldman_hajek} follows. The proof can be found in \Cref{subsec:proofs_for_formulation}.

\begin{restatable}{proposition}{varianceReduction}
	\label{prop:variance_reduction}
	Let $(\lambda_i,w_i)_i$ be as in \Cref{prop:bayesian_feldman_hajek}. It holds that
	\begin{align}
		\label{eqn:relative_variance_reduction}
		\frac{\var_{X\sim\mu_\pos}(\langle X,\mathcal{C}_\pr^{-1/2}w_i\rangle)}{\var_{X\sim\mu_\pr}(\langle X,\mathcal{C}_\pr^{-1/2}w_i\rangle)}
		=1+\lambda_i=\frac{1}{1+\tfrac{-\lambda_i}{1+\lambda_i}},\quad \forall i\in\N,
	\end{align}
	and for any subspace $V_{r}\subset\ran{\mathcal{C}_\pr^{1/2}}$ of dimension $r\in\N$,
	\begin{align}
		\label{eqn:maximal_variance_reduction}
		\min_{z\in (\mathcal{C}_\pr^{-1/2}V_{r})^\perp\setminus\{0\}} \frac{\var_{X\sim\mu_\pos}(\langle X,z\rangle)}{\var_{X\sim\mu_\pr}(\langle X,z\rangle)}
		=
		\inf_{z\in (V_{r}^\perp\cap{\ran{\mathcal{C}_\pr^{1/2}}})\setminus\{0\}} \frac{\var_{X\sim\mu_\pos}(\langle X,\mathcal{C}_\pr^{-1/2}z\rangle)}{\var_{X\sim\mu_\pr}(\langle X,\mathcal{C}_\pr^{-1/2}z\rangle)}
		\leq 1+\lambda_{r+1},
	\end{align}
	with equality for $V_{r}=\Span{w_1,\ldots,w_{r}}$.
\end{restatable}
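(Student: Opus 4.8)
The plan is to read off both identities from the structure provided by \Cref{prop:bayesian_feldman_hajek}. For \eqref{eqn:relative_variance_reduction}, observe that for a centered Gaussian $X\sim\mathcal{N}(0,\mathcal{C})$ on $\H$ and any $z\in\H$ we have $\var(\langle X,z\rangle)=\langle \mathcal{C}z,z\rangle$. Taking $z=\mathcal{C}_\pr^{-1/2}w_i$ (which is well-defined since $w_i\in\ran{\mathcal{C}_\pr^{1/2}}$ by \Cref{prop:bayesian_feldman_hajek}), the denominator becomes $\langle \mathcal{C}_\pr \mathcal{C}_\pr^{-1/2}w_i,\mathcal{C}_\pr^{-1/2}w_i\rangle = \langle \mathcal{C}_\pr^{1/2}w_i,\mathcal{C}_\pr^{-1/2}w_i\rangle=\langle w_i,w_i\rangle=1$, using that $\mathcal{C}_\pr^{1/2}$ is self-adjoint and $\mathcal{C}_\pr^{-1/2}\mathcal{C}_\pr^{1/2}=I$ on $\ran{\mathcal{C}_\pr^{1/2}}$. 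For the numerator, I would write $\langle \mathcal{C}_\pos\mathcal{C}_\pr^{-1/2}w_i,\mathcal{C}_\pr^{-1/2}w_i\rangle = \norm{\mathcal{C}_\pos^{1/2}\mathcal{C}_\pr^{-1/2}w_i}^2$ and then use the pencil relation \eqref{eqn:bayesian_cov_pencil}, $\mathcal{C}_\pos^{1/2}\mathcal{C}_\pr^{-1/2}w_i=(1+\lambda_i)\mathcal{C}_\pos^{-1/2}\mathcal{C}_\pr^{1/2}w_i$, together with $v_i=\sqrt{1+\lambda_i}\,\mathcal{C}_\pos^{-1/2}\mathcal{C}_\pr^{1/2}w_i$ and $\norm{v_i}=1$, to get $\norm{\mathcal{C}_\pos^{1/2}\mathcal{C}_\pr^{-1/2}w_i}^2=(1+\lambda_i)^2\norm{\mathcal{C}_\pos^{-1/2}\mathcal{C}_\pr^{1/2}w_i}^2=(1+\lambda_i)^2\cdot\frac{1}{1+\lambda_i}=1+\lambda_i$. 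The final equality in \eqref{eqn:relative_variance_reduction} is just the algebraic identity $1+\lambda=(1+\tfrac{-\lambda}{1+\lambda})^{-1}$ for $\lambda\in(-1,0]$, consistent with the eigenvalue transformation in the remark following \Cref{prop:bayesian_feldman_hajek}.

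For \eqref{eqn:maximal_variance_reduction}, the first equality is a change of variables: the map $z\mapsto\mathcal{C}_\pr^{-1/2}z$ is a bijection from $\ran{\mathcal{C}_\pr^{1/2}}$ onto $\H$, and it identifies the orthogonal complement taken in $\H$ with the complement taken inside $\ran{\mathcal{C}_\pr^{1/2}}$: if $z'\in(V_r^\perp\cap\ran{\mathcal{C}_\pr^{1/2}})$ then $\mathcal{C}_\pr^{-1/2}z'\in(\mathcal{C}_\pr^{-1/2}V_r)^\perp$ in $\H$, since $\langle \mathcal{C}_\pr^{-1/2}z',\mathcal{C}_\pr^{-1/2}v\rangle$ for $v\in V_r$ need not vanish — so I have to be careful here and instead set up the substitution so that the inner products match; the correct reading is that the left-hand side ranges over $z\in(\mathcal{C}_\pr^{-1/2}V_r)^\perp$ and we substitute $z=\mathcal{C}_\pr^{-1/2}w$ with $w$ ranging over the appropriate preimage. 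I would check that this preimage is exactly $V_r^\perp\cap\ran{\mathcal{C}_\pr^{1/2}}$ under the natural pairing induced by the Cameron--Martin inner product, which is precisely the content the notation is encoding; this bookkeeping is the one genuinely fiddly point.

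For the inequality and the equality case I would proceed as follows. By \eqref{eqn:relative_variance_reduction}, the function $w\mapsto \var_{\mu_\pos}(\langle X,\mathcal{C}_\pr^{-1/2}w\rangle)/\var_{\mu_\pr}(\langle X,\mathcal{C}_\pr^{-1/2}w\rangle)$, when expressed in the ONB $(w_i)_i$, is a Rayleigh quotient. More precisely, writing $w=\sum_j c_j w_j$, the numerator is $\norm{\mathcal{C}_\pos^{1/2}\mathcal{C}_\pr^{-1/2}w}^2$, which by \eqref{eqn:prior_preconditioned_Hessian} — rearranged as $(\mathcal{C}_\pos^{-1/2}\mathcal{C}_\pr^{1/2})^*(\mathcal{C}_\pos^{-1/2}\mathcal{C}_\pr^{1/2})=I+\mathcal{C}_\pr^{1/2}H\mathcal{C}_\pr^{1/2}=\sum_i(1+\tfrac{-\lambda_i}{1+\lambda_i})^{-1}\! w_i\otimes w_i$, hmm, let me instead use the cleaner route — equals $\sum_j (1+\lambda_j)c_j^2$ by the computation in the first paragraph applied to each $w_j$ and orthogonality of the $v_j$; the denominator is $\norm{w}^2=\sum_j c_j^2$. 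Thus the ratio is $\sum_j(1+\lambda_j)c_j^2/\sum_j c_j^2$, a convex combination of the $(1+\lambda_j)_j$, hence its infimum over a set of $w$'s is governed by which indices $j$ are forced to be "active". Restricting to $w\perp V_r$ with $V_r=\Span{w_1,\dots,w_r}$ forces $c_1=\dots=c_r=0$, so the infimum is $\inf_{j\geq r+1}(1+\lambda_j)=1+\lambda_{r+1}$ since $(\lambda_i)_i$ is nondecreasing; taking $w=w_{r+1}$ attains it. For a general $r$-dimensional $V_r\subset\ran{\mathcal{C}_\pr^{1/2}}$, a dimension count gives a nonzero $w\in V_r^\perp\cap\Span{w_1,\dots,w_{r+1}}$, for which the ratio is a convex combination of $1+\lambda_1,\dots,1+\lambda_{r+1}$ and hence $\leq 1+\lambda_{r+1}$; this yields the stated upper bound on the infimum. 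The main obstacle is not any single hard estimate but getting the Cameron--Martin duality and the orthogonal-complement bookkeeping in the first equality of \eqref{eqn:maximal_variance_reduction} exactly right, since the operators $\mathcal{C}_\pr^{-1/2}$ are unbounded and densely defined; once the Rayleigh-quotient reformulation via \eqref{eqn:relative_variance_reduction} is in place, the minimax/dimension-count argument is standard.
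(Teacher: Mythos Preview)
Your treatment of \eqref{eqn:relative_variance_reduction} is correct and essentially the paper's argument, only using the relation $v_i=\sqrt{1+\lambda_i}\,\mathcal{C}_\pos^{-1/2}\mathcal{C}_\pr^{1/2}w_i$ in place of taking an inner product directly. Likewise, your Rayleigh-quotient-plus-dimension-count argument for the inequality and the equality case in \eqref{eqn:maximal_variance_reduction} is valid and is the hands-on version of what the paper does by citing the Courant--Fischer inequality from \cite{Hsing2015}; working in the finite-dimensional span $\Span{w_1,\dots,w_{r+1}}$ sidesteps any convergence issues with $\mathcal{C}_\pr^{-1/2}$ on infinite sums.

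The genuine gap is exactly the point you flag: the first equality in \eqref{eqn:maximal_variance_reduction}. Your attempted change of variables fails for the reason you identify --- orthogonality $z\perp V_r$ in $\H$ does \emph{not} become $\mathcal{C}_\pr^{-1/2}z\perp\mathcal{C}_\pr^{-1/2}V_r$ in $\H$; that would require the Cameron--Martin inner product, not the ambient one --- and you do not repair this. The paper's route is to stop working with the unbounded $\mathcal{C}_\pr^{-1/2}$ on general vectors and instead pass to the \emph{bounded} operator $T\coloneqq\mathcal{C}_\pr^{-1/2}\mathcal{C}_\pos^{1/2}(\mathcal{C}_\pr^{-1/2}\mathcal{C}_\pos^{1/2})^*$, for which $I-T=\sum_i(-\lambda_i)w_i\otimes w_i$. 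On $\ran{\mathcal{C}_\pr^{1/2}}$ the Rayleigh quotient $\langle Tz,z\rangle/\norm{z}^2$ coincides with the variance ratio $\var_{\mu_\pos}(\langle X,\mathcal{C}_\pr^{-1/2}z\rangle)/\var_{\mu_\pr}(\langle X,\mathcal{C}_\pr^{-1/2}z\rangle)$, and the paper then (i) shows $V_r^\perp\cap\ran{\mathcal{C}_\pr^{1/2}}$ is dense in $V_r^\perp$ (by extending a basis of $V_r$ to an ONB inside the dense Cameron--Martin space), so continuity of the bounded Rayleigh quotient gives $\min_{z\in V_r^\perp}\langle Tz,z\rangle/\norm{z}^2=\inf_{z\in V_r^\perp\cap\ran{\mathcal{C}_\pr^{1/2}}}(\text{variance ratio})$, and (ii) rewrites the resulting infimum over $\{z:\mathcal{C}_\pr^{1/2}z\in V_r^\perp\}$ as an infimum over $(\mathcal{C}_\pr^{-1/2}V_r)^\perp$, and proves it is attained using weak compactness of the unit ball together with weak continuity of $z\mapsto\langle\mathcal{C}z,z\rangle$ for compact $\mathcal{C}$. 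You do not address attainment of the minimum on the left-hand side at all; in infinite dimensions this is not automatic and is where the compactness of $\mathcal{C}_\pr$ and $\mathcal{C}_\pos$ enters.
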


Note that while the ratios in \eqref{eqn:relative_variance_reduction} and \eqref{eqn:maximal_variance_reduction} depend on the posterior distribution, they only do so via the posterior covariance. Thus they are independent of the realisation of the data $y$, and only depend on the inverse problem via the choice of prior and the model structure \eqref{eqn:observation_model}.

The significance of \eqref{eqn:relative_variance_reduction} is that the posterior variance along the span of $\mathcal{C}_\pr^{-1/2}w_i$ is smaller than the prior variance along the same subspace by a factor of $(1+\tfrac{-\lambda_i}{1+\lambda_i})^{-1}$, for $i\in\N$. This was observed in the finite-dimensional case in \cite[eq.\ (3.4)]{Spantini2015}.
Thus, \Cref{prop:bayesian_feldman_hajek} implies that finite-dimensional data can only inform finitely many directions in parameter space, in the sense that posterior variance is reduced relative to prior variance only over a finite-dimensional subspace. 
The directions $(\mathcal{C}_\pr^{-1/2}w_i)_{i\leq\rank{H}}$ are orthogonal with respect to the $\mathcal{C}_\pr$-weighted inner product $\langle h_1,h_2\rangle_{\mathcal{C}_\pr}\coloneqq \langle \mathcal{C}_\pr h_1,h_2\rangle$, and not the unweighted inner product of $\H$. 

The equation \eqref{eqn:maximal_variance_reduction} can be interpreted as follows. Given an $r$-dimensional subspace $V_r\subset\ran{\mathcal{C}_\pr^{1/2}}$, the minimum in \eqref{eqn:maximal_variance_reduction} describes the maximal relative variance reduction that occurs among the directions of $\H$ orthogonal to $\mathcal{C}_\pr^{-1/2}V_r$. The inequality in \eqref{eqn:maximal_variance_reduction} implies this maximal relative variance reduction is by at least a factor of $1+\lambda_{r+1}$. If $V_{r}=\Span{w_1,\ldots,w_{r}}$, then this maximal relative variance reduction is by exactly a factor of $1+\lambda_{r+1}$. This shows that the largest relative variance reduction, among all directions in $\H$ orthogonal to $(\mathcal{C}_\pr^{-1/2}V_r)^\perp$, is as small as possible for the choice $V_r=\Span{w_1,\ldots,w_{r}}$, and hence the linearly-independent directions in 
\begin{align}
	\label{eqn:subspace_of_maximal_variance_reduction}
	W_r\coloneqq \Span{\mathcal{C}_\pr^{-1/2}w_1,\ldots,\mathcal{C}_\pr^{-1/2}w_{r}}
\end{align}
are subject to the largest relative variance reduction possible. Since $\mathcal{C}_\pr^{1/2}$ is injective, we thus conclude the following: among all $r$-dimensional subspaces of $\H$, it is the $r$-dimensional subspace $W_r$ that contains those $r$ linearly-independent directions in which the relative variance reduction is largest. This generalises the conclusion of \cite[Section 3.1]{Spantini2015} to infinite dimensions.

Recall from \Cref{sec:notation} the definition of the weighted inner product $\norm{\cdot}_{\mathcal{C}_\pr}$. The sequence $(\mathcal{C}_\pr^{-1/2}w_i)_i$ forms an ONB of $(\H,\norm{\cdot}_{\mathcal{C}_\pr})$. Indeed, $\langle \mathcal{C}_\pr^{-1/2}w_i,\mathcal{C}_\pr^{-1/2}w_j\rangle_{\mathcal{C}_\pr}=\langle w_i,w_i\rangle=\delta_{ij}$ and if $\langle h,\mathcal{C}_\pr^{-1/2}w_i\rangle_{\mathcal{C}_\pr}=0$ for all $i$, then $\mathcal{C}_\pr^{1/2}h=0$ and hence $h=0$ by injectivity of $\mathcal{C}_\pr$. Let 
\begin{align}
	\label{eqn:complement_subspace_of_maximal_variance}
	W_{-r}\coloneqq\overline{\Span{\mathcal{C}_\pr^{-1/2}w_i,\ i>r}},
\end{align}
where the closure is taken with respect to the $\H$-norm. Since $\langle\mathcal{C}_\pr^{-1/2} w_i,\mathcal{C}_\pr^{-1/2} w_j\rangle_{\mathcal{C}_\pr}=0$ for all $i\leq r<j$, it holds by linearity that $\langle h,k\rangle_{\mathcal{C}_\pr}=0$ for all $h\in W_r$ and $k\in\Span{\mathcal{C}_\pr^{-1/2}w_j,\ j>r}$. If $h\in W_r$ and if $(k_n)_n\subset \Span{\mathcal{C}_{\pr}^{-1/2}w_j,\ j>r}$ is a sequence converging to some $k\in W_{-r}$, then $\langle h,k\rangle_{\mathcal{C}_\pr}=\langle \mathcal{C}_\pr h,k\rangle = \lim_n\langle \mathcal{C}_\pr h,k_n\rangle =\lim_n\langle h,k_n\rangle_{\mathcal{C}_\pr} =0$. Hence, in the $\norm{\cdot}_{\mathcal{C}_\pr}$-norm we have the orthogonal decomposition $\H=W_r\oplus W_{-r}$ into the subspace of maximal relative variance reduction $W_r$ in \eqref{eqn:subspace_of_maximal_variance_reduction} and $W_{-r}$. Thus, the direct sum $\H=W_r + W_{-r}$ holds, but this decomposition is not orthogonal in general in the $\H$-inner product.

If, for some $r<\rank{H}$, there exists an $r$-dimensional subspace $W_r$ given by \eqref{eqn:subspace_of_maximal_variance_reduction} such that the variance reduction on the complement of this subspace is sufficiently small, then the subspace $\Span{\mathcal{C}_\pr^{1/2}w_1,\ldots,\mathcal{C}_\pr^{1/2}w_r}=\mathcal{C}_\pr(W_r)$ is also called the `likelihood-informed subspace' in literature, see e.g.\ \cite{Cui2014,CuiTong2022,Cui2022}.

\section{Optimal approximation of the covariance}
\label{sec:optimal_approximation_covariance}

This section discusses low-rank posterior covariance approximation, using \cite[Theorem 4.21]{PartI}. This approximation serves as a basis for the joint mean and covariance approximation discussed in \Cref{sec:optimal_joint_approximation}.

We aim to approximate the posterior distribution by approximating the posterior covariance and keeping the posterior mean fixed. The reverse KL divergence between such approximate posterior distributions and the exact posterior is used as a loss function on the set of approximate covariances. This set of candidates for covariance approximation is chosen as
\begin{align}
	\label{eqn:class_approx_covariance}
	\mathscr{C}_r\coloneqq\left\{ \mathcal{C}_{\pr}-KK^*>0:\ K\in\B(\R^r,\H),\ \ran{K}\subset\ran{\mathcal{C}_\pr}\right\},\quad r\in\N.
\end{align}
Since $\mathcal{C}_\pr-KK^*\in\mathscr{C}_r$ is positive and self-adjoint, it is an injective covariance operator. Furthermore, it is stated in \cite[Corollary 4.9]{PartI} that for every $\mathcal{C}\in\mathscr{C}_r$ it holds that $\mathcal{N}(m_\pos(y),\mathcal{C})$ is equivalent to the exact posterior. Since $\mathcal{C}_\pos$ does not depend on $y$, this equivalence holds for all $y$ simultaneously. This equivalence holds because of the range condition $\ran{K}\subset\ran{\mathcal{C}_\pr}$. Furthermore, the assumption $K\in\B(\R^r,\H)$ implies the rank restriction $\rank{K}\leq r$. Thus, for $r$ small compared to $n$, $\mathcal{C}_\pr-KK^*$ can be interpreted as a low-rank update of $\mathcal{C}_\pr.$ Therefore, the class $\mathscr{C}_r$ provides an extension to infinite dimensions of the finite-dimensional updates considered in \cite{Spantini2015}.

The low-rank posterior covariance problem is thus as follows.

\begin{problem}[Rank-$r$ nonpositive covariance updates] 
	\label{prob:optimal_covariance}
	Find $\mathcal{C}^\opt_r\in\mathscr{C}_r$ such that for all data $y$ in a set of probability 1,
	\begin{align*}
		D_\kl\left(\mathcal{N}(m_\pos(y),\mathcal{C}^\opt_r)\Vert\mathcal{N}(m_\pos(y),\mathcal{C}_\pos)\right)=\min\{D_\kl\left(\mathcal{N}(m_\pos(y),\mathcal{C})\Vert\mathcal{N}(m_\pos(y),\mathcal{C}_\pos)\right):\ \mathcal{C}\in\mathscr{C}_r\}.
	\end{align*}
\end{problem}

The KL divergences in \Cref{prob:optimal_covariance} are finite, because for $\mathcal{C}\in\mathscr{C}_r$ the equivalence $\mathcal{N}(m_\pos(y),\mathcal{C})\sim\mu_\pos(y)$ holds for all $y$ in a set of probability 1 by construction of $\mathscr{C}_r$, as discussed after \eqref{eqn:class_approx_covariance}.

The following theorem provides the solution to \Cref{prob:optimal_covariance}, which follows directly from \cite[Lemma 4.2(iii)]{PartI} and from \cite[Theorem 4.21]{PartI} applied with $f(x)\leftarrow f_\kl(\tfrac{-x}{1+x})$ , where
\begin{align}
	\label{eqn:def_f_kl}
	f_\kl:(-1,\infty)\rightarrow \R_{\geq 0},\quad f_\kl(x) = \frac{1}{2}(x-\log(1+x)).
\end{align}

\begin{restatable}{theorem}{optCovarianceAndPrecision}
	Let $r\leq n$ and let $(\lambda_i)_i\in\ell^2((-1,0])$ and $(w_i)_i\subset \ran{\mathcal{C}_\pr^{1/2}}$ be as given in \Cref{prop:bayesian_feldman_hajek}. Define
	\begin{align}
		\label{eqn:optimal_covariance}
		\mathcal{C}^\opt_r &\coloneqq \mathcal{C}_{\pr} - \sum_{i=1}^{r}-\lambda_i(\mathcal{C}_{\pr}^{1/2}w_i)\otimes(\mathcal{C}_{\pr}^{1/2}w_i).
	\end{align}
	Then $\mathcal{C}^\opt_r$ solves \Cref{prob:optimal_covariance}, $\dom{(\mathcal{C}^\opt_r)^{-1}}=\ran{\mathcal{C}_\pr}$ and $(\mathcal{C}_r^{\opt})^{-1}=\mathcal{C}_\pr - \sum_{i=1}^{r}(\mathcal{C}_\pr^{-1/2} w_i)\otimes (\mathcal{C}_\pr^{-1/2}w_i)$. Furthermore, the associated minimal loss is $\sum_{i>r}^{}f_\kl(\lambda_i)$, where $f_\kl$ is defined in \eqref{eqn:def_f_kl}. The solution $\mathcal{C}^\opt_r$ is unique if and only if the following holds: $\lambda_{r+1}=0$ or $\lambda_r<\lambda_{r+1}$.
	\label{thm:opt_covariance_and_precision}
\end{restatable}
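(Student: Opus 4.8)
The plan is to strip away the data-dependence, recognise the resulting deterministic problem as an instance of the abstract low-rank covariance approximation result, and then read off the claimed closed forms. Since for every $\mathcal{C}\in\mathscr{C}_r$ the measures $\mathcal{N}(m_\pos(y),\mathcal{C})$ and $\mathcal{N}(m_\pos(y),\mathcal{C}_\pos)$ have a common mean, the first term of the Kullback--Leibler formula \eqref{eqn:kullback_leibler_divergence} vanishes and the loss in \Cref{prob:optimal_covariance} equals $-\tfrac12\log\det_2(I+R(\mathcal{C}\Vert\mathcal{C}_\pos))$. This quantity is finite, because $\mathcal{N}(m_\pos(y),\mathcal{C})\sim\mu_\pos(y)$ for all $y$ in a set of probability $1$ by construction of $\mathscr{C}_r$ (recalled after \eqref{eqn:class_approx_covariance}), and it does not depend on $y$ at all. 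Hence \Cref{prob:optimal_covariance} reduces to a single deterministic minimisation over $\mathscr{C}_r$, and any minimiser of that problem is simultaneously optimal for all admissible $y$.

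Next I would cast this deterministic problem in the form solved by \cite[Theorem 4.21]{PartI}. Expanding the Hilbert--Carleman determinant as a product over the eigenvalues $\mu_i$ of the Hilbert--Schmidt operator $R(\mathcal{C}\Vert\mathcal{C}_\pos)$ turns the objective into the spectral sum $\sum_i f_\kl(\mu_i)$. Writing $\mathcal{C}=\mathcal{C}_\pr-KK^*\in\mathscr{C}_r$ and using \cite[Lemma 4.2(iii)]{PartI} to relate $R(\mathcal{C}\Vert\mathcal{C}_\pos)$, via prior preconditioning, to a rank-$\le r$ self-adjoint perturbation encoding $K$, the minimisation becomes precisely the rank-constrained problem of \cite[Theorem 4.21]{PartI} with the scalar function $f(x)\leftarrow f_\kl(\tfrac{-x}{1+x})$ announced before the statement. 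Applying that theorem together with the spectral decomposition $R(\mathcal{C}_\pos\Vert\mathcal{C}_\pr)=\sum_i\lambda_i\,w_i\otimes w_i$ from \Cref{prop:bayesian_feldman_hajek} shows that the optimal update retains exactly the $r$ directions $w_1,\dots,w_r$ carrying the extremal eigenvalues $\lambda_1\le\cdots\le\lambda_r$; transporting this back through \cite[Lemma 4.2(iii)]{PartI} identifies the minimiser with $\mathcal{C}^\opt_r$ of \eqref{eqn:optimal_covariance}, while the minimal value and the uniqueness criterion are the specialisations of the corresponding conclusions of \cite[Theorem 4.21]{PartI}.

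It then remains to verify the assertions on $(\mathcal{C}^\opt_r)^{-1}$ and its domain and to unpack the uniqueness criterion. Since $\mathcal{C}^\opt_r$ is a bounded, self-adjoint, finite-rank perturbation of $\mathcal{C}_\pr$, the argument used after \eqref{eqn:pos_precision} for $\mathcal{C}_\pos^{-1}$ applies verbatim and gives $\dom{(\mathcal{C}^\opt_r)^{-1}}=\dom{\mathcal{C}_\pr^{-1}}=\ran{\mathcal{C}_\pr}$. The explicit inverse follows by a Sherman--Morrison--Woodbury computation: because $(\mathcal{C}_\pr^{-1/2}w_i)_i$ is orthonormal for the $\mathcal{C}_\pr$-weighted inner product (noted around \eqref{eqn:subspace_of_maximal_variance_reduction}), one has $\mathcal{C}^\opt_r=\mathcal{C}_\pr^{1/2}(I+\sum_{i\le r}\lambda_i\,w_i\otimes w_i)\mathcal{C}_\pr^{1/2}$, and inverting the diagonal middle factor on $\Span{w_1,\dots,w_r}$ yields the stated expression. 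Finally, the criterion ``$\lambda_{r+1}=0$ or $\lambda_r<\lambda_{r+1}$'' is the concrete form of the uniqueness clause of \cite[Theorem 4.21]{PartI}: if $\lambda_{r+1}=0$ then $r\ge\rank{H}$, so $\mathcal{C}^\opt_r=\mathcal{C}_\pos$ attains loss $0$ and is the unique minimiser because $D_\kl$ vanishes only for equal measures; otherwise the strict gap $\lambda_r<\lambda_{r+1}$ gives the usual Eckart--Young-type uniqueness of the truncation.

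I expect the main obstacle to be the faithful reduction in the second step. One must carry the three defining constraints of $\mathscr{C}_r$ — the rank bound on $K$, the range condition $\ran{K}\subset\ran{\mathcal{C}_\pr}$ (which is exactly what makes $R(\mathcal{C}\Vert\mathcal{C}_\pos)$ Hilbert--Schmidt, hence the determinant and the loss well defined), and the positivity $\mathcal{C}_\pr-KK^*>0$ — through the unbounded operators $\mathcal{C}_\pr^{-1/2}$ and $\mathcal{C}_\pos^{-1/2}$, so that \cite[Lemma 4.2(iii)]{PartI} and \cite[Theorem 4.21]{PartI} apply verbatim. Once this is set up, the domain identification, the inverse formula and the uniqueness translation are routine.
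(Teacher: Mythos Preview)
Your proposal is correct and follows essentially the same route as the paper: the paper's entire proof is the one-line observation that the result ``follows directly from \cite[Lemma 4.2(iii)]{PartI} and from \cite[Theorem 4.21]{PartI} applied with $f(x)\leftarrow f_\kl(\tfrac{-x}{1+x})$'', which is exactly your second paragraph. Your additional exposition on the domain, the explicit inverse via the factorisation $\mathcal{C}^\opt_r=\mathcal{C}_\pr^{1/2}(I+\sum_{i\le r}\lambda_i\,w_i\otimes w_i)\mathcal{C}_\pr^{1/2}$, and the unpacking of the uniqueness clause is sound and simply fills in details the paper leaves implicit (indeed, the paper later uses precisely the inverse formula $(\mathcal{C}^\opt_r)^{-1}=\mathcal{C}_\pr^{-1}+\sum_{i\le r}\tfrac{-\lambda_i}{1+\lambda_i}(\mathcal{C}_\pr^{-1/2}w_i)\otimes(\mathcal{C}_\pr^{-1/2}w_i)$ in the proof of \Cref{prop:optimal_projector}).
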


The formulation of \Cref{thm:opt_covariance_and_precision} is a special case of \cite[Theorem 4.21]{PartI}, and this special case will suffice for the subsequent developments in this work. However, we note that the results of \cite[Theorem 4.21 and Corollary 4.23]{PartI} are more general than presented in \Cref{thm:opt_covariance_and_precision}. They state that $\mathcal{C}^\opt_r$ is not only the optimal low-rank approximation of $\mathcal{C}_\pos$ for the reverse KL divergence, but simultaneously also for all divergences in a more general class of divergences, including the forward KL divergence, the Hellinger distance, the R\'enyi divergences and the Amari $\alpha$-divergences for $\alpha\in(0,1)$.

\begin{remark}(Interpretation of $\mathcal{C}^\opt_r$)
		\label{rmk:interpretation_covariance_approx}
		Because $\mathcal{C}_\pr G^*(\mathcal{C}_\obs+ G\mathcal{C}_\pr G^*)^{-1/2}\in\B_{00,n}(\R^n,\H)$ maps into $\ran{\mathcal{C}_\pr}$, it holds that $\mathcal{C}_\pos\in\mathscr{C}_n$ by \eqref{eqn:pos_covariance} and the definition of $\mathscr{C}_n$ in \eqref{eqn:class_approx_covariance}. Thus, $\mathcal{C}^\opt_n= \mathcal{C}_\pos$. Taking $r\leftarrow n$ in \Cref{thm:opt_covariance_and_precision}, we then see that $\mathcal{C}_\pos = \mathcal{C}_\pr - \sum_{i=1}^{n}(-\lambda_i)(\mathcal{C}_\pr^{1/2}w_i)\otimes (\mathcal{C}_\pr^{1/2}w_i)$. Let $r\leq n$ be fixed. For $j\leq r$, we have that $\mathcal{C}^\opt_r \mathcal{C}_\pr^{-1/2}w_j = \mathcal{C}_\pr^{1/2}w_j+\lambda_j\mathcal{C}_\pr^{1/2}w_j = \mathcal{C}_\pos \mathcal{C}_\pr^{-1/2}w_j$.  With $W_r$ as defined in \eqref{eqn:subspace_of_maximal_variance_reduction}, we thus see that $\mathcal{C}^\opt_r = \mathcal{C}_\pos$ on $W_r$. Furthermore, for $j>r$, we have $\mathcal{C}^\opt_r \mathcal{C}_\pr^{-1/2}w_j = \mathcal{C}_\pr \mathcal{C}_\pr^{-1/2}w_j$. It then holds that $\mathcal{C}^{\opt}_r=\mathcal{C}_\pr$ on the dense subspace $\Span{\mathcal{C}_\pr^{-1/2}w_j,\ j>r}$ of $W_{-r}$ defined in \eqref{eqn:complement_subspace_of_maximal_variance}. Since $\mathcal{C}^\opt_r$ and $\mathcal{C}_\pr$ are both continuous, it then holds that $\mathcal{C}^\opt_r=\mathcal{C}_\pr$ on $W_{-r}$. 
\end{remark}

\section{Optimal approximation of the mean}
\label{sec:optimal_approximation_mean}

In this section, we discuss an optimal low-rank approximation procedure for the posterior mean $m_\pos(y) = \mathcal{C}_\pos G^* \mathcal{C}_\obs^{-1} y$, see \eqref{eqn:pos_mean}. Given the data $y$, the approximations considered are of the form $Ay$, where $A\in\mathscr{M}^{(i)}$ for $i=1$ is a structure-preserving update and for $i=2$ is a structure-ignoring update; see \eqref{eqn:class_approx_mean_with_covariance_update} and \eqref{eqn:class_approx_mean_without_covariance_update} respectively.
Unless otherwise specified, the proofs of the results below are given in \Cref{subsec:proofs_for_optimal_approximation_means}.

We shall assess the approximation quality of an approximate posterior mean by averaging the mean-dependent term for the R\'enyi divergence and the forward and reverse KL divergence over all possible realisations $y$ of $Y$. 
By averaging over $Y$, the optimal operator $A$ will be data-independent, i.e.\ will not depend on a specific realisation $y$ of $Y$. While averaging over $Y$ implies that the resulting posterior mean approximations are not optimal in general for a specific realisation $y$ of $Y$, this approach has the benefit that $A$ can be constructed before observing the data. This leads to an offline-online approach to posterior mean approximation: the preliminary `offline' stage computes one operator, which can then can be applied in the subsequent `online' stage to any realisation of the data. This is in analogy to the finite-dimensional case studied in \cite[Section 4.1]{Spantini2015} and its generalisation to certain nonlinear forward models and to losses with respect to the average Amari $\alpha$-divergences as studied in \cite[Section 5]{Li2024a}. Furthermore, averaging over $Y$ enables us to exploit recent work on reduced-rank operator approximation \cite{CarereLie2024}.

Recall that we use the observation model $Y=GX+\zeta$ for $\zeta\sim\mathcal{N}(0,\mathcal{C}_\obs)$ for $G\in\B(\H,\R^n)$ and positive $\mathcal{C}_\obs\in\B(\R^n)_\R$, and that our prior model is $X\sim\mathcal{N}(0,\mathcal{C}_\pr)$, with $X$ and $\zeta$ independent. These assumptions imply that the marginal distribution of $Y$ is $Y\sim\mathcal{N}(0,\mathcal{C}_\y)$, where 
\begin{align}
	\label{eqn:data_covariance}
	\mathcal{C}_\y\coloneqq G\mathcal{C}_\pr G^*+\mathcal{C}_\obs\in\B(\R^n).
\end{align}

Since $R(\mathcal{C}\Vert\mathcal{C})=0$ for any positive $\mathcal{C}\in L_1(\mathcal{H})_\R$, by \Cref{thm:gaussian_divergences}, the R\'enyi divergences and forward and reverse KL divergence of approximating $\mathcal{N}(m_\pos,\mathcal{C})$ by $\mathcal{N}(m,\mathcal{C})$ for any $m\in\mathcal{H}$ satisfying $m-m_\pos\in \ran{\mathcal{C}^{1/2}}$ is given by, for any $\rho\in(0,1)$,

\begin{align}
	\label{eqn:loss_for_means}
	\begin{split}
		\frac{1}{2}\norm{m-m_\pos}^2_{\mathcal{C}^{-1}} &= D_{\kl}(\mathcal{N}(m_\pos,\mathcal{C})\Vert\mathcal{N}(m,\mathcal{C})) 
		= D_{\ren,\rho}(\mathcal{N}(m_\pos,\mathcal{C})\Vert\mathcal{N}(m,\mathcal{C}))\\
		&= D_{\kl}(\mathcal{N}(m,\mathcal{C})\Vert\mathcal{N}(m_\pos,\mathcal{C})).
	\end{split}
\end{align}

We choose $\mathcal{C}$ to be $\mathcal{C}_\pos$, so that the optimal low-rank posterior mean then is given by the solution to the following problem. Note that the term inside the expectation on the left hand side corresponds to the mean-dependent term in \eqref{eqn:kullback_leibler_divergence}, and has the interpretation that it penalises errors in the approximation of the posterior mean more in those directions in which the posterior covariance is small.

\begin{problem}
	\label{prob:optimal_mean}
	Let $r \leq n$ and $i\in\{1,2\}$. Find $A^{\opt,(i)}_r\in \mathscr{M}_r^{(i)}$ such that 
	\begin{align*}
	\mathbb{E}\left[\norm{A^{\opt,(i)}_r Y-m_\pos(Y)}_{\mathcal{C}_\pos^{-1}}^2\right]= \min\left\{\mathbb{E}\left[\norm{AY-m_\pos(Y)}_{\mathcal{C}_\pos^{-1}}^2\right]:\ A\in\mathscr{M}_r^{(i)}\right\}.
	\end{align*}
\end{problem}

We only consider the case $r\leq n$ since the same problem for $r>n$ has the trivial solution $A^{\opt,(i)}_r=\mathcal{C}_\pos G^*\mathcal{C}_\obs^{-1}$ for $i=1,2$.

\begin{remark}[Comparison with Bayes risk]
	The Bayes risk $\mathcal{R}(A)\coloneqq\mathbb{E}\left[ \Norm{AY - X}_{\mathcal{C}_\pos^{-1}}^2 \right]$ for $A\in\mathscr{M}_r^{(i)}$, $i=1,2$, considered in \cite[Section 4.1]{Spantini2015} is not well-defined, since the event $\{X\in\dom{\mathcal{C}_\pos^{-1/2}}\}$ occurs with probability 0. However, one can show that $\mathcal{R}(A) = \mathbb{E}\left[ \Norm{AY-m_\pos(Y)}_{\mathcal{C}_\pos^{-1}}^2 \right]+\dim{\mathcal{H}}$ if $\dim{\mathcal{H}}<\infty$. Thus, not only does the approximation error \eqref{eqn:loss_for_means} used in \Cref{prob:optimal_mean} have a natural interpretation as the mean-dependent term of the R\'enyi, Amari, forward and reverse KL divergences, it also captures the relevant contribution to the Bayes risk which involves the approximation.
\end{remark}

In our derivation of the optimal $A^{\opt,(i)}_r$, we shall make use of specific non-self adjoint square roots $S_\pos\in L_2(\H)$ and $S_\y\in \B(\R^n)$ of the covariances $\mathcal{C}_\pos$ and $\mathcal{C}_y$ respectively. Since $n<\infty$, $\mathcal{C}_\obs^{-1}$ is bounded and self-adjoint and we can decompose $\mathcal{C}_{\obs}^{-1}=\mathcal{C}_\obs^{-1/2}(\mathcal{C}_{\obs}^{-1/2})^*$ by \Cref{lemma:adjoint_of_densely_defined_operators}. Therefore, by \eqref{eqn:prior_preconditioned_Hessian} in \Cref{prop:bayesian_feldman_hajek},
\begin{align}
	\label{eqn:decomposition_prior_preconditioned_hessian}
	(\mathcal{C}_\pr^{1/2}G^*\mathcal{C}_\obs^{-1/2})(\mathcal{C}_\pr^{1/2}G^*\mathcal{C}_\obs^{-1/2})^* = \mathcal{C}_\pr^{1/2}H\mathcal{C}_\pr^{1/2} = \sum_{i=1}^{n}\frac{-\lambda_i}{1+\lambda_i}w_i\otimes w_i,
\end{align}
with $(w_i)_i$ and $(\lambda_i)_i$ as in \Cref{prop:bayesian_feldman_hajek}. By \Cref{lemma:operator_svd}, we may apply the SVD to $\mathcal{C}_\pr^{1/2}G^*\mathcal{C}_\obs^{-1/2}$, and the singular values are then determined by \eqref{eqn:decomposition_prior_preconditioned_hessian}. That is, there exists an orthonormal sequence $(\varphi_i)_i$ in $\R^n$ such that
\begin{align}
	\label{eqn:svd_preconditioned_hessian}
	\mathcal{C}_\pr^{1/2} G^*\mathcal{C}_\obs^{-1/2} = \sum_{i=1}^{n}\sqrt{\frac{-\lambda_i}{1+\lambda_i}}w_i\otimes \varphi_i.
\end{align}
Using that $\lambda_i=0$ for all $i>n$ by \Cref{prop:bayesian_feldman_hajek}, we now define,
\begin{align}
	\begin{split}
		S_\pos &= \mathcal{C}_\pr^{1/2}\left( I + \sum_{i\in\N}^{}\frac{-\lambda_i}{1+\lambda_i}w_i\otimes w_i \right)^{-1/2}=\mathcal{C}_\pr^{1/2}\left( I + \sum_{i=1}^{n}\frac{-\lambda_i}{1+\lambda_i}w_i\otimes w_i \right)^{-1/2},\\
		S_\y &= \mathcal{C}_\obs^{1/2}\left( I + \sum_{i=1}^{n}\frac{-\lambda_i}{1+\lambda_i} \varphi_i\otimes \varphi_i\right)^{1/2}.
		\label{eqn:square_roots}
	\end{split}
\end{align}
Note that $\sum_{i=1}^{m}(1+\frac{-\lambda_i}{1+\lambda_i})w_i\otimes w_i$ does not converge in $\B(\H)$ as $m\rightarrow \infty$, when $\H$ is infinite-dimensional. Indeed, if $\sum_{i=1}^{m}(1+\frac{-\lambda_i}{1+\lambda_i})w_i\otimes w_i$ converges, then $\sum_{i=1}^{m}(1+\frac{-\lambda_i}{1+\lambda_i})w_i\otimes w_i - \sum_{i=1}^{n}\frac{-\lambda_i}{1+\lambda_i}w_i\otimes w_i$ is a sequence of finite rank operators converging to the identity. Since the identity in $\B(\H)$ is not compact when $\H$ is infinite-dimensional, the series $\sum_{i=1}^{m}(1+\frac{-\lambda_i}{1+\lambda_i})w_i\otimes w_i$ does not converge as $m\rightarrow\infty$. However, there is pointwise convergence: for $h\in \H$, we may compute,
\begin{align*}
	\left( I+\sum_{i}^{}\frac{-\lambda_i}{1+\lambda_i}w_i\otimes w_i \right) h = \sum_{i}^{}\left(1+\frac{-\lambda_i}{1+\lambda_i}\right)\langle h,w_i\rangle w_i = \sum_{i}^{}\frac{1}{1+\lambda_i}\langle h,w_i\rangle w_i.
\end{align*}
Similarly, a direct computation shows that for $h\in\H$ and $x\in\R^n$,
\begin{subequations}
	\label{eqn:square_root_expression}
	\begin{align}
		\label{eqn:square_root_expression_S_pos}
		\left( I+\sum_{i}^{}\frac{-\lambda_i}{1+\lambda_i}w_i\otimes w_i \right)^{-1/2} h &= \sum_{i}^{}(1+\lambda_i)^{1/2}\langle h,w_i\rangle w_i,\\
		\label{eqn:square_root_expression_S_y}
		\left( I+\sum_{i}^{}\frac{-\lambda_i}{1+\lambda_i}\varphi_i\otimes \varphi_i \right)^{1/2} x &= \sum_{i}^{}(1+\lambda_i)^{-1/2}\langle x,\varphi_i\rangle \varphi_i.
	\end{align}
\end{subequations}

We first note that $S_\pos,S_\y$ are indeed square roots, and that they have well-defined inverses. 

\begin{restatable}{lemma}{lemmaSquareRoots}
	\label{lemma:square_roots}
	Let $S_\pos$ and $S_\y$ be as in \eqref{eqn:square_roots}. It holds that 
	\begin{enumerate}
		\item
			\label{item:square_roots}
			$\mathcal{C}_\pos = S_\pos S_\pos^*$ and $\mathcal{C}_\y = S_\y S_\y^*$ and $S_\pos^{-1}:\ran{\mathcal{C}}_\pr^{1/2}\rightarrow\H$ and $S_\y^{-1}\in\B(\R^n)$ exist,
		\item
			\label{item:computation_cm_norm}
			$\norm{h}_{\mathcal{C}_\pos^{-1}}^2 =\norm{S_\pos^{-1}h}^2$ for all $h\in\ran{\mathcal{C}}_\pr^{1/2}=\ran{\mathcal{C}_{\pos}^{1/2}}$,
		\item
			\label{item:root_pre_cameron_martin_space}
			$S_\pos (\ran{\mathcal{C}_\pr^{1/2}})=\ran{\mathcal{C}_\pr}=\ran{\mathcal{C}_\pos}$.
	\end{enumerate}
\end{restatable}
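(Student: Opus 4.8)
The plan is to reduce everything to the diagonalising data of \Cref{prop:bayesian_feldman_hajek} together with the SVD \eqref{eqn:svd_preconditioned_hessian}. Write $F\coloneqq I+\sum_{i}\frac{-\lambda_i}{1+\lambda_i}w_i\otimes w_i$ and $F_\y\coloneqq I+\sum_{i}\frac{-\lambda_i}{1+\lambda_i}\varphi_i\otimes\varphi_i$, so that $S_\pos=\mathcal{C}_\pr^{1/2}F^{-1/2}$ and $S_\y=\mathcal{C}_\obs^{1/2}F_\y^{1/2}$. Since only $\rank{H}\le n$ of the $\lambda_i$ are nonzero and each lies in $(-1,0]$, both $F$ and $F_\y$ are bounded, self-adjoint and $\ge I$; they are therefore boundedly invertible, their self-adjoint nonnegative square roots and inverse square roots are the operators written out in \eqref{eqn:square_root_expression}, each of $F^{\pm1/2},F_\y^{\pm1/2}$ equals $I$ plus a finite-rank operator whose range is contained in $\Span{w_i}\subset\ran{\mathcal{C}_\pr^{1/2}}$, resp.\ $\Span{\varphi_i}$, and in particular $F^{-1/2}w_j=(1+\lambda_j)^{1/2}w_j$ for every $j$.

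For \ref{item:square_roots} I would first establish the factorisation $S_\pos=\mathcal{C}_\pos^{1/2}U$ with $U\coloneqq\sum_i v_i\otimes w_i$, which is unitary because $(w_i)_i$ and $(v_i)_i$ are ONBs of $\H$. Indeed, \Cref{prop:bayesian_feldman_hajek} gives $v_j=\sqrt{1+\lambda_j}\,\mathcal{C}_\pos^{-1/2}\mathcal{C}_\pr^{1/2}w_j$; applying $\mathcal{C}_\pos^{1/2}$ and using that $\mathcal{C}_\pos^{1/2}\mathcal{C}_\pos^{-1/2}$ acts as the identity on $\ran{\mathcal{C}_\pos^{1/2}}=\ran{\mathcal{C}_\pr^{1/2}}$ (the Cameron--Martin range identity, valid by \Cref{thm:feldman--hajek} because $\mu_\pr$ and $\mu_\pos(y)$ are equivalent) yields $\mathcal{C}_\pos^{1/2}Uw_j=\mathcal{C}_\pos^{1/2}v_j=\sqrt{1+\lambda_j}\,\mathcal{C}_\pr^{1/2}w_j=\mathcal{C}_\pr^{1/2}F^{-1/2}w_j=S_\pos w_j$ for all $j$, hence $S_\pos=\mathcal{C}_\pos^{1/2}U$. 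This gives $S_\pos S_\pos^{*}=\mathcal{C}_\pos^{1/2}UU^{*}\mathcal{C}_\pos^{1/2}=\mathcal{C}_\pos$, and shows that $S_\pos$ is injective with $\ran{S_\pos}=\mathcal{C}_\pos^{1/2}(\H)=\ran{\mathcal{C}_\pos^{1/2}}=\ran{\mathcal{C}_\pr^{1/2}}$, so that $S_\pos^{-1}\colon\ran{\mathcal{C}_\pr^{1/2}}\to\H$ exists. For $S_\y$ I would compute $S_\y S_\y^{*}=\mathcal{C}_\obs^{1/2}F_\y\mathcal{C}_\obs^{1/2}=\mathcal{C}_\obs+\mathcal{C}_\obs^{1/2}\bigl(\sum_i\frac{-\lambda_i}{1+\lambda_i}\varphi_i\otimes\varphi_i\bigr)\mathcal{C}_\obs^{1/2}$ and identify the bracketed operator, via \eqref{eqn:svd_preconditioned_hessian} and a short computation with the rank-one factors, with $(\mathcal{C}_\pr^{1/2}G^{*}\mathcal{C}_\obs^{-1/2})^{*}(\mathcal{C}_\pr^{1/2}G^{*}\mathcal{C}_\obs^{-1/2})=\mathcal{C}_\obs^{-1/2}G\mathcal{C}_\pr G^{*}\mathcal{C}_\obs^{-1/2}$; this gives $S_\y S_\y^{*}=\mathcal{C}_\obs+G\mathcal{C}_\pr G^{*}=\mathcal{C}_\y$, and since $S_\y$ is a product of invertible operators on $\R^n$ we obtain $S_\y^{-1}=F_\y^{-1/2}\mathcal{C}_\obs^{-1/2}\in\B(\R^n)$.

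Part \ref{item:root_pre_cameron_martin_space} then follows because $F^{-1/2}$ restricts to a bijection of $\ran{\mathcal{C}_\pr^{1/2}}$ onto itself (it and its inverse $F^{1/2}$ differ from $I$ only by finite-rank operators with range in $\Span{w_i}\subset\ran{\mathcal{C}_\pr^{1/2}}$), so $S_\pos(\ran{\mathcal{C}_\pr^{1/2}})=\mathcal{C}_\pr^{1/2}(\ran{\mathcal{C}_\pr^{1/2}})=\ran{\mathcal{C}_\pr}$, and $\ran{\mathcal{C}_\pr}=\ran{\mathcal{C}_\pos}$ by \eqref{eqn:pos_precision}. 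For part \ref{item:computation_cm_norm}, the equality $\ran{\mathcal{C}_\pr^{1/2}}=\ran{\mathcal{C}_\pos^{1/2}}$ is again the Cameron--Martin range identity, and from $S_\pos=\mathcal{C}_\pos^{1/2}U$ with $U$ unitary we get $S_\pos^{-1}=U^{*}\mathcal{C}_\pos^{-1/2}$ on $\ran{\mathcal{C}_\pos^{1/2}}$, whence $\norm{S_\pos^{-1}h}^{2}=\norm{U^{*}\mathcal{C}_\pos^{-1/2}h}^{2}=\norm{\mathcal{C}_\pos^{-1/2}h}^{2}=\norm{h}_{\mathcal{C}_\pos^{-1}}^{2}$ for all $h\in\ran{\mathcal{C}_\pos^{1/2}}$. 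The step I expect to require the most care is the bookkeeping with the unbounded inverses in \ref{item:square_roots}: $\mathcal{C}_\pos^{-1/2}$ is only densely defined, on $\ran{\mathcal{C}_\pos^{1/2}}$, and $\mathcal{C}_\pos^{-1}$ only on the strictly smaller $\ran{\mathcal{C}_\pos}$, so each cancellation such as $\mathcal{C}_\pos^{1/2}\mathcal{C}_\pos^{-1/2}=I$ or $\mathcal{C}_\pr^{1/2}\mathcal{C}_\pr^{-1/2}=I$ must be applied on the precise subspace where it is valid, which is exactly where the Cameron--Martin range identity enters; it is also why I would route \ref{item:computation_cm_norm} through the unitary factorisation rather than writing $\norm{S_\pos^{-1}h}^{2}=\langle(S_\pos S_\pos^{*})^{-1}h,h\rangle=\langle\mathcal{C}_\pos^{-1}h,h\rangle$, which would be legitimate only for $h\in\ran{\mathcal{C}_\pos}$.
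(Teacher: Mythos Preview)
Your proof is correct and takes a genuinely different route from the paper for the $S_\pos$-statements. The paper establishes $\mathcal{C}_\pos=S_\pos S_\pos^*$ by working at the level of precisions: it writes $\mathcal{C}_\pos^{-1}=\mathcal{C}_\pr^{-1}+H=\mathcal{C}_\pr^{-1/2}(I+\mathcal{C}_\pr^{1/2}H\mathcal{C}_\pr^{1/2})\mathcal{C}_\pr^{-1/2}=(S_\pos S_\pos^*)^{-1}$ on $\ran{\mathcal{C}_\pos}$ and then inverts, and for \ref{item:computation_cm_norm} it invokes an external result on pseudo-inverses of non-self-adjoint square roots (\cite[Corollary~B.3]{da_prato_stochastic_2014}). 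You instead exploit the eigenvector relation $v_j=\sqrt{1+\lambda_j}\,\mathcal{C}_\pos^{-1/2}\mathcal{C}_\pr^{1/2}w_j$ from \Cref{prop:bayesian_feldman_hajek} to obtain the unitary factorisation $S_\pos=\mathcal{C}_\pos^{1/2}U$ with $U=\sum_i v_i\otimes w_i$; from this, $S_\pos S_\pos^*=\mathcal{C}_\pos$ and $\norm{S_\pos^{-1}h}=\norm{\mathcal{C}_\pos^{-1/2}h}$ are both one-line consequences of $UU^*=I$. Your argument is more self-contained (no external square-root lemma needed) and conceptually clean, at the cost of using the basis $(v_i)_i$ which the paper otherwise never touches; the paper's argument stays closer to the raw precision update \eqref{eqn:pos_precision} and does not need that second basis. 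For $S_\y$ and for part \ref{item:root_pre_cameron_martin_space} your approach coincides with the paper's.
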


\Cref{item:computation_cm_norm} can be used to evaluate the norms in \Cref{prob:optimal_mean} by replacing $\mathcal{C}_\pos^{-1/2}$ by $S_\pos^{-1}$. 

Let us define,
\begin{align}
	\begin{split}
		\label{eqn:classes_mean_reparametrised}
		\widetilde{\mathscr{M}}^{(1)}_r &\coloneqq \{(S_\pos^{-1}\mathcal{C}_\pr-\widetilde{B})G^*\mathcal{C}_\obs^{-1}:\ \widetilde{B}\in\B_{00,r}(\H)\},\\
		\widetilde{\mathscr{M}}^{(2)}_r &\coloneqq \B_{00,r}(\R^n,\mathcal{H}).
	\end{split}
\end{align}
We now consider the following problem. 
\begin{problem}
	\label{prob:optimal_mean_reformulated}
	Let $r\leq n$ and $i\in\{1,2\}$. Find $\widetilde{A}^{\opt,(i)}_r\in \widetilde{\mathscr{M}}_r^{(i)}$ such that 
	\begin{align*}
		\mathbb{E}\left[\Norm{\widetilde{A}^{\opt,(i)}_r Y-S_\pos^{-1}m_\pos(Y)}^2\right] = \min\left\{\mathbb{E}\left[\Norm{\widetilde{A} Y- S_\pos^{-1} m_\pos(Y)}^2\right]:\ \widetilde{A}\in\widetilde{\mathscr{M}}_r^{(i)}\right\}.
	\end{align*}
\end{problem}

It is shown in \cref{item:mean_approximation_problem_equivalence_3,item:mean_approximation_problem_equivalence_4} of the following result that \Cref{prob:optimal_mean_reformulated} is a reformulation of \Cref{prob:optimal_mean}. Using \Cref{thm:feldman--hajek}, \cref{item:mean_approximation_problem_equivalence_1} of the following result also provides an explicit description of the approximation classes $\mathscr{M}^{(i)}_r$ of \eqref{eqn:class_approx_means} in terms of the ranges of the operators $A$ and $B$, while \cref{item:mean_approximation_problem_equivalence_2} relates these classes to the classes $\widetilde{\mathscr{M}}^{(i)}_r$ from \eqref{eqn:classes_mean_reparametrised}.

\begin{restatable}{proposition}{propMeanApproximationProblemEquivalence}
	\label{prop:mean_approximation_problem_equivalence}
	Let $r\leq n$ and $i=1,2$. Let $S_\pos$ be as defined in \eqref{eqn:square_roots}, let $\mathscr{M}^{(i)}_r$ be as in \eqref{eqn:class_approx_means} and let $\widetilde{\mathscr{M}}^{{(i)}}_r$ be as in \eqref{eqn:classes_mean_reparametrised}. Then,
	\begin{enumerate}
		\item 
			\label{item:mean_approximation_problem_equivalence_1}
			$\mathscr{M}^{(i)}_r$ can equivalently be described by
			\begin{subequations}
				\label{eqn:equivalent_class_approx_means}
				\begin{align}
					\label{eqn:equivalent_class_approx_mean_with_covariance_update}
				\mathscr{M}^{(1)}_r =& \{(\mathcal{C}_\pr-B)G^*\mathcal{C}_\obs^{-1}:\ B\in\B_{00,r}(\H),\ B(\ker{G}^\perp)\subset\ran{\mathcal{C}_\pr^{1/2}}\},
					\\
					\label{eqn:equivalent_class_approx_mean_without_covariance_update}
					\mathscr{M}^{(2)}_r =& \{A\in\B_{00,r}(\R^n,\H):\ \ran{A}\subset\ran{\mathcal{C}_\pr^{1/2}}\},
				\end{align}
			\end{subequations}
		\item
			\label{item:mean_approximation_problem_equivalence_2}
			$\widetilde{\mathscr{M}}^{(i)}_r=S_\pos^{-1}\mathscr{M}^{(i)}_r$,
		\item
			\label{item:mean_approximation_problem_equivalence_3}
			$S_\pos\widetilde{A}^{\opt,(i)}_r$ solves \Cref{prob:optimal_mean} if and only if $\widetilde{A}^{\opt,(i)}_r$ solves \Cref{prob:optimal_mean_reformulated}.
		\item
			\label{item:mean_approximation_problem_equivalence_4}
			$A^{\opt,(i)}_r$ solves \Cref{prob:optimal_mean} if and only if $S_\pos^{-1}A^{\opt,(i)}_r$ solves \Cref{prob:optimal_mean_reformulated}.
	\end{enumerate}
\end{restatable}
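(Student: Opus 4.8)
The plan is to establish the four items essentially in the order they are stated, since each feeds the next. The heart of the argument is item~\ref{item:mean_approximation_problem_equivalence_1}, the explicit description of the classes $\mathscr{M}^{(i)}_r$. For the structure-ignoring class $\mathscr{M}^{(2)}_r$, I would fix $A\in\B_{00,r}(\R^n,\H)$ and analyse when $\mathcal{N}(AY,\mathcal{C}_\pos)\sim\mu_\pos(Y)$ holds almost surely. Since both measures have the same covariance $\mathcal{C}_\pos$, conditions~\ref{item:fh_ranges} and~\ref{item:fh_covariance} of the Feldman--Hajek theorem (\Cref{thm:feldman--hajek}) are automatic, so equivalence reduces to the mean condition $AY - m_\pos(Y)\in\ran{\mathcal{C}_\pos^{1/2}} = \ran{\mathcal{C}_\pr^{1/2}}$. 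Because $m_\pos(Y)\in\ran{\mathcal{C}_\pos}\subset\ran{\mathcal{C}_\pr^{1/2}}$ always holds by \eqref{eqn:pos_mean}, this further reduces to $AY\in\ran{\mathcal{C}_\pr^{1/2}}$ a.s. Now the support of $Y\sim\mathcal{N}(0,\mathcal{C}_\y)$ is $\ran{\mathcal{C}_\y}^{-} = \R^n$ by nondegeneracy of $\mathcal{C}_\obs$ (so $\mathcal{C}_\y$ is invertible); hence $Y$ ranges over a set whose closure is all of $\R^n$, and in fact every nonempty open subset of $\R^n$ has positive probability. Therefore $AY\in\ran{\mathcal{C}_\pr^{1/2}}$ a.s.\ forces $Ax\in\ran{\mathcal{C}_\pr^{1/2}}$ for a set of $x$ of full measure, and since $\ran{A}$ is finite-dimensional and $A$ is linear, this is equivalent to $\ran{A}\subset\ran{\mathcal{C}_\pr^{1/2}}$. (One must be slightly careful: $\ran{\mathcal{C}_\pr^{1/2}}$ is a dense, generally non-closed, Borel-measurable proper subspace; the argument is that a linear map sending a positive-measure set into a subspace sends its linear span there, and the span of a full-measure set in $\R^n$ is $\R^n$.) For $\mathscr{M}^{(1)}_r$ the same reasoning applies to $A = (\mathcal{C}_\pr - B)G^*\mathcal{C}_\obs^{-1}$: we need $\ran{(\mathcal{C}_\pr-B)G^*\mathcal{C}_\obs^{-1}}\subset\ran{\mathcal{C}_\pr^{1/2}}$, and since $\mathcal{C}_\obs^{-1}$ is a bijection of $\R^n$ and $\ran{G^*} = \ker{G}^\perp$, this is exactly $(\mathcal{C}_\pr - B)(\ker{G}^\perp)\subset\ran{\mathcal{C}_\pr^{1/2}}$; using $\ran{\mathcal{C}_\pr}\subset\ran{\mathcal{C}_\pr^{1/2}}$ this collapses to $B(\ker{G}^\perp)\subset\ran{\mathcal{C}_\pr^{1/2}}$.

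For item~\ref{item:mean_approximation_problem_equivalence_2}, I would show the two inclusions $\widetilde{\mathscr{M}}^{(i)}_r = S_\pos^{-1}\mathscr{M}^{(i)}_r$ directly. The key facts are from \Cref{lemma:square_roots}: $S_\pos^{-1}$ is defined exactly on $\ran{\mathcal{C}_\pr^{1/2}} = \ran{\mathcal{C}_\pos^{1/2}}$, and $S_\pos$ maps $\ran{\mathcal{C}_\pr^{1/2}}$ onto $\ran{\mathcal{C}_\pr} = \ran{\mathcal{C}_\pos}$. For $i=2$: if $A\in\mathscr{M}^{(2)}_r$ then $\ran{A}\subset\ran{\mathcal{C}_\pr^{1/2}} = \dom{S_\pos^{-1}}$, so $S_\pos^{-1}A$ is well-defined, bounded (being $S_\pos^{-1}$ restricted to a finite-dimensional subspace composed with $A$), and of rank $\le r$, i.e.\ in $\B_{00,r}(\R^n,\H) = \widetilde{\mathscr{M}}^{(2)}_r$; conversely any $\widetilde A\in\B_{00,r}(\R^n,\H)$ equals $S_\pos^{-1}A$ with $A = S_\pos\widetilde A$, whose range lies in $S_\pos(\H)$, but one needs $\ran A\subset\ran{\mathcal{C}_\pr^{1/2}}$ --- here note $\ran{\widetilde A}$ is finite-dimensional and I would check $\widetilde A = S_\pos^{-1}(S_\pos\widetilde A)$ with $\ran{S_\pos\widetilde A}\subset S_\pos(\ran{\widetilde A})\subset\ran{\mathcal{C}_\pr}\subset\ran{\mathcal{C}_\pr^{1/2}}$ by \Cref{lemma:square_roots}\ref{item:root_pre_cameron_martin_space} applied to the finite-dimensional subspace $\ran{\widetilde A}\subset\H$ --- wait, that lemma gives $S_\pos(\ran{\mathcal{C}_\pr^{1/2}}) = \ran{\mathcal{C}_\pr}$, and $\ran{\widetilde A}$ need not lie in $\ran{\mathcal{C}_\pr^{1/2}}$, so instead I should argue $S_\pos\widetilde A x$ lands in $S_\pos(\H)$ and use that $S_\pos = \mathcal{C}_\pr^{1/2}(\cdots)^{-1/2}$ with the bounded factor preserving $\H$, giving $S_\pos(\H)\subset\ran{\mathcal{C}_\pr^{1/2}}$. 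For $i=1$, I would substitute $\widetilde B = S_\pos^{-1}B$: given $B\in\B_{00,r}(\H)$ with $B(\ker{G}^\perp)\subset\ran{\mathcal{C}_\pr^{1/2}}$, the operator $(S_\pos^{-1}\mathcal{C}_\pr - S_\pos^{-1}B)G^*\mathcal{C}_\obs^{-1}$ makes sense because $\mathcal{C}_\pr G^*\mathcal{C}_\obs^{-1}$ and $BG^*\mathcal{C}_\obs^{-1}$ both map into $\ran{\mathcal{C}_\pr^{1/2}} = \dom{S_\pos^{-1}}$, and conversely read the definition of $\widetilde{\mathscr{M}}^{(1)}_r$ backwards, defining $B = S_\pos\widetilde B$ and checking the range condition.

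Items~\ref{item:mean_approximation_problem_equivalence_3} and~\ref{item:mean_approximation_problem_equivalence_4} are then nearly immediate. By \Cref{lemma:square_roots}\ref{item:computation_cm_norm}, for any $m\in\ran{\mathcal{C}_\pr^{1/2}}$ we have $\norm{m}_{\mathcal{C}_\pos^{-1}}^2 = \norm{S_\pos^{-1}m}^2$. For $A\in\mathscr{M}^{(i)}_r$, both $AY$ and $m_\pos(Y)$ lie in $\ran{\mathcal{C}_\pr^{1/2}}$ almost surely (the former by item~\ref{item:mean_approximation_problem_equivalence_1}, the latter by \eqref{eqn:pos_mean}), so $\norm{AY - m_\pos(Y)}_{\mathcal{C}_\pos^{-1}}^2 = \norm{S_\pos^{-1}(AY - m_\pos(Y))}^2 = \norm{(S_\pos^{-1}A)Y - S_\pos^{-1}m_\pos(Y)}^2$ a.s., and taking expectations shows the objective of \Cref{prob:optimal_mean} evaluated at $A$ equals that of \Cref{prob:optimal_mean_reformulated} evaluated at $S_\pos^{-1}A$. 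Combined with the bijection $A\mapsto S_\pos^{-1}A$ between $\mathscr{M}^{(i)}_r$ and $\widetilde{\mathscr{M}}^{(i)}_r$ from item~\ref{item:mean_approximation_problem_equivalence_2} (injectivity follows since $S_\pos^{-1}$ is injective on its domain, surjectivity is item~\ref{item:mean_approximation_problem_equivalence_2}), this gives the stated equivalences of minimisers in both directions. I expect the main obstacle to be item~\ref{item:mean_approximation_problem_equivalence_1}: one must handle carefully the measure-theoretic subtlety that $\ran{\mathcal{C}_\pr^{1/2}}$ is a dense non-closed subspace --- verifying it is Borel (it is, as the range of a bounded operator, or as an increasing union of closed balls under $\mathcal{C}_\pr^{-1/2}$), and pinning down precisely why "$AY$ lies in this subspace a.s." upgrades to "$\ran A$ lies in this subspace" using only that $\{Y\in U\}$ has positive probability for every nonempty open $U\subset\R^n$, together with finite-dimensionality of $\ran A$. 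Everything else is bookkeeping with the domains of the unbounded operators $S_\pos^{-1}$, $\mathcal{C}_\pr^{-1/2}$, for which \Cref{lemma:square_roots} supplies exactly what is needed.
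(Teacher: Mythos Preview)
Your overall strategy is correct, and items~\ref{item:mean_approximation_problem_equivalence_3}--\ref{item:mean_approximation_problem_equivalence_4} match the paper's proof. The interesting difference is in the forward inclusion of item~\ref{item:mean_approximation_problem_equivalence_1}, where your argument is genuinely more elementary than the paper's. The paper observes that $AY$ is Gaussian with covariance $A\mathcal{C}_\y A^*$, invokes the structural fact (from Bogachev or Hairer) that the Cameron--Martin space of a Gaussian measure is contained in every measurable linear subspace of full measure, and then uses finite-rank identities to show $\ran{(A\mathcal{C}_\y A^*)^{1/2}}=\ran A$. Your route sidesteps all of this by noting that $A^{-1}(\ran{\mathcal{C}_\pr^{1/2}})$ is a \emph{linear subspace of $\R^n$}, hence automatically closed and Borel, and that a proper subspace has $\mathcal{N}(0,\mathcal{C}_\y)$-measure zero since $\mathcal{C}_\y$ is invertible. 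This is cleaner; in particular, you do not need $\ran{\mathcal{C}_\pr^{1/2}}$ itself to be Borel, only its preimage under $A$, which it is for free. Your stated worry about that measurability subtlety is therefore unnecessary --- finite-dimensionality of the data space does the work.

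There is one genuine imprecision in item~\ref{item:mean_approximation_problem_equivalence_2} for $i=1$. Your substitution $\widetilde B = S_\pos^{-1}B$ does not define an element of $\B_{00,r}(\H)$: the hypothesis only gives $B(\ker G^\perp)\subset\ran{\mathcal{C}_\pr^{1/2}}=\dom{S_\pos^{-1}}$, not $\ran B\subset\dom{S_\pos^{-1}}$, so $S_\pos^{-1}B$ need not be defined on all of $\H$. The paper handles this by setting $\widetilde B \coloneqq S_\pos^{-1}BP_{\ker G^\perp}$, which is well-defined and bounded of rank at most $r$, and satisfies $S_\pos\widetilde B G^* = BP_{\ker G^\perp}G^* = BG^*$ because $\ran{G^*}=\ker G^\perp$. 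With this small adjustment your argument for item~\ref{item:mean_approximation_problem_equivalence_2} goes through.
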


The following lemma shows that the mean square error terms in \Cref{prob:optimal_mean_reformulated} can be computed by evaluating a Hilbert--Schmidt norm of an operator involving the non-self adjoint square root \eqref{eqn:svd_preconditioned_hessian} of the prior-preconditioned Hessian \eqref{eqn:decomposition_prior_preconditioned_hessian}.

\begin{restatable}{lemma}{lemmaMeanEquivalentLoss}
	\label{lemma:mean_equivalent_loss}
	It holds that
	\begin{align}
			\mathbb{E}\left[\Norm{\widetilde{A}Y-S_\pos^{-1}m_\pos(Y)}^2\right] &= \Norm{\widetilde{A}S_\y - \mathcal{C}_\pr^{1/2}G^*\mathcal{C}_\obs^{-1/2}}_{L_2(\mathcal{H})}^2,\quad \widetilde{A}\in\mathcal{B}(\R^n,\mathcal{H}).
		\label{eqn:mean_equivalent_loss}
	\end{align}
\end{restatable}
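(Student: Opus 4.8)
The plan is to turn the expectation over the $\R^n$-valued Gaussian $Y\sim\mathcal{N}(0,\mathcal{C}_\y)$ into a Hilbert--Schmidt norm by extracting the square root $S_\y$ of the data covariance, and then to identify the resulting deterministic operator using the singular value decomposition \eqref{eqn:svd_preconditioned_hessian}. First I would rewrite $S_\pos^{-1}m_\pos(Y)$ as the image of $Y$ under a bounded operator. By \eqref{eqn:pos_mean}, $m_\pos(Y)=\mathcal{C}_\pos G^*\mathcal{C}_\obs^{-1}Y$, and $\mathcal{C}_\pos=S_\pos S_\pos^*$ by \Cref{lemma:square_roots}\ref{item:square_roots}. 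Since $S_\pos^{-1}$ is a left inverse of $S_\pos$, this gives $S_\pos^{-1}\mathcal{C}_\pos=S_\pos^*$ as operators on $\H$, so
\[
	S_\pos^{-1}m_\pos(Y)=S_\pos^{-1}\mathcal{C}_\pos G^*\mathcal{C}_\obs^{-1}Y=S_\pos^*G^*\mathcal{C}_\obs^{-1}Y,
\]
which is well-defined because $\mathcal{C}_\pos h=S_\pos(S_\pos^*h)\in\ran{S_\pos}=\dom{S_\pos^{-1}}$ for every $h\in\H$ (using $\ran{\mathcal{C}_\pos}=\ran{\mathcal{C}_\pr}\subset\ran{\mathcal{C}_\pr^{1/2}}$ and \Cref{lemma:square_roots}\ref{item:root_pre_cameron_martin_space}). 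Consequently $\widetilde{A}Y-S_\pos^{-1}m_\pos(Y)=TY$ with $T\coloneqq\widetilde{A}-S_\pos^*G^*\mathcal{C}_\obs^{-1}\in\B(\R^n,\H)$ bounded, hence finite rank.

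Second, I would record the elementary identity $\mathbb{E}\Norm{TY}^2=\Norm{TS_\y}_{L_2(\H)}^2$, valid for any $T\in\B(\R^n,\H)$: indeed $\mathbb{E}\Norm{TY}^2=\mathbb{E}\langle T^*TY,Y\rangle=\mathrm{tr}(T^*T\mathcal{C}_\y)$ by the second-moment formula for centered $\R^n$-valued Gaussians (here $T^*T\in\B(\R^n)$), and with $\mathcal{C}_\y=S_\y S_\y^*$ and cyclicity of the trace on $\R^n$ this equals $\mathrm{tr}\bigl((TS_\y)^*(TS_\y)\bigr)=\Norm{TS_\y}_{L_2(\H)}^2$, the last step because $TS_\y$ has finite rank. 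Since $TS_\y=\widetilde{A}S_\y-S_\pos^*G^*\mathcal{C}_\obs^{-1}S_\y$, it then remains to prove the operator identity
\[
	S_\pos^*G^*\mathcal{C}_\obs^{-1}S_\y=\mathcal{C}_\pr^{1/2}G^*\mathcal{C}_\obs^{-1/2}.
\]

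Third, I would verify this identity by direct computation. Writing $S_\pos=\mathcal{C}_\pr^{1/2}P_\pos$ and $S_\y=\mathcal{C}_\obs^{1/2}P_\y$ for the bounded, positive, self-adjoint factors $P_\pos=(I+\sum_i\frac{-\lambda_i}{1+\lambda_i}w_i\otimes w_i)^{-1/2}$ and $P_\y=(I+\sum_i\frac{-\lambda_i}{1+\lambda_i}\varphi_i\otimes\varphi_i)^{1/2}$ appearing in \eqref{eqn:square_roots}, and using $\mathcal{C}_\obs^{-1}\mathcal{C}_\obs^{1/2}=\mathcal{C}_\obs^{-1/2}$ on the finite-dimensional data space, the left-hand side becomes $P_\pos M P_\y$ with $M\coloneqq\mathcal{C}_\pr^{1/2}G^*\mathcal{C}_\obs^{-1/2}$. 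Substituting the SVD $M=\sum_{i=1}^n\sqrt{-\lambda_i/(1+\lambda_i)}\,w_i\otimes\varphi_i$ from \eqref{eqn:svd_preconditioned_hessian}, together with the explicit expressions \eqref{eqn:square_root_expression}, which give $P_\pos w_i=(1+\lambda_i)^{1/2}w_i$ and $P_\y\varphi_i=(1+\lambda_i)^{-1/2}\varphi_i$, the factors $(1+\lambda_i)^{\pm 1/2}$ cancel term by term and $P_\pos M P_\y=M$. Combining this with the first two steps yields $\mathbb{E}\Norm{\widetilde{A}Y-S_\pos^{-1}m_\pos(Y)}^2=\Norm{\widetilde{A}S_\y-\mathcal{C}_\pr^{1/2}G^*\mathcal{C}_\obs^{-1/2}}_{L_2(\H)}^2$, as claimed.

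The only delicate point, and the one I would be most careful about, is the unbounded-operator bookkeeping in the first step: one must justify that $S_\pos^{-1}m_\pos(Y)$ is well-defined and that $S_\pos^{-1}\mathcal{C}_\pos$ may legitimately be replaced by $S_\pos^*$, rather than manipulating $S_\pos^{-1}$ as if it were a genuine two-sided inverse. Once $S_\pos^{-1}m_\pos(Y)$ has been re-expressed as the application of the bounded operator $S_\pos^*G^*\mathcal{C}_\obs^{-1}$ to $Y$, no further domain issues arise, and the remaining arguments are the standard Gaussian second-moment identity and finite-dimensional linear algebra in the bases $(w_i)_i$ and $(\varphi_i)_i$.
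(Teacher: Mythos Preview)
Your proposal is correct and follows essentially the same route as the paper: define the bounded operator $T=\widetilde{A}-S_\pos^*G^*\mathcal{C}_\obs^{-1}$ so that the expectation becomes $\mathbb{E}\Norm{TY}^2=\Norm{TS_\y}_{L_2(\H)}^2$, and then verify the operator identity $S_\pos^*G^*\mathcal{C}_\obs^{-1}S_\y=\mathcal{C}_\pr^{1/2}G^*\mathcal{C}_\obs^{-1/2}$ via the SVD \eqref{eqn:svd_preconditioned_hessian} and the eigenvalue relations \eqref{eqn:square_root_expression}. The only cosmetic differences are that the paper packages the second-moment step into \Cref{lemma:gaussian_expected_norm_squared} rather than the trace computation you write out, and it carries out the final identity pointwise on an arbitrary $x\in\R^n$ instead of introducing your shorthand $P_\pos,P_\y$.
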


In order to solve \Cref{prob:optimal_mean_reformulated}, we use a result on reduced-rank operator approximation in $L_2(\mathcal{H})$ norm, proven in \cite{CarereLie2024}. It is a generalised version of the Eckart--Young theorem. Recall that compact operators, in particular Hilbert--Schmidt operators and finite-rank operators, have an SVD, c.f.\ \Cref{lemma:operator_svd}. Also recall the definition of the Moore--Penrose inverse $C^\dagger$ of $C\in\B(\mathcal{H})$ from \Cref{sec:notation}. If $C$ has closed range, then $C^\dagger$ is bounded, c.f.\ \cite[Proposition 2.4]{engl_regularization_1996}. The following is an application of \cite[Theorem 3.2]{CarereLie2024} to the case where the operators $B$ and $C$ occurring in the theorem have closed range. Note that when $T=I$ and $S=I$, we recover the Eckart--Young theorem.

\begin{theorem}[{\cite[Theorem 3.2, Remark 3.5]{CarereLie2024}}]
	\label{thm:generalised_friedland_torokhti}
Let $\mathcal{H}_1,\mathcal{H}_2,\mathcal{H}_3,\mathcal{H}_4$ be Hilbert spaces and let $T\in\B(\mathcal{H}_3,\mathcal{H}_4)$, $S\in\B(\mathcal{H}_1,\mathcal{H}_2)$ both have closed range and let $M\in L_2(\mathcal{H}_1,\mathcal{H}_4)$. Suppose $P_{\ran{T}}MP_{\ker{S}^\perp}$ has nonincreasing singular value sequence $(\sigma_i)_i\in\ell^2([0,\infty))$. Then, for each rank-$r$ truncated SVD $(P_{\ran{T}} M P_{\ker{S}^\perp})_r$ of $P_{\ran{T}} M P_{\ker{S}^\perp}$,
	\begin{align}
		\label{eqn:hat_N}
		\widehat{N} \coloneqq T^\dagger (P_{\ran{T}}MP_{\ker{S}^\perp})_rS^\dagger,
	\end{align}
	is a solution to the problem,
	\begin{align}
		\label{eqn:hilbert_schmidt_approximation_problem}
		\min\{\norm{M-TNS}_{L_2(\mathcal{H}_1,\mathcal{H}_4)},\ N\in\B_{00,r}(\mathcal{H}_2,\mathcal{H}_3)\},
	\end{align}
	such that 
	\begin{align}
		\label{eqn:minimality_property}
		N = P_{\ker{T}^\perp}NP_{\ran{S}}. 
	\end{align}
	Furthermore, \eqref{eqn:hat_N} is the only solution of \eqref{eqn:hilbert_schmidt_approximation_problem} satisfying \eqref{eqn:minimality_property} if and only if the following holds: $\sigma_{r+1}=0$ or $\sigma_r > \sigma_{r+1}.$
\end{theorem}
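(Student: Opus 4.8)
The plan is to reduce the constrained reduced-rank problem \eqref{eqn:hilbert_schmidt_approximation_problem} to the Eckhart--Young theorem for Hilbert--Schmidt operators. The first step is to identify the attainable set $\mathscr{A}_r:=\{TNS:N\in\B_{00,r}(\mathcal{H}_2,\mathcal{H}_3)\}$. Since $\ran{T}$ and $\ran{S}$ are closed, the Moore--Penrose inverses $T^\dagger$, $S^\dagger$ are bounded, with $TT^\dagger=P_{\ran{T}}$, $S^\dagger S=P_{\ker{S}^\perp}$, $\ran{T^\dagger}=\ker{T}^\perp$ and $\ker{S^\dagger}=\ran{S}^\perp$. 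Using these I would show
\begin{align*}
	\mathscr{A}_r=\{L\in\B_{00,r}(\mathcal{H}_1,\mathcal{H}_4):\ \ran{L}\subset\ran{T},\ \ker{S}\subset\ker{L}\}.
\end{align*}
The inclusion ``$\subseteq$'' is immediate; for ``$\supseteq$'', given such an $L$ one sets $N:=T^\dagger L S^\dagger\in\B_{00,r}(\mathcal{H}_2,\mathcal{H}_3)$ and computes $TNS=P_{\ran{T}}L P_{\ker{S}^\perp}=L$, because $\ran{L}\subset\ran{T}$ (a closed subspace) forces $P_{\ran{T}}L=L$ and $\ker{S}\subset\ker{L}$ forces $L P_{\ker{S}^\perp}=L$.

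Next I would set $\Pi:=P_{\ran{T}}M P_{\ker{S}^\perp}$ and, for every $L\in\mathscr{A}_r$ (so that $P_{\ran{T}}L P_{\ker{S}^\perp}=L$), establish the Pythagorean identity in $L_2(\mathcal{H}_1,\mathcal{H}_4)$
\begin{align*}
	\norm{M-L}_{L_2(\mathcal{H}_1,\mathcal{H}_4)}^2=\norm{\Pi-L}_{L_2(\mathcal{H}_1,\mathcal{H}_4)}^2+\norm{M-\Pi}_{L_2(\mathcal{H}_1,\mathcal{H}_4)}^2.
\end{align*}
This follows because $\langle \Pi-L,M-\Pi\rangle_{L_2}=\langle\Pi-L,P_{\ran{T}}(M-\Pi)P_{\ker{S}^\perp}\rangle_{L_2}$ (using $\Pi-L=P_{\ran{T}}(\Pi-L)P_{\ker{S}^\perp}$ and self-adjointness of the projections) and $P_{\ran{T}}(M-\Pi)P_{\ker{S}^\perp}=0$. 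Minimising \eqref{eqn:hilbert_schmidt_approximation_problem} thus amounts to minimising $\norm{\Pi-L}_{L_2}$ over $L\in\mathscr{A}_r$. The key observation is that each truncated SVD $\Pi_r=(P_{\ran{T}}M P_{\ker{S}^\perp})_r$ already belongs to $\mathscr{A}_r$: its left singular vectors lie in $\overline{\ran{\Pi}}\subset\ran{T}$ and its right singular vectors in $\overline{\ran{\Pi^*}}\subset\ran{S^*}=\ker{S}^\perp$, so $\ran{\Pi_r}\subset\ran{T}$ and $\ker{S}\subset\ker{\Pi_r}$. By the Eckhart--Young theorem $\Pi_r$ minimises $\norm{\Pi-L}_{L_2}$ over all operators of rank at most $r$, hence a fortiori over $\mathscr{A}_r$; combining with the first step, $\hat{N}=T^\dagger\Pi_r S^\dagger$ solves \eqref{eqn:hilbert_schmidt_approximation_problem} with $T\hat{N}S=\Pi_r$, and the minimality property \eqref{eqn:minimality_property} follows from $\ran{\hat{N}}\subset\ran{T^\dagger}=\ker{T}^\perp$ and $\ker{\hat{N}}\supset\ker{S^\dagger}=\ran{S}^\perp$.

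For the uniqueness claim I would show that a minimiser $N\in\B_{00,r}(\mathcal{H}_2,\mathcal{H}_3)$ of \eqref{eqn:hilbert_schmidt_approximation_problem} satisfying \eqref{eqn:minimality_property} is unique precisely when $\Pi$ has a unique best rank-$r$ Hilbert--Schmidt approximant, which by Eckhart--Young is the condition $\sigma_{r+1}=0$ or $\sigma_r>\sigma_{r+1}$. In one direction, a minimiser $N$ gives $L:=TNS\in\mathscr{A}_r$ which also minimises $\norm{\Pi-L}_{L_2}$, hence equals the (now unique) $\Pi_r$; applying $T^\dagger$ on the left and $S^\dagger$ on the right of $TNS=\Pi_r$ and invoking \eqref{eqn:minimality_property} together with $T^\dagger T=P_{\ker{T}^\perp}$ and $SS^\dagger=P_{\ran{S}}$ recovers $N=T^\dagger\Pi_r S^\dagger=\hat{N}$. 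In the other direction, if $\sigma_r=\sigma_{r+1}>0$ then $\Pi$ has distinct best rank-$r$ approximants, and distinct choices of truncated SVD yield distinct minimisers $T^\dagger(\cdot)S^\dagger$ of \eqref{eqn:hilbert_schmidt_approximation_problem} still satisfying \eqref{eqn:minimality_property}. I expect the main obstacle to be the first step: the exact description of $\mathscr{A}_r$ relies essentially on closedness of $\ran{T}$ and $\ran{S}$, both to render $T^\dagger,S^\dagger$ bounded and so that the constraint reads $\ran{L}\subset\ran{T}$ rather than $\ran{L}\subset\overline{\ran{T}}$; without this one controls only the closure of $\mathscr{A}_r$ and must separately argue that the infimum is attained.
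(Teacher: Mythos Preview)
The paper does not prove this theorem; it is quoted from \cite[Theorem 3.2, Remark 3.5]{CarereLie2024} and used as a black box, so there is no in-paper proof to compare against. That said, your argument is correct and is the natural route to such a result: you (i) identify the attainable set $\{TNS:N\in\B_{00,r}\}$ with the rank-$r$ operators $L$ satisfying $\ran{L}\subset\ran{T}$ and $\ker{S}\subset\ker{L}$, using closedness of the ranges to make $T^\dagger,S^\dagger$ bounded and the range/kernel constraints exact rather than up to closure; (ii) invoke the Pythagorean splitting $\norm{M-L}_{L_2}^2=\norm{M-\Pi}_{L_2}^2+\norm{\Pi-L}_{L_2}^2$ for such $L$; and (iii) observe that any truncated SVD $\Pi_r$ already lies in the attainable set, so Eckhart--Young over all rank-$r$ operators solves the constrained problem as well. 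The verification of \eqref{eqn:minimality_property} from $\ran{T^\dagger}=\ker{T}^\perp$ and $\ker{S^\dagger}=\ran{S}^\perp$ is clean, and the uniqueness argument in both directions is sound (in the non-unique case, distinct truncated SVDs $\Pi_r$ give distinct $\hat{N}$ because $T\hat{N}S=\Pi_r$). This is essentially the strategy used in the finite-dimensional predecessors of Sondermann and Friedland--Torokhti, lifted to the Hilbert--Schmidt setting.
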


\begin{remark}[Uniqueness and minimality]
	Even when the uniqueness condition of \Cref{thm:generalised_friedland_torokhti} holds, there are in general infinitely many solutions to \eqref{eqn:hilbert_schmidt_approximation_problem}. For example, if $\ran{S}^\perp\not=\{0\}$, then one can modify $N$ on $\ran{S}^\perp$ without changing the operator $TNS$. The condition \eqref{eqn:minimality_property} ensures that a unique solution of \eqref{eqn:hilbert_schmidt_approximation_problem} can be obtained. Furthermore, \eqref{eqn:minimality_property} also has a natural interpretation as giving minimal solutions of \eqref{eqn:hilbert_schmidt_approximation_problem}. Indeed, any $N\in L_2(\mathcal{H}_2,\mathcal{H}_3)$ satisfies
	\begin{align*}
		N &= P_{\ker{T}^\perp} N P_{\ran{S}} + P_{\ker{T}} N P_{\ran{S}} + P_{\ker{T}^\perp} N P_{\ran{S}^\perp} + P_{\ker{T}} N P_{\ran{S}^\perp}.
	\end{align*}
	By orthogonality of $\ker{T}$ and $\ker{T}^\perp$ and of $\ran{S}$ and $\ran{S}^\perp$, this implies that $N\in L_2(\mathcal{H}_2,\mathcal{H}_3)$ satisfies \eqref{eqn:minimality_property} if and only if the terms $P_{\ker{T}} N P_{\ran{S}}$, $P_{\ker{T}^\perp} N P_{\ran{S}^\perp}$, $P_{\ker{T}} N P_{\ran{S}^\perp}$ are all zero. Taking the $L_2(\mathcal{H}_2,\mathcal{H}_3)$ norm,
	\begin{align*}
		\norm{N}_{L_2(\mathcal{H}_2,\mathcal{H}_3)}^2 
		= &\norm{P_{\ker{T}^\perp}NP_{\ran{S}}}_{L_2(\mathcal{H}_2,\mathcal{H}_3)}^2
		+ \norm{P_{\ker{T}}NP_{\ran{S}}}_{L_2(\mathcal{H}_2,\mathcal{H}_3)}^2\\
		&+ \norm{P_{\ker{T}^\perp}NP_{\ran{S}^\perp}}_{L_2(\mathcal{H}_2,\mathcal{H}_3)}^2
		+ \norm{P_{\ker{T}}NP_{\ran{S}^\perp}}_{L_2(\mathcal{H}_2,\mathcal{H}_3)}^2,
	\end{align*}
	which shows that $\norm{N}_{L_2(\mathcal{H}_2,\mathcal{H}_3)}^2\geq \norm{P_{\ker{T}^\perp}NP_{\ran{S}}}_{L_2(\mathcal{H}_1,\mathcal{H}_4)}^2$, with equality if and only if \eqref{eqn:minimality_property} holds. Thus, \eqref{eqn:minimality_property} can be interpreted as a minimality condition on $N$.
	To see that the equality in the display above holds, note that $\langle P_{\ker{T}}Ch, P_{\ker{T^\perp}} Ch\rangle=0$ and $\langle P_{\ran{S}}C^*k,P_{\ran{S}^\perp}C^*k\rangle=0$ for any $h\in\mathcal{H}_2$, $k\in\mathcal{H}_3$ and $C\in\B(\mathcal{H}_2,\mathcal{H}_3)$. Thus, in $L_2(\mathcal{H}_2,\mathcal{H}_3)$, the operators $P_{\ker{T}}C$ and $P_{\ker{T}^\perp}C$ are orthogonal, and the operators $P_{\ran{S}}C^*$ and $P_{\ran{S}^\perp}C^*$ are orthogonal. By the fact that $\langle A,B\rangle_{L_2(\mathcal{H}_2,\mathcal{H}_3)}=\langle B^*,A^*\rangle_{L_2(\mathcal{H}_3,\mathcal{H}_2)}$ for any $A,B\in L_2(\mathcal{H}_2,\mathcal{H}_3)$, we see that $CP_{\ran{S}}$ and $CP_{\ran{S}^\perp}$ are orthogonal for any $C\in L_2(\mathcal{H}_2,\mathcal{H}_3)$. Therefore, the cross terms in the above expansion of $\norm{N}_{L_2(\mathcal{H}_2,\mathcal{H}_3)}^2$ all vanish.
\end{remark}

\begin{remark}[Equivalent uniqueness statement]
	\label{rmk:equivalent_uniqueness_statement}
	An equivalent formulation of the uniqueness statement of \Cref{thm:generalised_friedland_torokhti} is as follows: $TN_1S=TN_2S$ for any two solutions $N_1$ and $N_2$ of \eqref{eqn:hilbert_schmidt_approximation_problem} if and only if either $\sigma_{r+1}=0$ or $\sigma_r > \sigma_{r+1}$. To see this, we need to show that the solution of \eqref{eqn:hilbert_schmidt_approximation_problem} which also satisfies \eqref{eqn:minimality_property} is unique if and only if $TN_1S=TN_2S$ for any two solutions $N_1$ and $N_2$ of \eqref{eqn:hilbert_schmidt_approximation_problem}. For the forward implication, assume that there exists a unique solution of \eqref{eqn:hilbert_schmidt_approximation_problem} satisfying \eqref{eqn:minimality_property}. Suppose that $N_1$ and $N_2$ are solutions of \eqref{eqn:hilbert_schmidt_approximation_problem}. Since $TP_{\ker{T}^\perp}N_iP_{\ran{S}}S=TN_iS$ for $i=1,2$, also $P_{\ker{T}^\perp}N_iP_{\ran{S}}$ solves \eqref{eqn:hilbert_schmidt_approximation_problem}. Now, $P_{\ker{T}^\perp}N_iP_{\ran{S}}$ satisfies \eqref{eqn:minimality_property}. Therefore, $P_{\ker{T}^\perp}N_1P_{\ran{S}}=P_{\ker{T}^\perp}N_2P_{\ran{S}}$ by hypothesis, which implies $TN_1S=TN_2S$. Conversely, assume that $TN_1S=TN_2S$ for any two solutions $N_1$ and $N_2$ of \eqref{eqn:hilbert_schmidt_approximation_problem}. Suppose that $N_1$ and $N_2$ are solutions of \eqref{eqn:hilbert_schmidt_approximation_problem} satisfying \eqref{eqn:minimality_property}. Since $N_1$ and $N_2$ solve $\eqref{eqn:hilbert_schmidt_approximation_problem}$, we have by hypothesis $TN_1S=TN_2S$. Applying to both sides of the equation $T^\dagger$ from the left and $S^\dagger$ from the right, and using $T^\dagger T=P_{\ker{T}^\perp}$ and $SS^\dagger=P_{\ran{S}}$, c.f.\ \cite[eqs.\ (2.12)-(2.13)]{engl_regularization_1996}, we obtain $P_{\ker{T}^\perp}N_1P_{\ran{S}}=P_{\ker{T}^\perp}N_2P_{\ran{S}}$. Because $N_1$ and $N_2$ satisfy \eqref{eqn:minimality_property}, this implies $N_1=N_2$.
\end{remark}

With \Cref{thm:generalised_friedland_torokhti} and \Cref{lemma:square_roots}\ref{item:mean_approximation_problem_equivalence_3}, we can now identify solutions of \Cref{prob:optimal_mean}, by solving \Cref{prob:optimal_mean_reformulated} for $\widetilde{A}^{\opt,(i)}\in\widetilde{\mathscr{M}}^{\opt,(i)}$ and setting ${A}^{\opt,(i)}=S_\pos\widetilde{A}^{\opt,(i)}$. We first consider the low-rank posterior mean approximation problem for the structure-ignoring approximation class ${\mathscr{M}}_r^{(2)}$ given in \eqref{eqn:equivalent_class_approx_mean_without_covariance_update}, compute the corresponding minimal loss, and show that the solution $A^{\opt,(2)}$ not only satisfies $\ran{A^{\opt,(2)}}\subset\ran{\mathcal{C}}_{\pr}^{1/2}$, but also $\ran{A^{\opt,(2)}}\subset\ran{\mathcal{C}}_{\pr}=\ran{\mathcal{C}_\pos}$. The latter condition is also satisfied by the exact posterior mean, since $\ran{\mathcal{C}_\pos G^*\mathcal{C}_\obs^{-1}}\subset\ran{\mathcal{C}_\pos}$.

\begin{restatable}{theorem}{thmOptimalLowRankMeanApprox}
	\label{thm:optimal_low_rank_mean_approx}
	Fix $r\leq n$. Let $(\lambda_i,w_i)_i$ be as in \Cref{prop:bayesian_feldman_hajek} and $(\varphi_i)_{i=1}^n$ be as in \eqref{eqn:svd_preconditioned_hessian}.
	Then a solution of \Cref{prob:optimal_mean} for $i=2$ is given by $A_r^{\opt,(2)} = \mathcal{C}_\pr^{1/2}(\sum_{i=1}^{r}\sqrt{-\lambda_i(1+\lambda_i)}w_i\otimes \varphi_i)\mathcal{C}_\obs^{-1/2}\in\mathscr{M}_r^{(2)}.$ Furthermore, 
	$\ran{A_r^{\opt,(2)}} \subset \ran{\mathcal{C}_\pos}$, the corresponding loss is $\frac{1}{2}\sum_{i>r}^{}\frac{-\lambda_i}{1+\lambda_i}$, and the solution $A^{\opt,(2)}_r$ is unique if and only if the following holds: $\lambda_{r+1}=0$ or $\lambda_r<\lambda_{r+1}$.
\end{restatable}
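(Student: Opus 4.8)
The plan is to reduce \Cref{prob:optimal_mean} for $i=2$ to \Cref{prob:optimal_mean_reformulated} for $i=2$ via \Cref{prop:mean_approximation_problem_equivalence}, solve the reformulated problem with the reduced-rank operator approximation result \Cref{thm:generalised_friedland_torokhti}, and then transport the minimiser back through $S_\pos$. By \Cref{lemma:mean_equivalent_loss}, the objective of \Cref{prob:optimal_mean_reformulated} for $i=2$ is $\widetilde A\mapsto\norm{\widetilde A S_\y-\mathcal{C}_\pr^{1/2}G^*\mathcal{C}_\obs^{-1/2}}_{L_2(\H)}^2$ minimised over $\widetilde A\in\widetilde{\mathscr{M}}^{(2)}_r=\B_{00,r}(\R^n,\H)$. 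I would apply \Cref{thm:generalised_friedland_torokhti} with $\mathcal{H}_1=\mathcal{H}_2=\R^n$, $\mathcal{H}_3=\mathcal{H}_4=\H$, $T=I_\H$, $S=S_\y$, $N=\widetilde A$ and $M=\mathcal{C}_\pr^{1/2}G^*\mathcal{C}_\obs^{-1/2}$. Here $T$ has closed range trivially and $S_\y$ has closed range since it is invertible by \Cref{lemma:square_roots}\ref{item:square_roots}, and $M\in L_2(\H)$ since it is finite-rank by \eqref{eqn:svd_preconditioned_hessian}. Because $\ker{T}=\{0\}$ and $\ker{S_\y}=\{0\}$, one has $P_{\ran{T}}MP_{\ker{S}^\perp}=M$, and \eqref{eqn:svd_preconditioned_hessian} already exhibits an SVD of $M$ with singular values $\sigma_i=\sqrt{-\lambda_i/(1+\lambda_i)}$; these are nonincreasing because $x\mapsto -x/(1+x)$ is strictly decreasing on $(-1,0]$ while $(\lambda_i)_i$ is nondecreasing, and they are square-summable since only finitely many are nonzero. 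The minimality condition \eqref{eqn:minimality_property} reads $N=P_{\ker{T}^\perp}NP_{\ran{S}}=N$ and thus holds automatically. Hence \Cref{thm:generalised_friedland_torokhti} yields $\hat N=T^\dagger M_rS^\dagger=M_rS_\y^{-1}$ as a solution, with $M_r=\sum_{i=1}^r\sqrt{-\lambda_i/(1+\lambda_i)}\,w_i\otimes\varphi_i$, and by \Cref{prop:mean_approximation_problem_equivalence}\ref{item:mean_approximation_problem_equivalence_3} the operator $S_\pos\hat N$ solves \Cref{prob:optimal_mean} for $i=2$.

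Next I would evaluate $S_\pos M_rS_\y^{-1}$ using the explicit formulas \eqref{eqn:square_roots}--\eqref{eqn:square_root_expression}. From \eqref{eqn:square_root_expression_S_y}, $S_\y^{-1}=\bigl(\sum_i(1+\lambda_i)^{1/2}\varphi_i\otimes\varphi_i\bigr)\mathcal{C}_\obs^{-1/2}$, and using $(w_i\otimes\varphi_i)(\varphi_j\otimes\varphi_j)=\delta_{ij}\,w_i\otimes\varphi_i$ one obtains $M_rS_\y^{-1}=\bigl(\sum_{i=1}^r\sqrt{-\lambda_i}\,w_i\otimes\varphi_i\bigr)\mathcal{C}_\obs^{-1/2}$. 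Applying $S_\pos=\mathcal{C}_\pr^{1/2}\bigl(I+\sum_i\tfrac{-\lambda_i}{1+\lambda_i}w_i\otimes w_i\bigr)^{-1/2}$ on the left, with \eqref{eqn:square_root_expression_S_pos} giving $\bigl(I+\sum_i\tfrac{-\lambda_i}{1+\lambda_i}w_i\otimes w_i\bigr)^{-1/2}w_i=(1+\lambda_i)^{1/2}w_i$, and using $\sqrt{-\lambda_i}\sqrt{1+\lambda_i}=\sqrt{-\lambda_i(1+\lambda_i)}$, produces exactly $A^{\opt,(2)}_r=\mathcal{C}_\pr^{1/2}\bigl(\sum_{i=1}^r\sqrt{-\lambda_i(1+\lambda_i)}\,w_i\otimes\varphi_i\bigr)\mathcal{C}_\obs^{-1/2}$. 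Membership $A^{\opt,(2)}_r\in\mathscr{M}^{(2)}_r$ and the range statement then follow from the characterisation \eqref{eqn:equivalent_class_approx_mean_without_covariance_update}: the operator has rank at most $r$, and since $w_i\in\ran{\mathcal{C}_\pr^{1/2}}$ by \Cref{prop:bayesian_feldman_hajek} we have $\mathcal{C}_\pr^{1/2}w_i\in\mathcal{C}_\pr^{1/2}(\ran{\mathcal{C}_\pr^{1/2}})=\ran{\mathcal{C}_\pr}=\ran{\mathcal{C}_\pos}$, so $\ran{A^{\opt,(2)}_r}\subset\Span{\mathcal{C}_\pr^{1/2}w_1,\ldots,\mathcal{C}_\pr^{1/2}w_r}\subset\ran{\mathcal{C}_\pos}\subset\ran{\mathcal{C}_\pr^{1/2}}$.

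For the minimal loss, $T\hat N S=M_rS_\y^{-1}S_\y=M_r$, so the optimal value of \Cref{prob:optimal_mean_reformulated} for $i=2$ is $\norm{M-M_r}_{L_2(\H)}^2=\sum_{i>r}\sigma_i^2=\sum_{i>r}\tfrac{-\lambda_i}{1+\lambda_i}$, which by \Cref{lemma:mean_equivalent_loss} and \Cref{prop:mean_approximation_problem_equivalence} is also the optimal value of \Cref{prob:optimal_mean} for $i=2$. For uniqueness, since \eqref{eqn:minimality_property} is automatic, \Cref{thm:generalised_friedland_torokhti} says $\hat N$ is the unique solution of \Cref{prob:optimal_mean_reformulated} for $i=2$ if and only if $\sigma_{r+1}=0$ or $\sigma_r>\sigma_{r+1}$; since $x\mapsto -x/(1+x)$ is a strictly decreasing bijection from $(-1,0]$ onto $[0,\infty)$, these translate respectively to $\lambda_{r+1}=0$ and $\lambda_r<\lambda_{r+1}$. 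Finally, $S_\pos$ is injective, being a composition of the injective operators $\mathcal{C}_\pr^{1/2}$ and $\bigl(I+\sum_i\tfrac{-\lambda_i}{1+\lambda_i}w_i\otimes w_i\bigr)^{-1/2}$, so by \Cref{prop:mean_approximation_problem_equivalence}\ref{item:mean_approximation_problem_equivalence_4} uniqueness transfers to $A^{\opt,(2)}_r$ in \Cref{prob:optimal_mean}. I expect the main obstacle to be the bookkeeping in the explicit evaluation of $S_\pos M_rS_\y^{-1}$ — tracking the several square roots and their (in infinite dimensions, unbounded) inverses — rather than anything conceptually deep; a secondary point to get right is verifying that \eqref{eqn:svd_preconditioned_hessian} genuinely is a singular value decomposition with singular values in nonincreasing order, which is precisely what makes \Cref{thm:generalised_friedland_torokhti} applicable.
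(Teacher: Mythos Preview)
Your proposal is correct and follows essentially the same route as the paper: reduce to \Cref{prob:optimal_mean_reformulated} via \Cref{prop:mean_approximation_problem_equivalence} and \Cref{lemma:mean_equivalent_loss}, apply \Cref{thm:generalised_friedland_torokhti} with $T=I$, $S=S_\y$, $M=\mathcal{C}_\pr^{1/2}G^*\mathcal{C}_\obs^{-1/2}$, then push the solution back through $S_\pos$ using \eqref{eqn:square_roots}--\eqref{eqn:square_root_expression}. Your treatment of the range inclusion (directly via $\mathcal{C}_\pr^{1/2}w_i\in\ran{\mathcal{C}_\pr}$) and of uniqueness (noting \eqref{eqn:minimality_property} is automatic here and using injectivity of $S_\pos$) matches the paper's argument, with only cosmetic differences.
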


Next, we solve \Cref{prob:optimal_mean} for the structure-preserving approximation class $\mathcal{\mathscr{M}}_r^{(1)}$, and show that the solutions in fact satisfy $\ran{A^{\opt,(1)}}\subset\ran{\mathcal{C}}_{\pr}=\ran{\mathcal{C}_\pos}$.

\begin{restatable}{theorem}{optimalLowRankUpdateMeanApprox}
	\label{thm:optimal_low_rank_update_mean_approx}
	Fix $r\leq n$. Let $(\lambda_i)_i$ be as in \Cref{prop:bayesian_feldman_hajek} and $\mathcal{C}^\opt_r$ be an optimal rank-$r$ approximation of $\mathcal{C}_\pos$ from \eqref{eqn:optimal_covariance} in \Cref{thm:opt_covariance_and_precision}.
	Then a solution of \Cref{prob:optimal_mean} for $i=1$ is given by $A_r^{\opt,(1)} = \mathcal{C}^\opt_r G^* \mathcal{C}_\obs^{-1}\in\mathscr{M}_r^{(1)}$. Furthermore, $\ran{A_r^{\opt,(1)}} \subset \ran{\mathcal{C}_\pos}$, the corresponding loss is $\frac{1}{2}\sum_{i>r}^{}\left( \frac{-\lambda_i}{1+\lambda_i} \right)^3$ and the solution $A_r^{\opt,(1)}$ is unique if and only if the following holds: $\lambda_{r+1}=0$ or $\lambda_r<\lambda_{r+1}$.
\end{restatable}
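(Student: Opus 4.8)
The plan is to solve \Cref{prob:optimal_mean} for $i=1$ by reformulating it as \Cref{prob:optimal_mean_reformulated} via \Cref{prop:mean_approximation_problem_equivalence}, rewriting the loss via \Cref{lemma:mean_equivalent_loss}, and then applying the generalised Eckart--Young theorem \Cref{thm:generalised_friedland_torokhti}. By \Cref{prop:mean_approximation_problem_equivalence}\ref{item:mean_approximation_problem_equivalence_2,item:mean_approximation_problem_equivalence_3,item:mean_approximation_problem_equivalence_4}, it suffices to minimise $\mathbb{E}[\|\widetilde{A}Y - S_\pos^{-1}m_\pos(Y)\|^2]$ over $\widetilde{A}\in\widetilde{\mathscr{M}}^{(1)}_r = \{(S_\pos^{-1}\mathcal{C}_\pr - \widetilde{B})G^*\mathcal{C}_\obs^{-1}:\widetilde{B}\in\B_{00,r}(\H)\}$, and then set $A^{\opt,(1)}_r = S_\pos\widetilde{A}^{\opt,(1)}_r$. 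Using \Cref{lemma:mean_equivalent_loss}, this objective equals $\|\widetilde{A}S_\y - \mathcal{C}_\pr^{1/2}G^*\mathcal{C}_\obs^{-1/2}\|_{L_2(\H)}^2$. Substituting $\widetilde{A} = (S_\pos^{-1}\mathcal{C}_\pr - \widetilde{B})G^*\mathcal{C}_\obs^{-1}$ and using $S_\y = \mathcal{C}_\obs^{1/2}(\cdots)^{1/2}$ together with the SVD \eqref{eqn:svd_preconditioned_hessian}, I expect the objective to collapse to a constant plus $\|\widetilde{B}\,G^*\mathcal{C}_\obs^{-1}S_\y - (\text{fixed operator})\|_{L_2(\H)}^2$ of the form $\|M - TNS\|_{L_2}^2$ with $N = \widetilde{B}$, $T = I$, and $S = G^*\mathcal{C}_\obs^{-1}S_\y$; the rank-$r$ constraint on $\widetilde{B}$ then makes \Cref{thm:generalised_friedland_torokhti} directly applicable.

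The key computational step is identifying the fixed operator $M$ and the operator $S$, and computing the SVD of $P_{\ran T}MP_{\ker S^\perp}$ in terms of the eigenpairs $(\lambda_i,w_i)$ and the vectors $(\varphi_i)$. Here $S = G^*\mathcal{C}_\obs^{-1}S_\y = G^*\mathcal{C}_\obs^{-1/2}(\mathcal{C}_\obs^{-1/2})^*\mathcal{C}_\obs^{1/2}(I + \sum \tfrac{-\lambda_i}{1+\lambda_i}\varphi_i\otimes\varphi_i)^{1/2}$; since $\mathcal{C}_\obs$ is invertible this simplifies to $G^*\mathcal{C}_\obs^{-1/2}(I+\sum\tfrac{-\lambda_i}{1+\lambda_i}\varphi_i\otimes\varphi_i)^{1/2}$, and by \eqref{eqn:svd_preconditioned_hessian} and \eqref{eqn:square_root_expression_S_y} this acts as $\varphi_i \mapsto \sqrt{\tfrac{-\lambda_i}{1+\lambda_i}}(1+\lambda_i)^{-1/2}\mathcal{C}_\pr^{-1/2}w_i \cdot (\text{scalar})$ — more precisely I anticipate $S$ has an SVD with singular values $\tfrac{-\lambda_i}{1+\lambda_i}$ relative to appropriate bases, so that $\ker S^\perp = \Span\{\varphi_1,\ldots,\varphi_n\}$. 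Once $M$ and $S$ are identified, I compute the truncated SVD, read off $\hat N = \widetilde{B}^{\opt}$, translate back through $\widetilde{A}^{\opt,(1)}_r = (S_\pos^{-1}\mathcal{C}_\pr - \widetilde{B}^{\opt})G^*\mathcal{C}_\obs^{-1}$, and then $A^{\opt,(1)}_r = S_\pos\widetilde{A}^{\opt,(1)}_r$. I expect the algebra to reveal that $A^{\opt,(1)}_r = \mathcal{C}^\opt_r G^*\mathcal{C}_\obs^{-1}$ exactly, i.e.\ the optimal structure-preserving update $B$ coincides with the optimal rank-$r$ covariance update $\sum_{i=1}^r(-\lambda_i)(\mathcal{C}_\pr^{1/2}w_i)\otimes(\mathcal{C}_\pr^{1/2}w_i)$ from \eqref{eqn:optimal_covariance}; this should follow because the $L_2$-minimisation over rank-$r$ operators naturally selects the $r$ dominant eigendirections $w_1,\ldots,w_r$.

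For the loss computation, after substituting the optimal $\widetilde{B}^{\opt}$, the residual should be a diagonal operator with singular values indexed by $i>r$; I expect each contributes $\left(\tfrac{-\lambda_i}{1+\lambda_i}\right)^{3/2}$ to the Hilbert--Schmidt norm's diagonal entries so that the squared loss is $\sum_{i>r}\left(\tfrac{-\lambda_i}{1+\lambda_i}\right)^3$, matching the claimed value; cross-checking against the $r=n$ case (where the loss must be zero since $m_\pos\in\mathscr{M}^{(1)}_n$) provides a sanity check. For the range claim, since $\mathcal{C}^\opt_r = \mathcal{C}_\pr - \sum_{i=1}^r(-\lambda_i)(\mathcal{C}_\pr^{1/2}w_i)\otimes(\mathcal{C}_\pr^{1/2}w_i)$ has range in $\ran{\mathcal{C}_\pr} = \ran{\mathcal{C}_\pos}$ (the first term maps into $\ran{\mathcal{C}_\pr}$ and each rank-one correction has range $\Span\{\mathcal{C}_\pr^{1/2}w_i\}\subset\ran{\mathcal{C}_\pr^{1/2}}$; in fact one needs $\mathcal{C}_\pr^{1/2}w_i \in \ran{\mathcal{C}_\pr}$, which holds since $w_i\in\ran{\mathcal{C}_\pr^{1/2}}$), we get $\ran{A^{\opt,(1)}_r}\subset\ran{\mathcal{C}^\opt_r}\subset\ran{\mathcal{C}_\pos}$. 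The uniqueness statement transfers directly from \Cref{thm:generalised_friedland_torokhti}'s condition $\sigma_{r+1}=0$ or $\sigma_r>\sigma_{r+1}$ via \Cref{rmk:equivalent_uniqueness_statement}, since the singular values $\sigma_i$ of $P_{\ran T}MP_{\ker S^\perp}$ will be monotone functions of the $-\lambda_i$, so $\sigma_r>\sigma_{r+1}$ translates to $\lambda_r<\lambda_{r+1}$ (recalling $(\lambda_i)$ is nondecreasing and nonpositive) and $\sigma_{r+1}=0$ to $\lambda_{r+1}=0$.

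The main obstacle I anticipate is the bookkeeping in the substitution $\widetilde{A} = (S_\pos^{-1}\mathcal{C}_\pr - \widetilde{B})G^*\mathcal{C}_\obs^{-1}$ into $\|\widetilde{A}S_\y - \mathcal{C}_\pr^{1/2}G^*\mathcal{C}_\obs^{-1/2}\|_{L_2}^2$: one must verify that the term $S_\pos^{-1}\mathcal{C}_\pr G^*\mathcal{C}_\obs^{-1}S_\y$ (arising from the fixed part of $\widetilde{A}$) combines cleanly with $\mathcal{C}_\pr^{1/2}G^*\mathcal{C}_\obs^{-1/2}$, so that the whole problem reduces to an unconstrained-in-position but rank-constrained $L_2$ approximation of a single fixed operator — and that the effective target operator $M = \mathcal{C}_\pr^{1/2}G^*\mathcal{C}_\obs^{-1/2} - S_\pos^{-1}\mathcal{C}_\pr G^*\mathcal{C}_\obs^{-1}S_\y$ has the claimed SVD. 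Computing $S_\pos^{-1}\mathcal{C}_\pr$ requires care since $S_\pos^{-1}:\ran{\mathcal{C}_\pr^{1/2}}\to\H$ is unbounded and $\ran{\mathcal{C}_\pr}\subset\ran{\mathcal{C}_\pr^{1/2}}$ by \Cref{lemma:square_roots}\ref{item:square_roots}; using \eqref{eqn:square_root_expression_S_pos} and the fact that $S_\pos = \mathcal{C}_\pr^{1/2}(I+\sum\tfrac{-\lambda_i}{1+\lambda_i}w_i\otimes w_i)^{-1/2}$ one gets $S_\pos^{-1} = (I+\sum\tfrac{-\lambda_i}{1+\lambda_i}w_i\otimes w_i)^{1/2}\mathcal{C}_\pr^{-1/2}$ on $\ran{\mathcal{C}_\pr^{1/2}}$, so $S_\pos^{-1}\mathcal{C}_\pr = (I+\sum\tfrac{-\lambda_i}{1+\lambda_i}w_i\otimes w_i)^{1/2}\mathcal{C}_\pr^{1/2}$, which is bounded and tractable. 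I expect that once this identity is in hand, the SVD structure falls out from \eqref{eqn:svd_preconditioned_hessian}, and the rest is diagonal bookkeeping.
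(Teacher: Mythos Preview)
Your plan is correct and essentially identical to the paper's proof: reduce via \Cref{prop:mean_approximation_problem_equivalence} and \Cref{lemma:mean_equivalent_loss} to a Hilbert--Schmidt problem in $\widetilde{B}$, apply \Cref{thm:generalised_friedland_torokhti} with $T=I$, $S=G^*\mathcal{C}_\obs^{-1}S_\y$, and $M=S_\pos^{-1}\mathcal{C}_\pr G^*\mathcal{C}_\obs^{-1}S_\y - \mathcal{C}_\pr^{1/2}G^*\mathcal{C}_\obs^{-1/2}$, then translate back and verify the loss, range inclusion, and uniqueness via \Cref{rmk:equivalent_uniqueness_statement}. Two small corrections: the objective collapses \emph{exactly} to $\|M-\widetilde{B}S\|_{L_2}^2$ (no additive constant), and the vectors $\mathcal{C}_\pr^{-1/2}w_i$ appearing in $S$ are not $\H$-orthonormal, so $S$ does not have the SVD you suggest---but only $P_{\ker S^\perp}=\sum_{i=1}^n\varphi_i\otimes\varphi_i$ and $S^\dagger$ are needed, and these come out as you anticipate.
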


By \eqref{eqn:optimal_covariance}, $\mathcal{C}_r^{\opt} = \mathcal{C}_\pr - \sum_{i>r}^{}-\lambda_i(\mathcal{C}_\pr w_i)\otimes (\mathcal{C}_\pr w_i)$. We thus see that the optimal operator $A^{\opt,(1)}_r$ in \Cref{thm:optimal_low_rank_update_mean_approx} is of the form $(\mathcal{C}_\pr-B)G^*\mathcal{C}^{-1}_\obs$, where $B$ satisfies the conditions in \eqref{eqn:equivalent_class_approx_mean_with_covariance_update} and is also self-adjoint.

\Cref{thm:optimal_low_rank_mean_approx} and \Cref{thm:optimal_low_rank_update_mean_approx} generalise the results of \cite[Theorem 4.1 and Theorem 4.2]{Spantini2015} to an infinite-dimensional setting, and add a uniqueness statement. We note that in both considered approximation classes $\mathscr{M}_r^{(i)}$, $i\in\{1,2\}$, the optimal operator $A^{\opt,(i)}_r$ maps into $\ran{\mathcal{C}_\pos}$, just like the exact operator $\mathcal{C}_\pos G^*\mathcal{C}_\obs^{-1}$ in \eqref{eqn:pos_mean}. 

By \eqref{eqn:loss_for_means}, the optimal posterior mean approximations given in \Cref{thm:optimal_low_rank_mean_approx} and \Cref{thm:optimal_low_rank_update_mean_approx} correspond to optimal approximations of the posterior distribution with respect to the average forward and reverse KL divergence and average R\'enyi divergences, when the posterior covariance is kept fixed. Let us define the following functions on $[0,\infty)$, where $\alpha\in(0,1)$:
\begin{align}
	\label{eqn:tranform_amari_hellinger}
	g_{\am,\alpha}(x) \coloneqq -\alpha^{-1}(1-\alpha)^{-1}\left( \exp(-\alpha(1-\alpha)x)-1 \right),\quad
	g_{\hel}(x)  \coloneqq \left( 2(1-\exp(-x/4)) \right)^{1/2}.
\end{align}
Both functions have a negative second derivative and are thus concave.
By \Cref{rmk:hellinger_and_amari_divergence}, \Cref{thm:optimal_low_rank_mean_approx,thm:optimal_low_rank_update_mean_approx}, and Jensen's inequality, we then directly obtain upper bounds on the average Amari $\alpha$-divergences $D_{\am,\alpha}(\cdot\Vert\cdot)$ and the average Hellinger distance $D_\hel(\cdot,\cdot)$. 
We summarise this in \Cref{cor:optimal_mean_for_amari_and_hellinger}.

\begin{restatable}{corollary}{optimalMeanForAmariAndHellinger}
	\label{cor:optimal_mean_for_amari_and_hellinger}
	Let $r\leq n$, $i=1,2$ and define $\gamma(1)=3$ and $\gamma(2)=1$. Let $(\lambda_j)_j$ be as in \Cref{prop:bayesian_feldman_hajek} and let $A^{\opt,(i)}_r$ be given by \Cref{thm:optimal_low_rank_update_mean_approx} for $i=1$ and by \Cref{thm:optimal_low_rank_mean_approx} for $i=2$.  Then, for $\alpha\in(0,1)$,
	\begin{align*}
		\mathbb{E}\left[D_{\am,\alpha}(\mathcal{N}(A^{\opt,(i)}_rY,{\mathcal{C}}_\pos)\Vert\mu_\pos(Y))\right]
		&\leq \frac{-1}{\alpha(1-\alpha)}\left(\exp\left(-\frac{\alpha(1-\alpha)}{2}\sum_{j>r}^{}\left(\frac{-\lambda_j}{1+\lambda_j}\right)^{\gamma(i)}\right)-1\right),\\
		\mathbb{E}\left[D_{\am,\alpha}(\mu_\pos(Y)\Vert\mathcal{N}(A^{\opt,(i)}_rY,{\mathcal{C}}_\pos))\right]
		&\leq \frac{-1}{\alpha(1-\alpha)}\left(\exp\left(-\frac{\alpha(1-\alpha)}{2}\sum_{j>r}^{}\left(\frac{-\lambda_j}{1+\lambda_j}\right)^{\gamma(i)}\right)-1\right),
	\end{align*}
	and
	\begin{align*}
		\mathbb{E}\left[D_{\hel}(\mu_\pos(Y),\mathcal{N}(A^{\opt,(i)}_rY,\mathcal{C}_\pos))\right]
		&\leq \sqrt{2\left(1-\exp\left(-\frac{1}{8}\sum_{j>r}^{}\left(\frac{-\lambda_j}{1+\lambda_j}\right)^{\gamma(i)}\right)\right)}.
	\end{align*}
The operator $A_r^{\opt,(i)}$ is unique if and only if the following holds: $\lambda_{r+1}=0$ or $\lambda_r<\lambda_{r+1}$.
\end{restatable}

Similarly to \cite[Section 4.1]{Spantini2015}, a comparison between the minimal losses of \Cref{thm:optimal_low_rank_mean_approx} and \Cref{thm:optimal_low_rank_update_mean_approx} gives us insight as to which approximation procedure is preferable in a specific setting. As the theorems show, the decay of the eigenvalues $(\lambda_i)_i$ of $R(\mathcal{C}_\pos\Vert\mathcal{C}_\pr)$ governs this choice. The loss of the optimal approximation in \Cref{thm:optimal_low_rank_mean_approx} and in \Cref{thm:optimal_low_rank_update_mean_approx} is $\frac{1}{2}\sum_{i>r}(\tfrac{-\lambda_i}{1+\lambda_i})$ and $\frac{1}{2}\sum_{i>r}(\tfrac{-\lambda_i}{1+\lambda_i})^3$ respectively. If $\tfrac{-\lambda_i}{1+\lambda_i}\leq 1$ or equivalently $-\lambda_i\leq \tfrac{1}{2}$ for every $i>r$, then we have $\sum_{i>r}(\tfrac{-\lambda_i}{1+\lambda_i})\geq \sum_{i>r}(\tfrac{-\lambda_i}{1+\lambda_i})^3$. 
Since the sequence $(\lambda_i)_{i}\subset (-1,0]$ increases to zero by \Cref{prop:bayesian_feldman_hajek}, and since $(\lambda_i)_{i}$ have the interpretation of variance reduction by the discussion after \Cref{prop:variance_reduction}, it follows that if there exists some $r<n$ such that the relative variance reduction along $\mathcal{C}_\pr^{-1/2}w_i$ is smaller than $\tfrac{1}{2}$ for $i>r$, then the loss $\frac{1}{2}\sum_{i>r}(\tfrac{-\lambda_i}{1+\lambda_i})^3$ that arises from exploiting the structure \eqref{eqn:pos_mean} of the posterior mean is smaller than the loss that ignores this structure. In other words, one can achieve on average a smaller loss in the posterior mean approximation that exploits the structure \eqref{eqn:pos_mean} of the posterior mean, if the ratio of the posterior variance to the prior variance along $\mathcal{C}_\pr^{-1/2}w_i$ decays below the threshold of $\tfrac{1}{2}$ for sufficiently large $i$. If for example $\lambda_i>-\tfrac{1}{2}$ for every $i\in\N$, then this decay does not occur, and one can obtain a smaller loss by ignoring the structure.

 In the following, we interpret the optimal low-rank posterior mean approximations in terms of projections of the prior and the posterior means.

\begin{restatable}{lemma}{interpretationOptimalMeans}
	\label{lemma:interpretation_optimal_means}
	Let $r\leq n$ and $A^{\opt,(i)}_{r}$ for $i=1,2$ be defined in \Cref{thm:optimal_low_rank_update_mean_approx,thm:optimal_low_rank_mean_approx} and denote by $m_\pr=0$ the prior mean. Let $\H=W_r+W_{-r}$ be the direct sum of $W_r$ and $W_{-r}$ defined in \eqref{eqn:subspace_of_maximal_variance_reduction} and \eqref{eqn:complement_subspace_of_maximal_variance}. Let $P_{W_r}$ and $P_{W_{-r}}$ be the orthogonal projectors onto $W_r$ and $W_{-r}$ respectively.
	Then for every realisation $y$ of $Y$, we have
	\begin{equation*}
		\begin{split}
			P_{W_r}A^{\opt,(1)}_ry &= P_{W_r}m_\pos(y), \\
			P_{W_r}A^{\opt,(2)}_ry &= P_{W_r}m_\pos(y), 
		\end{split}
		\qquad
		\begin{split}
			P_{W_{-r}}A^{\opt,(1)}_ry &=  P_{W_{-r}}\mathcal{C}_\pr G^*\mathcal{C}_\obs^{-1}y,\\
			P_{W_{-r}}A^{\opt,(2)}_ry &=  P_{W_{-r}}m_\pr.
		\end{split}
	\end{equation*}
\end{restatable}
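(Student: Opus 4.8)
The plan is to reduce all four identities to one biorthogonality observation. Write $\psi_i\coloneqq\mathcal{C}_\pr^{1/2}w_i$ and $\widetilde\psi_i\coloneqq\mathcal{C}_\pr^{-1/2}w_i$, so that $W_r=\Span{\widetilde\psi_1,\ldots,\widetilde\psi_r}$ and $W_{-r}=\overline{\Span{\widetilde\psi_j:j>r}}$. Since $\langle\psi_i,\widetilde\psi_j\rangle=\langle w_i,w_j\rangle=\delta_{ij}$, we immediately get $\psi_i\perp W_r$ whenever $i>r$ and, passing the orthogonality to the closure by continuity of the inner product, $\psi_i\perp W_{-r}$ whenever $i\leq r$. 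Consequently $P_{W_r}$ annihilates every vector in $\Span{\psi_i:r<i\leq n}$ and $P_{W_{-r}}$ annihilates every vector in $\Span{\psi_i:i\leq r}$, so it suffices to show that each of the four differences, namely $A^{\opt,(i)}_r y$ minus its prescribed target, lies in one of these two finite-dimensional spans.

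For the structure-preserving case $i=1$: by \Cref{thm:optimal_low_rank_update_mean_approx} and \eqref{eqn:pos_mean}, $A^{\opt,(1)}_r y-m_\pos(y)=(\mathcal{C}^\opt_r-\mathcal{C}_\pos)G^*\mathcal{C}_\obs^{-1}y$ and $A^{\opt,(1)}_r y-\mathcal{C}_\pr G^*\mathcal{C}_\obs^{-1}y=(\mathcal{C}^\opt_r-\mathcal{C}_\pr)G^*\mathcal{C}_\obs^{-1}y$. From \eqref{eqn:optimal_covariance} and the $r\leftarrow n$ expansion $\mathcal{C}_\pos=\mathcal{C}_\pr-\sum_{i=1}^n(-\lambda_i)\psi_i\otimes\psi_i$ recorded in \Cref{rmk:interpretation_covariance_approx}, we have $\mathcal{C}^\opt_r-\mathcal{C}_\pos=\sum_{i=r+1}^n(-\lambda_i)\psi_i\otimes\psi_i$ and $\mathcal{C}^\opt_r-\mathcal{C}_\pr=\sum_{i=1}^r\lambda_i\,\psi_i\otimes\psi_i$, whose ranges lie in $\Span{\psi_i:r<i\leq n}$ and $\Span{\psi_i:i\leq r}$ respectively. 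Applying $P_{W_r}$ to the first difference and $P_{W_{-r}}$ to the second settles both $i=1$ identities.

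For the structure-ignoring case $i=2$: the operator $A^{\opt,(2)}_r$ of \Cref{thm:optimal_low_rank_mean_approx} already has range in $\Span{\psi_i:i\leq r}$, so $P_{W_{-r}}A^{\opt,(2)}_r y=0=P_{W_{-r}}m_\pr$. For the final identity I would expand $m_\pos(y)$ in the $\psi_i$: using $\mathcal{C}_\obs^{-1}=\mathcal{C}_\obs^{-1/2}(\mathcal{C}_\obs^{-1/2})^*$ and the SVD \eqref{eqn:svd_preconditioned_hessian}, one verifies, after applying the injective $\mathcal{C}_\pr^{1/2}$ which makes the unboundedness of $\mathcal{C}_\pr^{-1/2}$ on these finitely many vectors harmless, that $G^*\mathcal{C}_\obs^{-1}y=\sum_{i=1}^n\sqrt{-\lambda_i/(1+\lambda_i)}\,\langle\mathcal{C}_\obs^{-1/2}y,\varphi_i\rangle\widetilde\psi_i$; then $\mathcal{C}_\pos\widetilde\psi_i=(1+\lambda_i)\psi_i$, again from the expansion of $\mathcal{C}_\pos$, gives $m_\pos(y)=\sum_{i=1}^n\sqrt{-\lambda_i(1+\lambda_i)}\,\langle\mathcal{C}_\obs^{-1/2}y,\varphi_i\rangle\psi_i$, whereas $A^{\opt,(2)}_r y=\sum_{i=1}^r\sqrt{-\lambda_i(1+\lambda_i)}\,\langle\mathcal{C}_\obs^{-1/2}y,\varphi_i\rangle\psi_i$ directly from \Cref{thm:optimal_low_rank_mean_approx}. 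Hence $m_\pos(y)-A^{\opt,(2)}_r y\in\Span{\psi_i:r<i\leq n}$, and $P_{W_r}$ annihilates it. I do not anticipate a genuine obstacle: the essential content is the biorthogonality of $(\psi_i)$ and $(\widetilde\psi_i)$ together with the explicit $\psi_i$-expansions; the only care required is the bookkeeping around the unbounded $\mathcal{C}_\pr^{-1/2}$ and the passage to the $\H$-closure $W_{-r}$.
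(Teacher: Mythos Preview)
Your proposal is correct and follows essentially the same approach as the paper. Both arguments rest on the biorthogonality $\langle \mathcal{C}_\pr^{1/2}w_i,\mathcal{C}_\pr^{-1/2}w_j\rangle=\delta_{ij}$ and on the explicit $\psi_i$-expansions of $m_\pos(y)$, $A^{\opt,(2)}_r y$, and the operator differences $\mathcal{C}^\opt_r-\mathcal{C}_\pos$, $\mathcal{C}^\opt_r-\mathcal{C}_\pr$; the paper verifies the identities by computing $\langle\,\cdot\,,\mathcal{C}_\pr^{-1/2}w_j\rangle$ for each $j$, whereas you package the same computation by first locating each difference in the span of the appropriate $\psi_i$'s and then applying the projection once---a cosmetic reorganisation rather than a different idea.
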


From \Cref{lemma:interpretation_optimal_means} we see that $P_{W_r}A^{\opt,(1)}_ry= P_{W_r}A^{\opt,(2)}_ry$, but $P_{W_{-r}}A^{\opt,(1)}_ry$ and $ P_{W_{-r}}A^{\opt,(2)}_ry$ differ in general.

\section{Optimal joint approximation of the mean and covariance}
\label{sec:optimal_joint_approximation}

In \Cref{sec:optimal_approximation_covariance}, we considered the optimal rank-$r$ approximation of the posterior covariance given the same mean, while in \Cref{sec:optimal_approximation_mean} we considered the optimal rank-$r$ approximation of the posterior mean given the same posterior covariance. In this section, we consider jointly approximating the posterior mean and covariance in the reverse KL divergence defined in \Cref{sec:equivalence_and_divergences_between_gaussian_measures}. Approximation in reverse KL divergence is important in the context of variational inference, c.f.\ \cite[Theorem 5]{Ray2022}. We leave the solution of the optimal joint approximation of the mean and covariance for the forward KL divergence for future work.


Let $y\in\R^n$ be an arbitrary data vector and $m_\pos(y)$ be as in \eqref{eqn:pos_mean}. Let $\widetilde{m}_\pos(y)$ be an approximation of $m_\pos(y)$ and $\widetilde{\mathcal{C}}_\pos$ be an approximation of $\mathcal{C}_\pos$ such that $\mathcal{N}(\widetilde{m}_\pos(y),\widetilde{\mathcal{C}}_\pos)\sim\mu_\pos$, and let $m\in\H$ be arbitrary.
Then, by \eqref{eqn:kullback_leibler_divergence},
\begin{align*}
	D_{\kl}(\mathcal{N}(\widetilde{m}_\pos(y),\widetilde{\mathcal{C}}_\pos)\Vert \mu_\pos)
	= &\frac{1}{2}\Norm{\mathcal{C}_\pos^{-1/2}(\widetilde{m}_\pos(y)-m_\pos(y))}^2-\frac{1}{2}\log\det_2\left(I+R(\widetilde{\mathcal{C}}_\pos\Vert\mathcal{C}_\pos)\right) 
	\\
	= &\frac{1}{2}\Norm{\mathcal{C}_\pos^{-1/2}(\widetilde{m}_\pos(y)-m_\pos(y))}^2+D_{\kl}(\mathcal{N}(m,\widetilde{\mathcal{C}}_\pos)\Vert \mathcal{N}(m,\mathcal{C}_\pos))
	\\
	= &D_{\kl}(\mathcal{N}(\widetilde{m}_\pos(y),\mathcal{C}_\pos)\Vert \mathcal{N}(m_\pos(y),\mathcal{C}_\pos))\\
	&+D_{\kl}(\mathcal{N}(m,\widetilde{\mathcal{C}}_\pos)\Vert \mathcal{N}(m,\mathcal{C}_\pos)),
\end{align*}
which constitutes a Pythagorean-like identity for the Kullback--Leibler divergence between two Gaussians. The identity above is reasonable, since the Kullback--Leibler divergence is a Bregman divergence, which are known to satisfy generalised Pythagorean theorems. See e.g.\ \cite[Section 1.6]{Amari2016} or \cite{Nielsen2022} for the information geometry perspective on Pythagorean identities and \cite[Theorem 2.1]{Li2024a} for a Pythagorean theorem in the context of dimension reduction for Bayesian inverse problems.

In our context, the Pythagorean identity above implies that, in order to solve the joint approximation problem, it suffices to solve the posterior mean approximation problem and the posterior covariance approximation problems separately. Let $r\in\N$. Suppose we search for $\widetilde{m}_\pos(y)$ of the form $Ay$ for $A$ in one of the approximation classes $\mathscr{M}^{(i)}_r$ defined in \eqref{eqn:class_approx_means}, and that we search for $\widetilde{\mathcal{C}}_\pos$ of the form $\mathcal{C}_\pr-KK^*$ from $\mathscr{C}_r$ defined in \eqref{eqn:class_approx_covariance}. Then for $i=1,2$ and any $m\in\mathcal{H}$,
\begin{align*}
	&\min\left\{\mathbb{E}\left[D_{\kl}(\mathcal{N}( AY,\mathcal{C}_\pr-KK^*)\Vert \mathcal{N}(m_\pos(Y),\mathcal{C}_\pos))\right]:\ A\in\mathscr{M}^{(i)}_r,\ \mathcal{C}_\pr-KK^*\in\mathscr{C}_r\right\}
	\\
	=& \min\left\{ \mathbb{E}\left[D_{\kl}(\mathcal{N}(AY,\mathcal{C}_\pos)\Vert \mathcal{N}(m_\pos(Y),\mathcal{C}_\pos))\right]:\ A\in\mathscr{M}^{(i)}_r\right\}
	\\
	&+ \min\left\{ D_{\kl}(\mathcal{N}(m,\mathcal{C}_\pr-KK^*) \Vert \mathcal{N}(m,{\mathcal{C}_\pos})):\ \mathcal{C}_\pr-KK^*\in\mathscr{C}_r\right\}.
\end{align*}
The two minimisation problems can then be solved using \Cref{thm:opt_covariance_and_precision} and either \Cref{thm:optimal_low_rank_mean_approx} or \Cref{thm:optimal_low_rank_update_mean_approx}: 
\begin{restatable}{proposition}{optimalJointApproximation}
	\label{prop:optimal_joint_approximation}
	Let $r\leq n$, $i=1,2$, and $(\lambda_j)_j$ be as in \Cref{prop:bayesian_feldman_hajek}. Let $\mathcal{C}^\opt_r$ be as in \Cref{thm:opt_covariance_and_precision} and $A^{\opt,(i)}_r$ be as in either \Cref{thm:optimal_low_rank_mean_approx,thm:optimal_low_rank_update_mean_approx}. Then,
	\begin{align*}
		&\min\left\{\mathbb{E}\left[D_{\kl}(\mathcal{N}( AY,\mathcal{C}_\pr-KK^*)\Vert \mathcal{N}(m_\pos(Y),\mathcal{C}_\pos))\right]:\ A\in\mathscr{M}^{(i)}_r,\ \mathcal{C}_\pr-KK^*\in\mathscr{C}_r\right\}\\
		&=\mathbb{E}\left[D_{\kl}(\mathcal{N}( A^{\opt,(i)}_rY,\mathcal{C}^\opt_r)\Vert \mathcal{N}(m_\pos(Y),\mathcal{C}_\pos))\right],\\
		&=\sum_{j>r}^{}f_\kl\left(\frac{-\lambda_j}{1+\lambda_j}\right) + \frac{1}{2}\left( \frac{-\lambda_j}{1+\lambda_j} \right)^{\gamma(i)},
	\end{align*}
	where $\gamma(1)=3$ by \Cref{thm:optimal_low_rank_update_mean_approx}, $\gamma(2)=1$ by \Cref{thm:optimal_low_rank_mean_approx}, and where $f_\kl$ is defined in \eqref{eqn:def_f_kl}. Furthermore, $(A^{\opt,(i)}_r,\mathcal{C}^\opt_r)$ is the unique minimiser if and only if the following holds: $\lambda_{r+1}=0$ or $\lambda_r<\lambda_{r+1}$.
\end{restatable}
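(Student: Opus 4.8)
The plan is to derive the proposition from the Pythagorean-like identity for the Kullback--Leibler divergence between Gaussians established immediately above the statement. Fix $r\leq n$, $i\in\{1,2\}$, and an arbitrary $y\in\R^n$, and take $A\in\mathscr{M}^{(i)}_r$ together with $K$ such that $\mathcal{C}_\pr-KK^*\in\mathscr{C}_r$. By \Cref{prop:mean_approximation_problem_equivalence}\ref{item:mean_approximation_problem_equivalence_1}, membership of $A$ in $\mathscr{M}^{(i)}_r$ forces $Ay-m_\pos(y)\in\ran{\mathcal{C}_\pr^{1/2}}=\ran{\mathcal{C}_\pos^{1/2}}$, so $\mathcal{N}(Ay,\mathcal{C}_\pos)\sim\mathcal{N}(m_\pos(y),\mathcal{C}_\pos)$ by \Cref{thm:feldman--hajek}; moreover, by \cite[Corollary 4.9]{PartI} (recalled after \eqref{eqn:class_approx_covariance}) and the fact that equivalence of Gaussians with a common mean depends only on their covariances, $\mathcal{N}(0,\mathcal{C}_\pr-KK^*)\sim\mathcal{N}(0,\mathcal{C}_\pos)$. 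Transitivity then yields $\mathcal{N}(Ay,\mathcal{C}_\pr-KK^*)\sim\mu_\pos(y)$, so every divergence below is finite. Taking the free mean $m$ in the Pythagorean identity to be $0$ and using \eqref{eqn:kullback_leibler_divergence},
\begin{align*}
	D_\kl(\mathcal{N}(Ay,\mathcal{C}_\pr-KK^*)\Vert\mu_\pos(y)) &= D_\kl(\mathcal{N}(Ay,\mathcal{C}_\pos)\Vert\mathcal{N}(m_\pos(y),\mathcal{C}_\pos))\\
	&\quad+ D_\kl(\mathcal{N}(0,\mathcal{C}_\pr-KK^*)\Vert\mathcal{N}(0,\mathcal{C}_\pos)),
\end{align*}
where the first summand depends only on $A$ and $y$, and the second only on $K$ and is independent of $y$.

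I would then take the expectation over $Y$ (the second summand is constant in $Y$) and minimise over the product set $\mathscr{M}^{(i)}_r\times\mathscr{C}_r$, both factors being nonempty. Since the objective is a sum of a function of $A$ alone and a function of $K$ alone and the feasible set is a Cartesian product, its infimum equals the sum of the infima over the two factors. By \eqref{eqn:loss_for_means}, $\mathbb{E}[D_\kl(\mathcal{N}(AY,\mathcal{C}_\pos)\Vert\mathcal{N}(m_\pos(Y),\mathcal{C}_\pos))]=\mathbb{E}[\norm{AY-m_\pos(Y)}_{\mathcal{C}_\pos^{-1}}^2]$, so the $A$-infimum is exactly \Cref{prob:optimal_mean}, attained at $A^{\opt,(i)}_r$ by \Cref{thm:optimal_low_rank_update_mean_approx} ($i=1$, $\gamma(1)=3$) and \Cref{thm:optimal_low_rank_mean_approx} ($i=2$, $\gamma(2)=1$), with minimal value $\sum_{j>r}(\tfrac{-\lambda_j}{1+\lambda_j})^{\gamma(i)}$; the $K$-infimum is the $y$-independent quantity of \Cref{prob:optimal_covariance}, attained at $\mathcal{C}^\opt_r$ by \Cref{thm:opt_covariance_and_precision}. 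Evaluating the Pythagorean identity at the pair $(A^{\opt,(i)}_r,\mathcal{C}^\opt_r)$ shows $\mathbb{E}[D_\kl(\mathcal{N}(A^{\opt,(i)}_rY,\mathcal{C}^\opt_r)\Vert\mathcal{N}(m_\pos(Y),\mathcal{C}_\pos))]$ equals the sum of these two minimal values, hence equals the joint minimum; inserting the explicit losses from \Cref{thm:opt_covariance_and_precision} and \Cref{thm:optimal_low_rank_mean_approx,thm:optimal_low_rank_update_mean_approx} gives the formula in the statement.

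For uniqueness, the product structure again suffices: a pair $(A,\mathcal{C})$ attains the joint minimum if and only if $A$ attains the minimum of the $A$-term and $\mathcal{C}$ attains the minimum of the $K$-term, since each term is bounded below by its own infimum and equality in the sum forces equality in each. Hence the joint minimiser is unique if and only if both the optimal mean operator and the optimal covariance are unique, and by \Cref{thm:optimal_low_rank_mean_approx,thm:optimal_low_rank_update_mean_approx,thm:opt_covariance_and_precision} each of these is unique exactly when $\lambda_{r+1}=0$ or $\lambda_r<\lambda_{r+1}$, which therefore characterises uniqueness of $(A^{\opt,(i)}_r,\mathcal{C}^\opt_r)$.

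I do not expect a serious obstacle: once the Pythagorean identity (already derived in the text) is available, the argument is the elementary observation that a separable objective over a product domain is minimised coordinatewise. The one point requiring care is confirming that the identity is licensed, i.e.\ that $\mathcal{N}(Ay,\mathcal{C}_\pr-KK^*)\sim\mu_\pos(y)$ for every $A\in\mathscr{M}^{(i)}_r$, every admissible $K$, and every $y$; this is precisely what \Cref{prop:mean_approximation_problem_equivalence} and \cite[Corollary 4.9]{PartI} supply.
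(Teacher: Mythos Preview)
Your proposal is correct and follows essentially the same approach as the paper: the argument in the paper is entirely contained in the paragraph preceding the proposition, namely the Pythagorean-like identity for the reverse KL divergence and the observation that the resulting separable objective over the product set $\mathscr{M}^{(i)}_r\times\mathscr{C}_r$ can be minimised coordinatewise via \Cref{thm:opt_covariance_and_precision} and \Cref{thm:optimal_low_rank_mean_approx,thm:optimal_low_rank_update_mean_approx}. You supply somewhat more detail than the paper does, in particular the explicit verification of equivalence needed to license the identity and the product-structure argument for the uniqueness claim, but the strategy is identical.
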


	The choice of the user-specified truncation parameter $r$ in \Cref{prop:optimal_joint_approximation}, \Cref{thm:optimal_low_rank_update_mean_approx,thm:optimal_low_rank_mean_approx}, and \Cref{cor:optimal_mean_for_amari_and_hellinger}, may depend on the specific inverse problem that is considered. Usually, $r$ can be chosen small due to the rapid decay of the prior-preconditioned Hessian, c.f.\ \cite{Bui-Thanh2013}. Clearly, $r\leq \rank{H}\leq n$, since the choice $r\leftarrow \rank{H}$ recovers the exact posterior. We now discuss some guidelines for choosing $r$ in \Cref{prop:optimal_joint_approximation}. One may choose $r$ based on a spectral cutoff criterion, in which $r$ is taken as the smallest integer such that $\lambda_{r+1}<\varepsilon$ or $\lambda_{r+1}/\lambda_1<\varepsilon$ for some chosen threshold $\varepsilon>0$. Alternatively, one may exploit that only finitely many $\lambda_j$ are nonzero by \Cref{prop:bayesian_feldman_hajek}, and bound the optimal error in \Cref{prop:optimal_joint_approximation} according to 
\begin{align*}
	\sum_{j>r}^{}f_\kl\left(\frac{-\lambda_j}{1+\lambda_j}\right) + \frac{1}{2}\left( \frac{-\lambda_j}{1+\lambda_j} \right)^{\gamma(i)}
	\leq (n-r)\left[f_{\kl}(-\lambda_{r+1}/(1+\lambda_{r+1}))+\frac{1}{2}(-\lambda_{r+1}/(1+\lambda_{r+1}))^{\gamma(i)}\right],
\end{align*}
for $i=1,2$. The right-hand side decreases in $r$ and can be made smaller than a chosen tolerance by choosing $r$ large enough. Furthermore, by \eqref{eqn:prior_preconditioned_Hessian} and by the functional calculus, the optimal error for $r=0$ satisfies
\begin{align*}
	\sum_{j\geq 0}^{}f_\kl\left(\frac{-\lambda_j}{1+\lambda_j}\right) + \frac{1}{2}\left( \frac{-\lambda_j}{1+\lambda_j} \right)^{\gamma(i)}
	= \tr{\omega^{(i)}(\mathcal{C}_\pr^{1/2} H \mathcal{C}_\pr^{1/2})}.
\end{align*}
Here, the function $\omega^{(i)}(x)\coloneqq f_{\kl}(x)+\frac{1}{2}x^{\gamma(i)}$ is analytic on a compact interval of $(-1,0]$ containing $(\lambda_j)_j$. By the definitions \eqref{eqn:class_approx_means} and \eqref{eqn:class_approx_covariance}, the optimal error for $r=0$ corresponds to the average reverse KL divergence $\mathbb{E}[D_{\kl}(\mu_\pos(Y)\Vert\mu_\pr)]$ between the prior and posterior. In a discretised setting, so-called `stochastic Lanczos quadrature' can be used to approximate $\tr{\omega^{(i)}(\mathcal{C}_\pr^{1/2} H \mathcal{C}_\pr^{1/2})}$ efficiently, see \cite{Ubaru2017}. Then, $r$ can be chosen to approximately control the reduction in average reverse KL divergence relative to the prior, which is given by
\begin{align*}
	\frac{\sum_{j>r}^{}f_\kl\left(\frac{-\lambda_j}{1+\lambda_j}\right) + \frac{1}{2}\left( \frac{-\lambda_j}{1+\lambda_j} \right)^{\gamma(i)}}{\sum_{j\geq 0}^{}f_\kl\left(\frac{-\lambda_j}{1+\lambda_j}\right) + \frac{1}{2}\left( \frac{-\lambda_j}{1+\lambda_j} \right)^{\gamma(i)}}
	=
	\frac{ \tr{\omega^{(i)}(\mathcal{C}_\pr^{1/2} H\mathcal{C}_\pr^{1/2})} - \sum_{j\leq r}^{}f_\kl\left(\frac{-\lambda_j}{1+\lambda_j}\right) + \frac{1}{2}\left( \frac{-\lambda_j}{1+\lambda_j} \right)^{\gamma(i)}}{\tr{\omega^{(i)}(\mathcal{C}_\pr^{1/2} H\mathcal{C}_\pr^{1/2})}}.
\end{align*}
Similar arguments can be applied for the choice of $r$ for the optimal posterior mean approximations and the corresponding losses of \Cref{thm:optimal_low_rank_update_mean_approx,thm:optimal_low_rank_mean_approx} and \Cref{cor:optimal_mean_for_amari_and_hellinger}, and the optimal posterior covariance approximations of \Cref{thm:opt_covariance_and_precision}. Recall that the optimal error of \Cref{prop:optimal_joint_approximation} consists of the contributions $\sum_{j>r}^{}f_\kl(-\lambda_j/(1+\lambda_j))$ and $\sum_{j>r}^{}\frac{1}{2}( -\lambda_j(1+\lambda_j))^{\gamma(i)}$ of the posterior covariance and the posterior mean approximations, respectively. Thus, these relative contributions can be balanced by choosing separate truncation parameters for the mean and covariance. Finally, we mention that the approximation errors in the different losses can be balanced against computational costs and storage costs, depending on user-defined computational objectives.


%

\section{Characterisation through optimal projection}
\label{sec:optimal_projector}

Let $P_r\in\B(\mathcal{H})$ be a projector of rank at most $r$, i.e $(P_r)^2=P_r$ and $\rank{P_r}\leq r$. Then $GP_r\in\B_{00,r}(\mathcal{H})$ and we consider the Bayesian inverse problem
\begin{align}
	\label{eqn:projected_observation_model}
	Y = GP_rX + \zeta,\quad \zeta\sim\mathcal{N}(0,\mathcal{C}_\obs),
\end{align}
where again $X\sim\mu_{\pr}=\mathcal{N}(0,\mathcal{C}_\pr)$.
This problem only differs from \Cref{sec:formulation} in the replacement of the forward map $G$ by $GP_r$. As before, we denote by $y$ an arbitrary realisation of $Y$. Let $\mu_{P_r,\pos}(y)=\mathcal{N}(m_{P_r,\pos}(y),\mathcal{C}_{P_r,\pos})$ be the posterior distribution corresponding to \eqref{eqn:projected_observation_model} and $\mu_\pr=\mathcal{N}(0,\mathcal{C}_{\pr})$. Because $GP_r$ is continuous, it follows from \cite[Theorem 6.31]{Stuart2010} that $\mu_{P_r,\pos}(y)\sim\mu_\pr\sim\mu_\pos(y)$, where $\mu_\pos(y)$ is the posterior distribution of the full observation model \eqref{eqn:observation_model}. 
For the chosen value of $r$ and $i=1,2$, let $\mu^{\opt,(i)}_{\pos,r}(y)=\mathcal{N}(m^{\opt,(i)}_{\pos,r}(y),\mathcal{C}^\opt_r)$ denote the data-averaged optimal posterior approximation of $\mu_\pos(y)$ obtained in \Cref{sec:optimal_joint_approximation}. Thus, $\mathcal{C}^{\opt}_r$ is given by \Cref{thm:opt_covariance_and_precision} and $m^{\opt,(i)}_{\pos,r}(y)=A^{\opt,(i)}_ry$ is given by \Cref{thm:optimal_low_rank_update_mean_approx} for $i=1$ and \Cref{thm:optimal_low_rank_mean_approx} for $i=2$. \Cref{prop:optimal_joint_approximation}, \eqref{eqn:pos_mean} applied with $G$ replaced by $GP_r$, and the definition of $\mathscr{M}_r^{(2)}$ in \eqref{eqn:class_approx_mean_without_covariance_update}, imply for $i=2$ that $\mathbb{E}\left[D_\kl(\mu_{P_r,\pos}(Y)\Vert\mu_{\pos}(Y))\right] \geq \mathbb{E}\left[D_\kl(\mu^{\opt,(i)}_{\pos,r}(Y)\Vert \mu_\pos(Y))\right]$. For $i=2$, we show that this lower bound is attained, that is, there exists a suitable choice $P^\opt_r$ of $P_r$ such that for every realisation $y$ we have $\mu_{P^\opt_r,\pos}(y)=\mu^{\opt,(2)}_{\pos,r}(y)$. The proof is given in \Cref{subsec:proofs_for_optimal_projector}.

\begin{restatable}{proposition}{optimalProjector}
	\label{prop:optimal_projector}
	Let $r\leq n$ and $(\lambda_i,w_i)_i$ be as in \Cref{prop:bayesian_feldman_hajek}. With $P^\opt_r\in\B(\mathcal{H})$ defined by $P^\opt_r\coloneqq\sum_{i=1}^{r} (\mathcal{C}_\pr^{1/2}w_i)\otimes (\mathcal{C}_\pr^{-1/2}w_i)$, it holds that $P^\opt_r$ is a projector of rank at most $r$, and that the Bayesian inverse problem \eqref{eqn:projected_observation_model} for $P_r\leftarrow P^\opt_r$ and for an arbitrary realisation $y$ of $Y$ has posterior distribution $\mathcal{N}(A^{\opt,(2)}_ry,\mathcal{C}^\opt_r)$, where $\mathcal{C}^\opt_r$ is a solution of \Cref{prob:optimal_covariance} as given by \eqref{eqn:optimal_covariance}, and $A^{\opt,(2)}_r$ is a solution to \Cref{prob:optimal_mean} for $i=2$.
\end{restatable}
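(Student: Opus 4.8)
The plan is to establish the three assertions of \Cref{prop:optimal_projector} --- that $P$ is a projector with $\rank{P}\le r$, that the posterior covariance $\mathcal{C}_{P,\pos}$ of the projected problem \eqref{eqn:projected_observation_model} equals $\mathcal{C}^\opt_r$, and that its posterior mean $m_{P,\pos}(y)$ equals $A^{\opt,(2)}_r y$ --- by reducing the last two to a single computation, that of the bounded operator $\Psi\coloneqq(GP)^*\mathcal{C}_\obs^{-1/2}=P^*G^*\mathcal{C}_\obs^{-1/2}\in\B(\R^n,\H)$. Throughout I use that $w_i\in\ran{\mathcal{C}_\pr^{1/2}}$ (\Cref{prop:bayesian_feldman_hajek}), so that each $\mathcal{C}_\pr^{-1/2}w_i$ is a well-defined vector, $P$ and $P^*=\sum_{i=1}^{r}(\mathcal{C}_\pr^{-1/2}w_i)\otimes(\mathcal{C}_\pr^{1/2}w_i)$ are bounded and finite-rank, and the orthogonality relation $\langle\mathcal{C}_\pr^{1/2}w_i,\mathcal{C}_\pr^{-1/2}w_j\rangle=\langle w_i,w_j\rangle=\delta_{ij}$ holds, which follows from self-adjointness of the bounded operator $\mathcal{C}_\pr^{1/2}$ and the identity $\mathcal{C}_\pr^{1/2}\mathcal{C}_\pr^{-1/2}w_j=w_j$. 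Combining this relation with the rank-one composition rule $(a\otimes b)(c\otimes d)=\langle b,c\rangle(a\otimes d)$, a direct computation gives $P^2=P$, while $\rank{P}\le r$ is immediate; note that $P$ is in general not self-adjoint, so \eqref{eqn:projected_observation_model} is a genuinely oblique projection of the forward model.

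The key step is to compute $\Psi$. Using $\mathcal{C}_\pr^{-1/2}\mathcal{C}_\pr^{1/2}=I$ on $\H$ and $\ran{\mathcal{C}_\pr^{1/2}G^*\mathcal{C}_\obs^{-1/2}}\subset\ran{\mathcal{C}_\pr^{1/2}}=\dom{\mathcal{C}_\pr^{-1/2}}$, I factor $\Psi=P^*\mathcal{C}_\pr^{-1/2}\bigl(\mathcal{C}_\pr^{1/2}G^*\mathcal{C}_\obs^{-1/2}\bigr)$, substitute the singular value decomposition \eqref{eqn:svd_preconditioned_hessian}, and use that $P^*\mathcal{C}_\pr^{-1/2}w_i$ equals $\mathcal{C}_\pr^{-1/2}w_i$ for $i\le r$ and $0$ for $i>r$ (by the orthogonality relation), together with $r\le n$, to obtain $\Psi=\sum_{i=1}^{r}\sqrt{\tfrac{-\lambda_i}{1+\lambda_i}}\,(\mathcal{C}_\pr^{-1/2}w_i)\otimes\varphi_i$.

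For the covariance, I apply the precision identity \eqref{eqn:pos_precision} with the forward model $GP\in\B(\H,\R^n)$. Since $\mu_{P,\pos}(y)\sim\mu_\pr$, the reasoning following \eqref{eqn:update_equations} applies verbatim and yields $\mathcal{C}_{P,\pos}^{-1}=\mathcal{C}_\pr^{-1}+(GP)^*\mathcal{C}_\obs^{-1}(GP)=\mathcal{C}_\pr^{-1}+\Psi\Psi^*$ with domain $\ran{\mathcal{C}_\pr}$, the perturbation $\Psi\Psi^*$ being bounded. Using $\langle\varphi_i,\varphi_j\rangle=\delta_{ij}$ and the composition rule, $\Psi\Psi^*=\sum_{i=1}^{r}\tfrac{-\lambda_i}{1+\lambda_i}(\mathcal{C}_\pr^{-1/2}w_i)\otimes(\mathcal{C}_\pr^{-1/2}w_i)$, so that $\mathcal{C}_{P,\pos}^{-1}=\mathcal{C}_\pr^{-1}+\sum_{i=1}^{r}\tfrac{-\lambda_i}{1+\lambda_i}(\mathcal{C}_\pr^{-1/2}w_i)\otimes(\mathcal{C}_\pr^{-1/2}w_i)$, which is the precision $(\mathcal{C}^\opt_r)^{-1}$ of \Cref{thm:opt_covariance_and_precision}; hence $\mathcal{C}_{P,\pos}=\mathcal{C}^\opt_r$. (Equivalently, a Sherman--Morrison--Woodbury computation, in which the relevant Gram matrix is the identity by the orthogonality relation, gives directly $\mathcal{C}_{P,\pos}=\mathcal{C}_\pr-\sum_{i=1}^{r}(-\lambda_i)(\mathcal{C}_\pr^{1/2}w_i)\otimes(\mathcal{C}_\pr^{1/2}w_i)$, which is $\mathcal{C}^\opt_r$ by \eqref{eqn:optimal_covariance}.) For the mean, $m_{P,\pos}(y)=\mathcal{C}_{P,\pos}(GP)^*\mathcal{C}_\obs^{-1}y=\mathcal{C}^\opt_r\,\Psi\,\mathcal{C}_\obs^{-1/2}y$, where I used $\mathcal{C}_\obs^{-1}=\mathcal{C}_\obs^{-1/2}\mathcal{C}_\obs^{-1/2}$ with $\mathcal{C}_\obs^{-1/2}$ bounded and self-adjoint on $\R^n$. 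Evaluating $\Psi\mathcal{C}_\obs^{-1/2}y=\sum_{i=1}^{r}\sqrt{\tfrac{-\lambda_i}{1+\lambda_i}}\langle\varphi_i,\mathcal{C}_\obs^{-1/2}y\rangle\,\mathcal{C}_\pr^{-1/2}w_i$, then applying $\mathcal{C}^\opt_r$ via $\mathcal{C}^\opt_r\mathcal{C}_\pr^{-1/2}w_i=(1+\lambda_i)\mathcal{C}_\pr^{1/2}w_i$ for $i\le r$ (\Cref{rmk:interpretation_covariance_approx}), and using $\sqrt{\tfrac{-\lambda_i}{1+\lambda_i}}(1+\lambda_i)=\sqrt{-\lambda_i(1+\lambda_i)}$, I obtain $m_{P,\pos}(y)=\mathcal{C}_\pr^{1/2}\bigl(\sum_{i=1}^{r}\sqrt{-\lambda_i(1+\lambda_i)}\,w_i\otimes\varphi_i\bigr)\mathcal{C}_\obs^{-1/2}y$, which is exactly $A^{\opt,(2)}_r y$ from \Cref{thm:optimal_low_rank_mean_approx}.

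The algebra is routine once the orthogonality relation and the rank-one composition rule are available; I expect the only genuine care to be with the domains of the unbounded operators $\mathcal{C}_\pr^{-1/2}$ and $\mathcal{C}_\pr^{-1}$. Specifically, one must check that the factorisation $\Psi=P^*\mathcal{C}_\pr^{-1/2}(\mathcal{C}_\pr^{1/2}G^*\mathcal{C}_\obs^{-1/2})$ is valid as an identity of bounded operators, which relies on $\ran{\mathcal{C}_\pr^{1/2}}=\dom{\mathcal{C}_\pr^{-1/2}}$, and that boundedness of $\Psi\Psi^*$ together with $\dom{\mathcal{C}_\pr^{-1}}=\ran{\mathcal{C}_\pr}$ (established in \Cref{sec:formulation}) legitimately promotes the equality $\mathcal{C}_{P,\pos}^{-1}=(\mathcal{C}^\opt_r)^{-1}$ on $\ran{\mathcal{C}_\pr}$ to $\mathcal{C}_{P,\pos}=\mathcal{C}^\opt_r$.
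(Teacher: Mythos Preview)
Your proposal is correct and follows essentially the same approach as the paper's proof: both compute $P^*G^*\mathcal{C}_\obs^{-1/2}$ via the SVD \eqref{eqn:svd_preconditioned_hessian}, obtain the projected Hessian $\Psi\Psi^*$, match $(\mathcal{C}_{P,\pos})^{-1}$ to $(\mathcal{C}^\opt_r)^{-1}$ via \eqref{eqn:pos_precision}, and then verify the mean formula. The only cosmetic difference is that for the mean you invoke $\mathcal{C}^\opt_r\mathcal{C}_\pr^{-1/2}w_i=(1+\lambda_i)\mathcal{C}_\pr^{1/2}w_i$ from \Cref{rmk:interpretation_covariance_approx}, whereas the paper expands $\mathcal{C}^\opt_r$ explicitly and multiplies through; the computations are equivalent.
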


In the finite-dimensional setting, it is shown in \cite[Corollary 3.2]{Spantini2015} that the posterior covariance corresponding to the model \eqref{eqn:projected_observation_model} agrees with the solution of \Cref{prob:optimal_covariance} for the choice of $P^\opt_r$ given in \Cref{prop:optimal_projector}. \Cref{prop:optimal_projector} generalises this to infinite dimensions and adds an analogous statement for the posterior mean of model \eqref{eqn:projected_observation_model}: the exact posterior mean of the projected problem \eqref{eqn:projected_observation_model} with $P_r\leftarrow P^\opt_r$ as in \Cref{prop:optimal_projector} is equal to the optimal low-rank structure-ignoring posterior mean approximation given by \Cref{thm:optimal_low_rank_mean_approx}.

From the analogue of \eqref{eqn:pos_mean} with $G$ replaced by $GP^\opt_r$ we immediately see that the posterior mean is a linear transformation of the data $y$ by an operator of rank at most $r$. Since $A^{\opt,(1)}_r$ given in \Cref{thm:optimal_low_rank_update_mean_approx} does not in general have rank at most $r$, it follows that $A^{\opt,(1)}_r y$ cannot be obtained as the posterior mean of model \eqref{eqn:projected_observation_model} for any $P^\opt_r\in\B_{00,r}(\mathcal{H})$.

For $W_r$ defined in \eqref{eqn:subspace_of_maximal_variance_reduction}, the likelihood-informed subspace $\ran{P^\opt_r}=\mathcal{C}_\pr(W_r)$ defined at the end of \Cref{sec:equivalence_and_divergences_between_gaussian_measures} is a one-to-one transformation of $W_r$. Recall from \Cref{prop:variance_reduction} and the discussion following it that $W_r$ is the $r$-dimensional subspace which reduces the prior variance the most in relative terms, among all $r$-dimensional subspaces of $\H$. By \Cref{rmk:interpretation_covariance_approx} and \Cref{lemma:interpretation_optimal_means}, it holds that $\mathcal{C}^\opt_r=\mathcal{C}_\pos$ on $W_r$ and $P_{W_r}A^{\opt,(2)}_r y = P_{W_r}m_\pos(y)$ for every realisation $y$ of $Y$, where $P_{W_r}$ denotes the orthogonal projector onto $W_r$. Furthermore, $\mathcal{C}^\opt_r=\mathcal{C}_\pr$ on $W_{-r}$ and $P_{W_{-r}}A^{\opt,(2)}_r y = P_{W_{-r}}m_{\pr}$, where $P_{W_{-r}}$ denotes the orthogonal projector onto the subspace $W_{-r}$ defined in \eqref{eqn:complement_subspace_of_maximal_variance} and $m_{\pr}=0$ is the prior mean.
Thus, the optimal joint approximation with structure-ignoring approximate mean yields the exact posterior measure for the projected inverse problem in which the data is only used to inform $W_r$.

\section{Examples}
\label{sec:examples_short}

In this section we consider two typical ill-posed inverse problems to illustrate the proposed framework. We identify the prior-preconditioned Hessian \eqref{eqn:prior_preconditioned_Hessian} and its non-self adjoint square root \eqref{eqn:svd_preconditioned_hessian} in terms of the functions occurring in the forward problem and the prior. After discretising these expressions, matrix-free methods such as Krylov or Lanczos algorithms and randomized parallel schemes can be used to efficiently approximate the corresponding truncated rank-r SVD; see e.g.\ \cite{Flath2011,Bui-Thanh2012b,Saad2003}. With the $r$ leading eigendirections, the optimal projector $P^\opt_r$ in \Cref{prop:optimal_projector} can then be constructed, yielding the projected Bayesian inverse problem \eqref{eqn:projected_observation_model} which contains the essential posterior information. Further details and explanations are provided in \Cref{sec:examples}.

\begin{example}[Deconvolution]
	\label{ex:deconvolution}
	Let $\H=L^2([0,1])$ and let $\kappa:[0,1]^2\rightarrow\R$ be square integrable. We consider a convolution operator $T_k$ on $\H$ with kernel $\kappa$. That is,
	$(T_\kappa h )(t) = \int_0^1 \kappa (t,s) h(s)\d s$, for $h\in\H$ and for almost every $t\in[0,1]$.
	The unknown $x^\dagger\in L^2([0,1])$ is convolved by $T_\kappa$, and needs to be recovered using the $n$ data points $y_i = \int_{t_i}^{t_{i+1}} (T_\kappa x^\dagger)(s) \gamma (s)\d s+ \zeta_i$, where $\gamma\in \mathcal{H}$ is known, $\zeta_i$ is i.i.d standard Gaussian, and $0\leq t_1<\cdots <t_{n+1}\leq 1$.
	Under suitable assumptions on $\kappa$, we have $\kappa(s,t) = \sum_{i=1}^{\infty}b_i f_i(s)f_i(t)$, where $(b_i)_i$ is a nonnegative zero-sequence and $(f_i)_i$ is an ONB of $\H$ consisting of bounded functions. 

	In the Bayesian perspective, we endow $x^\dagger$ with a prior distribution, which is taken to be $\mathcal{N}(0,\mathcal{C}_\pr)$ with $\mathcal{C}_\pr=\sum_{i}^{}c_i f_i\otimes f_i$ for some $c\in\ell^2( (0,\infty))$. 
	Then, the problem can be cast in the form \eqref{eqn:observation_model} and the operators \eqref{eqn:svd_preconditioned_hessian} and \eqref{eqn:prior_preconditioned_Hessian} take the form, for $z\in\R^n$,
	\begin{align*}
		\mathcal{C}_\pr^{1/2}G^*\mathcal{C}_\obs^{-1/2}z&= \sum_{i=1}^{n}\sum_{j}^{}z_ib_jc_ja_{j,i}f_j,\quad
		\mathcal{C}_\pr^{1/2} H\mathcal{C}_\pr^{1/2} =  \sum_{j,k}^{}d_{k,j}f_k\otimes f_j. 
	\end{align*}
	The coefficients $d_{k,j}=b_jc_jb_kc_k\sum_{i=1}^{n}\langle f_j,1_{[t_i,t_{i+1}]}\gamma\rangle\langle f_k,1_{[t_i,t_{i+1}]}\gamma\rangle$ and the orthonormal sequence $(f_j)_j$ are explicitly known and depend on the choice of prior via $(c_i)_i$ and on the forward model via $(f_k)_k$, $(b_i)_i$ and $\gamma$.
\end{example}

\begin{example}[Inferring the initial condition of the heat equation]
	\label{ex:heat_equation}
	Suppose the temperature field $(x,t)\mapsto u(x,t)$ on $(0,1)\times[0,T]$ solves the heat equation
	\begin{align*}
		\partial_t u - \partial_{xx} u &=  0, &&\text{in } (0,1)\times(0,T),\\
		u(\cdot,0) &= x^\dagger, \quad &&\text{on } (0,1),\\
		u(0,\cdot) =u(1,\cdot) &= 0, && \text{on }(0,T].
	\end{align*}
	The true initial state $x^\dagger$ is unknown and needs to be estimated from noisy observations of $u$ at $(x_i,t_i)_{i=1}^n\subset(0,1)\times(0,T]$. We assume i.i.d.\ standard Gaussian noise. This problem is similar to \cite[Example 3.5]{Stuart2010} and \cite[Section 4.2]{Flath2011}. However, in this example we do not observe the entire spatial temperature profile, but observe at finitely many fixed spatial locations, and we consider periodic instead of Dirichlet boundary conditions. 

	The Laplacian can be expressed as $\Delta h=-\sum_{i}^{}a_i \langle h,e_i\rangle e_i$ for any $h\in\dom{\Delta}=\{h\in L_2((0,1)):\ \sum_{i}^{}a_i^2\langle h,e_i\rangle^2<\infty\}$, where $a_i=i^2\pi^2$ and $e_i(x)=\sqrt{2}\sin(i\pi x)$.	
	We take the Bayesian perspective by considering $x^\dagger$ as an $\H$-valued random variable $X\sim\mathcal{N}(0,\mathcal{C}_\pr)$ with $\mathcal{C}_\pr=(-\Delta)^{-s}$ for some $s>\frac{1}{2}$ as in \cite{Stuart2010}.
	We can then formulate the problem in the form \eqref{eqn:observation_model}, and the operators \eqref{eqn:svd_preconditioned_hessian} and \eqref{eqn:prior_preconditioned_Hessian} can be expressed as, for $z\in\R^n$,
	\begin{align*}
		\mathcal{C}_\pr^{1/2}G^*\mathcal{C}_\obs^{-1/2}z &= \sum_{i=1}^{n}\sum_{k}^{}z_ia_k^{-s/2}\exp(-t_ia_k)e_k(x_i)e_k,\quad
		\mathcal{C}_\pr^{1/2}H\mathcal{C}_\pr^{1/2} =  \sum_{j,k}^{}d_{j,k}e_k\otimes e_j,
	\end{align*}
	where $d_{j,k}=\sum_{i=1}^{n}a_j^{-s/2}\exp(-t_ia_j)a_k^{-s/2}\exp(-t_ia_k)e_j(x_i)e_k(x_i)$ are explicitly available. 
\end{example}

\section{Numerical example}
\label{sec:numerical_example}
To verify several aspects of the theory developed in this work, we consider a numerical implementation of a linear Gaussian inverse problem governed by the parabolic heat equation. 
We introduce the inverse problem and its discretisation in \Cref{sec:heat_equation_example_formulation_and_discretisation}, and study the low-rank approximations as a function of the rank $r$ and of the discretisation dimension in \Cref{sec:numerical_results}.

\subsection{Formulation and discretisation}
\label{sec:heat_equation_example_formulation_and_discretisation}
We consider the inverse problem studied in \cite{Sanz-Alonso2024}, in which the initial condition of the heat equation is inferred based on noisy and partial observations of the final state. This inverse problem is similar to the one described in \Cref{ex:heat_equation} of \Cref{sec:examples_short}. The main differences are the choice of prior covariance operator, the choice of observation operator, and the dimension of the physical domain. 

The parameter space is given by $\H=L^2(\mathcal{D})$ with a two-dimensional smooth spatial domain $\mathcal{D}$. As in \Cref{ex:heat_equation}, the goal is to infer the initial condition $X$ of the heat equation
\begin{align}
	\label{eqn:heat_equation}
	\begin{aligned}
		\partial_t u - \Delta u &=  0, &&\text{in } \mathcal{D}\times(0,T),\\
		u &= X, \quad &&\text{on } \mathcal{D}\times\{t=0\},\\
		u &= 0, && \text{on }\partial\mathcal{D}\times(0,T],
	\end{aligned}
\end{align}
where we have imposed homogeneous Dirichlet boundary conditions on the boundary $\partial\mathcal{D}$. The observation $y$ arises by integrating the solution field $u(\cdot,T)$ at the final time $T$ against $n$ indicator functions $\psi_i\in \H, i=1,\ldots,n$. We take $\psi_i=\abs{B_\delta(s^i)}^{-1}1_{B_\delta(s^i)}$, i.e.\ $\psi_i$ is given by the indicator of a ball of radius $\delta$ centered at $s^i=(s^i_1,s^i_2)$, and is scaled to have unit $\H$-norm. The forward model $G\in\B(\H,\R^n)$ is thus given by $X\mapsto (\langle u(\cdot,T),\psi_i\rangle)_{i=1}^n$, where $u$ solves \eqref{eqn:heat_equation}. Let us denote by $\mathcal{F}\in\B(\H)$ the solution operator of the heat equation that sends the initial condition to the solution at the final time. Furthermore, let $\mathcal{O}\in\B(\H,\R^n)$ denote the observation map $h\mapsto (\langle h,\psi_i\rangle)_{i=1}^n$. Then we have $G=\mathcal{O}\circ \mathcal{F}$, which corresponds to the forward model considered in \cite[Example  2.2]{Sanz-Alonso2024}, noting that we have interchanged the notation of $G$ and $\mathcal{F}$.

The prior covariance is chosen as in \cite[Example 2.1]{Sanz-Alonso2024} to be  $\mathcal{C}_\pr = \mathcal{A}^{-\alpha}$, where $\alpha\in2\N$ and $\mathcal{A}:\dom(\mathcal{A})\subset\H\rightarrow \H$ is given by $Au = \nabla{\cdot(\Theta\nabla u)} + bu$. That is, $\mathcal{C}_\pr = (\nabla{\cdot(\Theta\nabla(\cdot))} + bI)^{-\alpha}$. The domain of $\mathcal{A}$ is given by $\dom{\mathcal{A}}=H^2(\mathcal{D})\cap H^1_0(\mathcal{D})$, where for $k\in\N$ the linear space $H^k(\mathcal{D})$ consists of the functions in $L^2(\mathcal{D})$ that have $k$ square-integrable weak derivatives, and $H^1_0(\mathcal{D})$ consists of the functions in $H^1(\mathcal{D})$ that vanish at $\partial\mathcal{D}$ in the sense of traces, see \cite[Section 1.3]{Quarteroni1994}.
The functions $\Theta,b:\mathcal{D}\rightarrow(0,\infty)$ are smooth enough and positive to ensure ellipticity of the operator $\mathcal{A}$ and the trace-class property of $\mathcal{C}_\pr$. The choice of $\alpha$ regulates the smoothness of draws from the Gaussian prior. We refer to \cite{Sanz-Alonso2024,Stuart2010} for further details.

The parameter space $\mathcal{H}$ and the prior distribution $\mu_\pr$ are approximated using a sequence of approximation spaces $\mathcal{V}_d\subset\mathcal{H}$ with $\dim{\mathcal{V}_d}=d<\infty$. The application of the prior covariance corresponds to solving a PDE, and thus the prior can be discretised by Galerkin projection onto $\mathcal{V}_d$. Indeed, the application of $\mathcal{C}_\pr$ to a function $h\in \H$ amounts to solving the following $\alpha$ elliptic PDEs, stated in weak formulation:
\begin{align*}
	\text{for each }1\leq j\leq\alpha, \text{ find }u_j\in H^1_0(\mathcal{D})\text{ s.t. }\int_\mathcal{D} (\Theta\nabla{u}_j\cdot\nabla{p} +b up)\d{x} = \int_{\mathcal{D}}^{} h_j p\d{x},\quad\text{for all }p\in H^1_0(\mathcal{D}).
\end{align*}
Here $h_\alpha=h$ and $h_j = u_{j+1}$ for $1\leq j<\alpha$. 
The forward model $G$ is discretised by discretising the heat equation \eqref{eqn:heat_equation} via a Galerkin projection onto $\mathcal{V}_d$ and a Crank--Nicolson discretisation in time with step size $\Delta{t}$. The space $\mathcal{V}_d$ is chosen as a subspace of $H^1_0(\mathcal{D})$ based on piecewise linear Lagrangian finite elements. We refer the reader to \cite[Section 2.3.1]{Sanz-Alonso2024} for more details on the discretisation. 

We denote by $\mathcal{F}_{(d,\Delta{t})}\in\B(\mathcal{V}_d)$, $\mathcal{O}_d\in\B(\mathcal{V}_d,\R^n)$, $G_{(d,\Delta{t})}\coloneqq\mathcal{O}_d\circ\mathcal{F}_{(d,\Delta{t})}$, $\mathcal{C}_{\pr,d}\in\B(\mathcal{V}_d)$ the discretised counterparts to $\mathcal{F}$, $\mathcal{O}$, $G$, and $\mathcal{C}_\pr$, respectively. The posterior distribution corresponding to the discretised inverse problem on $\mathcal{V}_d$ with forward model $G_{(d,\Delta{t})}$ and with prior $\mathcal{N}(0,\mathcal{C}_{\pr,d})$ is denoted by $\mu_{\pos,(d,\Delta{t})}(y) = \mathcal{N}(m_{\pos,(d,\Delta{t})}(y),\mathcal{C}_{\pos,(d,\Delta{t})})$. Let us also denote by $Q_d:\H\rightarrow\mathcal{V}_d$ the orthogonal projector onto $\mathcal{V}_d$ with codomain restricted to $\mathcal{V}_d$. Then the discretised posterior mean $m_{\pos,(d,\Delta{t})}(y)$ and posterior covariance $\mathcal{C}_{\pos,(d,\Delta{t})}$ provide approximations $Q_d^*m_{\pos,(d,\Delta{t})}(y)$ and $Q_d^*\mathcal{C}_{\pos,(d,\Delta{t})}Q_d$ of the exact posterior mean $m_\pos(y)$ and posterior covariance $\mathcal{C}_\pos$. In \cite[Sections 3.1 and 3.2]{Sanz-Alonso2024}, it is proven under suitable conditions that the discretisation of the inverse problem is consistent, in the sense that
\begin{align*}
	\norm{m_{\pos}(y) - Q_d^* m_{\pos,(d,\Delta{t})}(y)}\rightarrow 0,\quad\norm{\mathcal{C}_\pos - Q_d^*\mathcal{C}_{\pos,(d,\Delta{t})}Q_d}\rightarrow 0, \quad \text{ as } d\rightarrow\infty,\ \Delta{t}\rightarrow 0.
\end{align*}
Analogously, the infinite-dimensional formulation of the optimal low-rank posterior approximations developed in \Cref{sec:optimal_approximation_covariance,sec:optimal_approximation_mean,sec:optimal_joint_approximation,sec:optimal_projector} enables one to study the consistency of discretisations of these optimal approximations. This endeavor goes beyond the scope of the current work, however, and in the next section we shall instead consider a numerical implementation of the above inverse problem with the described discretisation.

\subsection{Numerical results}
\label{sec:numerical_results}
\graphicspath{ {./numerical_example} }

In this section, we describe some numerical simulations\footnote{Simulations are performed in Python 3.10 using Dolfinx v.0.10.0.post2 \cite{Baratta2023,Alnaes2014}, petsc4py v.3.15.1 \cite{Balay2025}, and slepc4py v.3.15.1 \cite{Hernandez2005,Dalcin2011}. Images are made using Paraview v.6.0.1 \cite{Paraview}.} for the inverse problem described in \Cref{sec:heat_equation_example_formulation_and_discretisation} and analyse the numerical results. We choose specific values for the constants in \Cref{sec:heat_equation_example_formulation_and_discretisation}, and the experiments we run are described in \Cref{sec:experiment_description}. The results of these experiments are presented in \Cref{sec:posterior_information,sec:spectral_decay,sec:optimal_approximations_for_varying_rank,sec:perturbed_optimal_approximations}.

\subsubsection{Experiment description}
\label{sec:experiment_description}
The physical domain is chosen to be the unit square $\mathcal{D}=(0,1)^2$ with boundary $\partial\mathcal{D}=[0,1]^2\setminus{(0,1)^2}$. For the prior, we take $\alpha=2$, $\Theta=1$ and $b=1$. That is, the application of the square root of the prior covariance $\mathcal{C}_\pr^{1/2}$ to a vector $h\in\H$ is given by the solution $v\in H^1_0(\mathcal{D})$ of the elliptic PDE $(-\Delta+I)v = h$ with homogeneous Dirichlet boundary conditions. For the forward problem, we take $T=1.5\cdot 10^{-3}$ as the final time at which the observations are made. We choose $n=400$ observables $\psi_i=\abs{B_{\delta}(s^i)}^{-1}1_{B_\delta(s^i)}, 1\leq i\leq n$, with centers $(s^i)_i$ that are uniformly spaced inside of $\mathcal{D}$, as shown in \Cref{fig:observables}. The radius $\delta=0.02$ is small enough such that the supports of the $(\psi_i)_i$ do not overlap. We use a true parameter value $x^\dagger$ given by 
\begin{align*}
	x^\dagger(s_1,s_2) \coloneqq 1_{\{s_1 + s_2 > 1.3\}} + 0.2  \sin(3\pi s_1)\sin(2\pi s_2),
\end{align*}
shown in \Cref{fig:ground_truth}. Thus, $x^\dagger$ is the sum of a discontinuous function with nonzero boundary conditions and a smooth function vanishing at the boundary. The noiseless data $Gx^\dagger$ is discretised using a finer discretisation $(d,\Delta{t})=(10^6,10^{-6})$ than used for our experiments to address the issue of inverse crimes, c.f.\ \cite[Section 1.2]{Kaipio2005}. A data vector $y=Gx^\dagger+\zeta^\dagger$ is generated using a random draw $\zeta^\dagger$ from the noise distribution $\mathcal{N}(0,\mathcal{C}_\obs)$. Here, the covariance $\mathcal{C}_\obs$ is a randomly chosen self-adjoint and positive matrix. Furthermore, $\mathcal{C}_\obs$ is scaled in such a way that draws from the noise distribution are of slightly smaller order than the noiseless data $G x^\dagger$ corresponding to the ground truth $x^\dagger$, i.e.\ $\tr{\mathcal{C}_\obs}\approx\norm{Gx^\dagger}/10$.

\begin{figure}
	\centering
	\begin{subfigure}[t]{0.32\textwidth}
		\centering
		\includegraphics[width=\linewidth]{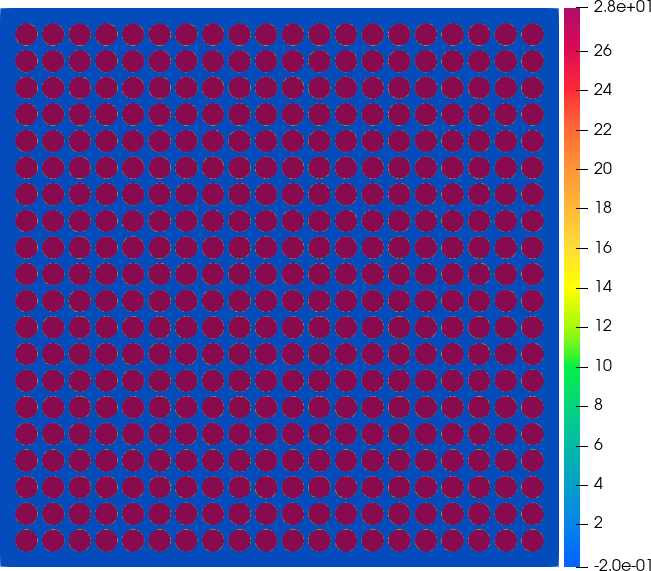}
		\caption{Observables ($n=400$)}
		\label{fig:observables}
	\end{subfigure}
	\begin{subfigure}[t]{0.32\textwidth}
		\centering
		\includegraphics[width=\linewidth]{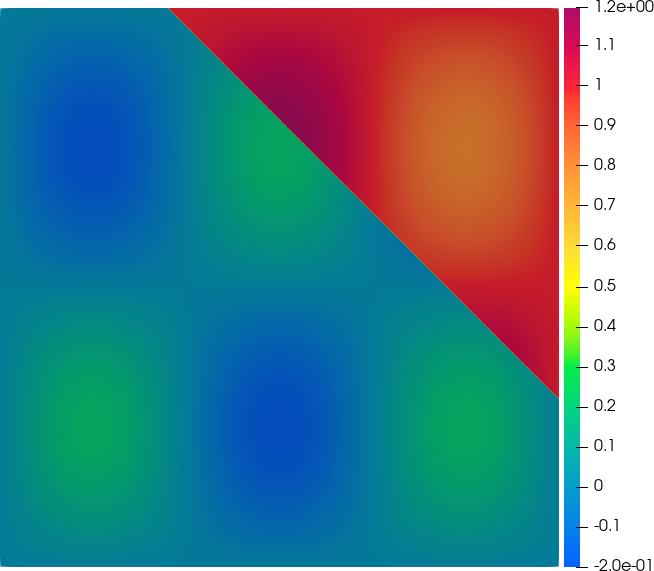}
		\caption{Ground truth}
		\label{fig:ground_truth}
	\end{subfigure}
	\begin{subfigure}[t]{0.32\textwidth}
		\centering
		\includegraphics[width=\linewidth]{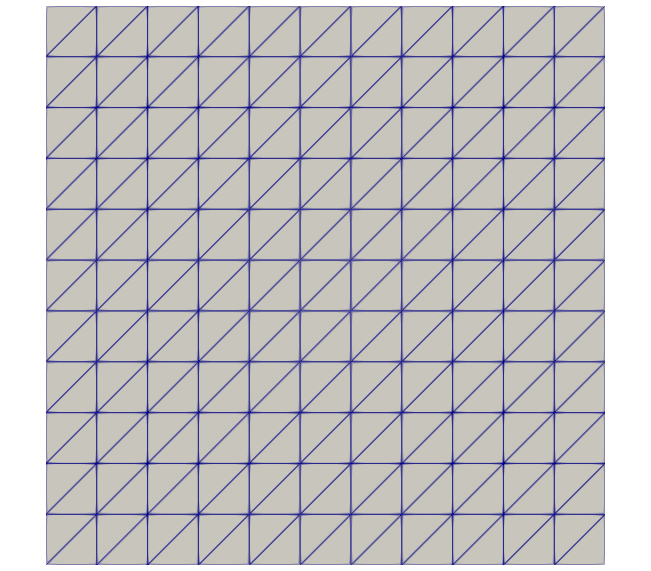}
		\caption{Example mesh for $d=10^2$}
		\label{fig:mesh}
	\end{subfigure}
	\caption{Experiment setup. (a) The observables $\psi_i$ for $i=1,\ldots,n=400$. (b) The chosen ground truth $x^\dagger$ for the initial condition of the heat equation. (c) A triangulated mesh corresponding to $d=10^2$. }
	\label{fig:experiment_setup}
\end{figure}

In our experiments, the dimension $d$ of the finite element space $\mathcal{V}_d\subset H^1_0(\mathcal{D})$ and the time step $\Delta t$ of the Crank--Nicolson discretisation are varied. We choose a triangular mesh, see \Cref{fig:mesh} for an example for $d=10^2$. Thus, the mesh size $h$ of the spatial discretisation is related to $d$ via $h = \sqrt{2}/(\sqrt{d}+1)$.
We shall approximate the limit $(d,\Delta t)\rightarrow(\infty,0)$ by increasing spatial and temporal refinement levels, and then compare the discretised posteriors corresponding to different values of $(d,\Delta{t})$. Furthermore, we shall also fix the refinement level and instead vary the truncation parameter $r$. 


In the discretisation of the heat equation $\mathcal{F}$, we relate the choice of $\Delta{t}$ to the choice of $d$. By \cite[Theorem 7.7]{Thomee2006} applied to a temporal discretisation with the Crank--Nicolson scheme and to a spatial discretisation with piecewise linear Lagrangian finite elements, which are are second-order accurate in time and space respectively, we have the error bound $\norm{u(T)-u_h(T)} \leq c_1 h^2 + c_2\Delta{t}^2.$ The constants $c_1$ and $c_2$ depend on $T^{-1}$ and on the $\H$-norm of the initial condition. Balancing the spatial and time discretisation errors by choosing $\Delta{t}^2/h^2$ constant will therefore control the error in $L^2(\mathcal{D})$ norm. However, even with this choice of $\Delta{t}$, the Crank--Nicolson discretisation in time is known to result in oscillatory behaviour of the numerical solution, for nonsmooth initial conditions and at small times, see \cite{Osterby2003} and the third bullet point in \cite[Section 9.9]{LeVeque2007}. To mitigate this oscillatory behaviour of the discretised solution for small $t$, we choose a smaller time step, as suggested by \cite{Osterby2003}. We choose $d\Delta{t}=O(1)$. The choice $\Delta{t} = T / \max(1,\lfloor Td\rfloor)$ ensures $\Delta{t}\leq T$, $T/\Delta{t}\in\N$, and $d\Delta{t}=O(1)$. Since $\Delta{t}$ is now chosen as a function of $d$, we simply write $\mu_{\pos,d}$ instead of $\mu_{\pos,(d,\Delta{t})}$, and similarly for any other discretised quantities, c.f.\ \Cref{sec:heat_equation_example_formulation_and_discretisation}.

The reason that choosing a smaller time step size according to $\Delta{t}=O(d^{-1})=O(h^2)$ eliminates oscillatory behaviour near $t=0$ can be seen as follows. The solution of \eqref{eqn:heat_equation} with initial condition $x^\dagger$ is given by $\exp(t\Delta)x^\dagger$, see the details on \Cref{ex:heat_equation} in \Cref{sec:examples}. Denoting an ONB of eigenfunctions of $-\Delta$ by $(e_i)_i$ with corresponding eigenvalues $(a_i)$, it holds that the finite element discretisation in space allows only for those $e_i$ to be resolved that have sufficiently large corresponding eigenvalue $a_i$ compared to the mesh size $h$. Such eigenfunctions with large eigenvalue exhibit high frequency oscillations, and because $x^\dagger$ is not smooth, some of these will contribute significantly when decomposing $x^\dagger$ in the ONB $(e_i)_i$. A Crank--Nicolson discretisation in time evolves such eigenfunctions $e_i$ as $\mathscr{R}(a_i \Delta{t})^ke_i$ at time $k\Delta{t}$, $k\in\N$. Here, $\mathscr{R}(z) \coloneqq (1-z/2)/(1+z/2)$, $z\in\R$, and it holds that $\mathscr{R}(a_i\Delta{t})\approx -1$ if $a_i\Delta{t}$ is large, while $\mathscr{R}(a_i\Delta{t})^k\approx \exp(-a_ik\Delta{t})$ for $a_i\Delta{t}$ small. Therefore, denoting by $a_{d,\max}$ the largest resolved eigenvalue of a finite element approximation of $-\Delta$ using the approximation space $\mathcal{V}_d$, we shall require $a_{d,\max}\Delta{t}$ to be $O(1)$. The inverse estimate $\norm{\nabla{\xi}}^2\leq Ch^{-2}\norm{\xi}^2$, $\xi\in\mathcal{V}_d$, valid for some $C>0$, see \cite[eq.\ (1.12)]{Thomee2006}, yields together with the Courant-Fisher min-max principle \cite[eq.\ (4.13)]{Hsing2015} and an integration by parts, 
\begin{align*}
	a_{d,\max} = \max_{\xi\in\mathcal{V}_d}\frac{\langle -\Delta \xi,\xi\rangle}{\norm{\xi}^2} = \max_{\xi\in\mathcal{V}_d}\frac{\langle \nabla \xi,\nabla\xi\rangle}{\norm{\xi}^2} \leq Ch^{-2}.
\end{align*}
It follows that $a_{d,\max} = O(h^{-2})$, which leads to the choice $\Delta{t} = O(a_{d,\max}^{-1}) = O(h^2)$.

For the discretisation of the observation operator $\mathcal{O}$, integrals against $\psi_i$, $1\leq i\leq n$, are computed via quadrature as suggested in \cite[Example 2.4]{Sanz-Alonso2024}. Such quadrature is accurate with small quadrature degrees (e.g.\ 4) if $\delta$ is not too small compared to $h$. Since we are interested in discretisations for large $d$, this is the case for most of our purposes. However, when we do require $\delta/h\leq 1$, then we increase the quadrature degree, so that we can also approximate the observation operator $\mathcal{O}_d$ with reasonable accuracy for coarse meshes, i.e.\ for relatively small $d$.

The experiments we perform are the following:
\begin{enumerate}
	\item (Posterior information) For a fixed discretisation level $(d,\Delta{t})$, we examine the exact and approximate posterior distributions, by drawing from these distributions.
	\item (Spectral decay) For increasingly fine discretisation levels, we investigate the spectral decay of the operator $R(\mathcal{C}_{\pos,d}\Vert\mathcal{C}_{\pr,d})$ as defined in \eqref{eqn:feldman_hajek_operator}. 	
	\item (Optimal approximations for varying rank) For a fixed discretisation level and increasing values of $r$, we compare reverse KL divergences of Gaussians with identical covariance and with either
		\begin{enumerate}
			\item the full posterior mean,
			\item the optimal structure-ignoring low-rank posterior mean approximation of \Cref{thm:optimal_low_rank_mean_approx},
			\item the optimal structure-preserving low-rank posterior mean approximation of \Cref{thm:optimal_low_rank_update_mean_approx},
			\item the posterior mean of the projected inverse problem of \Cref{prop:optimal_projector}.
		\end{enumerate}
	\item (Perturbed optimal approximations) For increasingly fine discretisation levels, we compare the approximation quality of Gaussians with the posterior covariance and with either
		\begin{enumerate}
			\item the optimal structure-ignoring low-rank posterior mean approximation of \Cref{thm:optimal_low_rank_mean_approx},
			\item a perturbed low-rank posterior mean approximation that lies in the Cameron--Martin space,
			\item a perturbed low-rank posterior mean approximation that lies outside of the Cameron--Martin space.
		\end{enumerate}
\end{enumerate}

\subsubsection{Posterior information}
\label{sec:posterior_information}

For $(d,\Delta{t})=(10^4,10^{-4})$, we first consider draws from the exact and approximate posteriors.
\Cref{thm:opt_covariance_and_precision} suggests a computationally efficient method to approximate draws from the posterior. To motivate this, notice that by \Cref{thm:opt_covariance_and_precision},
\begin{align*}
	\mathcal{C}^\opt_r = \mathcal{C}_\pr - \sum_{i=1}^{r}(-\lambda_i)(\mathcal{C}_\pr^{1/2} w_i)\otimes (\mathcal{C}_\pr^{1/2} w_i) = LL^*,\quad
	L \coloneqq \mathcal{C}_\pr^{1/2}\left( I + \sum_{i=1}^{r}\left(\sqrt{\lambda_i+1}-1\right)w_i\otimes w_i \right).
\end{align*}
This also holds in the discretised setting. Thus, if $v\sim\mathcal{N}(0,I)$ in $\mathcal{V}_d$, then by \Cref{lemma:linear_push_forward_gaussian} $\widehat{v}\coloneqq\mathcal{C}_{\pr,d}^{1/2}v\sim \mathcal{N}(0,\mathcal{C}_{\pr,d})$ and $\widehat{v} + \sum_{i=1}^{r}(\sqrt{\lambda_i+1}-1)\langle \mathcal{C}_{\pr,d}^{-1/2}w_i,\widehat{v}\rangle\mathcal{C}_{\pr,d}^{1/2}w_i = Lv\sim\mathcal{N}(0,\mathcal{C}^\opt_{r,d})$. Thus, we can draw from $\mathcal{N}(0,\mathcal{C}^\opt_{r,d})$ by drawing $\widehat{v}\sim \mathcal{N}(0,\mathcal{C}_{\pr,d})$ in $\mathcal{V}_d$ and then updating $\widehat{v}$ in the $r$ directions $\mathcal{C}_{\pr,d}^{1/2}w_i$, which can be precomputed and stored. Drawing from the discretised prior can be done, for example, by a possibly truncated Karhunen-Lo\`{e}ve expansion, see \cite[Theorem 6.19]{Stuart2010}. Given the discretised optimal rank-$r$ posterior mean approximation $m_{\pos,r,d}^{\opt,(i)}(y)$ for $i=1,2$, the sum $m_{\pos,r,d}^{\opt,(i)}(y)+Lv$ then yields a draw from $\mu_{\pos,r,d}^{\opt,(i)}(y)$. Setting $r\leftarrow n$ yields draws from the full discretised posterior. 

Using this method, we draw the posterior samples shown in \Cref{fig:posterior}. The required draws from the prior are made using a truncated Karhunen--Lo\`eve expansion, where we truncate after 1000 terms. \Cref{fig:pos_draw} shows a draw from the full posterior $\mu_{\pos,d}(y)$ and \Cref{fig:pos_draw_low_rank_sp} and \Cref{fig:pos_draw_low_rank_si} show draws from the optimal rank-$r$ posterior approximations $\mu_{\pos,r,d}^{\opt,(1)}(y)$ and $\mu_{\pos,r,d}^{\opt,(2)}(y)$ respectively, for $r=20$. With $r=20$, only a 20-dimensional update of the prior mean and covariance is performed, in a $10^5$-dimensional approximate parameter space. The structure-ignoring posterior mean approximation appears to yield a better approximation than the structure-preserving posterior mean approximation, and in fact it appears to represent the exact posterior draw relatively well. 

\begin{figure}
	\centering
	\begin{subfigure}[t]{0.32\textwidth}
		\centering
		\includegraphics[width=\linewidth]{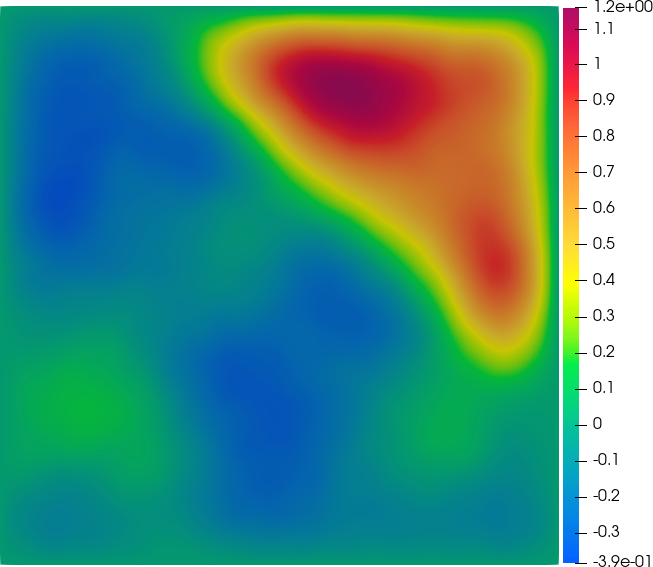}
		\caption{Draw from $\mu_{\pos,d}(y)$}
		\label{fig:pos_draw}
	\end{subfigure}
	\begin{subfigure}[t]{0.32\textwidth}
		\centering
		\includegraphics[width=\linewidth]{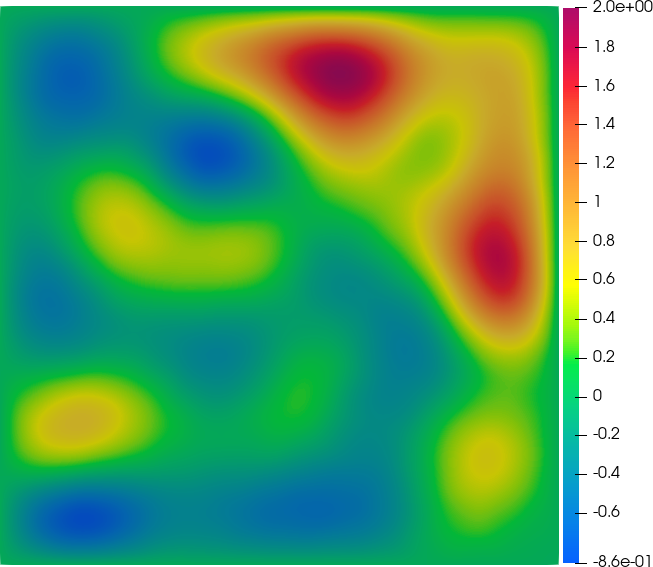}
		\caption{Draw from $\mu_{\pos,r,d}^{\opt,(1)}(y)$}
		\label{fig:pos_draw_low_rank_sp}
	\end{subfigure}
	\begin{subfigure}[t]{0.32\textwidth}
		\centering
		\includegraphics[width=\linewidth]{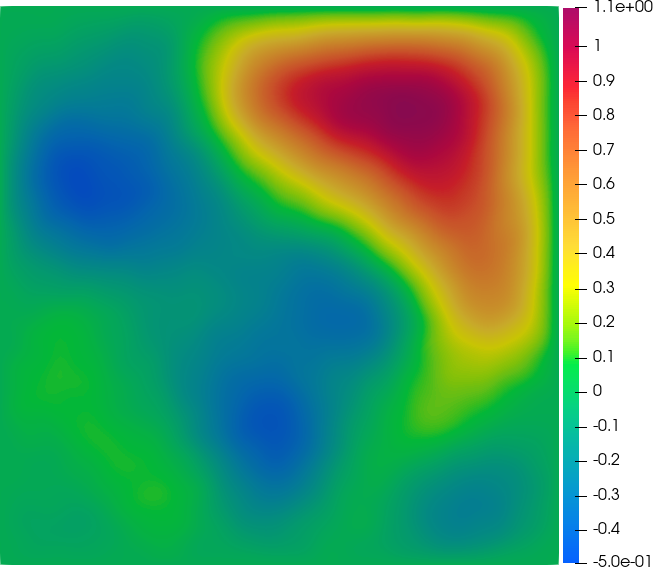}
		\caption{Draw from $\mu_{\pos,r,d}^{\opt,(2)}(y)$}
		\label{fig:pos_draw_low_rank_si}
	\end{subfigure}
	\caption{Posterior draws corresponding to $(d,\Delta{t})=(10^4,10^{-4})$. 
		(a) A draw from the full posterior distribution. 
		(b) A draw from the optimal rank-$r$ posterior distribution with structure-preserving mean with $r=20$.
		(c) A draw from the optimal rank-$r$ posterior distribution with structure-ignoring mean with $r=20$.}
	\label{fig:posterior}
\end{figure}

\subsubsection{Spectral decay}
\label{sec:spectral_decay}

Next, we turn to the low-rank behaviour of $R(\mathcal{C}_\pos\Vert\mathcal{C}_\pr)$ with $R(\cdot\Vert\cdot)$ defined in \eqref{eqn:feldman_hajek_operator}, the operator occurring in the Feldman--Hajek theorem with spectrum in $(-1,0]$, c.f.\ \Cref{prop:bayesian_feldman_hajek}. 
\Cref{fig:spectra_n_400_log} and \Cref{fig:spectra_n_400_first} show the leading part of the spectrum of four discretised versions $-R(\mathcal{C}_{\pos,d}\Vert\mathcal{C}_{\pr,d})$ of $-R(\mathcal{C}_{\pos}\Vert\mathcal{C}_{\pr})$, each corresponding to a different discretisation level $(d,\Delta{t})$. \Cref{fig:spectra} suggests that the spectra of $R(\mathcal{C}_{\pos,d}\Vert\mathcal{C}_{\pr,d})$ become independent of the discretisation for sufficiently fine discretisation, and thus approach the spectrum of the infinite-dimensional formulation of the inverse problem, which is necessary for the numerical consistency of the finite-dimensional posterior distributions $\mu^{\opt,(i)}_{\pos,r,d}\rightarrow \mu^{\opt,(i)}_{\pos,r}$, $i=1,2$. The coarsest discretisation $(d,\Delta{t}) = (10^2, 1.5\cdot 10^{-3})$ seems only to capture the first 50 eigenvalues. This can be both due to too coarse a discretisation or a poor performance of the quadrature used in the computation of the observation operator $\mathcal{O}_d$. We also see that this spectrum is near zero for indices larger than $r=70$, thereby confirming numerically that low-rank behaviour occurs in the infinite-dimensional formulation of the inverse problem. This low-rank behaviour then allows one to construct qualitatively good low-rank approximations via \Cref{thm:opt_covariance_and_precision,thm:optimal_low_rank_mean_approx,thm:optimal_low_rank_update_mean_approx} and \Cref{prop:optimal_joint_approximation,prop:optimal_projector}.

\begin{figure}
	\centering
	\begin{subfigure}[t]{0.48\textwidth}
		\centering
		\includegraphics[width=\linewidth]{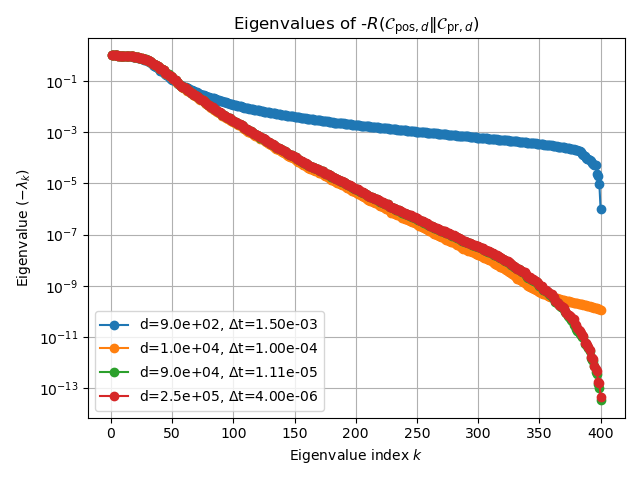}
		\caption{All 400 nonzero eigenvalues of $-R(\mathcal{C}_{\pos,d} || \mathcal{C}_{\pr,d})$}
		\label{fig:spectra_n_400_log}
	\end{subfigure}\hfill
	\begin{subfigure}[t]{0.48\textwidth}
		\centering
		\includegraphics[width=\linewidth]{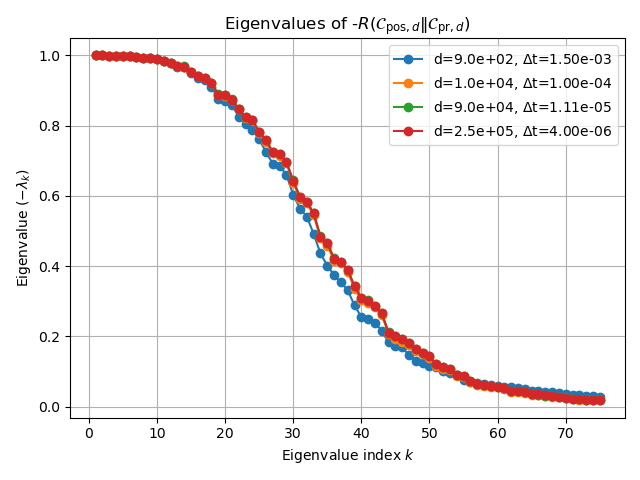}
		\caption{First 75 nonzero eigenvalues of $-R(\mathcal{C}_{\pos,d} || \mathcal{C}_{\pr,d})$}
		\label{fig:spectra_n_400_first}
	\end{subfigure}
	\caption{Spectral decay of different discretisations of the negative Feldman--Hajek operator $-R(\mathcal{C}_\pos\Vert\mathcal{C}_\pr)$ for data dimension $n=400$. (a) Log-linear plot of all nonzero eigenvalues. (b) Linear-linear plot of first 75 eigenvalues.}
	\label{fig:spectra}
\end{figure}

\subsubsection{Optimal approximations for varying rank}
\label{sec:optimal_approximations_for_varying_rank}

\Cref{prop:optimal_projector} states that the optimal low-rank posterior approximation $\mu_{\pos,r}^{\opt,(2)}$ with structure-ignoring posterior mean approximation corresponds to the exact posterior $\mu_{P^\opt_r,\pos}$ of a projected inverse problem. This must hold in particular for any discretisation of the inverse problem. To verify numerically that indeed $\mu_{\pos,r,d}^{\opt,(2)}=\mu_{P^\opt_r,\pos,d}$ holds, we fix a discretisation level $(d,\Delta{t})=(9\cdot 10^2,1.5\cdot 10^{-3})$ and use Monte Carlo sampling with 100 samples from the distribution of the data $Y$ to approximate certain data-averaged KL divergences. We recall that $Y$ has distribution $\mathcal{N}(0,\mathcal{C}_{\y,d})$, where the covariance $\mathcal{C}_{\y,d}$ is defined in \eqref{eqn:data_covariance} with $G$ replaced by $G_d$ and can be represented as a matrix in $\R^{n\times n}$. Note that the choice $\Delta{t}=1.5\cdot 10^{-3}=T$ implies that only one time step is used in the discretisation scheme.

Monte Carlo approximations of the data-averaged KL divergences $\mathbb{E}[D_{\kl}(\mu^{\opt,(2)}_{\pos,r,d}(Y)\Vert \mu_{\pos,d}(Y))]$, $\mathbb{E}[D_{\kl}(\mu_{P^\opt_r,\pos,d}(Y)\Vert \mu_{\pos,d}(Y))]$, and $\mathbb{E}[D_{\kl}(\mu_{P^\opt_r,\pos,d}(Y)\Vert \mu^{\opt,(2)}_{\pos,r,d}(Y))]$, are shown in \Cref{fig:kl_vs_r_full_vs_approx} as a function of $r$. The curves of $\mathbb{E}[D_{\kl}(\mu^{\opt,(2)}_{\pos,r,d}(Y)\Vert \mu_{\pos,d}(Y))]$ and $\mathbb{E}[D_{\kl}(\mu_{P^\opt_r,\pos,d}(Y)\Vert \mu_{\pos,d}(Y))]$ overlap, and $\mathbb{E}[D_{\kl}(\mu_{P^\opt_r,\pos,d}(Y)\Vert \mu^{\opt,(2)}_{\pos,r,d}(Y))]$ is of the order of numerical error for all $r$. Since the KL divergence is nonnegative and vanishes only between identical measures, this is consistent with the assertion that $\mu^{\opt,(2)}_{\pos,r,d}(y)=\mu_{P^\opt_r,\pos,d}(y)$ holds for all realisations $y$ of $Y$ in a set of probability 1. This verifies the statement $\mu^{\opt,(2)}_{\pos,r,d}=\mu_{P^\opt_r,\pos,d}$ implied by \Cref{prop:optimal_projector}. 

\Cref{fig:kl_vs_r_full_vs_approx} also shows that the average reverse KL divergence is around five orders of magnitude smaller when using the posterior approximation $\mu^{\opt,(2)}_{\pos,r,d}$ with $r\approx 50$ compared to using $r=0$, i.e.\ compared to using the prior to approximate the posterior. In \Cref{fig:kl_vs_r_si_vs_sp}, we compare the performance of $\mu^{\opt,(i)}_{\pos,r,d}$ for $i=1$ (structure-preserving) and $i=2$ (structure-ignoring). We see that for $r<28$ the structure-ignoring approximation performs better, while for $r\geq 28$ the structure-preserving approximation performs slightly better. This is consistent with \Cref{fig:spectra_n_400_first} and the discussion after \Cref{cor:optimal_mean_for_amari_and_hellinger}, which predict that the optimal structure-preserving mean approximation is better for $r\geq 33$, since $-\lambda_i<\frac{1}{2}$ for all $i\geq 33$.

\begin{figure}
	\centering
	\begin{subfigure}[t]{0.48\textwidth}
		\centering
		\includegraphics[width=\linewidth]{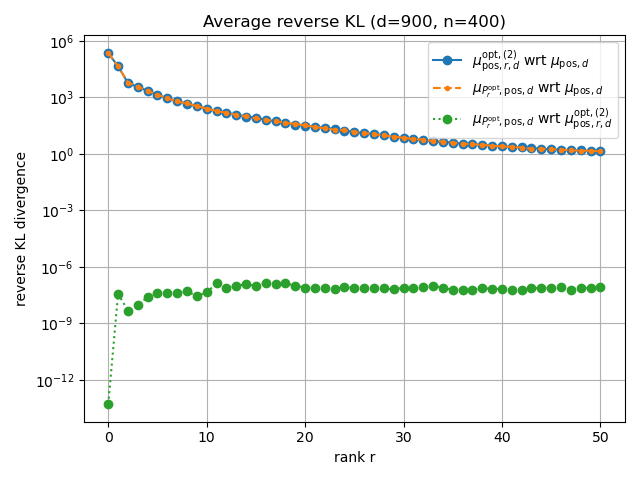}
		\caption{KL divergence-based comparison of structure-ignoring posterior and projection-based posterior}
		\label{fig:kl_vs_r_full_vs_approx}
	\end{subfigure}\hfill
	\begin{subfigure}[t]{0.48\textwidth}
		\centering
		\includegraphics[width=\linewidth]{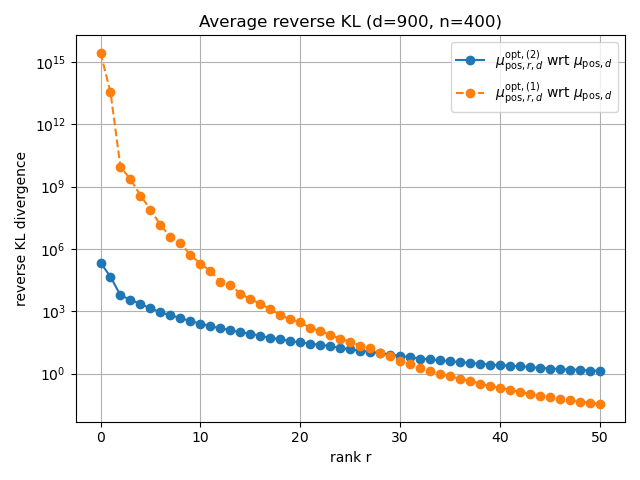}
		\caption{KL divergence-based comparison of structure-ignoring posterior and structure-preserving posterior}
		\label{fig:kl_vs_r_si_vs_sp}
	\end{subfigure}
	\caption{Monte Carlo averages of KL divergences computed using 100 samples of $Y$ versus the truncation parameter $r=0,\ldots,50$, at discretisation level $(d,\Delta{t})=(9\cdot 10^2, 1.5\cdot 10^{-3})$. 
		(a) The divergences shown are $\mathbb{E}[D_{\kl}(\mu_{\pos,r,d}^{\opt,(2)}(Y)\Vert \mu_\pos(Y))]$, $\mathbb{E}[D_{\kl}(\mu_{P^\opt_r,\pos,d}(Y)\Vert \mu_\pos(Y))]$ and $\mathbb{E}[D_{\kl}(\mu_{P^\opt_r,\pos,d}(Y)\Vert \mu_{\pos,r,d}^{\opt,(2)}(Y))]$. 
		(b) The divergences shown are $\mathbb{E}[D_{\kl}(\mu_{\pos,r,d}^{\opt,(i)}(Y)\Vert \mu_\pos(Y))]$ for $i=1$ (structure-preserving) and $i=2$ (structure-ignoring).}
	\label{fig:kl_vs_r}
\end{figure}

\subsubsection{Perturbed optimal approximations}
\label{sec:perturbed_optimal_approximations}

Finally, we compare the posterior mean $m_{\pos}(Y)$ with perturbations $m_{\pos,r}^{(2),\omega} (Y)$ of the optimal structure-ignoring rank-$r$ posterior mean approximation $m_{\pos,r}^{\opt,(2)}(Y)$, where $\omega\in\H$ denotes a vector that we shall use to generate perturbations. For this, we consider discretisations $m_{\pos,d}$ of $m_{\pos}$ and $m_{\pos,r,d}^{(2),\omega}$ of $m_{\pos,r}^{(2),\omega}$. Then, we can compute the average Cameron--Martin norm 
\begin{align}
	\label{eqn:discretised_mean_loss}
	\mathbb{E}\Norm{\mathcal{C}_{\pos,d}^{-1/2} \left(m_{\pos,r,d}^{(2),\omega}(Y) - m_{\pos,d}(Y)\right)}^2.
\end{align}
By \eqref{eqn:divergences}, this average Cameron--Martin norm is equal to the average reverse KL divergence between $\mathcal{N}(m_{\pos,d}(Y),\mathcal{C}_{\pos,d})$ and $\mathcal{N}(m_{\pos,r,d}^{(2),\omega}(Y),\mathcal{C}_{\pos,d})$, and also to the average forward KL divergences and average R\'enyi-$\rho$ divergences between said Gaussians, for $\rho\in(0,1)$.

We shall consider perturbations that with positive probability yield mutually singular approximations of both the exact posterior $\mu_\pos(Y)$ and the Gaussian with approximated posterior mean $\mathcal{N}(A_r^{\opt,(2)}Y,\mathcal{C}_\pos)$. This is possible, since $\H$ is not finite-dimensional. To do this, we consider a vector $\omega\in\H$ and perturb the optimal rank-$r$ posterior mean given in \Cref{thm:optimal_low_rank_mean_approx} by $\sqrt{-\lambda_i/(1+\lambda_i)}\langle\mathcal{C}_\obs^{-1/2}\varphi_1,Y\rangle \omega$. The resulting perturbed posterior mean approximation $m_{\pos}^{r,(2),\omega}$ is still a rank-$r$ linear transformation of the data, and is given by
\begin{align}
	\nonumber
	m_{\pos}^{r,(2),\omega}(y) &\coloneqq A^\omega_ry,\quad y\in\R^n,\\
	\label{eqn:perturbed_low_rank_mean_approximation}
	A^\omega_r &\coloneqq \sum_{i=1}^{r} \sqrt{\frac{-\lambda_i}{1+\lambda_i}}(\mathcal{C}_\pr^{1/2}w_i) \otimes (\mathcal{C}_\obs^{-1/2}\varphi_i) + \sqrt{\frac{-\lambda_1}{1+\lambda_1}}\omega\otimes (\mathcal{C}_\obs^{-1/2}\varphi_1).
\end{align}
As mentioned in the paragraph above, the posterior covariance $\mathcal{C}_{\pos,d}$ is not perturbed. We make four choices of $\omega$:
\begin{enumerate}
	\item
		$\omega=0$, so that $m_\pos^{r,(2),\omega} = m_{\pos}^{r,(2)}$,
	\item
		$\omega=1$, so that while $\omega$ is smooth, it does not satisfy the Dirichlet boundary conditions and hence $m_\pos^{r,(2),\omega}\not\in H^1_0(\mathcal{D})$,
	\item
		$\omega(s)=\textup{dist}(s,\partial\mathcal{D})^{\beta}$ for some $0<\beta<\frac{1}{2}$, so that while $\omega$ satisfies the Dirichlet boundary conditions, it is not sufficiently smooth and hence $m_\pos^{r,(2),\omega}\not\in H^1_0(\mathcal{D})$,
	\item
		$\omega(s) = \sin(\pi s_0)\sin(\pi s_1)$, so that $\omega\in H^1_0(\mathcal{D})$.
\end{enumerate}
Note that $s\mapsto \textup{dist}(s,\partial\mathcal{D})^\beta$ is equal to $s_1^\beta$ on $\mathcal{D}_0\coloneqq\{s\in\mathcal{D}:\ s_1<s_2,1-s_1>s_2\}$, and its derivative on $\mathcal{D}_0$ has norm $\beta s_1^{\beta-1}$, which is not square integrable on $\mathcal{D}_0$ for $0<\beta<\frac{1}{2}$. Thus, $\norm{\textup{dist}(\cdot,\partial\mathcal{D})^\beta}_{ H^1(\mathcal{D})} \geq \norm{\textup{dist}(\cdot,\partial\mathcal{D})^\beta}_{ H^1(\mathcal{D}_0)}  =\infty$, showing that $s\mapsto \textup{dist}(s,\partial\mathcal{D})^\beta$ does indeed not belong to $H^1_0(\mathcal{D})$.

If $\omega\not\in H^1_0(\mathcal{D})$, then $\omega$ does not lie in the Cameron--Martin space $\ran{\mathcal{C}_\pos^{1/2}}$, since $\ran{\mathcal{C}_\pos^{1/2}}=\ran{\mathcal{C}_\pr^{1/2}}=\dom{(-\Delta+I)}  = H^2(\mathcal{D})\cap H^1_0(\mathcal{D})\subset H^1_0(\mathcal{D})$ as sets, c.f.\ \Cref{sec:heat_equation_example_formulation_and_discretisation}. By \Cref{prop:mean_approximation_problem_equivalence}\ref{item:mean_approximation_problem_equivalence_1}, $A^\omega_r\not\in \mathscr{M}_r^{(2)}$ for such choices of $\omega$, for $\mathscr{M}_r^{(2)}$ defined in \eqref{eqn:class_approx_mean_without_covariance_update}. By definition of $\mathscr{M}_r^{(2)}$, $\mathcal{N}(A^\omega_r y,\mathcal{C}^\opt_r)$ then is mutually singular with respect to $\mu_\pos(y)$ for $y$ in a set of positive probability under the distribution $\mathcal{N}(0,\mathcal{C}_\y)$ of $Y$, with $\mathcal{C}_\y$ defined in \eqref{eqn:data_covariance}. Hence $\mathbb{E}[D_{\kl}( \mathcal{N}(A^\omega_r Y,\mathcal{C}_\pos)\Vert \mu_\pos(Y))]=\infty$. After discretisation, this average reverse KL divergence becomes
\begin{align}
	\label{eqn:perturbed_reverse_kl_mean}
	\mathbb{E}[D_{\kl}( \mathcal{N}(A^\omega_{r,d} Y,\mathcal{C}_{\pos,d})\Vert \mu_{\pos,d}(Y))],
\end{align}
with $A^\omega_{r,d}$ the discretised version of $A^\omega_r$ defined in \eqref{eqn:perturbed_low_rank_mean_approximation}, and we recall that \eqref{eqn:perturbed_reverse_kl_mean} is equal to \eqref{eqn:discretised_mean_loss}.
The average reverse KL divergence \eqref{eqn:perturbed_reverse_kl_mean} in the discretised setting is finite, but should grow to infinity as the discretisation is refined. We verify this in \Cref{fig:kl_vs_d}, where for $r=30$ a Monte Carlo approximation of the expected reverse KL divergence for each of the four perturbations is shown. For the perturbations obtained with $\omega=1$ or with the distance function with $\beta=0.3$ (labeled ``$\omega=\text{dist}^\beta$''), we see that once the discretisation is fine enough to resolve high-frequency components, the average KL divergence indeed blows up when the discretisation is refined further. Instead, for the smooth perturbation (labeled ``$\omega=\sin(\pi s_0)\sin(\pi s_1)$''), the average KL divergence remains bounded from above as the discretisation is refined, and is bounded from below by the zero perturbation, which corresponds to the optimal choice of low-rank structure-ignoring posterior mean approximation (labeled ``$\omega=0$''). We thus see that even in finite dimensions, approximations must be discretisations of smooth enough functions in $L^2(\mathcal{D})$ that also satisfy the boundary conditions, for the approximation quality not to deteriorate under the refinement of discretisation.  This numerically verifies the importance of the infinite-dimensional Cameron--Martin space for constructing low-rank approximations, as was also identified in \Cref{prop:mean_approximation_problem_equivalence}\ref{item:mean_approximation_problem_equivalence_1} by relating the sets of admissible posterior mean approximations $\mathscr{M}^{(i)}_r$ defined in \eqref{eqn:class_approx_means} to this Cameron--Martin space.
\begin{figure}
	\centering
		\hspace{-2cm}
		\includegraphics[width=0.6\textwidth]{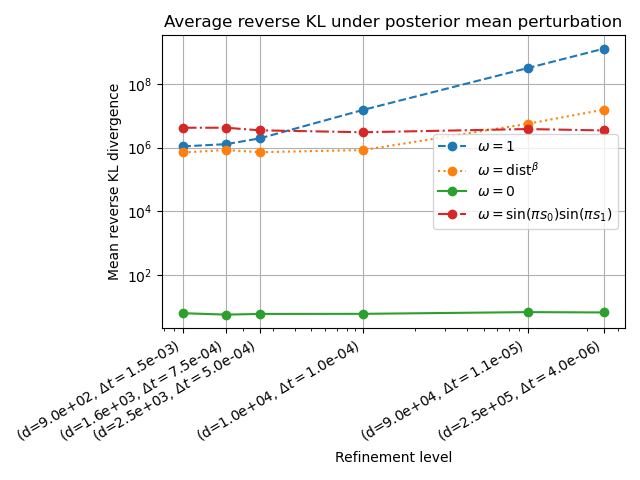}
		\caption{Monte Carlo averages of the KL divergence \eqref{eqn:perturbed_reverse_kl_mean} computed using 100 samples of $Y$ versus the discretisation level. The divergences shown are computed between the exact posterior mean and each of four choices of the perturbation $\omega$ of the optimal structure-ignoring rank-$r$ posterior mean, with $n=400$ and $r=30$.}
	\label{fig:kl_vs_d}
\end{figure}

\section{Conclusion}
\label{sec:conclusion}

This work considers low-rank approximations to linear Gaussian inverse problems on possibly infinite-dimensional separable Hilbert spaces. 
Numerical approximations for such problems transform them into finite-dimensional inverse problems, and optimal low-rank approximations in finite dimensions have been constructed in \cite{Spantini2015}. In order to show that numerical methods give optimal posterior approximations which are consistent with the infinite-dimensional formulation, one needs to formulate and find such optimal approximations on the infinite-dimensional space directly. To the best of our knowledge, the formulation and solution of these optimal approximation problems on infinite-dimensional spaces has not been addressed in the literature.

In this work, we have provided the formulation and solution of the low-rank posterior mean approximation problem directly on infinite-dimensional separable Hilbert spaces. We considered approximations that ignore and preserve the structure of the prior-to-posterior mean update in \Cref{thm:optimal_low_rank_mean_approx} and \Cref{thm:optimal_low_rank_update_mean_approx} respectively.
To quantify the posterior mean approximation quality, we have considered various loss classes. These loss classes consist of divergences between the exact Gaussian posterior and the approximate Gaussian posterior given by an approximate posterior mean and the exact posterior covariance, after averaging over the data distribution. The chosen divergences are the Hellinger distance and the R\'enyi, Amari, and forward and reverse KL divergences. 
These loss classes form a natural extension of the Bayes risk used in finite dimensions in \cite{Spantini2015}, and were used to assess optimality for low-rank approximations to the posterior covariance in \cite{PartI}.

The optimal low-rank posterior mean approximations satisfy the property that the resulting posterior distributions are equivalent to the exact posterior distribution, for any realisation of the data. The optimality of these low-rank posterior mean approximations holds for all of the structure-preserving and structure-ignoring posterior mean approximations which satisfy this equivalence property. Such approximations have been explicitly characterised in terms of range conditions on certain low-rank operators, as shown in \Cref{prop:mean_approximation_problem_equivalence}.

We have also provided a solution to the problem of finding optimal low-rank joint approximations of the posterior mean and covariance with respect to the average reverse KL divergence, using the results of \cite{PartI}, which considers separate posterior covariance approximation without posterior mean approximation. This joint problem is solved by combining the optimal mean approximation and the optimal covariance approximation, as shown in \Cref{prop:optimal_joint_approximation}. If the structure-ignoring posterior mean approximation is considered, we have shown in \Cref{prop:optimal_projector} that the solution to the joint approximation problem can equivalently be found by computing the exact posterior distribution of a linear Gaussian inverse problem with a projected forward model. This projected forward model involves a projection onto a low-dimensional subspace of the parameter space. This subspace is a one-to-one transformation of the subspace which contains the directions for which the ratio of posterior variance and prior variance is smallest, among all subspaces of the same dimension. The range of this projector was already studied in finite dimensions and is also known as the `likelihood-informed subspace'.

By solving the joint low-rank approximation problems and finding the corresponding optimal projection in parameter space, we have provided a perspective for the low-rank approximation problem that encompasses both mean and covariance simultaneously. Furthermore, since it is derived on the infinite-dimensional parameter space, we have shown that the optimal posterior approximation procedure is inherently discretisation independent and dimension independent.

\section*{Use of AI tools}

The large language models ChatGPT by OpenAI and Mistral Chat by Mistral AI were used only to assist in code development. No other AI tools were used in the creation of this manuscript.

\section*{Acknowledgements}
The research of the authors has been partially funded by the Deutsche Forschungsgemeinschaft (DFG) Project-ID 318763901 -- SFB1294. The authors thank Ricardo Baptista (California Institute of Technology) and Youssef Marzouk (Massachusetts Institute of Technology) for mentioning the joint approximation problem, Bernhard Stankewitz (University of Potsdam) for helpful discussions, Remo Kretschmann (University of Potsdam) for useful input on the PDE example, Thomas Mach (University of Potsdam) for constructive suggestions about the manuscript, and Francesco Romor (Weierstrass Institute) and Francesco Carere (Ghent University) for helpful suggestions on the numerical example.

\appendix

\section{Auxiliary results}
\label{sec:theoretical_facts}

In this section we collect some auxiliary results on Hilbert spaces and bounded operators, unbounded operators and Gaussian measures.

\subsection{Hilbert spaces and bounded operators}

\begin{lemma}[{\cite[Lemma A.1]{PartI}}]
	\label{lemma:extension_of_finite_basis_in_dense_subspace}
	Let $\mathcal{H}$ be a separable Hilbert space and $\mathcal{D}\subset\mathcal{H}$ be a dense subspace and $(e_i)_{i=1}^m$ be an orthonormal sequence in $\mathcal{D}$ for $m\in\N$. Then there exists a countable sequence $(d_i)_i\subset \mathcal{D}$ such that $(d_i)_i$ is an ONB of $\mathcal{H}$ and $d_i=e_i$ for $i\leq m$.
\end{lemma}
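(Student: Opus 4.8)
The plan is to build the ONB explicitly by a Gram--Schmidt argument applied inside the dense subspace $\mathcal{D}$. Since $(e_i)_{i=1}^m$ already lies in $\mathcal{D}$, I would set $d_i = e_i$ for $i \leq m$ and only need to extend this finite orthonormal system to a full ONB of $\mathcal{H}$ while staying inside $\mathcal{D}$. The natural obstruction is that the obvious completion of an orthonormal system to an ONB uses arbitrary vectors of $\mathcal{H}$, which need not lie in $\mathcal{D}$; the density of $\mathcal{D}$ is exactly what lets us fix this, but it has to be done with some care so that the resulting system is both orthonormal and \emph{complete}.

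First I would fix a countable dense subset $\{v_k\}_{k\in\N}$ of $\mathcal{D}$ (which exists since $\mathcal{H}$ is separable and $\mathcal{D}$ is dense, so $\mathcal{D}$ is itself separable in the induced topology). Then I would run the Gram--Schmidt process on the sequence $e_1,\ldots,e_m,v_1,v_2,v_3,\ldots$: at each stage, subtract from the current vector its orthogonal projection onto the span of the vectors already chosen, and normalise if the remainder is nonzero (discarding it if it is zero). Because each $e_i$ and each $v_k$ lies in the subspace $\mathcal{D}$, and $\mathcal{D}$ is closed under the finite linear combinations and scalar multiplications used in Gram--Schmidt, every vector produced lies in $\mathcal{D}$. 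Since the first $m$ vectors $e_1,\ldots,e_m$ are already orthonormal, the procedure leaves them unchanged, so $d_i = e_i$ for $i\leq m$, as required. The output $(d_i)_i$ is by construction an orthonormal sequence in $\mathcal{D}$.

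It remains to check completeness, i.e.\ that $\overline{\Span\{d_i\}} = \mathcal{H}$. By construction $\Span\{d_i : i \in \N\} \supseteq \Span\{e_1,\ldots,e_m,v_1,v_2,\ldots\} \supseteq \Span\{v_k : k\in\N\}$, and the latter is dense in $\mathcal{H}$ because $\{v_k\}$ is dense in $\mathcal{D}$, which is dense in $\mathcal{H}$. Hence $\overline{\Span\{d_i\}} = \mathcal{H}$, so $(d_i)_i$ is an ONB of $\mathcal{H}$. Finally, the sequence $(d_i)_i$ is genuinely countable (it is indexed by a subset of $\N$), and it is infinite precisely when $\mathcal{H}$ is infinite-dimensional; if $\dim\mathcal{H} < \infty$ the process terminates and the statement still holds with a finite ONB.

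The only step requiring genuine attention — rather than routine bookkeeping — is ensuring \emph{simultaneously} that the new vectors stay in $\mathcal{D}$ and that nothing in $\mathcal{H}$ is missed; enumerating a countable dense subset of $\mathcal{D}$ and feeding it through Gram--Schmidt after the given $e_i$'s handles both at once. No deeper difficulty is expected.
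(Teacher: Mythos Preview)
Your proof is correct. The paper does not actually prove this lemma; it merely cites it from \cite[Lemma~A.1]{PartI}, so there is no in-paper argument to compare against. Your Gram--Schmidt construction applied to $e_1,\ldots,e_m$ followed by a countable dense subset of $\mathcal{D}$ is the standard approach and handles all the required points: closure of $\mathcal{D}$ under finite linear combinations keeps the output in $\mathcal{D}$, the initial orthonormal block is preserved, and completeness follows from density of the $v_k$.
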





\begin{lemma}[{\cite[Lemma A.4]{PartI}}]
	\label{lemma:positive_is_injective_nonnegative}
	Let $\H$ be a Hilbert space and $A\in\B(\H)$. Then $A>0$ if and only if $A\geq 0$ and $A$ is injective.
\end{lemma}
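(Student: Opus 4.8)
The forward implication I would dispatch first, directly from the definitions: if $A>0$ then $\langle Ah,h\rangle>0$ for every nonzero $h$, so in particular $\langle Ah,h\rangle\geq 0$ for all $h$, i.e.\ $A\geq 0$; and if $Ah=0$ then $\langle Ah,h\rangle=0$, which by strict positivity forces $h=0$, so $A$ is injective. The substance of the statement is therefore the reverse implication, and the plan is to convert the strict positivity of the quadratic form into injectivity of a square root.

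So assume $A\geq 0$ and $A$ injective; I take $A$ to be self-adjoint, which is the case relevant here and is in any event automatic over a complex Hilbert space, since there $\langle Ah,h\rangle\geq 0$ for all $h$ already forces $A=A^*$. Being self-adjoint and nonnegative, $A$ admits a self-adjoint nonnegative square root $A^{1/2}\in\B(\H)_\R$ with $A=A^{1/2}A^{1/2}$. The one auxiliary fact I would record is $\ker{A^{1/2}}=\ker{A}$: the inclusion $\ker{A^{1/2}}\subset\ker{A}$ is immediate from $A=A^{1/2}A^{1/2}$, and conversely $Ah=0$ gives $\norm{A^{1/2}h}^2=\langle A^{1/2}h,A^{1/2}h\rangle=\langle Ah,h\rangle=0$, hence $A^{1/2}h=0$. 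Thus injectivity of $A$ yields injectivity of $A^{1/2}$. Now for an arbitrary nonzero $h\in\H$ we have $A^{1/2}h\neq 0$, so
\begin{align*}
	\langle Ah,h\rangle=\langle A^{1/2}h,A^{1/2}h\rangle=\norm{A^{1/2}h}^2>0,
\end{align*}
which is exactly $A>0$, completing the argument.

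An equivalent route for the reverse implication, should one prefer to avoid the square root, is to note that for self-adjoint nonnegative $A$ the form $(h,k)\mapsto\langle Ah,k\rangle$ is positive semidefinite and hence obeys the Cauchy--Schwarz inequality $\abs{\langle Ah,k\rangle}^2\leq\langle Ah,h\rangle\langle Ak,k\rangle$; then $\langle Ah,h\rangle=0$ implies $\langle Ah,k\rangle=0$ for all $k$, i.e.\ $Ah=0$, and injectivity gives $h=0$, so contrapositively every nonzero $h$ satisfies $\langle Ah,h\rangle>0$. I do not anticipate a genuine obstacle in either version; the only point warranting a line of care is the self-adjointness hypothesis, since both the square-root construction and the Cauchy--Schwarz form argument require $\langle A\,\cdot\,,\cdot\rangle$ to be a Hermitian form in order to pass from $\langle Ah,h\rangle=0$ to $Ah=0$ rather than merely to a statement about the symmetric part of $A$.
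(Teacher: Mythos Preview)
The paper does not supply a proof of this lemma; it is stated in the auxiliary results appendix with a citation to the companion paper \cite{PartI} and no argument given. So there is no in-paper proof to compare your attempt against.

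Your argument is correct in substance. The forward implication is immediate, and for the reverse implication both of your routes (via $\ker A^{1/2}=\ker A$ and via Cauchy--Schwarz for the semidefinite form) are standard and valid. You are also right to flag the self-adjointness issue: the paper's definitions of $A\geq 0$ and $A>0$ do not by themselves require $A$ to be self-adjoint, and over a \emph{real} Hilbert space the reverse implication fails without it (e.g.\ the rotation $A=\begin{psmallmatrix}0&-1\\1&0\end{psmallmatrix}$ on $\R^2$ has $\langle Ah,h\rangle=0$ for all $h$, so $A\geq 0$ and $A$ is injective, yet $A>0$ is false). In the paper the lemma is only applied to covariance operators and their square roots, which are self-adjoint, so the hypothesis is harmless in context; your remark that self-adjointness should be read into the statement is the right way to handle it.
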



\begin{lemma}[{\cite[Theorem 4.3.1]{Hsing2015}}]
	\label{lemma:operator_svd}
	Let $\H,\mathcal{K}$ be Hilbert spaces, and $A\in\B(\H,\mathcal{K})$ be compact. Then $A$ is diagonalisable, that is, there exists an ONB $(h_i)_i$ of $\H$ and an orthonormal sequence $(k_i)_i$ of $\mathcal{K}$ and a nonnegative and nonincreasing sequence $(\sigma_i)_i$ such that $A=\sum_{i}^{}\sigma_i k_i\otimes h_i$.
\end{lemma}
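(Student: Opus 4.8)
The plan is to derive the decomposition from the spectral theorem (Hilbert--Schmidt theorem) for compact self-adjoint operators applied to $A^*A$. First I would observe that $A\in\B_0(\H,\mathcal{K})$ forces $A^*\in\B_0(\mathcal{K},\H)$, hence $T\coloneqq A^*A\in\B_0(\H)$, and $T$ is self-adjoint with $\langle Th,h\rangle=\norm{Ah}^2\ge 0$ for all $h$, so $T$ is nonnegative. The spectral theorem then yields an ONB $(h_i)_i$ of $\H$ consisting of eigenvectors of $T$, say $Th_i=\mu_i h_i$ with $\mu_i\ge 0$; reordering the index set I may assume $(\mu_i)_i$ is nonincreasing, with $\mu_i\to 0$ if there are infinitely many nonzero eigenvalues, by compactness of $T$. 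I then set $\sigma_i\coloneqq\sqrt{\mu_i}$.

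Next I would produce the left singular vectors. For every index $i$ with $\sigma_i>0$, define $k_i\coloneqq\sigma_i^{-1}Ah_i\in\mathcal{K}$; then $\langle k_i,k_j\rangle=(\sigma_i\sigma_j)^{-1}\langle A^*Ah_i,h_j\rangle=(\sigma_i\sigma_j)^{-1}\mu_i\langle h_i,h_j\rangle=\delta_{ij}$, so $(k_i)_{i:\sigma_i>0}$ is orthonormal. For indices with $\sigma_i=0$ one has $\norm{Ah_i}^2=\langle Th_i,h_i\rangle=\mu_i=0$, so $Ah_i=0$ and the corresponding summand vanishes; if an honest orthonormal sequence indexed over all $i$ is wanted, I would extend $\{k_i:\sigma_i>0\}$ to an orthonormal sequence in $\mathcal{K}$ using separability, with the obvious convention when $\mathcal{K}$ has dimension too small for this, in which case the extra indices simply do not contribute to the sum. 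To verify $A=\sum_i\sigma_i k_i\otimes h_i$, take $h\in\H$, write $h=\sum_i\langle h,h_i\rangle h_i$, and use continuity of $A$: $Ah=\sum_i\langle h,h_i\rangle Ah_i=\sum_{i:\sigma_i>0}\sigma_i\langle h,h_i\rangle k_i=\left(\sum_i\sigma_i k_i\otimes h_i\right)h$, the series converging in $\mathcal{K}$ because its partial sums equal $A$ applied to the partial sums of the expansion of $h$.

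The main obstacle is the spectral theorem for compact self-adjoint operators itself, which I would either cite or establish: the crux is that $\norm{T}$ or $-\norm{T}$ is an eigenvalue of $T$, obtained by taking a maximising sequence for $h\mapsto\langle Th,h\rangle$ on the closed unit ball, passing to a weakly convergent subsequence, and using compactness of $T$ to upgrade to norm convergence of the images; one then splits off the corresponding eigenspace and induces on its orthogonal complement, checking that the operator restricted to $\bigcap_i(\ker{T-\mu_i I})^\perp$ has zero norm. The rest is bookkeeping: ordering the eigenvalues into a nonincreasing sequence, choosing any ONB of $\ker{A}=\ker{T}$ to furnish the eigenvectors for the eigenvalue $0$ so that $(h_i)_i$ is a genuine ONB of $\H$, and reconciling the index-set conventions for $(k_i)_i$.
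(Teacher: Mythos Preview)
Your argument is correct and is the standard derivation of the singular value decomposition from the spectral theorem for compact self-adjoint operators. Note, however, that the paper does not prove this lemma at all: it is listed among the auxiliary results in the appendix and is simply cited from \cite[Theorem 4.3.1]{Hsing2015}, so there is no ``paper's own proof'' to compare against. Your approach is precisely the one underlying that reference (and essentially every textbook treatment), so nothing further is needed.
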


\begin{lemma}[{\cite[Proposition VI.1.8]{conway_course_2007}}]
	\label{lemma:kernel_range}
	Let $\mathcal{H}$, $\mathcal{K}$ be Hilbert spaces and $A\in\mathcal{B}(\mathcal{H},\mathcal{K})$. Then $\ker{A}=\ran{A^*}^\perp$ and $\ker{A}^\perp=\overline{\ran{A^*}}$.
\end{lemma}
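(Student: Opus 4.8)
The plan is to prove the two displayed identities in order, obtaining the second from the first by taking orthogonal complements.

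First I would verify $\ker{A}=\ran{A^*}^\perp$ by a direct chain of equivalences. Fix $x\in\mathcal{H}$. Then $x\in\ker{A}$ if and only if $Ax=0$, which, since a vector of a Hilbert space vanishes precisely when it is orthogonal to every vector, holds if and only if $\langle Ax,k\rangle=0$ for all $k\in\mathcal{K}$. By the defining property of the adjoint, $\langle Ax,k\rangle=\langle x,A^*k\rangle$, so the condition becomes $\langle x,A^*k\rangle=0$ for all $k\in\mathcal{K}$, that is, $x\perp A^*k$ for every $k\in\mathcal{K}$, i.e.\ $x\in\ran{A^*}^\perp$. This establishes the first identity; note in particular that $\ker{A}$ is automatically closed, being the orthogonal complement of a set.

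Next I would deduce the second identity by applying $(\cdot)^\perp$ to both sides of $\ker{A}=\ran{A^*}^\perp$. Using the standard Hilbert-space fact that the double orthogonal complement of a linear subspace equals its norm closure, together with the observation that $\ran{A^*}$ is a linear subspace (being the image of a linear map), we obtain
\begin{align*}
	\ker{A}^\perp = \bigl(\ran{A^*}^\perp\bigr)^\perp = \overline{\ran{A^*}}.
\end{align*}

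I do not anticipate a genuine obstacle here: the only ingredients are the adjoint identity $\langle Ax,k\rangle=\langle x,A^*k\rangle$, nondegeneracy of the inner product, and the projection-theorem consequence that the double orthogonal complement of a subset of $\mathcal{H}$ is its closed linear span. If one preferred to avoid invoking the double-complement identity, one could instead argue the second claim by a direct double inclusion: $\overline{\ran{A^*}}\subseteq\ker{A}^\perp$ is immediate from the first identity together with closedness of $\ker{A}^\perp$ and continuity of the inner product, while $\ker{A}^\perp\subseteq\overline{\ran{A^*}}$ follows by decomposing an element of $\ker{A}^\perp$ along the closed subspace $\overline{\ran{A^*}}$ and its complement and using $\overline{\ran{A^*}}^\perp=\ran{A^*}^\perp=\ker{A}$.
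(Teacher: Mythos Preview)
Your proof is correct and is the standard textbook argument. The paper does not supply its own proof of this lemma; it simply cites \cite[Proposition VI.1.8]{conway_course_2007}, where essentially the same adjoint-identity and double-orthogonal-complement reasoning appears.
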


\begin{lemma}[{\cite[Lemma A.7]{PartI}}]
	\label{lemma:kernel_of_square}
	Let $\mathcal{H}$ and $\mathcal{K}$ be Hilbert spaces and $A\in\mathcal{B}(\mathcal{H},\mathcal{K})$. Then $\ker{AA^*}=\ker{A^*}$.
\end{lemma}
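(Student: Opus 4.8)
The plan is to prove the two set inclusions separately, both of which are elementary; the only substantive ingredient is the inner-product identity $\langle AA^*k,k\rangle=\norm{A^*k}^2$, which reduces the nontrivial direction to the fact that a vector of zero norm vanishes.

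First I would show $\ker{A^*}\subseteq\ker{AA^*}$. This is purely formal: if $A^*k=0$, then $AA^*k=A(A^*k)=A0=0$, so $k\in\ker{AA^*}$. Here one only uses that $AA^*$ is the composition of $A\in\B(\H,\mathcal{K})$ with $A^*\in\B(\mathcal{K},\H)$, which is a well-defined bounded operator on $\mathcal{K}$.

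For the reverse inclusion $\ker{AA^*}\subseteq\ker{A^*}$, suppose $AA^*k=0$. Pairing with $k$ and using the defining adjoint relation $\langle Ax,z\rangle_{\mathcal{K}}=\langle x,A^*z\rangle_{\H}$ with $x=A^*k\in\H$ and $z=k\in\mathcal{K}$, one gets $0=\langle AA^*k,k\rangle_{\mathcal{K}}=\langle A^*k,A^*k\rangle_{\H}=\norm{A^*k}^2$, hence $A^*k=0$, i.e.\ $k\in\ker{A^*}$. Combining the two inclusions yields $\ker{AA^*}=\ker{A^*}$.

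I do not expect any genuine obstacle here; the only point requiring mild care is keeping track of which adjoint and which space is in play when applying the adjoint identity. As an alternative derivation one could combine \Cref{lemma:kernel_range} applied to $A^*$ (giving $\ker{A^*}=\overline{\ran{A}}^\perp$) with the identity $\overline{\ran{AA^*}}=\overline{\ran{A}}$, but the direct two-line computation above is shorter and self-contained, so that is the route I would take.
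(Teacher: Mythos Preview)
Your proof is correct and is the standard two-inclusion argument for this elementary identity. The paper does not actually give its own proof of this lemma (it is cited from \cite{PartI} without proof in the auxiliary results appendix), so there is nothing to compare against; your direct computation via $\langle AA^*k,k\rangle=\norm{A^*k}^2$ is exactly what one expects.
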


%
\begin{lemma}[{\cite[Lemma A.8]{PartI}}]
	\label{lemma:range_of_square_of_finite_rank}
	Let $\mathcal{H}$, $\mathcal{K}$ be Hilbert spaces and $A\in\mathcal{B}_{00}(\mathcal{H},\mathcal{K})$. Then $\ran{AA^*}=\ran{A}$.
\end{lemma}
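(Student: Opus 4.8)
The plan is to identify both ranges via their orthogonal complements, and then to use the finite-rank hypothesis to upgrade an equality of closures into an equality of the subspaces themselves.

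First I would observe that since $A\in\B_{00}(\mathcal{H},\mathcal{K})$, for every $k\in\mathcal{K}$ we have $AA^*k = A(A^*k)\in\ran{A}$, so $\ran{AA^*}\subset\ran{A}$; in particular $AA^*\in\B(\mathcal{K})$ is finite-rank, and both $\ran{A}$ and $\ran{AA^*}$ are finite-dimensional, hence closed, subspaces of $\mathcal{K}$. This closedness is what will let us pass from an identity of orthogonal complements back to the ranges.

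Next I would compute orthogonal complements. The operator $AA^*$ is self-adjoint, so $(AA^*)^*=AA^*$, and \Cref{lemma:kernel_range} applied to $AA^*$ gives $\ker{AA^*}=\ran{(AA^*)^*}^\perp=\ran{AA^*}^\perp$. By \Cref{lemma:kernel_of_square}, $\ker{AA^*}=\ker{A^*}$, and \Cref{lemma:kernel_range} applied to $A^*\in\B(\mathcal{K},\mathcal{H})$ gives $\ker{A^*}=\ran{(A^*)^*}^\perp=\ran{A}^\perp$. Chaining these three identities yields $\ran{AA^*}^\perp=\ran{A}^\perp$. Taking orthogonal complements of both sides and using $V^{\perp\perp}=V$ for the closed subspaces $V=\ran{AA^*}$ and $V=\ran{A}$, I conclude $\ran{AA^*}=\ran{A}$.

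There is essentially no obstacle in this argument; the only step that genuinely relies on the finite-rank assumption is the passage from $\ran{AA^*}^\perp=\ran{A}^\perp$ (equivalently $\overline{\ran{AA^*}}=\overline{\ran{A}}$) to the equality of the ranges themselves. It is essential here: for general compact $A$ of infinite rank one only has $\overline{\ran{AA^*}}=\overline{\ran{A}}$, and the inclusion $\ran{AA^*}\subsetneq\ran{A}$ can be strict.
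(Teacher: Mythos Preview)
Your proof is correct. The paper does not actually supply its own proof of this lemma; it is stated as an auxiliary result cited from \cite[Lemma A.8]{PartI}, so there is nothing to compare against here. Your argument via orthogonal complements, using \Cref{lemma:kernel_range} and \Cref{lemma:kernel_of_square} together with the closedness of finite-dimensional ranges, is clean and complete, and your remark on where the finite-rank hypothesis is genuinely needed is accurate.
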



\begin{lemma}[{{\cite[Lemma A.9]{PartI}}}]
	\label{lemma:inverse_of_self_adj_hilbert_schmidt_perturbation}
	Let $\H$ be a Hilbert space, $(e_i)_i$ an orthonormal sequence, $(\delta_i)_i\in\ell^2(\R)$ and $T\coloneqq I+\sum_{i}^{}\delta_ie_i\otimes e_i$. The following holds. 
	\begin{enumerate}
		\item
			\label{item:inverse_of_self_adj_hilbert_schmidt_perturbation_1}
			$T$ is invertible in $\B(\H)$ if and only if $\delta_i\not=-1$ for all $i$.
		\item
			\label{item:inverse_of_self_adj_hilbert_schmidt_perturbation_2}
			$T\geq 0$ if and only if $\delta_i\geq -1$ for all $i$.
		\item
			\label{item:inverse_of_self_adj_hilbert_schmidt_perturbation_3}
			$T>0$ if and only if $\delta_i>-1$ for all $i$.
	\end{enumerate}
	In cases \ref{item:inverse_of_self_adj_hilbert_schmidt_perturbation_1} and \ref{item:inverse_of_self_adj_hilbert_schmidt_perturbation_3} above, the inverse of $T$ is $I-\sum_{i}^{}\frac{\delta_i}{1+\delta_i}e_i\otimes e_i$. 
\end{lemma}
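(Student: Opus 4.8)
The plan is to exploit that, since the $e_i$ are orthonormal, the rank-one operators $e_i\otimes e_i$ are mutually orthogonal orthogonal projections, so $T$ acts as multiplication by $1+\delta_i$ along $e_i$ and as the identity on the orthogonal complement of the closed span of the $e_i$. All three equivalences, and the inverse formula, will follow from this diagonal structure together with elementary quadratic-form identities. Throughout I will use that $(\delta_i)_i\in\ell^2(\R)$ forces $\delta_i\to 0$, so $\sum_i\delta_i e_i\otimes e_i$ converges in operator norm to a self-adjoint (indeed Hilbert--Schmidt) operator, and $T$ is a bounded self-adjoint Hilbert--Schmidt perturbation of the identity.

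First I would record the key identity: writing $P_0$ for the orthogonal projection of $\H$ onto the closed linear span of the $e_i$, Bessel's identity gives $\|h\|^2-\sum_i|\langle h,e_i\rangle|^2=\|(I-P_0)h\|^2$, hence
\[
	\langle Th,h\rangle = \|(I-P_0)h\|^2+\sum_i(1+\delta_i)|\langle h,e_i\rangle|^2\qquad(h\in\H).
\]
This immediately yields \ref{item:inverse_of_self_adj_hilbert_schmidt_perturbation_2}: if $\delta_i\geq -1$ for all $i$, every summand is nonnegative, so $T\geq 0$; conversely $\langle Te_{i_0},e_{i_0}\rangle=1+\delta_{i_0}$, so $T\geq 0$ forces $\delta_{i_0}\geq -1$ for each $i_0$.

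For \ref{item:inverse_of_self_adj_hilbert_schmidt_perturbation_1}, if $\delta_{i_0}=-1$ then $Te_{i_0}=0$, so $T$ is not injective, which gives the ``only if'' direction. For the converse, I would observe that $\delta_i\to 0$ together with $\delta_i\neq -1$ for all $i$ forces $c:=\inf_i|1+\delta_i|>0$ (otherwise a subsequence would satisfy $\delta_{i_k}\to -1\neq 0$). Consequently $|\delta_i/(1+\delta_i)|\leq |\delta_i|/c$, so $(\delta_i/(1+\delta_i))_i\in\ell^2(\R)$ and $S:=I-\sum_i\frac{\delta_i}{1+\delta_i}e_i\otimes e_i$ is again a bounded (Hilbert--Schmidt perturbation of the identity) operator. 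Using $(e_j\otimes e_j)(e_i\otimes e_i)=\delta_{ij}\,e_i\otimes e_i$ and the algebraic identity $\delta_i-\frac{\delta_i}{1+\delta_i}-\frac{\delta_i^2}{1+\delta_i}=0$, a direct check on each $e_i$ and on $(I-P_0)\H$ shows $ST=TS=I$. This proves both that $T$ is invertible and that $T^{-1}$ equals the stated operator.

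Finally, \ref{item:inverse_of_self_adj_hilbert_schmidt_perturbation_3} follows by combining \ref{item:inverse_of_self_adj_hilbert_schmidt_perturbation_1}, \ref{item:inverse_of_self_adj_hilbert_schmidt_perturbation_2} and \Cref{lemma:positive_is_injective_nonnegative}, which says $T>0$ iff $T\geq 0$ and $T$ is injective. If $\delta_i>-1$ for all $i$, then $T\geq 0$ by \ref{item:inverse_of_self_adj_hilbert_schmidt_perturbation_2} and $T$ is invertible, hence injective, by \ref{item:inverse_of_self_adj_hilbert_schmidt_perturbation_1}, so $T>0$ and the formula for $T^{-1}$ from \ref{item:inverse_of_self_adj_hilbert_schmidt_perturbation_1} applies. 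Conversely, if $T>0$ then $\delta_i\geq -1$ for all $i$ by \ref{item:inverse_of_self_adj_hilbert_schmidt_perturbation_2}, and $\delta_{i_0}=-1$ would give $\langle Te_{i_0},e_{i_0}\rangle=0$ with $e_{i_0}\neq 0$, contradicting positivity; hence $\delta_i>-1$ for all $i$. The only mildly delicate point in the whole argument is the observation $\inf_i|1+\delta_i|>0$, which is exactly what makes $S$ bounded; everything else is routine bookkeeping.
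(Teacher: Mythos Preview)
Your proof is correct. The paper itself does not prove this lemma; it merely cites it from \cite[Lemma A.9]{PartI}, so there is no in-paper argument to compare against. Your approach---writing $T$ in diagonal form via the quadratic-form identity, reading off nonnegativity and positivity from the coefficients $1+\delta_i$, and exhibiting the explicit inverse $S$ after using $\delta_i\to 0$ to secure $\inf_i|1+\delta_i|>0$---is the natural one and is carried out cleanly. The verification $ST=TS=I$ and the use of \Cref{lemma:positive_is_injective_nonnegative} for item~\ref{item:inverse_of_self_adj_hilbert_schmidt_perturbation_3} are both correct.
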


\begin{lemma}
	\label{lemma:eigenpairs_of_squares_relation}
	Let $\H,\mathcal{K}$ be separable Hilbert spaces and $A\in\B(\H,\mathcal{K})$. Suppose $AA^*=\sum_{i}^{}\delta_i e_i\otimes e_i$ for $(e_i)_i$ an ONB of $\mathcal{K}$ and $(\delta_i)_i\subset[0,\infty)$ a nonincreasing sequence converging to 0.
	Then $(\delta_i,A^*e_i)$ is an eigenpair of $A^*A$.
\end{lemma}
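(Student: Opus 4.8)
The plan is to verify the eigenpair relation by a direct computation, exploiting associativity of composition of bounded operators together with the hypothesis that $(e_i)_i$ diagonalises $AA^*$. The target identity is
\begin{align*}
	A^*A\,(A^*e_i) = A^*\bigl((AA^*)e_i\bigr) = \delta_i\,A^*e_i, \qquad i\in\N.
\end{align*}

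First I would evaluate $(AA^*)e_i$. Since $(e_i)_i$ is an ONB of $\mathcal{K}$, the action of each rank-one term in \Cref{sec:notation} gives $(e_j\otimes e_j)e_i=\langle e_j,e_i\rangle e_j$, which vanishes unless $j=i$. Hence the series $\sum_j\delta_j(e_j\otimes e_j)e_i$ reduces to its single nonzero term and $(AA^*)e_i=\delta_i e_i$; in particular there is no convergence issue to address. Applying the bounded operator $A^*$ to both sides and using linearity yields $A^*A(A^*e_i)=A^*(\delta_i e_i)=\delta_i A^*e_i$, which is precisely the asserted relation.

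The only point needing a remark — and it is not really an obstacle — concerns whether $A^*e_i$ is nonzero, i.e.\ whether $(\delta_i,A^*e_i)$ is a \emph{genuine} eigenpair. Here one computes $\Norm{A^*e_i}^2=\langle AA^*e_i,e_i\rangle=\delta_i$. Thus for $\delta_i>0$ we have $A^*e_i\neq 0$ and $(\delta_i,A^*e_i)$ is an eigenpair in the usual sense, while for $\delta_i=0$ one gets $A^*e_i=0$ and the identity $A^*A(A^*e_i)=\delta_iA^*e_i$ holds trivially with both sides vanishing. No deeper difficulty arises: the whole statement is an immediate consequence of $A^*A A^*=A^*(AA^*)$ and the given diagonalisation of $AA^*$.
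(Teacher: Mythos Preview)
Your proof is correct and follows exactly the paper's approach: the paper's entire proof is the single line ``This follows from $A^*AA^*e_i=\delta_i A^*e_i$'', which is precisely the identity you derive. Your additional remark about $\Norm{A^*e_i}^2=\delta_i$ is a helpful clarification but not needed for the lemma as stated.
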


\begin{proof}
	This follows from $A^*AA^*e_i=\delta_i A^*e_i$.
\end{proof}

\begin{lemma}
	\label{lemma:weakly_continuous_inner_products}
	Let $\mathcal{H}$ be a Hilbert space and $A\in\B_0(\mathcal{H})$. Then $h\mapsto\langle Ah,h\rangle$ is weakly continuous on $\mathcal{H}$.
\end{lemma}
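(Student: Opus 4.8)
The plan is to read ``weakly continuous'' in the sequential sense: I will show that whenever $(h_n)_n\subset\mathcal{H}$ satisfies $h_n\rightharpoonup h$, one has $\langle Ah_n,h_n\rangle\to\langle Ah,h\rangle$. The argument rests on two standard facts. First, a weakly convergent sequence is norm-bounded by the uniform boundedness principle, so $M\coloneqq\sup_n\norm{h_n}<\infty$. Second, a compact operator sends weakly convergent sequences to norm-convergent ones, i.e.\ $h_n\rightharpoonup h$ implies $\norm{Ah_n-Ah}\to 0$; I justify this below. Granting these, I split
\[
	\langle Ah_n,h_n\rangle-\langle Ah,h\rangle=\langle Ah_n-Ah,\,h_n\rangle+\langle Ah,\,h_n-h\rangle,
\]
estimate the first term by $\norm{Ah_n-Ah}\,\norm{h_n}\le M\norm{Ah_n-Ah}\to 0$, and observe that the second term tends to $0$ because $h_n\rightharpoonup h$ tested against the fixed vector $Ah$. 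This yields $\langle Ah_n,h_n\rangle\to\langle Ah,h\rangle$, which is the assertion.

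It remains to establish that compact $A$ maps weakly convergent sequences to norm-convergent ones. Since $A\in\B(\mathcal{H})$, it is weak-to-weak continuous: for every $k\in\mathcal{H}$ we have $\langle Ah_n,k\rangle=\langle h_n,A^*k\rangle\to\langle h,A^*k\rangle=\langle Ah,k\rangle$, so $Ah_n\rightharpoonup Ah$. Suppose, for contradiction, that $\norm{Ah_n-Ah}\not\to 0$; then there exist $\varepsilon>0$ and a subsequence with $\norm{Ah_{n_k}-Ah}\ge\varepsilon$ for all $k$. As $(h_{n_k})_k$ is bounded and $A$ is compact, $(Ah_{n_k})_k$ admits a norm-convergent sub-subsequence, say $Ah_{n_{k_j}}\to z$. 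Norm convergence implies weak convergence, so $Ah_{n_{k_j}}\rightharpoonup z$; but as a subsequence of $(Ah_n)_n$ it also satisfies $Ah_{n_{k_j}}\rightharpoonup Ah$, whence $z=Ah$ by uniqueness of weak limits. This contradicts $\norm{Ah_{n_{k_j}}-Ah}\ge\varepsilon$, so $Ah_n\to Ah$ in norm, as needed.

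I expect no real obstacle: the only step with any content is the classical ``compact $\Rightarrow$ weak-to-strong'' claim, and the rest is the routine inner-product splitting. One caveat worth noting is that on an infinite-dimensional $\mathcal{H}$ the weak topology is not metrizable, so a net version of weak continuity would additionally require the nets to be norm-bounded; this is automatic on bounded subsets of $\mathcal{H}$, where the weak topology is metrizable when $\mathcal{H}$ is separable, and the sequential statement proved above is the form used in the sequel.
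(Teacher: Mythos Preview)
Your proof is correct and follows essentially the same route as the paper: both use the splitting $\langle Ah_n,h_n\rangle-\langle Ah,h\rangle=\langle Ah_n-Ah,h_n\rangle+\langle Ah,h_n-h\rangle$, bound the first term via norm-boundedness of $(h_n)_n$ together with the fact that compact operators send weakly convergent sequences to norm-convergent ones, and handle the second term by weak convergence tested against $Ah$. The only difference is that the paper cites \cite[Theorem VI.11]{Reed1980} for the weak-to-strong property, whereas you supply the standard subsequence argument yourself.
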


\begin{proof}
	Suppose that $(h_n)_n\subset \mathcal{H}$ is weakly convergent with limit $h\in\mathcal{H}$, i.e.\ $\langle h_n,k\rangle\rightarrow\langle h,k\rangle $ for all $k\in\mathcal{H}$ as $n\rightarrow\infty$. In particular, $\langle Ah,h-h_n\rangle\rightarrow 0$. Since the sequence $(\langle h_n,k\rangle)_n$ is bounded for each $k\in\mathcal{H}$, the principle of uniform boundedness, c.f.\ \cite[Theorem III.14.3]{conway_course_2007}, implies that $(h_n)_n$ is a bounded sequence. By \cite[Theorem VI.11]{Reed1980}, $(Ah_n)_n$ converges in norm to $Ah$ since $A$ is compact. Thus, $\abs{\langle A(h-h_n),h_n\rangle}\leq \norm{A(h-h_n)}\sup_n\norm{h_n}\rightarrow 0$. We conclude that $\abs{\langle Ah,h\rangle - \langle Ah_n,h_n\rangle}\leq \abs{\langle A(h-h_n),h_n\rangle} + \abs{\langle Ah,h-h_n\rangle}\rightarrow 0$.
\end{proof}

\subsection{Unbounded operators}

\begin{definition}[{\cite[Definition X.1.5]{conway_course_2007}}]
	\label{def:unbounded_ajoint}
	Let $\H,\mathcal{K}$ be separable Hilbert spaces and $A:\H\rightarrow \mathcal{K}$ be a densely defined linear operator on $\H$. Then we define 
	\begin{align*}
		\dom{A^*}\coloneqq\{k\in \mathcal{K}:\ h\mapsto \langle Ah,k\rangle \text{ is a bounded linear functional on }\dom{A}\}.
	\end{align*}
	As $\dom{A}\subset \H$ is dense, if $k\in \mathcal{K}$, there exists by the Riesz representation theorem some $f\in \H$ such that $\langle Ah,k\rangle = \langle h,f\rangle $ for all $h\in \H$. We define $A^*:\dom{A^*}\rightarrow \H$ by setting $A^*k=f$.
\end{definition}

\begin{lemma}[{{\cite[Lemma A.19]{PartI}}}]
	\label{lemma:adjoint_of_densely_defined_operators}
	Let $\H$ be a separable Hilbert space. If $A,B,AB:\H\rightarrow \H$ are densely defined, then
	\begin{enumerate}
		\item 
			\label{item:adjoint_of_densely_defined_operators_1}
			$(AB)^*\supset B^*A^*$,
		\item 
			\label{item:adjoint_of_densely_defined_operators_2}
			If $B^*A^*$ is bounded, then $(AB)^*=B^*A^*$.
	\end{enumerate}
\end{lemma}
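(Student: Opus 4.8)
The plan is to argue directly from the definition of the adjoint of a densely defined operator (\Cref{def:unbounded_ajoint}) together with the domain conventions for compositions recalled in \Cref{sec:notation}, namely $\dom{AB}=B^{-1}(\dom{A})$ and $\dom{B^*A^*}=(A^*)^{-1}(\dom{B^*})$. Throughout I take $AB$ to be densely defined, which is implicit in the statement, since otherwise $(AB)^*$ is not a well-defined single-valued operator. The strategy is that part \ref{item:adjoint_of_densely_defined_operators_1} is obtained by an unconditional inner-product manipulation, after which part \ref{item:adjoint_of_densely_defined_operators_2} follows from a domain-maximality observation that is where the boundedness hypothesis enters.

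For part \ref{item:adjoint_of_densely_defined_operators_1}, I would fix $k\in\dom{B^*A^*}$, which by the composition convention means $k\in\dom{A^*}$ and $A^*k\in\dom{B^*}$, and show $k\in\dom{(AB)^*}$ with $(AB)^*k=B^*A^*k$. The key step is the following chain, valid for every $h\in\dom{AB}$: since $h\in\dom{B}$ with $Bh\in\dom{A}$, I apply the defining property of $A^*$ (using $Bh\in\dom{A}$ and $k\in\dom{A^*}$) and then of $B^*$ (using $h\in\dom{B}$ and $A^*k\in\dom{B^*}$) to get
\begin{align*}
	\langle (AB)h,k\rangle = \langle A(Bh),k\rangle = \langle Bh, A^*k\rangle = \langle h, B^*A^*k\rangle.
\end{align*}
Thus $h\mapsto\langle (AB)h,k\rangle$ is a bounded functional on $\dom{AB}$ with representing vector $B^*A^*k$, so \Cref{def:unbounded_ajoint} yields $k\in\dom{(AB)^*}$ and $(AB)^*k=B^*A^*k$. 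As $k\in\dom{B^*A^*}$ was arbitrary, this is precisely the inclusion $(AB)^*\supset B^*A^*$.

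For part \ref{item:adjoint_of_densely_defined_operators_2}, the point is that part \ref{item:adjoint_of_densely_defined_operators_1} already fixes $(AB)^*$ on $\dom{B^*A^*}$, so only the reverse domain inclusion $\dom{(AB)^*}\subset\dom{B^*A^*}$ remains. Interpreting ``$B^*A^*$ bounded'' in the sense of the paper, i.e.\ $B^*A^*\in\B(\H)$ and hence everywhere defined, the argument is immediate: one has $\dom{B^*A^*}=\H$, so part \ref{item:adjoint_of_densely_defined_operators_1} forces $\dom{(AB)^*}\supset\H$, whence $\dom{(AB)^*}=\H=\dom{B^*A^*}$; since the two operators agree on this common domain by part \ref{item:adjoint_of_densely_defined_operators_1}, we conclude $(AB)^*=B^*A^*$.

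The main obstacle is conceptual rather than computational. The reverse inclusion $(AB)^*\subset B^*A^*$ cannot be extracted from the same inner-product computation and can genuinely fail for unbounded $A,B$; it is exactly the everywhere-definedness of the bounded operator $B^*A^*$ that rescues it, and only through the maximality of $\H$ as a domain. It is worth flagging that the everywhere-defined reading of ``bounded'' is what makes the asserted \emph{equality} hold literally: since $(AB)^*$ is closed, a merely densely-defined bounded $B^*A^*$ would only give $(AB)^*=\overline{B^*A^*}$, so the literal identity requires $\dom{B^*A^*}=\H$. A secondary point to record is the standing dense-definedness of $AB$, on which the very meaning of $(AB)^*$ rests.
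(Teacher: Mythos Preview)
Your argument is correct and is the standard proof of this classical fact. Note, however, that the paper does not supply its own proof of this lemma: it is merely stated as an auxiliary result with a citation to \cite[Lemma A.19]{PartI}, so there is no proof in the present paper to compare against. Your treatment---the direct inner-product computation for \ref{item:adjoint_of_densely_defined_operators_1} and the domain-maximality observation for \ref{item:adjoint_of_densely_defined_operators_2}---is exactly the textbook approach one would expect the cited source to use. Your remarks on the implicit dense-definedness of $AB$ and on the reading of ``bounded'' as ``everywhere defined and bounded'' are appropriate and consistent with the paper's conventions.
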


\begin{lemma}[{{\cite[Lemma A.23]{PartI}}}]
	\label{lemma:covariance_properties}
	Let $\mathcal{H}$ be a separable Hilbert space and $\mathcal{C}_1, \mathcal{C}_2\in L_1(\mathcal{H})_\R$ be nonnegative.
	If $\ran{\mathcal{C}_1^{1/2}}\subset\H$ densely, then the following hold.
	\begin{enumerate}
		\item $\mathcal{C}_1>0$ and $\mathcal{C}_1^{1/2}>0$.
			\label{item:covariance_properties_1}
		\item $\mathcal{C}_1^{-1/2}:\ran{\mathcal{C}_1^{1/2}}\rightarrow \H$ and $\mathcal{C}_1^{-1}:\ran{\mathcal{C}_1}\rightarrow \H$ are bijective and self-adjoint operators that are unbounded if $\dim{\H}$ is unbounded.
			\label{item:covariance_properties_2}
	\end{enumerate}
\end{lemma}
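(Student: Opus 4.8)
The measure $\mathcal{C}_2$ does not enter the stated conclusions, so the plan is to argue only about $\mathcal{C}_1$; write $\mathcal{C}\coloneqq\mathcal{C}_1$ for brevity. First I would extract injectivity of $\mathcal{C}^{1/2}$ from the density of its range: since $\mathcal{C}\in L_1(\H)_\R$ is nonnegative, the nonnegative self-adjoint square root $\mathcal{C}^{1/2}\in\B(\H)_\R$ exists, and \Cref{lemma:kernel_range} applied to the self-adjoint operator $\mathcal{C}^{1/2}$ gives $\ker{\mathcal{C}^{1/2}}=\ran{\mathcal{C}^{1/2}}^\perp$. The hypothesis that $\ran{\mathcal{C}^{1/2}}$ is dense then forces $\ker{\mathcal{C}^{1/2}}=\{0\}$, so by \Cref{lemma:positive_is_injective_nonnegative} we obtain $\mathcal{C}^{1/2}>0$. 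Applying \Cref{lemma:kernel_of_square} with $A\leftarrow\mathcal{C}^{1/2}$, so that $AA^*=\mathcal{C}$ and $A^*=\mathcal{C}^{1/2}$, yields $\ker{\mathcal{C}}=\ker{\mathcal{C}^{1/2}}=\{0\}$, and a second use of \Cref{lemma:positive_is_injective_nonnegative} gives $\mathcal{C}>0$. This proves part (i).

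For part (ii), injectivity of $\mathcal{C}^{1/2}$ makes $\mathcal{C}^{1/2}\colon\H\to\ran{\mathcal{C}^{1/2}}$ a linear bijection, hence $\mathcal{C}^{-1/2}\colon\ran{\mathcal{C}^{1/2}}\to\H$ is a well-defined linear bijection; likewise $\mathcal{C}^{-1}\colon\ran{\mathcal{C}}\to\H$. Both domains are dense: $\ran{\mathcal{C}^{1/2}}$ is dense by assumption, while $\overline{\ran{\mathcal{C}}}=\ker{\mathcal{C}}^\perp=\H$ by \Cref{lemma:kernel_range} and the injectivity of $\mathcal{C}$. For self-adjointness I would invoke the standard fact that, for a densely defined injective operator $A$ with dense range, $A^*$ is again injective with dense range and $(A^{-1})^*=(A^*)^{-1}$; symmetry of $A^{-1}$ follows from the short computation $\langle A^{-1}x,y\rangle=\langle u,Av\rangle=\langle Au,v\rangle=\langle x,A^{-1}y\rangle$ for $x=Au$, $y=Av\in\ran{A}$, while the equality of domains comes from unwinding the definition of the adjoint of an unbounded operator. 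Taking $A\leftarrow\mathcal{C}^{1/2}$ gives $(\mathcal{C}^{-1/2})^*=((\mathcal{C}^{1/2})^*)^{-1}=\mathcal{C}^{-1/2}$, and taking $A\leftarrow\mathcal{C}$ gives $(\mathcal{C}^{-1})^*=\mathcal{C}^{-1}$.

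Finally, $\mathcal{C}^{-1/2}$ and $\mathcal{C}^{-1}$ are unbounded when $\H$ is infinite-dimensional: since $\mathcal{C}$ is trace class, hence compact, and injective, \Cref{lemma:operator_svd} (equivalently, the spectral theorem) provides an ONB $(e_i)_i$ of eigenvectors with strictly positive eigenvalues $\sigma_i\to 0$, infinitely many of which are nonzero when $\dim\H=\infty$. Then $\mathcal{C}^{-1}e_i=\sigma_i^{-1}e_i$ and $\mathcal{C}^{-1/2}e_i=\sigma_i^{-1/2}e_i$ with $\sigma_i^{-1},\sigma_i^{-1/2}\to\infty$, so neither operator is bounded on its domain; equivalently, boundedness of $\mathcal{C}^{-1}$ would force $\ran{\mathcal{C}}$ to be closed and $\mathcal{C}$ to be boundedly invertible, which is impossible for a compact operator on an infinite-dimensional space. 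The only genuinely delicate point is the self-adjointness in part (ii): the inclusion $\dom{(A^{-1})^*}\subseteq\dom{A^{-1}}=\ran{A}$ must be checked directly against the definition of the adjoint of an unbounded operator rather than manipulated formally; everything else is routine bookkeeping with the cited lemmas.
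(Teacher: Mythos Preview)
The paper does not supply its own proof of this lemma; it merely cites it as \cite[Lemma A.23]{PartI} in the auxiliary results appendix. Your argument is correct and essentially the standard one: part \ref{item:covariance_properties_1} follows directly from the cited \Cref{lemma:kernel_range}, \Cref{lemma:positive_is_injective_nonnegative}, and \Cref{lemma:kernel_of_square}, and part \ref{item:covariance_properties_2} combines the textbook fact that the inverse of a bounded injective self-adjoint operator is self-adjoint on its range with compactness of $\mathcal{C}$ to obtain unboundedness when $\dim\H=\infty$. The only point you flag as delicate---the domain equality in $(A^{-1})^*=(A^*)^{-1}$---is indeed where care is needed, but this is a standard result (e.g., Rudin, \emph{Functional Analysis}, Theorem 13.11) and your sketch of the symmetry calculation plus the remark that the reverse inclusion must be checked against \Cref{def:unbounded_ajoint} is an adequate outline.
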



%
\begin{lemma}[{\cite[Lemma A.24]{PartI}}]
	\label{lemma:equivalent_cm_condition}
	Let $\H$ be a Hilbert space and $\mathcal{C}_1,\mathcal{C}_2\in\B(\H)$ be injective. Then $\ran{\mathcal{C}_1^{1/2}}=\ran{\mathcal{C}_2^{1/2}}$ if and only if $\mathcal{C}_2^{-1/2}\mathcal{C}_1^{1/2}$ is a well-defined invertible operator in $\B(\H)$.
\end{lemma}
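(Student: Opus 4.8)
The plan is to prove both implications by working directly with $T \coloneqq \mathcal{C}_2^{-1/2}\mathcal{C}_1^{1/2}$ and its candidate inverse $S \coloneqq \mathcal{C}_1^{-1/2}\mathcal{C}_2^{1/2}$, establishing boundedness through the closed graph theorem. Here I use that $\mathcal{C}_1,\mathcal{C}_2$ are the (nonnegative, self-adjoint) covariance-type operators for which $\mathcal{C}_i^{1/2}$ denotes the nonnegative self-adjoint square root. Injectivity of $\mathcal{C}_i$ then forces injectivity of $\mathcal{C}_i^{1/2}$, since $\ker{\mathcal{C}_i^{1/2}}=\ker{(\mathcal{C}_i^{1/2})(\mathcal{C}_i^{1/2})^*}=\ker{\mathcal{C}_i}=\{0\}$ by self-adjointness and \Cref{lemma:kernel_of_square}. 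Consequently each $\mathcal{C}_i^{-1/2}$ is a densely defined, generally unbounded operator with $\dom{\mathcal{C}_i^{-1/2}}=\ran{\mathcal{C}_i^{1/2}}$, so that $T$ and $S$ are well defined exactly on $\{h\in\H:\mathcal{C}_1^{1/2}h\in\ran{\mathcal{C}_2^{1/2}}\}$ and $\{h\in\H:\mathcal{C}_2^{1/2}h\in\ran{\mathcal{C}_1^{1/2}}\}$ respectively.

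For the forward implication I would assume $\ran{\mathcal{C}_1^{1/2}}=\ran{\mathcal{C}_2^{1/2}}$. Then for every $h\in\H$ we have $\mathcal{C}_1^{1/2}h\in\ran{\mathcal{C}_1^{1/2}}=\ran{\mathcal{C}_2^{1/2}}=\dom{\mathcal{C}_2^{-1/2}}$, so $\dom{T}=\H$, and the symmetric statement gives $\dom{S}=\H$. To obtain boundedness I would verify that $T$ is closed and invoke the closed graph theorem: if $h_n\to h$ and $Th_n\to g$, then applying the bounded operator $\mathcal{C}_2^{1/2}$ to the defining identity $\mathcal{C}_2^{1/2}Th_n=\mathcal{C}_1^{1/2}h_n$ and passing to the limit yields $\mathcal{C}_2^{1/2}g=\mathcal{C}_1^{1/2}h$, whence injectivity of $\mathcal{C}_2^{1/2}$ forces $Th=\mathcal{C}_2^{-1/2}\mathcal{C}_1^{1/2}h=g$. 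The same argument gives $S\in\B(\H)$. A direct computation on $\H$, using $\mathcal{C}_1^{1/2}\mathcal{C}_1^{-1/2}\mathcal{C}_2^{1/2}=\mathcal{C}_2^{1/2}$ and the index-swapped identity, then shows $TS=ST=I$, so $T$ is invertible in $\B(\H)$ with $T^{-1}=S$.

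For the reverse implication I would assume that $T$ is a well-defined invertible operator in $\B(\H)$. Well-definedness means $\dom{T}=\H$, so $\mathcal{C}_1^{1/2}h\in\ran{\mathcal{C}_2^{1/2}}$ for every $h$, giving $\ran{\mathcal{C}_1^{1/2}}\subset\ran{\mathcal{C}_2^{1/2}}$. For the reverse inclusion I would read off from the definition of $T$ the operator identity $\mathcal{C}_2^{1/2}T=\mathcal{C}_1^{1/2}$ on $\H$, and post-compose with $T^{-1}\in\B(\H)$ to get $\mathcal{C}_2^{1/2}=\mathcal{C}_1^{1/2}T^{-1}$. Then for any $g\in\H$ we have $\mathcal{C}_2^{1/2}g=\mathcal{C}_1^{1/2}(T^{-1}g)\in\ran{\mathcal{C}_1^{1/2}}$, so $\ran{\mathcal{C}_2^{1/2}}\subset\ran{\mathcal{C}_1^{1/2}}$ and equality of the ranges follows.

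The main obstacle is the boundedness step in the forward direction. A priori $T$ is only an everywhere-defined linear map assembled from the unbounded operator $\mathcal{C}_2^{-1/2}$, so its continuity cannot be taken for granted and must be extracted from the closed graph theorem. The essential ingredient making the closedness argument succeed is that $\mathcal{C}_1^{1/2}$ and $\mathcal{C}_2^{1/2}$ are bounded while $\mathcal{C}_2^{1/2}$ is injective, which is precisely what allows one to push the defining identity $\mathcal{C}_2^{1/2}Th_n=\mathcal{C}_1^{1/2}h_n$ to the limit and recover $Th=g$; the remaining invertibility and range computations are then routine algebra with the square roots.
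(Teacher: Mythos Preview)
Your proof is correct. Note, however, that this lemma is not proved in the present paper: it is cited as \cite[Lemma A.24]{PartI} and listed among the auxiliary results without argument, so there is no ``paper's own proof'' to compare against. Your closed graph theorem approach is the standard route for this statement and is carried out cleanly; the only implicit assumption worth making explicit is that $\mathcal{C}_1,\mathcal{C}_2$ are nonnegative self-adjoint (so that the square roots exist), which is indeed the intended context here.
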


\subsection{Gaussian measures on Hilbert spaces}

\begin{lemma}
	\label{lemma:gaussian_expected_norm_squared}
	Let $\H$ be a separable Hilbert space and $\mu=\mathcal{N}(0,\mathcal{C})$ be a Gaussian measure on $\H$. If $X\sim \mu$ and $\mathcal{C}=SS^*$ for $S\in L_2(\mathcal{H})$, then $\mathbb{E}\norm{X}^2 = \norm{S^*}^2_{L_2(\mathcal{H})} = \norm{S}_{L_2(\mathcal{H})}^2.$
\end{lemma}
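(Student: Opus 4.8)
The plan is to compute $\mathbb{E}\norm{X}^2$ by expanding $X$ in an orthonormal basis and recognising the resulting series as a Hilbert--Schmidt norm. First I would fix an orthonormal basis $(e_i)_i$ of $\H$, which exists by separability, and record the pointwise (almost sure) identity $\norm{X}^2=\sum_i\abs{\langle X,e_i\rangle}^2$. Since each summand is a nonnegative random variable, Tonelli's theorem --- equivalently, the monotone convergence theorem applied to the partial sums $\sum_{i\le N}\abs{\langle X,e_i\rangle}^2$ --- lets me exchange the expectation with the sum, yielding $\mathbb{E}\norm{X}^2=\sum_i\mathbb{E}\abs{\langle X,e_i\rangle}^2$, an identity valid in $[0,\infty]$.

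Next I would evaluate each term using the defining property of a Gaussian measure on $\H$ recalled in \Cref{sec:notation}: $\langle X,e_i\rangle\sim\mathcal{N}(\langle m,e_i\rangle,\langle\mathcal{C}e_i,e_i\rangle)$, so $\mathbb{E}\abs{\langle X,e_i\rangle}^2=\langle\mathcal{C}e_i,e_i\rangle+\abs{\langle m,e_i\rangle}^2$, and hence $\mathbb{E}\norm{X}^2=\norm{m}^2+\sum_i\langle\mathcal{C}e_i,e_i\rangle$. In the situations where this lemma is applied the relevant Gaussian is centred --- for instance, in \Cref{lemma:mean_equivalent_loss} the random variable $\widetilde{A}Y-S_\pos^{-1}m_\pos(Y)$ is a linear image of the centred $Y$ and so has mean zero --- so the $\norm{m}^2$ term drops and $\mathbb{E}\norm{X}^2=\sum_i\langle\mathcal{C}e_i,e_i\rangle$. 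Finally, substituting $\mathcal{C}=SS^*$ gives $\sum_i\langle\mathcal{C}e_i,e_i\rangle=\sum_i\langle S^*e_i,S^*e_i\rangle=\sum_i\norm{S^*e_i}^2=\norm{S^*}_{L_2(\H)}^2$, which is finite precisely because $S\in L_2(\H)$; this finiteness also retroactively shows that the interchange in the first step was between finite quantities. That $\norm{S^*}_{L_2(\H)}=\norm{S}_{L_2(\H)}$ follows from the singular value decomposition of \Cref{lemma:operator_svd}, since $S$ and $S^*$ share the same singular value sequence.

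I do not anticipate any substantial obstacle: the lemma is essentially bookkeeping. The only two steps warranting a line of justification are the exchange of expectation and infinite sum --- which is immediate from nonnegativity of the terms --- and the a priori finiteness of $\mathbb{E}\norm{X}^2$, which is exactly what the hypothesis $S\in L_2(\H)$ (equivalently, $\mathcal{C}$ trace-class) guarantees.
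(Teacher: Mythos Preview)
Your approach is essentially identical to the paper's: expand $\norm{X}^2$ in an ONB, swap expectation and sum by Tonelli, use the covariance relation to get $\sum_i\langle\mathcal{C}e_i,e_i\rangle$, and rewrite this as $\norm{S^*}_{L_2}^2=\norm{S}_{L_2}^2$. You are actually more careful than the paper on one point: you correctly note that the general identity is $\mathbb{E}\norm{X}^2=\norm{m}^2+\sum_i\langle\mathcal{C}e_i,e_i\rangle$, so the stated conclusion needs $m=0$; the paper's proof writes $\mathbb{E}\abs{\langle X,e_i\rangle}^2=\langle\mathcal{C}e_i,e_i\rangle$ directly, which is the same centred assumption made without comment, and indeed the only use of the lemma (in \Cref{lemma:mean_equivalent_loss}) is in the centred case.
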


\begin{proof}
	Let $(e_i)_i$ be an ONB of $\H$ and $X=\sum_{i}^{}\langle X,e_i\rangle e_i$. Then by Tonelli's theorem, the definition of the covariance operator, the hypothesis that $\mathcal{C}=SS^*$, and the invariance of the Hilbert--Schmidt norm under adjoints,
	\begin{align*}
		\mathbb{E}\norm{X}^2 &= \mathbb{E} \sum_{i}^{}\abs{\langle X,e_i\rangle}^2 = \sum_{i}^{}\mathbb{E}\abs{\langle X,e_i\rangle}^2 =\sum_{i}^{} \langle \mathcal{C}e_i,e_i\rangle = \sum_{i}^{}\norm{S^*e_i}^2 = \norm{S^*}_{L_2(\mathcal{H})}^2 = \norm{S}_{L_2(\mathcal{H})}^2.
	\end{align*}
\end{proof}

\begin{lemma}
	\label{lemma:linear_push_forward_gaussian}
	If $\mathcal{H}_1,\mathcal{H}_2$ are separable Hilbert spaces, $X\sim\mu=\mathcal{N}(m,\mathcal{C})$ is a Gaussian distribution on $\mathcal{H}_1$ and $A\in\B(\mathcal{H}_1,\mathcal{H}_2)$, then the distribution of $AX$ is $\mathcal{N}(Am,ACA^*)$.
\end{lemma}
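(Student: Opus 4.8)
The plan is to verify directly the defining property of a Gaussian measure on a separable Hilbert space recalled in \Cref{sec:notation}: a random variable is $\mathcal{N}(\cdot,\cdot)$-distributed precisely when all of its one-dimensional real-linear projections are scalar Gaussians with the corresponding mean and variance. So I would fix an arbitrary $k\in\mathcal{H}_2$ and, using the adjoint relation $\langle Ax,k\rangle=\langle x,A^*k\rangle$ valid for the bounded operator $A$, write $\langle AX,k\rangle=\langle X,A^*k\rangle$. Since $X\sim\mathcal{N}(m,\mathcal{C})$ and $A^*k\in\mathcal{H}_1$, the definition of a Gaussian distribution yields $\langle X,A^*k\rangle\sim\mathcal{N}\bigl(\langle m,A^*k\rangle,\langle\mathcal{C}A^*k,A^*k\rangle\bigr)$.

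The second step is to re-express these two scalar parameters. Moving $A^*$ back across the inner product gives $\langle m,A^*k\rangle=\langle Am,k\rangle$ and $\langle\mathcal{C}A^*k,A^*k\rangle=\langle A\mathcal{C}A^*k,k\rangle$. Hence $\langle AX,k\rangle\sim\mathcal{N}\bigl(\langle Am,k\rangle,\langle A\mathcal{C}A^*k,k\rangle\bigr)$ for every $k\in\mathcal{H}_2$, which is exactly the assertion that the distribution of $AX$, i.e.\ the pushforward of $\mu$ under $A$, is $\mathcal{N}(Am,A\mathcal{C}A^*)$.

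It remains only to confirm that $A\mathcal{C}A^*$ is an admissible covariance operator on $\mathcal{H}_2$, i.e.\ self-adjoint, nonnegative and trace-class, so that $\mathcal{N}(Am,A\mathcal{C}A^*)$ is well-defined. Self-adjointness follows from $(A\mathcal{C}A^*)^*=A\mathcal{C}^*A^*=A\mathcal{C}A^*$ and nonnegativity from $\langle A\mathcal{C}A^*k,k\rangle=\langle\mathcal{C}A^*k,A^*k\rangle\geq 0$; for the trace-class property I would write $A\mathcal{C}A^*=(A\mathcal{C}^{1/2})(A\mathcal{C}^{1/2})^*$ and note that $\mathcal{C}\in L_1(\mathcal{H}_1)$ forces $\mathcal{C}^{1/2}\in L_2(\mathcal{H}_1)$, hence $A\mathcal{C}^{1/2}\in L_2(\mathcal{H}_1,\mathcal{H}_2)$ by boundedness of $A$, so that the product lies in $L_1(\mathcal{H}_2)$. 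There is no real obstacle here; the only subtlety worth flagging is that, if one prefers not to invoke the convention that a measure with the stated one-dimensional marginals already \emph{is} Gaussian, one should additionally remark that the pushforward of $\mu$ under $A$ is a Borel measure on $\mathcal{H}_2$ (being the image of a Borel measure under a continuous map), so that identifying its one-dimensional projections suffices.
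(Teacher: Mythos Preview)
Your proof is correct and is the standard argument via one-dimensional marginals. The paper states this lemma without proof, treating it as a well-known auxiliary fact, so there is nothing to compare against; your write-up would serve perfectly well as the missing justification.
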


\section{Proofs of results}
\label{sec:proofs_of_results}

\subsection{Proofs of Section \ref{sec:equivalence_and_divergences_between_gaussian_measures}}
\label{subsec:proofs_for_formulation}

\varianceReduction*
\begin{proof}[Proof of \Cref{prop:variance_reduction}]
	Applying $\mathcal{C}_\pos^{1/2}$ to both sides of the equation \eqref{eqn:bayesian_cov_pencil} implies $\mathcal{C}_\pos\mathcal{C}_\pr^{-1/2}w_i=(1+\lambda_i)\mathcal{C}_\pr^{1/2}w_i=(1+\lambda_i)\mathcal{C}_\pr(\mathcal{C}_\pr^{-1/2}w_i)$.
	Taking the inner product of both sides of the last equation with $\mathcal{C}_\pr^{-1/2}w_i$, we obtain the equality $\langle \mathcal{C}_\pos \mathcal{C}_\pr^{-1/2}w_i,\mathcal{C}_\pr^{-1/2}w_i\rangle =(1+\lambda_i) \langle \mathcal{C}_\pr\mathcal{C}_\pr^{-1/2}w_i,\mathcal{C}_\pr^{-1/2}w_i\rangle$.
	By \Cref{lemma:linear_push_forward_gaussian}, $\var_{X\sim\mu_\pos}(\langle X,z\rangle)=\langle \mathcal{C}_\pos z,z\rangle$ and $\var_{X\sim\mu_\pr}(\langle X,z\rangle)=\langle \mathcal{C}_\pr z,z\rangle$ for any $z\in\H$. Thus we obtain \eqref{eqn:relative_variance_reduction}.
	We now prove the final statement. It holds that $\ran{\mathcal{C}_\pr^{1/2}}=\ran{\mathcal{C}_\pos^{1/2}}$ by \Cref{thm:feldman--hajek}\ref{item:fh_ranges}. Then, by definition of the domain of compositions of unbounded operators, $\dom{\mathcal{C}_\pos^{1/2}\mathcal{C}_\pr^{-1/2}}=\dom{\mathcal{C}_\pr^{-1/2}}=\ran{\mathcal{C}_\pr^{1/2}}$. Furthermore, $\mathcal{C}_\pr^{-1/2}\mathcal{C}_\pos^{1/2}$ is a well-defined bounded operator on $\H$ by \Cref{lemma:equivalent_cm_condition}, and hence so is $(\mathcal{C}_\pr^{-1/2}\mathcal{C}_\pos^{1/2})^*$. We now apply $\mathcal{C}_\pr^{-1/2}\mathcal{C}_\pos^{1/2}$ to both sides of \eqref{eqn:bayesian_cov_pencil} and obtain
	\begin{align*}
		\mathcal{C}_{\pr}^{-1/2}\mathcal{C}_\pos^{1/2}\mathcal{C}_\pos^{1/2}\mathcal{C}_\pr^{-1/2}w_i=(1+\lambda_i)w_i.
	\end{align*}
	By \Cref{lemma:adjoint_of_densely_defined_operators}\ref{item:adjoint_of_densely_defined_operators_1}, $(\mathcal{C}_\pr^{-1/2}\mathcal{C}_\pos^{1/2})^*\in\B(\H)$ satisfies $(\mathcal{C}_\pr^{-1/2}\mathcal{C}_\pos^{1/2})^*w_i = \mathcal{C}_\pos^{1/2}\mathcal{C}_\pr^{-1/2}w_i$. The above display thus shows $I-\mathcal{C}_\pr^{-1/2}\mathcal{C}_\pos^{1/2}(\mathcal{C}_\pr^{-1/2}\mathcal{C}_\pos^{1/2})^* = \sum_{i}^{}-\lambda_i w_i\otimes w_i$. This is a nonnegative and compact operator, since $(-\lambda_i)_i\in\ell^2([0,1))$. Applying \cite[eq.\ (4.13)]{Hsing2015} to this operator, we get for any subspace $V_r\subset\ran{\mathcal{C}_\pr^{1/2}}$ of dimension $r$,
	\begin{align*}
		1+\max_{z\in V_{r}^\perp\setminus\{0\}}\frac{\langle-\mathcal{C}_\pr^{-1/2}\mathcal{C}_\pos^{1/2}(\mathcal{C}_\pr^{-1/2}\mathcal{C}_\pos^{1/2})^*z,z\rangle}{\norm{z}^2}
		=\max_{z\in V_{r}^\perp\setminus\{0\}}\frac{\langle I-\mathcal{C}_\pr^{-1/2}\mathcal{C}_\pos^{1/2}(\mathcal{C}_\pr^{-1/2}\mathcal{C}_\pos^{1/2})^*z,z\rangle}{\norm{z}^2}\geq -\lambda_{r+1},
	\end{align*}
	with equality for $V_{r}=\Span{w_1,\ldots,w_{r}}$. Using $\max_x -f(x)=-\min_x f(x)$ for any real-valued $f$, 
	\begin{align*}
		\min_{z\in V_{r}^\perp\setminus\{0\}}\frac{\langle \mathcal{C}_\pr^{-1/2}\mathcal{C}_\pos^{1/2}(\mathcal{C}_\pr^{-1/2}\mathcal{C}_\pos^{1/2})^*z,z\rangle}{\norm{z}^2}\leq 1+\lambda_{r+1},
	\end{align*}
	with equality for $V_{r}=\Span{w_1,\ldots,w_{r}}$. 
	Next, we show that, for any subspace $V_{r}\subset\ran{\mathcal{C}_\pr^{1/2}}$ of dimension $r$,
	\begin{align}
		\label{eqn:variance_reduction_over_dense_set}
		\min_{z\in V_{r}^\perp\setminus\{0\}} \frac{\langle \mathcal{C}_\pr^{-1/2}\mathcal{C}_\pos^{1/2}(\mathcal{C}_\pr^{-1/2}\mathcal{C}_\pos^{1/2})^*z,z\rangle}{\norm{z}^2}
		= \inf_{z\in (V_{r}^\perp\cap{\ran{\mathcal{C}_\pr^{1/2}}})\setminus\{0\}} \frac{\var_{X\sim\mu_\pos}(\langle X,\mathcal{C}_\pr^{-1/2}z\rangle)}{\var_{X\sim\mu_\pr}(\langle X,\mathcal{C}_\pr^{-1/2}z\rangle)}.
	\end{align}
	Let $v_1,\ldots,v_{r}$ be any basis of $V_{r}$. By \Cref{lemma:extension_of_finite_basis_in_dense_subspace}, we may extend this to a sequence $(v_i)_i\subset\ran{\mathcal{C}_\pr^{1/2}}$ which forms an ONB of $\H$. Thus, $(v_i)_{i>r}\subset V_{r}^\perp\cap{\ran{\mathcal{C}_\pr^{1/2}}}$ is an ONB of $V_{r}^\perp$. This shows that $V_{r}^\perp\cap{\ran{\mathcal{C}_\pr^{1/2}}}$ is dense in $V_{r}^\perp$. Since $\mathcal{C}_\pr^{-1/2}\mathcal{C}_\pos^{1/2}(\mathcal{C}_\pr^{-1/2}\mathcal{C}_\pos^{1/2})^*$ is continuous, it follows that the map $z\mapsto \norm{z}^{-2}\langle \mathcal{C}_\pr^{-1/2}\mathcal{C}_\pos^{1/2}(\mathcal{C}_\pr^{-1/2}\mathcal{C}_\pos^{1/2})^*z,z\rangle$ is continuous. Thus,
	\begin{align*}
		\min_{z\in V_{r}^\perp\setminus\{0\}}\frac{\langle \mathcal{C}_\pr^{-1/2}\mathcal{C}_\pos^{1/2}(\mathcal{C}_\pr^{-1/2}\mathcal{C}_\pos^{1/2})^*z,z\rangle}{\norm{z}^2}
		= \inf_{z\in (V_{r}^\perp\cap{\ran{\mathcal{C}_\pr^{1/2}}})\setminus\{0\}}\frac{\langle \mathcal{C}_\pr^{-1/2}\mathcal{C}_\pos^{1/2}(\mathcal{C}_\pr^{-1/2}\mathcal{C}_\pos^{1/2})^*z,z\rangle}{\norm{z}^2}.
	\end{align*}
	Now, $(\mathcal{C}_\pr^{-1/2}\mathcal{C}_\pos^{1/2})^*z = \mathcal{C}_{\pos}^{1/2}\mathcal{C}_\pr^{-1/2}z$ for $z\in\ran{\mathcal{C}_\pr^{1/2}}$ by \Cref{lemma:adjoint_of_densely_defined_operators}\ref{item:adjoint_of_densely_defined_operators_1}. Hence, for $z\in\ran{\mathcal{C}_\pr^{1/2}}$ we have $\langle \mathcal{C}_\pr^{-1/2}\mathcal{C}_\pos^{1/2}(\mathcal{C}_\pr^{-1/2}\mathcal{C}_\pos^{1/2})^*z,z\rangle = \langle \mathcal{C}_\pos \mathcal{C}_\pr^{-1/2}z,\mathcal{C}_\pr^{-1/2}z\rangle = \var_{X\sim\mu_\pos}(\langle X,\mathcal{C}_\pr^{-1/2}z\rangle)$ using \Cref{lemma:linear_push_forward_gaussian}. The equation \eqref{eqn:variance_reduction_over_dense_set} now follows, because $\norm{z}^2 = \var_{X\sim\mu_\pr}(\langle X,\mathcal{C}_\pr^{-1/2}z\rangle)$ for $z\in\ran{\mathcal{C}_\pr^{1/2}}$ by \Cref{lemma:linear_push_forward_gaussian}. 
	We note that the infimum in \eqref{eqn:variance_reduction_over_dense_set} is equal to
\begin{align*}
		\inf_{z\in\H:\ \mathcal{C}_\pr^{1/2}z\in V_{r}^\perp\setminus\{0\}} \frac{\var_{X\sim\mu_\pos}(\langle X,z\rangle)}{\var_{X\sim\mu_\pr}(\langle X,z\rangle)}
		&=  
		\inf_{z\in(\mathcal{C}_\pr^{-1/2}V_{r})^\perp\setminus\{0\}} \frac{\var_{X\sim\mu_\pos}(\langle X,z\rangle)}{\var_{X\sim\mu_\pr}(\langle X,z\rangle)}\\
		&= 
		\inf_{z\in(\mathcal{C}_\pr^{-1/2}V_{r})^\perp,\norm{z} = 1} \frac{\var_{X\sim\mu_\pos}(\langle X,z\rangle)}{\var_{X\sim\mu_\pr}(\langle X,z\rangle)},
\end{align*} 
where in the final step we use that the ratio $\frac{\var_{X\sim\mu_\pos}(\langle X,z\rangle)}{\var_{X\sim\mu_\pr}(\langle X,z\rangle)}$ is invariant under scaling of $X$. It remains to show that the final infimum above is attained. Since $\{z\in\H:\ \norm{z}\leq 1\}$ is weakly compact by \cite[Theorem V.4.2]{conway_course_2007}, the closed subspace $(\mathcal{C}_\pr^{-1/2}V_r)^\perp\cap\{z\in\H:\ \norm{z}=1\}$ of $\{z\in\H:\ \norm{z}\leq1\}$ is also weakly compact. Furthermore, $\var_{X\sim \mu_\pos}(\langle X,z\rangle)=\langle \mathcal{C}_\pos z,z\rangle $ by \Cref{lemma:linear_push_forward_gaussian}, which is weakly continuous in $z$ by \Cref{lemma:weakly_continuous_inner_products}. Similarly, $\var_{X\sim \mu_\pr}(\langle X,z\rangle)$ is weakly continuous. Thus the ratio $\frac{\var_{X\sim\mu_\pos}(\langle X,z\rangle)}{\var_{X\sim\mu_\pr}(\langle X,z\rangle)}$ is weakly continuous on the weakly compact set $(\mathcal{C}_\pr^{-1/2}V_r)^\top\cap\{z\in\H:\ \norm{z}=1\}$. It follows that the infima above are attained, proving \eqref{eqn:maximal_variance_reduction}.
\end{proof}

\subsection{Proofs of Section \ref{sec:optimal_approximation_mean}}
\label{subsec:proofs_for_optimal_approximation_means}

\lemmaSquareRoots*
\begin{proof}[Proof of \Cref{lemma:square_roots}]
	We recall that $\lambda_i=0$ for $i>n$ by \Cref{prop:bayesian_feldman_hajek}.
	Since $\mathcal{C}_\obs^{1/2}$ has a bounded inverse, \Cref{lemma:adjoint_of_densely_defined_operators} and \eqref{eqn:svd_preconditioned_hessian} imply
	\begin{align*}
		\mathcal{C}_\obs^{-1/2}G^*\mathcal{C}_\pr G\mathcal{C}_\obs^{-1/2} = (\mathcal{C}_\pr^{1/2}G^*\mathcal{C}_\obs^{-1/2})^*(\mathcal{C}_\pr^{1/2}G^*\mathcal{C}_\obs^{-1/2}) = \sum_{i=1}^{n}\frac{-\lambda_i}{1+\lambda_i}\varphi_i\otimes\varphi_i.
	\end{align*}
	Therefore, using the definitions of $S_\y$ and $\mathcal{C}_\y$ in \eqref{eqn:square_roots} and \eqref{eqn:data_covariance}, we have 
	\begin{align*}
		\mathcal{C}_\y = \mathcal{C}_\obs + G^*\mathcal{C}_\pr G = \mathcal{C}_\obs^{1/2}(I+\mathcal{C}_\obs^{-1/2}G^*\mathcal{C}_\pr G\mathcal{C}_\obs^{-1/2})\mathcal{C}_\obs^{1/2} = S_\y S_\y^*. 
	\end{align*}
	Because $S_\y$ is a rank-$n$ operator on $\R^n$, it has a bounded inverse. 
	Next, $I+\sum_{i} \frac{-\lambda_i}{1+\lambda_i}w_i\otimes w_i$ is boundedly invertible by \Cref{lemma:inverse_of_self_adj_hilbert_schmidt_perturbation}\ref{item:inverse_of_self_adj_hilbert_schmidt_perturbation_1} since $\frac{-\lambda_i}{1+\lambda_i}\not=-1$ for all $i$, 
	hence $\ran{S_\pos}=\ran{\mathcal{C}_\pr^{1/2}}$. Because $S_\pos$ is an injective operator, this shows that the inverse of $S_\pos:\mathcal{H}\rightarrow\ran{\mathcal{C}_\pr^{1/2}}$ exists. 
	Furthermore, $I+\sum_{i} \frac{-\lambda_i}{1+\lambda_i}w_i\otimes w_i$ maps $\ran{\mathcal{C}_\pr^{1/2}}$ onto itself, since $(w_i)_i\subset\ran{\mathcal{C}_\pr^{1/2}}$ by \Cref{prop:bayesian_feldman_hajek}. Hence also $(I+\sum_{i} \frac{-\lambda_i}{1+\lambda_i}w_i\otimes w_i)^{-1}$ maps $\ran{\mathcal{C}_\pr^{1/2}}$ onto itself. Recalling that $\ran{\mathcal{C}_\pr}=\ran{\mathcal{C}_\pos}$ by the discussion after \eqref{eqn:pos_precision}, it follows that $\ran{S}_\pos S_\pos^* = \ran{\mathcal{C}_\pr^{1/2}(I+\sum_{i} \frac{-\lambda_i}{1+\lambda_i}w_i\otimes w_i)^{-1}\mathcal{C}_\pr^{1/2}}=\ran{\mathcal{C}_\pr}=\ran{\mathcal{C}}_\pos$.
	By \eqref{eqn:pos_precision} and \eqref{eqn:decomposition_prior_preconditioned_hessian}, it holds on $\ran{\mathcal{C}_\pos}$,
	\begin{align*}
		\mathcal{C}_\pos^{-1} = \mathcal{C}_\pr^{-1} + H = \mathcal{C}_\pr^{-1/2}(I+\mathcal{C}_\pr^{1/2}H\mathcal{C}_\pr^{1/2})\mathcal{C}_\pr^{-1/2} = \mathcal{C}_\pr^{-1/2}\left(I+\sum_{i=1}^{n}\frac{-\lambda_i}{1+\lambda_i}w_i\otimes w_i\right)\mathcal{C}_\pr^{-1/2}=(S_\pos S_\pos^*)^{-1}.
	\end{align*}
	This shows that $\mathcal{C}_\pos=S_\pos S_\pos^*$, which proves \cref{item:square_roots}.
	\Cref{item:computation_cm_norm} now immediately follows from \cite[Corollary B.3]{da_prato_stochastic_2014} and the equality $\ran{S_\pos}=\ran{\mathcal{C}_\pr^{1/2}}=\ran{\mathcal{C}_\pos^{1/2}}.$ 
	For \cref{item:root_pre_cameron_martin_space}, we note that by \eqref{eqn:square_root_expression_S_pos} we have for $h\in\ran{\mathcal{C}_\pr^{1/2}}$, $\left( I+\sum_{i=1}^{n}\frac{-\lambda_i}{1+\lambda_i}w_i\otimes w_i\right)^{-1/2}h = \sum_{i}^{}(1+\lambda_i)^{1/2}\langle h,w_i\rangle w_i = h - \sum_{i=1}^{n}\langle h,w_i\rangle w_i + \sum_{i=1}^{n}(1+\lambda_i)^{1/2}\langle h,w_i\rangle w_i\in\ran{\mathcal{C}_\pr^{1/2}}$ as a sum of elements of $\ran{\mathcal{C}_\pr^{1/2}}$, because $(w_i)_i\subset\ran{\mathcal{C}_\pr^{1/2}}$ by \Cref{prop:bayesian_feldman_hajek}. Furthermore, if $k\in\ran{\mathcal{C}_\pr^{1/2}}$, then $h\coloneqq \sum_{i}^{}(1+\lambda_i)^{-1/2}\langle k,w_i\rangle w_i$ satisfies $h=k - \sum_{i=1}^{n}\langle k,w_i\rangle w_i + \sum_{i=1}^{n}(1+\lambda_i)^{-1/2}\langle k,w_i\rangle w_i\in\ran{\mathcal{C}_\pr^{1/2}}$. By \eqref{eqn:square_root_expression_S_pos}, we have $\left( I+\sum_{i=1}^{n}\frac{-\lambda_i}{1+\lambda_i}w_i\otimes w_i\right)^{-1/2}h=\sum_{i}^{}(1+\lambda_i)^{1/2}\langle h,w_i\rangle w_i=\sum_{i}^{}\langle k,w_i\rangle w_i =k$. We conclude that $\left( I+\sum_{i=1}^{n}\frac{-\lambda_i}{1+\lambda_i}w_i\otimes w_i\right)^{-1/2}$ maps $\ran{\mathcal{C}_\pr^{1/2}}$ onto $\ran{\mathcal{C}_\pr^{1/2}}$, so that 
	\begin{align*}
		S_\pos(\ran{\mathcal{C}_\pr^{1/2}})=\mathcal{C}_\pr^{1/2}\left( I+\sum_{i=1}^{n}\frac{-\lambda_i}{1+\lambda_i}w_i\otimes w_i\right)^{-1/2}(\ran{\mathcal{C}_\pr^{1/2}})=\mathcal{C}_\pr^{1/2}(\ran{\mathcal{C}_\pr^{1/2}})=\ran{\mathcal{C}_\pr}=\ran{\mathcal{C}_\pos}.
	\end{align*}
\end{proof}

\propMeanApproximationProblemEquivalence*

\begin{proof}[Proof of \Cref{prop:mean_approximation_problem_equivalence}]
	\ref{item:mean_approximation_problem_equivalence_1} Note that by \eqref{eqn:pos_mean}, $m_\pos(Y)\in\ran{\mathcal{C}_\pos}\subset\ran{\mathcal{C}_\pos^{1/2}}$ with probability 1.
	We first show the reverse inclusions. Suppose that $A\in\B(\R^n,\mathcal{H})$ satisfies $\ran{A}\subset\ran{\mathcal{C}_\pr^{1/2}}=\ran{\mathcal{C}_\pos^{1/2}}$. Because $m_\pos(Y)\in\ran{\mathcal{C}_\pos^{1/2}}$ with probability 1, it follows that $AY-m_\pos(Y)\in\ran{\mathcal{C}_\pos^{1/2}}$ with probability 1. Hence, by \Cref{thm:feldman--hajek}, it holds that $\mathcal{N}(m_\pos(Y),\mathcal{C}_\pos)\sim\mathcal{N}(AY,\mathcal{C}_\pos)$ with probability 1. This implies the reverse inclusion for $i=2$. To see that it also implies the reverse inclusion for $i=1$, we show that $\ran{A}\subset\ran{\mathcal{C}_\pr^{1/2}}$ holds true if $A\in\mathscr{M}^{(1)}_r$, that is, if $A=(\mathcal{C}_\pr-B)G^*\mathcal{C}_\obs^{-1}$ for some $B\in\B_{00,r}(\mathcal{H})$ with $B(\ker{G}^\perp)\subset\ran{\mathcal{C}_\pr^{1/2}}$. Since $G$ has finite rank, its range is closed. Thus, $\ran{BG^*}=B(\ran{G^*})=B(\overline{\ran{G^*}})=B(\ker{G}^\perp)$ by \Cref{lemma:kernel_range}. Therefore, $\ran{BG^*\mathcal{C}_\obs^{-1}}\subset\ran{BG^*}\subset\ran{\mathcal{C}_\pr^{1/2}}$. With $\ran{\mathcal{C}_{\pr}}\subset\ran{\mathcal{C}_\pr^{1/2}}$ it follows that $\ran{A}=\ran{(\mathcal{C}_{\pr}-B)G^*\mathcal{C}_\obs^{-1}}\subset\ran{\mathcal{C}_\pr^{1/2}}$.

	We show the forward inclusions next. 
	Suppose that $A\in\mathscr{M}^{(i)}_r$ for $i=1$ or $i=2$. By \Cref{thm:feldman--hajek}\ref{item:fh_means}, $AY-m_\pos(Y)\in\ran{\mathcal{C}^{1/2}_\pos}$ with probability 1. Since $m_\pos(Y)\in\ran{\mathcal{C}^{1/2}_\pos}$ with probability 1 by \eqref{eqn:pos_mean}, this implies $AY\in\ran{\mathcal{C}_\pos^{1/2}}$ with probability 1. 
	Now fix $i=2$. By \Cref{lemma:linear_push_forward_gaussian}, $AY$ is a Gaussian measure with covariance $A\mathcal{C}_\y A^*$, where $\mathcal{C}_\y$ is the covariance of $Y$. By \cite[Theorem 2.4.7]{bogachev_gaussian_1998} or \cite[Proposition 4.45]{Hairer2023}, the Cameron--Martin space of a Gaussian measure is contained in every measurable linear subspace of full measure. Thus, since $AY \in\ran{\mathcal{C}_\pos^{1/2}}$ with probability 1, the Cameron--Martin space of $AY$, which is $\ran{(A\mathcal{C}_\y A^*)^{1/2}}$, is contained in $\ran{\mathcal{C}_\pos^{1/2}}=\ran{\mathcal{C}_\pr^{1/2}}$. Because $A$ has finite rank, $A\mathcal{C}_\y A^*$ has finite rank and therefore $\ran{A\mathcal{C}_\y A^*} = \ran{(A\mathcal{C}_\y A^*)^{1/2}}$, by \Cref{lemma:range_of_square_of_finite_rank} applied to $A\leftarrow (A\mathcal{C}_\y A^*)^{1/2}$. Furthermore, by \Cref{lemma:range_of_square_of_finite_rank} applied to $A\leftarrow A\mathcal{C}_\y^{1/2}$ and invertibility of $\mathcal{C}_\y$, we have $\ran{A}=\ran{A\mathcal{C}_\y}^{1/2}=\ran{A\mathcal{C}_\y^{1/2}(A\mathcal{C}_\y^{1/2})^*}=\ran{A\mathcal{C}_\y A^*}$. As a consequence, $\ran{A}=\ran{(A\mathcal{C}_\y A^*)^{1/2}}\subset\ran{\mathcal{C}_\pr^{1/2}}$. This shows the forward inclusion for $i=2$.
	Finally, let $i=1$. Thus, $A=(\mathcal{C}_\pr-B)G^*\mathcal{C}_\obs^{-1}$ for some $B\in \B_{00,r}(\mathcal{H})$. Since we just showed that $AY\in\ran{\mathcal{C}_\pos^{1/2}}$ with probability 1, and since $\ran{\mathcal{C}_\pr}\subset\ran{\mathcal{C}_\pr^{1/2}}=\ran{\mathcal{C}_\pos^{1/2}}$, it follows that $BG^*\mathcal{C}_\obs^{-1}Y\in\ran{\mathcal{C}_\pos^{1/2}}$ with probability 1. By replacing $A$ with $BG^*\mathcal{C}_\obs^{-1}$ in the argument for the case where $i=2$, we obtain
	$\ran{BG^*}\mathcal{C}_\obs^{-1}\subset\ran{\mathcal{C}_\pr^{1/2}}$. Since $\ran{G^*}$ is finite-dimensional, it is closed. Using that $\mathcal{C}_\obs$ is invertible, this implies $B(\ker{G}^\perp)=B(\ran{G^*})\subset\ran{\mathcal{C}_\pr^{1/2}}$ by \Cref{lemma:kernel_range}. This shows the forward inclusion for $i=1$.

	\ref{item:mean_approximation_problem_equivalence_2} By \Cref{lemma:square_roots}\ref{item:square_roots}, $S_\pos$ is injective and $\ran{S_\pos}=\ran{\mathcal{C}_\pr^{1/2}}=\ran{\mathcal{C}_\pos^{1/2}}$. Thus, $\rank{S_\pos \widetilde{A}}=\rank{\widetilde{A}}$ and $\ran{S_\pos \widetilde{A}\subset\ran{\mathcal{C}_\pos^{1/2}}}$ for every $\widetilde{A}\in\B_{00,r}(\R^n,\mathcal{H})$. By \eqref{eqn:equivalent_class_approx_mean_without_covariance_update}, $S_\pos\widetilde{\mathscr{M}}^{(2)}_r=\{A\in\B_{00,r}(\R^n,\mathcal{H}):\ \ran{A}\subset\ran{\mathcal{C}_\pr^{1/2}}\}=\mathscr{M}^{(2)}_r$. This shows the result for $i=2$. For $i=1$, first let $A\in\mathscr{M}^{(1)}_r$. By \eqref{eqn:equivalent_class_approx_mean_with_covariance_update}, this implies $A=(\mathcal{C}_\pr-B)G^*\mathcal{C}_\obs^{-1}$ for some $B\in\B_{00,r}(\mathcal{H})$ with $B(\ker{G}^\perp)\subset\ran{\mathcal{C}_\pr^{1/2}}$. Let $\widetilde{B}\coloneqq S_\pos^{-1}BP_{\ker{G}^\perp}$, where $P_{\ker{G}^\perp}$ denotes the orthogonal projector onto $\ker{G}^\perp$. Then $\widetilde{B}$ is well-defined, because $\ran{BP_{\ker{G}^\perp}}=B(\ker{G}^\perp)\subset\ran{\mathcal{C}_\pr^{1/2}}=\dom{S_\pos^{-1}}$ by \Cref{lemma:square_roots}\ref{item:square_roots}. Furthermore, $\rank{\widetilde{B}}\leq \rank{B}\leq r$ and $S_\pos\widetilde{B} = BP_{\ker{G}^\perp}$. Hence $S_\pos\widetilde{B}G^*= BG^*$ by \Cref{lemma:kernel_range}, showing $A=S_\pos(S_\pos^{-1}\mathcal{C}_\pr-\widetilde{B})G^*\mathcal{C}_\obs^{-1}$. Thus, $A\in S_\pos \widetilde{\mathscr{M}}^{(1)}_r$. For the reverse inclusion, let $A\in S_\pos\widetilde{\mathscr{M}}^{(1)}_r$. That is, let $A=S_\pos(S_\pos^{-1}\mathcal{C}_\pr-\widetilde{B})G^*\mathcal{C}_\obs^{-1}$ for some $\widetilde{B}\in\B_{00,r}(\mathcal{H})$. Then $A=(\mathcal{C}_\pr-B)G^*\mathcal{C}_\obs^{-1}$, where $B\coloneqq S_\pos \widetilde{B}$ satisfies $\rank{B}=\rank{\widetilde{B}}\leq r$ and $B(\ker{G}^\perp)\subset\ran{B}\subset\ran{S_\pos}=\ran{\mathcal{C}_\pos^{1/2}}=\ran{\mathcal{C}_\pr^{1/2}}$. By \eqref{eqn:equivalent_class_approx_mean_with_covariance_update}, this shows that $A\in\mathscr{M}^{(1)}_r$.

	\ref{item:mean_approximation_problem_equivalence_3} For $i=1,2$, we note that $\widetilde{A}_1,\widetilde{A}_2\in\widetilde{\mathscr{M}}^{(i)}_r$ satisfy
	\begin{align*}
		\mathbb{E} \Norm{\widetilde{A}_1Y-S_\pos^{-1}m_{\pos}(Y)}^2 \leq \mathbb{E} \Norm{\widetilde{A}_2Y-S_\pos^{-1}m_{\pos}(Y)}^2,
	\end{align*}
	if and only if 
	\begin{align*}
		\mathbb{E} \Norm{S_\pos^{-1}(S_\pos \widetilde{A}_1Y-m_{\pos}(Y))}^2 \leq \mathbb{E} \Norm{S_\pos^{-1}(S_\pos\widetilde{A}_2Y-m_{\pos}(Y))}^2.
	\end{align*}
	By \Cref{lemma:square_roots}\ref{item:computation_cm_norm} and \cref{item:mean_approximation_problem_equivalence_2} above, this shows that $\widetilde{A}_1$ solves \Cref{prob:optimal_mean_reformulated} if and only if $S_\pos \widetilde{A}_1$ solves \Cref{prob:optimal_mean}.

	\ref{item:mean_approximation_problem_equivalence_4} This follows immediately from \cref{item:mean_approximation_problem_equivalence_2,item:mean_approximation_problem_equivalence_3}.
\end{proof}

\lemmaMeanEquivalentLoss*
\begin{proof}[Proof of \Cref{lemma:mean_equivalent_loss}]
	Let $\widetilde{A}\in\B(\R^n,\mathcal{H})$. Recall from \Cref{lemma:square_roots} that $\mathcal{C}_\pos=S_\pos S_\pos^*$ and from \eqref{eqn:pos_mean} that $m_\pos(y)=\mathcal{C}_\pos G^*\mathcal{C}_\obs^{-1}y$ for $y\in\R^n$. Thus if we let $Z\coloneqq\widetilde{A}-S_\pos^* G^*\mathcal{C}_\obs^{-1}$, then $\widetilde{A} Y- S_\pos^{-1} m_\pos(Y)=ZY$. By \Cref{lemma:linear_push_forward_gaussian}, the covariance of $ZY$ is $Z\mathcal{C}_\y Z^*$. Then, by applying \Cref{lemma:gaussian_expected_norm_squared} with $X\leftarrow ZY$, $\mathcal{C}\leftarrow Z\mathcal{C}_\y Z^*$, and $S\leftarrow ZS_\y$,
	\begin{align*}
		\mathbb{E}\Norm{\widetilde{A}Y-S_\pos^{-1}m_\pos(Y)}^2 = \norm{ZS_\y}_{L_2(\mathcal{H})}^2 = \norm{\widetilde{A}S_\y - S_\pos^*G^*\mathcal{C}_\obs^{-1}S_\y}_{L_2(\mathcal{H})}^2.
	\end{align*}
	Thus, to show \eqref{eqn:mean_equivalent_loss} it remains to show $\mathcal{C}_\pr^{1/2}G^*\mathcal{C}_\obs^{-1/2} = S_\pos^*G^*\mathcal{C}_\obs^{-1}S_\y$.
	By \eqref{eqn:square_roots},
	\begin{align*}
		S_\pos^*G^*\mathcal{C}_\obs^{-1}S_\y &= \left( I+\sum_{i=1}^{n}\frac{-\lambda_i}{1+\lambda_i}w_i\otimes w_i \right)^{-1/2}\mathcal{C}_\pr^{1/2}G^*\mathcal{C}_\obs^{-1/2} 
		\left( I+\sum_{i=1}^{n}\frac{-\lambda_i}{1+\lambda_i}\varphi_i\otimes \varphi_i \right)^{1/2}.
	\end{align*}
	Fix an arbitrary $x\in\R^n$. Then, 
	\begin{align*}
		S_\pos^*G^*\mathcal{C}_\obs^{-1}S_\y x &= \left( I+\sum_{i}^{}\frac{-\lambda_i}{1+\lambda_i}w_i\otimes w_i \right)^{-1/2} \left( \sum_{i=1}^{n}\sqrt{\frac{-\lambda_i}{1+\lambda_i}}w_i\otimes \varphi_i \right)
		 \sum_{i}^{}(1+\lambda_i)^{-1/2}\langle x,\varphi_i\rangle \varphi_i \\
		&= \left( I+\sum_{i}^{}\frac{-\lambda_i}{1+\lambda_i}w_i\otimes w_i \right)^{-1/2} \sum_{i=1}^{n}\sqrt{\frac{-\lambda_i}{(1+\lambda_i)^2}}\langle x,\varphi_i\rangle w_i \\
		&= \sum_{i=1}^{n}\sqrt{\frac{-\lambda_i}{1+\lambda_i}}\langle x,\varphi_i\rangle w_i\\
		&= \left(\sum_{i=1}^{n} \sqrt{\frac{-\lambda_i}{1+\lambda_i}}w_i\otimes \varphi_i\right) x = \mathcal{C}_\pr^{1/2}G^*\mathcal{C}_\obs^{-1/2}x,
	\end{align*}
	where we use \eqref{eqn:svd_preconditioned_hessian} and \eqref{eqn:square_root_expression_S_y} in the first equation, \eqref{eqn:square_root_expression_S_pos} in the third equation and \eqref{eqn:svd_preconditioned_hessian} in the last equation.
\end{proof}

\thmOptimalLowRankMeanApprox*
\begin{proof}[Proof of \Cref{thm:optimal_low_rank_mean_approx}]
	In order to solve \Cref{prob:optimal_mean}, it suffices by \Cref{lemma:mean_equivalent_loss} and \eqref{eqn:classes_mean_reparametrised} to first find $\widetilde{A}_{r}^{\opt,(2)}$ that solves the rank-constrained operator approximation problem
	\begin{align}
		\label{eqn:min_hs_problem_mean}
		\min\left\{\Norm{\widetilde{A} S_\y - \mathcal{C}_\pr^{1/2} G^*\mathcal{C}_\pos^{-1/2}}^2_{L_2(\H)}:\ \widetilde{A}\in \widetilde{\mathscr{M}}^{(2)}_r=\B_{00,r}(\R^r,\mathcal{H})\right\},
	\end{align}
	and then set $A_r^{\opt,(2)} \coloneqq S_\pos \widetilde{A}_r^{\opt,(2)}$ using \Cref{prop:mean_approximation_problem_equivalence}\ref{item:mean_approximation_problem_equivalence_3}. Note that $I^{\dagger}=I$, that $S_y^{\dagger}=S_\y^{-1}$ by \Cref{lemma:square_roots}\ref{item:square_roots}, and that $(\mathcal{C}_\pr^{1/2} G^*\mathcal{C}_\pos^{-1/2})_r\coloneqq \sum_{i=1}^{r}\sqrt{\frac{-\lambda_i}{1+\lambda_i}}w_i\otimes \varphi_i$ is a rank-$r$ truncated SVD of $\mathcal{C}_\pr^{1/2} G^*\mathcal{C}_\pos^{-1/2}$ by \eqref{eqn:svd_preconditioned_hessian}. Since $I\in\mathcal{B}(\mathcal{H})$ and $S_\y\in\B(\R^n)$ have closed range, and since $\mathcal{C}_\pr^{1/2} G^*\mathcal{C}_\pos^{-1/2}$ has finite rank and is thus Hilbert--Schmidt, we may apply \Cref{thm:generalised_friedland_torokhti} with $\mathcal{H}_i\leftarrow \R^n$ for $i\in\{1,2\}$, $\mathcal{H}_i\leftarrow\mathcal{H}$ for $i\in\{3,4\}$, $T\leftarrow I$, $S\leftarrow S_\y$, $M\leftarrow \mathcal{C}_\pr^{1/2} G^*\mathcal{C}_\pos^{-1/2}$ to find
	\begin{align*}
		\widetilde{A}_r^{\opt,(2)} &= \left(\sum_{i=1}^{r}\sqrt{\frac{-\lambda_i}{1+\lambda_i}}w_i\otimes \varphi_i\right) S_\y^{-1}.
	\end{align*}
	Since $(w_i)_i\subset\ran{\mathcal{C}_\pr^{1/2}}$ by \Cref{prop:bayesian_feldman_hajek}, it follows by \Cref{lemma:square_roots}\ref{item:root_pre_cameron_martin_space} that $\ran{{A}_r^{\opt,(2)}}=\ran{S_\pos\widetilde{A}_r^{\opt,(2)}}\subset\Span{S_\pos w_i,\ i\leq r}\subset \ran{\mathcal{C}}_\pr=\ran{\mathcal{C}_\pos}$.
	Thus,
	\begin{align*}
		A_r^{\opt,(2)} &= S_\pos \widetilde{A}_r^{\opt,(2)} = S_\pos \left(\sum_{i=1}^{r}\sqrt{\frac{-\lambda_i}{1+\lambda_i}}w_i\otimes \varphi_i\right) S_\y^{-1}\\
		&= \mathcal{C}_\pr^{1/2}\left( I+\sum_{i=1}^{n}\frac{-\lambda_i}{1+\lambda_i} w_i\otimes w_i\right)^{-1/2}\sum_{i=1}^{r}\sqrt{\frac{-\lambda_i}{1+\lambda_i}}w_i\otimes \varphi_i
		\left( I+\sum_{i=1}^{n}\frac{-\lambda_i}{I+\lambda_i}\varphi_i\otimes \varphi_i \right)^{-1/2}\mathcal{C}_\obs^{-1/2}\\
		&= \mathcal{C}_\pr^{1/2} \left(\sum_{i=1}^{r}\sqrt{-\lambda_i(1+\lambda_i)} w_i\otimes \varphi_i\right) \mathcal{C}_\obs^{-1/2},
	\end{align*}
	where we used \eqref{eqn:square_roots} in the third equation and \eqref{eqn:square_root_expression} in the last equation.
	Using \eqref{eqn:svd_preconditioned_hessian}, the definition of the Hilbert--Schmidt norm and the definition of $\widetilde{A}_r^{\opt,(2)}$, we can compute the corresponding minimal loss:
	\begin{align*}
		\Norm{\widetilde{A}_r^{\opt,(2)} S_\y - \mathcal{C}_\pr^{1/2} G^*\mathcal{C}_\pos^{-1/2} }_{L_2(\mathcal{H})}^2 &= \Norm{\sum_{i=1}^{r}\sqrt{\frac{-\lambda_i}{1+\lambda_i}}w_i\otimes \varphi_i - \sum_{i=1}^{n}\sqrt{\frac{-\lambda_i}{1+\lambda_i}}w_i\otimes \varphi_i}_{L_2(\mathcal{H})}^2 
		= \sum_{i>r}^{}\frac{-\lambda_i}{1+\lambda_i}.
	\end{align*}
	Finally, by \Cref{prop:mean_approximation_problem_equivalence}\ref{item:mean_approximation_problem_equivalence_3}-\ref{item:mean_approximation_problem_equivalence_4} and \Cref{lemma:mean_equivalent_loss} it holds that \Cref{prob:optimal_mean} has a unique solution if and only if \eqref{eqn:min_hs_problem_mean} has a unique solution. With the above choices of $M$, $T$ and $S$ it holds that $P_{\ker{T}^\perp}=I$ and $P_{\ran{S}}=I$, and \Cref{thm:generalised_friedland_torokhti} and \eqref{eqn:svd_preconditioned_hessian} imply that \eqref{eqn:min_hs_problem_mean} has a unique solution if and only if ${-\lambda_{r+1}}(1+\lambda_{r+1})^{-1}=0$ or ${-\lambda_r}(1+\lambda_r)^{-1}>{-\lambda_{r+1}}(1+\lambda_{r+1})^{-1}$. Since $(\lambda_i)_i\subset(-1,0]$ is a nonincreasing sequence by \Cref{prop:bayesian_feldman_hajek} and $x\mapsto {-x}(1+x)^{-1}$ is decreasing on $(-1,\infty)$, the latter condition holds if and only if $\lambda_{r+1}=0$ or $\lambda_r<\lambda_{r+1}$. This concludes the proof of uniqueness.
\end{proof}

\optimalLowRankUpdateMeanApprox*
\begin{proof}[Proof of \Cref{thm:optimal_low_rank_update_mean_approx}]
	In order to solve \Cref{prob:optimal_mean}, it suffices by \Cref{lemma:mean_equivalent_loss} and \eqref{eqn:classes_mean_reparametrised} to first find $\widetilde{A}^{\opt,(1)}_r$ that solves the rank-constrained operator approximation problem
	\begin{align}
		\label{eqn:min_hs_problem_mean_update}
		\min\left\{\Norm{\widetilde{A} S_\y- \mathcal{C}_\pr^{1/2} G^*\mathcal{C}_\pos^{-1/2}}^2_{L_2(\H)}:\ \widetilde{A}\in \widetilde{\mathscr{M}}^{(1)}_r\right\},
	\end{align}
	and then set $A_r^{\opt,(1)} \coloneqq S_\pos \widetilde{A}^{\opt,(1)}_r$ using \Cref{prop:mean_approximation_problem_equivalence}\ref{item:mean_approximation_problem_equivalence_3}.
	Recall that by definition \eqref{eqn:classes_mean_reparametrised}, $\widetilde{A}\in\widetilde{\mathscr{M}}_r^{(1)}$ if and only if $\widetilde{A}=(S_\pos^{-1}\mathcal{C}_\pr-\widetilde{B})G^*\mathcal{C}_\obs^{-1}$ for some $\widetilde{B}\in\B_{00,r}(\H)$. 
	Notice that for such $\widetilde{A}$,
	\begin{align*}
		\widetilde{A}S_\y-\mathcal{C}_\pr^{1/2}G^*\mathcal{C}_\obs^{-1/2} = S_\pos^{-1}\mathcal{C}_\pr G^*\mathcal{C}_\obs^{-1}S_\y - \mathcal{C}_\pr^{1/2}G^*\mathcal{C}_\obs^{-1/2} - \widetilde{B}G^*\mathcal{C}_\obs^{-1}S_\y.
	\end{align*}
	The above rank-$r$ operator approximation problem can therefore be solved by solving the following rank-$r$ operator approximation problem
	\begin{align}
		\label{eqn:min_hs_problem_mean_update_2}
		\min\left\{\Norm{S_\pos^{-1}\mathcal{C}_\pr G^*\mathcal{C}_\obs^{-1}S_\y - \mathcal{C}_\pr^{1/2}G^*\mathcal{C}_\obs^{-1/2} -\widetilde{B} G^*\mathcal{C}_\obs^{-1}S_\y}_{L_2(\H)}\ :\ \widetilde{B}\in \B_{00,r}(\H)\right\},
	\end{align}
	and $\widetilde{A}$ solves \eqref{eqn:min_hs_problem_mean_update} if and only if $\widetilde{A}=(S_\pos^{-1}\mathcal{C}_\pr-\widetilde{B})G^*\mathcal{C}_\obs^{-1}$ for some $\widetilde{B}$ solving \eqref{eqn:min_hs_problem_mean_update_2}.
	Since $I\in\mathcal{B}(\mathcal{H})$ and $G^*\mathcal{C}_\obs^{-1}S_\y$ have closed range and since $S_\pos^{-1}\mathcal{C}_\pr G^*\mathcal{C}_\obs^{-1}S_\y - \mathcal{C}_\pr^{1/2}G^*\mathcal{C}_\obs^{-1/2}$ has finite rank and therefore is Hilbert--Schmidt, we may apply \Cref{thm:generalised_friedland_torokhti} with $\mathcal{H}_1\leftarrow \R^n$ and $\mathcal{H}_j\leftarrow\mathcal{H}$ for $j\in\{2,3,4\}$, $T\leftarrow I$, $S\leftarrow G^*\mathcal{C}_\obs^{-1}S_\y$ and $M\leftarrow S_\pos^{-1}\mathcal{C}_\pr G^*\mathcal{C}_\obs^{-1}S_\y - \mathcal{C}_\pr^{1/2}G^*\mathcal{C}_\obs^{-1/2}$ to find a solution $\widetilde{B}^\opt$ to the approximation problem \eqref{eqn:min_hs_problem_mean_update_2}. For the given choices of $T$ and $S$, we have that $T^\dagger=I$, while for the finite-rank operator $S$ we have from \eqref{eqn:svd_preconditioned_hessian} and \eqref{eqn:square_roots} that
	\begin{align*}
		S &= \mathcal{C}_\pr^{-1/2}\left( \mathcal{C}_\pr^{1/2}G^*\mathcal{C}_\obs^{-1/2}\right)\mathcal{C}_\obs^{-1/2}S_\y= \mathcal{C}_\pr^{-1/2}\left( \sum_{i=1}^{n}\sqrt{\frac{-\lambda_i}{1+\lambda_i}} w_i\otimes \varphi_i\right) \left( I+\sum_{i=1}^{n}\frac{-\lambda_i}{1+\lambda_i}\varphi_i\otimes \varphi_i \right)^{1/2},
	\end{align*}
	where $w_i$ is the eigenvector corresponding to the eigenvalue $\lambda_i$ given by \Cref{prop:bayesian_feldman_hajek} and $\varphi_i$ is the right singular vector corresponding to $\lambda_i$ in \eqref{eqn:svd_preconditioned_hessian}.
	By \cite[Theorem 2.8]{engl_regularization_1996}, the Moore--Penrose inverse of $\sum_{i=1}^{n}\sqrt{\frac{-\lambda_i}{1+\lambda_i}}w_i\otimes \varphi_i$ is given by $\sum_{i=1}^{n}\sqrt{\frac{1+\lambda_i}{-\lambda_i}}\varphi_i\otimes w_i$. Furthermore, the Moore--Penrose inverse of a composition of bounded operators is the composition in reverse order of the Moore--Penrose inverses of these operators, see e.g.\ \cite[eq.\ (3.23)]{Hsing2015}. Since $\mathcal{C}_\pr^{-1/2}$ and $I+ \sum_{i=1}^{n}\frac{-\lambda_i}{1+\lambda_i} \varphi_i\otimes \varphi_i$ are boundedly invertible by \Cref{lemma:inverse_of_self_adj_hilbert_schmidt_perturbation}, it thus holds that the bounded operator 
	\begin{align*}
		\left( I+ \sum_{i=1}^{n}\frac{-\lambda_i}{1+\lambda_i} \varphi_i\otimes \varphi_i\right)^{-1/2}\left(\sum_{i=1}^{n}\sqrt{\frac{1+\lambda_i}{-\lambda_i}}\varphi_i\otimes w_i\right)\mathcal{C}_\pr^{1/2}
	\end{align*}
	has Moore--Penrose inverse equal to $S$. Because \cite[Theorem 9.2(f)]{ben-israel2003} implies that $(\mathfrak{S}^{\dagger})^\dagger=\mathfrak{S}$ for any bounded operator $\mathfrak{S}$, the operator in the display above is equal to $S^\dagger$.
	Furthermore, by \cite[eq.\ (2.12)]{engl_regularization_1996}, $P_{\ker{S}^\perp}= S^\dagger S$, showing that $P_{\ker{S}^\perp}= \sum_{i=1}^{n}\varphi_i\otimes \varphi_i.$
	Next, we compute for the given choice of $M$,
	\begin{align}
		\nonumber
		M = &S_\pos^{-1}\mathcal{C}_\pr^{1/2} \left(\mathcal{C}_\pr^{1/2}G^*\mathcal{C}_\obs^{-1/2}\right)\mathcal{C}_\obs^{-1/2}S_\y-\mathcal{C}_\pr^{1/2}G^*\mathcal{C}_\obs^{-1/2}\\
		\nonumber
		= &\left(I+ \sum_{i=1}^{n}\frac{-\lambda_i}{1+\lambda_i}w_i\otimes w_i\right)^{1/2}\left( \sum_{i=1}^{n}\sqrt{\frac{-\lambda_i}{1+\lambda_i}}w_i\otimes \varphi_i\right)
		\left( I+ \sum_{i=1}^{n}\frac{-\lambda_i}{1+\lambda_i}\varphi_i\otimes \varphi_i\right)^{1/2} \\
		\nonumber
		&- \sum_{i=1}^{n}\sqrt{\frac{-\lambda_i}{1+\lambda_i}}w_i\otimes \varphi_i\\
		\label{eqn:svd_M}
		= &\sum_{i=1}^{n}\left(\sqrt{\frac{-\lambda_i}{(1+\lambda_i)^3}}- \sqrt{\frac{-\lambda_i}{1+\lambda_i}} \right)w_i\otimes \varphi_i,
	\end{align}
	where in the second equation we use \eqref{eqn:svd_preconditioned_hessian} and \eqref{eqn:square_roots}, and in the last equation we use \eqref{eqn:square_root_expression}.
	Hence, $MP_{\ker{S}^\perp}=M$ and \Cref{thm:generalised_friedland_torokhti} yields, with $(M)_r$ a rank-$r$ truncated SVD of $M$,
	\begin{align*}
		\widetilde{B}^\opt &= T^\dagger (M)_r S^\dagger = \left(\sum_{i=1}^{r}\left(\sqrt{\frac{-\lambda_i}{(1+\lambda_i)^3}}- \sqrt{\frac{-\lambda_i}{1+\lambda_i}} \right)w_i\otimes \varphi_i\right) S^\dagger\\
		&= \left(\sum_{i=1}^{r}\left(\sqrt{\frac{-\lambda_i}{(1+\lambda_i)^2}}- \sqrt{-\lambda_i} \right)w_i\otimes \varphi_i\right) \left( \sum_{i=1}^{n}\sqrt{\frac{1+\lambda_i}{-\lambda_i}}\varphi_i\otimes w_i \right)\mathcal{C}_\pr^{1/2}\\
		&= \left(\sum_{i=1}^{r}\left(\sqrt{\frac{1}{1+\lambda_i}} - \sqrt{1+\lambda_i} \right)w_i\otimes w_i\right) \mathcal{C}_\pr^{1/2},
	\end{align*}
	where the third equation follows from the formula for $S^\dagger$ above, \eqref{eqn:square_root_expression_S_y}, and direct computation.
	It follows by \eqref{eqn:square_roots}, \eqref{eqn:square_root_expression_S_pos} and direct computation, that
	\begin{align*}
		S_\pos \widetilde{B}^\opt &=\mathcal{C}_\pr^{1/2}\left( I + \sum_{i\in\N}\frac{-\lambda_i}{1+\lambda_i}w_i\otimes w_i \right)^{-1/2} \left(\sum_{i=1}^{r}\left(\sqrt{\frac{1}{1+\lambda_i}} - \sqrt{1+\lambda_i} \right)w_i\otimes w_i\right) \mathcal{C}_\pr^{1/2}
		\\
		&=\mathcal{C}_\pr^{1/2}\left(\sum_{i=1}^{r}\left( 1-(1+\lambda_i) \right)w_i\otimes w_i\right) \mathcal{C}_\pr^{1/2}
		\\
		&=\sum_{i=1}^{r}(-\lambda_i)\mathcal{C}_\pr^{1/2}w_i\otimes \mathcal{C}_\pr^{1/2}w_i.
	\end{align*}
	Recall that $\widetilde{A}^\opt$ and $\widetilde{B}^\opt$ are related by $\widetilde{A}^\opt=(S_\pos^{-1}\mathcal{C}_\pr-\widetilde{B}^\opt)G^*\mathcal{C}_\obs^{-1}$. Note that the expression for $S_\pos \widetilde{B}^\opt$ above coincides with the second term on the right-hand side of \eqref{eqn:optimal_covariance} in \Cref{thm:opt_covariance_and_precision}. Thus, 
	\begin{align*}
		A_r^{\opt,(1)} = S_\pos \widetilde{A}_r^{\opt,(1)} = S_\pos(S_\pos^{-1}\mathcal{C}_\pr-\widetilde{B}^\opt)G^*\mathcal{C}_\obs^{-1} = (\mathcal{C}_\pr-S_\pos\widetilde{B}^\opt)G^*\mathcal{C}_\obs^{-1} = \mathcal{C}^\opt_r G^*\mathcal{C}_\obs^{-1}.
	\end{align*}
	Since $(w_i)_i\subset\ran{\mathcal{C}_\pr^{1/2}}$ by \Cref{prop:bayesian_feldman_hajek}, we note that $\ran{A_r^{\opt,(1)}} \subset \ran{\mathcal{C}_r^{\opt}}\subset \Span{\mathcal{C}_\pr^{1/2}w_i,\ i\leq n}\subset \ran{\mathcal{C}_\pr}=\ran{\mathcal{C}_\pos}$.
	Next, we compute the corresponding loss. By \eqref{eqn:optimal_covariance} and \eqref{eqn:svd_preconditioned_hessian},
	\begin{align*}
		\mathcal{C}_\pr^{-1/2}\mathcal{C}^\opt_r G^*\mathcal{C}_\obs^{-1/2} &= \left(I-\sum_{i=1}^{r}(-\lambda_i)w_i\otimes w_i\right)\mathcal{C}_\pr^{1/2}G^*\mathcal{C}_\obs^{-1/2}\\
		&= \left(I-\sum_{i=1}^{r}(-\lambda_i)w_i\otimes w_i\right) \sum_{i=1}^{n}\sqrt{\frac{-\lambda_i}{1+\lambda_i}}w_i\otimes \varphi_i.
	\end{align*}
	Together with \eqref{eqn:square_roots}, the preceding equation implies that
	\begin{align*}
		S_\pos^{-1}A_r^{\opt,(1)} S_\y
		&= \sum_{i=1}^{n}\sqrt{\frac{-\lambda_i}{(1+\lambda_i)^3}}w_i\otimes \varphi_i - \sum_{i=1}^{r}\sqrt{\frac{-\lambda_i}{1+\lambda_i}}^3 w_i\otimes \varphi_i.
\end{align*}
We prove the equation above as follows. Fix an arbitrary $x\in\R^n$.
Then 
\begin{align*}
		S_\pos^{-1}A_r^{\opt,(1)} S_\y x=&\left( I+ \sum_{i=1}^{n}\frac{-\lambda_i}{1+\lambda_i}w_i\otimes w_i\right)^{1/2}\mathcal{C}_\pr^{-1/2}\mathcal{C}^\opt_r G^*\mathcal{C}_\obs^{-1/2}\left( I+ \sum_{i=1}^{n}\frac{-\lambda_i}{1+\lambda_i}\varphi_i\otimes \varphi_i\right)^{1/2}x\\
		=&\left( I+ \sum_{i=1}^{n}\frac{-\lambda_i}{1+\lambda_i}w_i\otimes w_i\right)^{1/2}\mathcal{C}_\pr^{-1/2}\mathcal{C}^\opt_r G^*\mathcal{C}_\obs^{-1/2} \sum_{i=1}^{n}\frac{1}{\sqrt{1+\lambda_i}}\langle x,\varphi_i\rangle \varphi_i\\
		=& \left( I+ \sum_{i=1}^{n}\frac{-\lambda_i}{1+\lambda_i}w_i\otimes w_i\right)^{1/2}\left(I-\sum_{i=1}^{r}(-\lambda_i)w_i\otimes w_i\right) \sum_{i=1}^{n}\sqrt{\frac{-\lambda_i}{(1+\lambda_i)^2}}\langle x,\varphi_i\rangle w_i \\
		=&\left( I+ \sum_{i=1}^{n}\frac{-\lambda_i}{1+\lambda_i}w_i\otimes w_i\right)^{1/2}\left( \sum_{i=1}^{n}\sqrt{\frac{-\lambda_i}{(1+\lambda_i)^2}}\langle x,\varphi_i\rangle w_i-\sum_{i=1}^{r}\sqrt{\frac{(-\lambda_i)^3}{(1+\lambda_i)^2}}\langle x,\varphi_i\rangle w_i\right),
\end{align*}
where the first equation follows from \eqref{eqn:square_roots}, the second equation from \eqref{eqn:square_root_expression_S_y}, and the third and fourth equations follow from the equation for $\mathcal{C}_\pr^{-1/2}\mathcal{C}^\opt_r G^*\mathcal{C}_\obs^{-1/2}$ above and direct computations. Now the analogue of \eqref{eqn:square_root_expression_S_y} with $\varphi_i\leftarrow w_i$ and $x\leftarrow w$ for arbitrary $w\in\H$ yields the desired equation for $S_\pos^{-1}A_r^{\opt,(1)} S_\y$.
Since $\sqrt\frac{-\lambda_i}{(1+\lambda_i)^3}=\sqrt\frac{-\lambda_i}{1+\lambda_i}\left(1+\frac{-\lambda_i}{1+\lambda_i}\right)$,
\begin{align*}
	\widetilde{A}_r^{\opt,(1)} S_\y = S_\pos^{-1} A_r^{\opt,(1)} S_y
	&= \sum_{i>r}^{}\sqrt{\frac{-\lambda_i}{1+\lambda_i}}^3 w_i\otimes \varphi_i + \sum_{i=1}^{n} \sqrt{\frac{-\lambda_i}{1+\lambda_i}} w_i\otimes \varphi_i \\
	&= \sum_{i>r}^{}\sqrt{\frac{-\lambda_i}{1+\lambda_i}}^3 w_i\otimes \varphi_i + \mathcal{C}_\pr^{1/2}G^*\mathcal{C}_\obs^{-1/2},
\end{align*}
where the last equation follows from \eqref{eqn:svd_preconditioned_hessian}. We conclude, by definition of the Hilbert--Schmidt norm,
	\begin{align*}
		\Norm{ \widetilde{A}^{\opt,(1)}_r S_\y-\mathcal{C}_\pr^{1/2}G^*\mathcal{C}_\obs^{-1/2}}_{L_2(\mathcal{H})}^2 = \sum_{i>r}^{}\sqrt{\frac{-\lambda_i}{1+\lambda_i}}^6.
	\end{align*}

	Finally, by \Cref{prop:mean_approximation_problem_equivalence}\ref{item:mean_approximation_problem_equivalence_3}-\ref{item:mean_approximation_problem_equivalence_4} and \Cref{lemma:mean_equivalent_loss} it holds that \Cref{prob:optimal_mean} has a unique solution if and only if \eqref{eqn:min_hs_problem_mean_update} has a unique solution. As described above, $\widetilde{A}$ solves \eqref{eqn:min_hs_problem_mean_update} if and only if $\widetilde{A}=(S_\pos^{-1}\mathcal{C}_\pr-\widetilde{B})G^*\mathcal{C}_\obs^{-1}$ for some $\widetilde{B}$ solving \eqref{eqn:min_hs_problem_mean_update_2}. Thus, \eqref{eqn:min_hs_problem_mean_update} has a unique solution if and only if any two solutions $\widetilde{B}_1$ and $\widetilde{B}_2$ of \eqref{eqn:min_hs_problem_mean_update_2} satisfy $\widetilde{B}_1G^*\mathcal{C}_\obs^{-1}S_\y = \widetilde{B}_2G^*\mathcal{C}_\obs^{-1}S_\y$. By \Cref{rmk:equivalent_uniqueness_statement} with the above choices of $M$, $T$ and $S$, any two solutions $\widetilde{B}_1$ and $\widetilde{B}_2$ of \eqref{eqn:min_hs_problem_mean_update_2} satisfy $\widetilde{B}_1G^*\mathcal{C}_\obs^{-1}S_\y = \widetilde{B}_2G^*\mathcal{C}_\obs^{-1}S_\y$ if and only if $\sigma_{r+1}=0$ or $\sigma_r>\sigma_{r+1}$, where $\sigma_i\coloneqq \sqrt{-\lambda_i(1+\lambda_i)^{-3}} - \sqrt{-\lambda_i(1+\lambda_i)^{-1}}=\sqrt{-\lambda_i^3(1+\lambda_i)^{-3}}$ is the $i$-th singular value of $MP_{\ker{S}^\perp}=M$.  In turn, this holds if and only if $\lambda_{r+1}=0$ or $\lambda_r<\lambda_{r+1}$, because $(\lambda_i)_i\subset(-1,0]$ is a nonincreasing sequence by \Cref{prop:bayesian_feldman_hajek} and $x\mapsto \sqrt{-x(1+x)^{-1}}^{3}$ is decreasing on $(-1,\infty)$. This concludes the proof of uniqueness.
\end{proof}

\optimalMeanForAmariAndHellinger*
\begin{proof}
	Let $g_{\am,\alpha}$ and $g_{\hel}$ be as in \eqref{eqn:tranform_amari_hellinger}.
	By \Cref{rmk:hellinger_and_amari_divergence}, Jensen's inequality, and \Cref{thm:optimal_low_rank_mean_approx,thm:optimal_low_rank_update_mean_approx},
	\begin{align*}
		\mathbb{E}\left[D_{\am,\alpha}(\mathcal{N}(A^{\opt,(i)}_rY,{\mathcal{C}}_\pos)\Vert\mu_\pos)\right]
		&= \mathbb{E}\left[g_\alpha\left(D_{\ren,\alpha}(\mathcal{N}(A^{\opt,(i)}_rY,{\mathcal{C}}_\pos)\Vert\mu_\pos)\right)\right] \\
		&\leq g_\alpha\left(\mathbb{E}\left[D_{\ren,\alpha}(\mathcal{N}(A^{\opt,(i)}_rY,{\mathcal{C}}_\pos)\Vert\mu_\pos)\right]\right) \\
		&= \frac{-1}{\alpha(1-\alpha)}\left(\exp\left(-\frac{\alpha(1-\alpha)}{2}\sum_{j>r}^{}\left(\frac{-\lambda_j}{1+\lambda_j}\right)^{\gamma(i)}\right)-1\right).
	\end{align*}
	The case for the forward Amari-$\alpha$ divergence follows analogously. For the Hellinger distance, we invoke once more \Cref{rmk:hellinger_and_amari_divergence}, Jensen's inequality, and \Cref{thm:optimal_low_rank_mean_approx,thm:optimal_low_rank_update_mean_approx},
	\begin{align*}
		\mathbb{E}\left[D_{\hel}(\mu_\pos(Y),\mathcal{N}(A^{\opt,(i)}_rY,\mathcal{C}_\pos))\right]
		&= \mathbb{E}\left[g_\hel\left(D_{\ren,\frac{1}{2}}(\mu_\pos,\mathcal{N}(A^{\opt,(i)}_rY,\mathcal{C}_\pos))\right)\right]\\\
		&\leq g_\hel\left(\mathbb{E}\left[D_{\ren,\frac{1}{2}}(\mu_\pos,\mathcal{N}(A^{\opt,(i)}_rY,\mathcal{C}_\pos))\right]\right)\\\
		&= \sqrt{2\left(1-\exp\left(-\frac{1}{8}\sum_{j>r}^{}\left(\frac{-\lambda_j}{1+\lambda_j}\right)^{\gamma(i)}\right)\right)}.
	\end{align*}
\end{proof}

\interpretationOptimalMeans*
\begin{proof}[Proof of \Cref{lemma:interpretation_optimal_means}]
	For any realisation $y$ of $Y$, it holds that $m_\pos(y)\in\mathscr{M}_n^{(1)}\cap\mathscr{M}_n^{(2)}$, as discussed at the end of \Cref{sec:formulation}. Hence $A^{\opt,(i)}_ny=m_\pos(y)$ for $i=1,2$.
	Applying \Cref{thm:optimal_low_rank_mean_approx} with $r\leftarrow n$, we see that 
	\begin{align*}
		m_\pos(y) = A^{\opt,(2)}_n y = \mathcal{C}_\pr^{1/2}\biggr(\sum_{i=1}^{n}\sqrt{-\lambda_i(1+\lambda_i)}w_i\otimes \varphi_i\biggr)\mathcal{C}_\obs^{-1/2}y.
	\end{align*}
	For fixed $r\leq n$, it follows that for any $j\leq r$,
	\begin{align*}
		\langle A^{\opt,(2)}_ry,\mathcal{C}_\pr^{-1/2}w_j\rangle 
		&= \sum_{i=1}^{r} \sqrt{-\lambda_i(1+\lambda_i)} \langle \mathcal{C}_\pr^{1/2}w_i,\mathcal{C}_\pr^{-1/2}w_j\rangle\langle \varphi_i,\mathcal{C}_{\obs}^{-1/2}y\rangle \\
		&= \sum_{i=1}^{n} \sqrt{-\lambda_i(1+\lambda_i)} \langle \mathcal{C}_\pr^{1/2}w_i,\mathcal{C}_\pr^{-1/2}w_j\rangle\langle \varphi_i,\mathcal{C}_{\obs}^{-1/2}y\rangle \\
		&= \langle m_\pos(y),\mathcal{C}_\pr^{-1/2}w_j\rangle.
	\end{align*}
	Furthermore, $\langle A^{\opt,(2)}_ry,\mathcal{C}_\pr^{-1/2}w_j\rangle =0=\langle m_\pr,\mathcal{C}_\pr^{-1/2}w_j\rangle$ for $j>r$, since $m_\pr=0$. 
	Hence, $\langle A^{\opt,(2)}_ry,h\rangle=\langle m_\pos(y),h\rangle$ for all $h\in W_r$ and $\langle A^{\opt,(2)}_ry,h\rangle=\langle m_\pr,h\rangle$ for all $h\in\Span{\mathcal{C}_\pr^{-1/2}w_j,\ j>r}$, which is dense in $W_{-r}$. Thus, we have that $P_{W_r}A^{\opt,(2)}_ry = P_{W_r}m_\pos(y)$, and also that $P_{W_{-r}}A^{\opt,(2)}_ry = P_{W_{-r}}m_\pr$ by continuity of $h\mapsto\langle k,h\rangle$ for any $k\in\H$.

	Next, we note that $\mathcal{C}^\opt_n=\mathcal{C}_\pos$ by \Cref{rmk:interpretation_covariance_approx}. It follows from \Cref{thm:optimal_low_rank_update_mean_approx} with $r\leftarrow n$,
	\begin{align*}
		m_\pos(y) = A^{\opt,(1)}_n y = \mathcal{C}^\opt_n G^*\mathcal{C}_\obs^{-1}y = \mathcal{C}_\pos G^*\mathcal{C}_\obs^{-1}y.
	\end{align*}
	Hence, for $j\leq r$,
	\begin{align*}
		\langle A^{\opt,(1)}_ry,\mathcal{C}_\pr^{-1/2}w_j\rangle 
		&= \langle \mathcal{C}^\opt_r G^*\mathcal{C}_\obs^{-1}y, \mathcal{C}_\pr^{-1/2}w_j\rangle
		= \langle G^*\mathcal{C}_\obs^{-1}y, \mathcal{C}^\opt_r \mathcal{C}_\pr^{-1/2}w_j\rangle\\
		&= \langle G^*\mathcal{C}_\obs^{-1}y, \mathcal{C}_\pos \mathcal{C}_\pr^{-1/2}w_j\rangle
		= \langle \mathcal{C}_\pos G^*\mathcal{C}_\obs^{-1}y, \mathcal{C}_\pr^{-1/2}w_j\rangle
		= \langle m_\pos(y), \mathcal{C}_\pr^{-1/2}w_j\rangle,
	\end{align*}
	where we use consecutively the definition of $A^{\opt,(1)}_r$ of \Cref{thm:optimal_low_rank_update_mean_approx}, the self-adjoint property of $\mathcal{C}^\pos_r$, the fact that $\mathcal{C}^\opt_r \mathcal{C}_\pr^{-1/2}w_j=\mathcal{C}_\pos\mathcal{C}_\pr^{-1/2}w_j$ for $j\leq r$ by \Cref{rmk:interpretation_covariance_approx}, the self-adjoint property of $\mathcal{C}_\pos$, and the above expression of $m_\pos(y)$. 
	Using that  $\mathcal{C}^\opt_r\mathcal{C}_\pr^{-1/2}w_j = \mathcal{C}_\pr\mathcal{C}_\pr^{-1/2}w_j$ for $j>r$ by \Cref{rmk:interpretation_covariance_approx}, a similar computation for $j>r$ shows that $ \langle A^{\opt,(1)}_ry,\mathcal{C}_\pr^{-1/2}w_j\rangle =\langle \mathcal{C}_\pr G^*\mathcal{C}_\obs^{-1}y,\mathcal{C}_\pr^{-1/2}w_j \rangle$.
\end{proof}

\subsection{Proofs of Section \ref{sec:optimal_projector}}
\label{subsec:proofs_for_optimal_projector}

\optimalProjector*

\begin{proof}[Proof of \Cref{prop:optimal_projector}]
	Since ${P^\opt_r}\mathcal{C}_\pr^{1/2}w_i=\mathcal{C}_\pr^{1/2}w_i$ for $i\leq r$ and $\ran{P^\opt_r}=\Span{\mathcal{C}_\pr^{1/2}w_i,\ i\leq r}$, it holds that $(P^\opt_r)^2=P^\opt_r$, so that $P^\opt_r$ is indeed a projector of rank at most $r$. Let $(\widetilde{A}_ry,\widetilde{\mathcal{C}}_r)$ denote the posterior mean and covariance for the model \eqref{eqn:projected_observation_model} with $P_r\leftarrow P^\opt_r$. We first show that $\mathcal{C}^\opt_r=\widetilde{\mathcal{C}_r}$ by showing that $\widetilde{\mathcal{C}}_r^{-1}=(\mathcal{C}^\opt_r)^{-1}$. We then use this to show that $\widetilde{A}_r = A^{\opt,(2)}_r$.
	Since $P^\opt_r=\sum_{i=1}^{r}(\mathcal{C}_\pr^{1/2} w_i)\otimes (\mathcal{C}_\pr^{-1/2}w_i) = \mathcal{C}_\pr^{1/2} \sum_{i=1}^{r}w_i\otimes (\mathcal{C}_\pr^{-1/2}w_i)$, we have $(P^\opt_r)^* = \left(\sum_{i=1}^{r}(\mathcal{C}_\pr^{-1/2}w_i)\otimes w_i\right) \mathcal{C}_\pr^{1/2}$. Let $\varphi_i$ be the right eigenvector corresponding to $(\lambda_i,w_i)$ in \eqref{eqn:svd_preconditioned_hessian}.
	Using \eqref{eqn:svd_preconditioned_hessian} and the orthonormality of $(w_i)_i$, it follows that
	\begin{align}
		\nonumber
		(P^\opt_r)^*G^*\mathcal{C}_\obs^{-1/2} 
		&= \left(\sum_{i=1}^{r}(\mathcal{C}_\pr^{-1/2}w_i)\otimes w_i\right) \mathcal{C}_\pr^{1/2}G^* \mathcal{C}_\obs^{-1/2} \\
		\nonumber
		&= \left(\sum_{i=1}^{r}(\mathcal{C}_\pr^{-1/2}w_i)\otimes w_i\right)\left(\sum_{i}^{}\sqrt{\frac{-\lambda_i}{1+\lambda_i}} w_i\otimes \varphi_i \right) \\
		\label{eqn:projected_model_svd}
		&= \sum_{i=1}^{r}\sqrt{\frac{-\lambda_i}{1+\lambda_i}}(\mathcal{C}_\pr^{-1/2}w_i)\otimes \varphi_i.
	\end{align}
	Recall that ${H}$ defined in \eqref{eqn:hessian} is the Hessian of the negative log-likelihood of \eqref{eqn:observation_model}. Analogously, let $\widetilde{H}$ denote the Hessian of the negative log-likelihood of \eqref{eqn:projected_observation_model} with $P_r\leftarrow P^\opt_r$. 
	That is, upon replacement of $G$ with $GP^\opt_r$ in \eqref{eqn:hessian}, we obtain $\widetilde{H}$.
	Hence, orthonormality of $(\varphi_i)_i$ implies
	\begin{align*}
		\widetilde{H} &= (GP^\opt_r)^*\mathcal{C}_\obs^{-1}GP^\opt_r = (P^\opt_r)^*G^*\mathcal{C}_\obs^{-1}GP^\opt_r \\
		&=\left( \sum_{i=1}^{r}\sqrt{\frac{-\lambda_i}{1+\lambda_i}}(\mathcal{C}_\pr^{-1/2}w_i)\otimes \varphi_i)\right) \left(\sum_{i=1}^{r}\sqrt{\frac{-\lambda_i}{1+\lambda_i}}\varphi_i\otimes (\mathcal{C}_\pr^{-1/2}w_i) \right)\\
		&= \sum_{i=1}^{r}\frac{-\lambda_i}{1+\lambda_i}(\mathcal{C}_\pr^{-1/2}w_i)\otimes(\mathcal{C}_\pr^{-1/2}w_i).
	\end{align*}
	The analogue of the update \eqref{eqn:pos_precision} applied to the model \eqref{eqn:projected_observation_model} with $P_r\leftarrow P^\opt_r$, that is, \eqref{eqn:pos_precision} with $G$ replaced by $GP^\opt_r$, then implies $\ran{\widetilde{\mathcal{C}}_r}=\ran{\mathcal{C}_\pr}$ and $\widetilde{\mathcal{C}}_r^{-1} = \mathcal{C}_\pr^{-1}+\widetilde{H}$. 
	By \Cref{thm:opt_covariance_and_precision}, $\ran{\mathcal{C}^\opt_r=\ran{\mathcal{C}_\pr}}$.
	Hence $\ran{\widetilde{\mathcal{C}}_r}=\ran{\mathcal{C}^\opt_r}$. By the above expression of $\widetilde{H}$ and the expression of $(\mathcal{C}^\opt_r)^{-1}$ in \Cref{thm:opt_covariance_and_precision}, 
	\begin{align*}
		\widetilde{\mathcal{C}}_r^{-1} &= \mathcal{C}_\pr^{-1} + \widetilde{H}
		= \mathcal{C}_\pr^{-1} + \sum_{i=1}^{r}\frac{-\lambda_i}{1+\lambda_i}(\mathcal{C}_\pr^{-1/2}w_i)\otimes (\mathcal{C}_\pr^{-1/2}w_i)
		= (\mathcal{C}^\opt_r)^{-1}.
	\end{align*}
	Taking inverses shows that $\widetilde{\mathcal{C}}_r = \mathcal{C}^\opt_r$. The analogue of \eqref{eqn:pos_mean} applied to model \eqref{eqn:projected_observation_model} with $P_r\leftarrow P^\opt_r$, i.e.\ with $G$ replaced by $GP^\opt_r$, shows $\widetilde{A}_r = \widetilde{\mathcal{C}}_r (GP^\opt_r)^*\mathcal{C}_\obs^{-1}=\mathcal{C}^\opt_r (P^\opt_r)^*G^*\mathcal{C}_{\obs}^{-1}$. By \eqref{eqn:optimal_covariance} and \eqref{eqn:projected_model_svd},
	\begin{align*}
		\widetilde{A}_r
		&= \left( \mathcal{C}_\pr - \sum_{i=1}^{r}-\lambda_i (\mathcal{C}_\pr^{1/2} w_i)\otimes (\mathcal{C}_\pr^{1/2} w_i) \right)(P^\opt_r)^*G^*\mathcal{C}_\obs^{-1} \\
		&= \mathcal{C}_\pr^{1/2}\left(I - \sum_{i=1}^{r}-\lambda_i w_i\otimes w_i \right)\mathcal{C}_\pr^{1/2} (P^\opt_r)^*G^*\mathcal{C}_\obs^{-1} \\
		&= \mathcal{C}_\pr^{1/2}\left(I - \sum_{i=1}^{r}-\lambda_i w_i\otimes w_i \right)\left( \sum_{i=1}^{r}\sqrt{\frac{-\lambda_i}{1+\lambda_i}}w_i\otimes \varphi_i\right)\mathcal{C}_\obs^{-1/2}.
	\end{align*}
	Since $\left(I-\sum_{i=1}^{r}-\lambda_iw_i\otimes w_i\right)h=\sum_{i=1}^{r}(1+\lambda_i)\langle h,w_i\rangle w_i$ by the fact that $h=\sum_{i}^{}\langle w_i,h\rangle w_i$, we obtain
		\begin{align*}
			\widetilde{A}_r= \mathcal{C}_\pr^{1/2}\sum_{i=1}^{r}\sqrt{-\lambda_i(1+\lambda_i)}w_i\otimes \varphi_i \mathcal{C}_\obs^{-1/2} = A^{\opt,(2)}_r,
		\end{align*}
		where the last equality follows from \Cref{thm:optimal_low_rank_mean_approx}.
\end{proof}

\section{Examples}
\label{sec:examples}

In this section we consider the two examples of the linear Gaussian inverse problems given in \Cref{sec:examples_short} in detail. 
In both examples, $(\H,\langle\cdot,\cdot\rangle)=L^2([0,1])\simeq L^2( (0,1))$. We identify the operators in the formulation of \Cref{sec:formulation}. We also describe the prior-preconditioned Hessian $\mathcal{C}_\pr^{1/2}G^*\mathcal{C}_\obs^{-1}G\mathcal{C}_\pr^{1/2}$ and its square root $\mathcal{C}_\pr^{1/2}G^*\mathcal{C}_\obs^{-1/2}$ in \eqref{eqn:svd_preconditioned_hessian}. The eigendecomposition of the prior-preconditioned Hessian can be used in the construction of the optimal projector in \Cref{sec:optimal_projector}, and the SVD of \eqref{eqn:svd_preconditioned_hessian} can be used to form the optimal posterior mean approximations. If $(\frac{-\lambda_i}{1+\lambda_i},w_i)$ is an eigenpair of $\mathcal{C}_\pr^{1/2}G^*\mathcal{C}_\obs^{-1}G\mathcal{C}_\pr^{1/2}$, then $(\frac{-\lambda_i}{1+\lambda_i},\mathcal{C}_\obs^{-1/2}G\mathcal{C}_\pr^{1/2}w_i)$ is an eigenpair of $\mathcal{C}_\obs^{-1/2}G\mathcal{C}_\pr G^*\mathcal{C}_\obs^{-1/2}$, c.f.\ \Cref{lemma:eigenpairs_of_squares_relation}, so that the $(\varphi_i)_i$ occurring in \Cref{thm:optimal_low_rank_mean_approx} can be computed using the eigenpairs of the prior-preconditioned Hessian. Alternatively, they can be obtained by forming \eqref{eqn:svd_preconditioned_hessian}.

\begin{example}[Deconvolution]
	Let $\H=L^2([0,1])$ and let $\kappa:[0,1]^2\rightarrow\R$ be square integrable. We consider the convolution of functions in $L^2([0,1])$ with kernel $\kappa$, and hence define the convolution operator $T_\kappa\in\B(\H)$ by, for almost every $t\in[0,1]$,
	\begin{align*}
		(T_\kappa h )(t) = \int_0^1 \kappa (t,s) h(s)\d s,\quad h\in\H.
	\end{align*}
	Note that $T_\kappa$ is continuous by the integrability assumption on $\kappa$. 
	We consider the inverse problem in which the unknown parameter $x^\dagger\in L^2([0,1])$ is convolved by $T_\kappa\in\B(\H)$, and the goal is to recover $x^\dagger$. We take the Bayesian perspective and put a centered Gaussian prior $\mu_\pr$ on $\mathcal{H}$. We specify the prior covariance below. The parameter is now denoted by $X\sim\mu_\pr$.

	We assume the data $y$ is obtained by observing weighted averages of $T_\kappa X$ on the $n$ intervals in $[0,1]$ separated by $t_1<\cdots <t_{n+1}$, that are corrupted with standard Gaussian noise. That is, $y_i = \int_{t_i}^{t_{i+1}} (T_\kappa X)(s) \gamma (s)\d s+ \zeta_i = \langle T_\kappa X,1_{[t_i,t_{i+1}]}\gamma\rangle + \zeta_i$ for some known weighting function $\gamma\in \mathcal{H}$ and for $\zeta_i\sim\mathcal{N}(0,1)$. 

	Let $\mathcal{O}\in\B(\H,\R^n)$ be defined by $\mathcal{O}h = (\langle h, 1_{[t_i,t_{i+1}]}\gamma\rangle)_{i=1}^n$. 
	Defining $G\coloneqq\mathcal{O}T_\kappa$, we can write the deconvolution problem in the formulation \eqref{eqn:observation_model}, with $\mathcal{C}_\obs=I$.

	We construct the prior distribution $\mu_\pr$ of $X$ by using the Karhunen--Lo\`{e}ve expansion $X\coloneqq\sum_{i=1}^{\infty}c_i\xi_i e_i$. Here, $c\in\ell^2( (0,\infty))$, $(e_i)_i$ forms an ONB of $\H$, and $(\xi_i)_i$ is a sequence of independent $\mathcal{N}(0,1)$-distributed random variables. Then $\mu_\pr=\mathcal{N}(0,\mathcal{C}_\pr)$ with injective covariance $\mathcal{C}_\pr = \sum_{i}^{}c_i^2 e_i\otimes e_i\in L_1(\H)$. 	

	To compute the Hessian $H=G^*\mathcal{C}_\obs^{-1}G=G^*G$, we compute $G^*\in\B(\R^n,\H)$ by observing that $G^*=T_\kappa^*\mathcal{O}^*$ and
	\begin{align*}
		T_\kappa^* k &= \int_0^1 \kappa (t,\cdot)k(t)\d t,\quad k\in\H,
		\qquad\mathcal{O}^*z =  \sum_{i=1}^{n}1_{[t_i,t_{i+1}]}\gamma z_i,\quad z\in\R^n.
	\end{align*}
	Hence $G^* z =  \sum_{i=1}^{n} z_i\int_{}^{}\kappa(t,\cdot)1_{[t_i,t_{i+1}]}(t) \gamma(t)\d t.$
	In this way, we can formulate the deconvolution problem as a linear Gaussian inverse problem with observation model \eqref{eqn:observation_model}, and compute the Hessian $H$ defined in \eqref{eqn:hessian} by $Hh = G^*Gh = \sum_{i=1}^{n}\langle T_\kappa h,1_{[t_i,t_{i+1}]}\gamma\rangle\int\kappa(t,\cdot)1_{[t_i,t_{i+1}]}(t)\gamma(t)\d{t}$.

	Let us now assume that $\kappa$ is bounded and symmetric, and satisfies $\int \kappa(s,t)h(s)h(t)\geq 0$ for all $h\in\H$. Hence, $T_\kappa$ is self-adjoint and nonnegative. Then by Mercer's theorem, \cite[Theorem 3.a.1]{Konig1986}, we have $\kappa(s,t) = \sum_{i=1}^{\infty}b_i f_i(s)f_i(t)$, where the series converges absolutely and uniformly for almost every $(t,s)$. Here, $(b_i)_i$ is a nonnegative sequence converging to zero and $(f_i)_i$ is an ONB of $\H$ consisting of bounded functions. Furthermore, we may write $T_\kappa = \sum_{i}^{}b_i f_i\otimes f_i$. 
	For simplicity, we assume that the eigenvectors $(e_i)_i$ of the prior covariance and the eigenfunctions $(f_i)_i$ of the kernel are the same.
	One can verify that, with $a_{k,j}\coloneqq \langle f_k,1_{[t_j,t_{j+1}]}\gamma\rangle$, we have $\langle T_\kappa h,1_{[t_i,t_{i+1}]}\gamma\rangle = \sum_{j}^{}b_ja_{j,i}\langle f_j,h\rangle$ and $ \int\kappa(t,\cdot)1_{[t_i,t_{i+1}]}(t)\gamma(t)\d{t} = \sum_{k}^{}b_ka_{k,i}f_k$.
	Thus, $\mathcal{C}_\pr^{1/2}G^*\mathcal{C}_\obs^{-1/2}z=\sum_{i=1}^{n}\sum_{j}^{}z_ib_jc_ja_{j,i}f_j$ for $z\in\R^n$. Furthermore, $G^*G= \sum_{i=1}^{n}\sum_{j,k}^{} b_jb_k a_{j,i}a_{k,i} f_k\otimes f_j$ and hence the prior-preconditioned Hessian now takes the form 
	\begin{align*}
		\mathcal{C}_\pr^{1/2} H\mathcal{C}_\pr^{1/2} = \sum_{i=1}^{n}\sum_{j,k}^{} b_jc_jb_kc_k a_{j,i}a_{k,i} f_k\otimes f_j = \sum_{j,k}^{}d_{k,j}f_k\otimes f_j,
	\end{align*}
	where the coefficients $d_{k,j}=b_jc_jb_kc_k\sum_{i=1}^{n}a_{j,i}a_{k,i}$ and orthonormal sequence $(f_j)_j$ are explicitly known and depend on the choice of prior via $(c_i)_i$, on the kernel via $(f_k)_k$ and $(b_i)_i$, and on the observation model via $\gamma$.
\end{example}

\begin{example}[Inferring the initial condition of the heat equation]
	Let $u$ denote the solution of the heat equation on the one-dimensional spatial domain $(0,1)$ with boundary $\{0,1\}$ and time domain $[0,T]$. Thus, the temperature field $(x,t)\mapsto u(x,t)$ on $(0,1)\times[0,T]$ solves,
	\begin{align*}
		\partial_t u - \partial_{xx} u &=  0, &&\text{in } (0,1)\times(0,T),\\
		u(\cdot,0) &= x^\dagger, \quad &&\text{on } (0,1),\\
		u(0,\cdot) =u(1,\cdot) &= 0, && \text{on }(0,T],
	\end{align*}
	where the true initial condition $x^\dagger$ is unknown and where we impose
	a homogenous Dirichlet spatial boundary condition. We assume that the data consists of a noisy observation of $u$ at the observation coordinates $(x_i,t_i)_{i=1}^n\subset(0,1)\times(0,T]$, where we assume i.i.d.\ standard Gaussian noise. The aim is to reconstruct the initial condition $x^\dagger$ from the data $y$. This problem is similar to \cite[Example 3.5]{Stuart2010} and \cite[Section 4.2]{Flath2011}, but in this example we do not observe the temperature field over the entire spatial domain at finitely many times. Instead, we observe the temperature only at finitely many space-time points $(x_i,t_i)_{i=1}^{n}$. Furthermore, \cite[Section 4.2]{Flath2011} considers periodic boundary conditions instead of Dirichlet boundary conditions. We take the Bayesian perspective by considering $x^\dagger$ as an $\H$-valued random variable $X$ with centered Gaussian distribution $\mu_\pr$. Below, we choose an explicit form of the prior covariance $\mathcal{C}_\pr$ as a negative power of the Laplacian.

	To write this problem in the formulation of \Cref{sec:formulation}, we define $\H\coloneqq L^2((0,1))$. Let us denote by $H^1((0,1))$ the Sobolev space of square-integrable functions $h$ on $(0,1)$ that have a square-integrable weak derivative $\partial_x h$, which is a Hilbert space with the inner product $\langle h_1,h_2\rangle_1 \coloneqq \langle h_1,h_2 \rangle + \langle \partial_x h_1,\partial_x h_2 \rangle$, $h_1,h_2\in H^1((0,1))$. By \cite[Theorem 5.6.5]{Evans2010}, we have the continuous embedding $H^1((0,1))\subset C([0,1])$, where $C([0,1])$ denotes the space of continuous functions on $[0,1]$ with the supremum norm. Hence, for any $h\in H^1((0,1))$ and $x\in [0,1]$, we have $\abs{h(x)}\leq \norm{h}_{C([0,1])}\leq c\norm{h}_1$ for some $c>0$, so that pointwise evaluation is well-defined, linear and continuous on $H^1((0,1))$. Thus, $H^1((0,1))$ is a reproducing kernel Hilbert space. We denote the Riesz representatives of the pointwise evaluation functionals, or `features', by $\{\phi(x)\in H^1((0,1)),\ x\in[0,1]\}$. Hence, $h(x)=\langle h,\phi(x)\rangle_1$ for all $x\in[0,1]$ and $h\in H^1((0,1))$. For our choice of spatial domain $(0,1)$, we have the following explicit form for the features, by \cite[Corollary 2]{Thomas-Agnan1996}:
	\begin{align*}
		\phi(x)(x') = \frac{\cosh(x-1)\cosh(x')}{\sinh(1)},\quad 0\leq x'\leq x\leq 1,\\
		\phi(x)(x') = \frac{\cosh(x'-1)\cosh(x)}{\sinh(1)},\quad 0\leq x\leq x'\leq 1.
	\end{align*}
	We also define $H^1_0((0,1))\coloneqq \{h\in H^1((0,1)):\ h(0)=0=h(1)\}$, the space of functions $h\in H^1((0,1))$ which vanish on the boundary $\{0,1\}$.

	We use certain properties of $\Delta\coloneqq \partial_{xx}$, the one-dimensional Laplacian. We describe these briefly, and refer to \cite[Section 5.3]{Kretschmann2019} for a comprehensive treatment of these properties and their relation to the heat equation. By \cite[Theorem 8.22]{Brezis2011}, we can write $\Delta h=-\sum_{i}^{}a_i \langle h,e_i\rangle e_i$ for $h\in\dom{\Delta}=\{h\in L_2((0,1)):\ \sum_{i}^{}a_i^2\langle h,e_i\rangle^2<\infty\}$, where $\lim_i a_i=\infty$ and $(e_i)_i$ is an ONB on $\H$. In fact, by the example on \cite[p. 232]{Brezis2011}, we have $a_i=i^2\pi^2$ and $e_i(x)=\sqrt{2}\sin(i\pi x)$ for our choices of spatial domain $(0,1)$ and boundary conditions. 
	Now, one can define the self-adjoint operator $\exp(t\Delta)\in\B_0(\H)$ by $\exp(t\Delta)=\sum_i\exp(-ta_i)e_i\otimes e_i$. It holds that $\ker{\exp(t\Delta)}=\{0\}$ and $\ran{\exp(t\Delta)=\H}$. The diagonalisation of the Laplacian is compatible with $H^1_0((0,1))$ in the sense that $H^1_0((0,1))=\{h\in\H:\ \sum_{i}^{}a_i\langle h,e_i\rangle^2<\infty\}$ and $\langle h,k\rangle_1=\sum_{i}^{}(1+a_i)\langle h,e_i\rangle\langle e_i,k\rangle$ for $h,k\in H^1_0((0,1))$. Since for any $t\in(0,T]$ and $h\in \H$, we have $\langle \exp(t\Delta)h,e_i\rangle=\exp(-a_i t)\langle h,e_i\rangle$, it follows that $\sum_{i}^{}a_i\langle\exp(t\Delta)h,e_i\rangle^2\leq C(t)\sum_{i}^{}\langle h,e_i\rangle^2$ for some $C(t)>0$, so that $\exp(t\Delta)h\in\H^1_0((0,1))$. Therefore, the map $h\mapsto \exp(\Delta t)h$, $\H\rightarrow H^1_0((0,1))$ is linear and continuous for $t\in(0,T]$. Furthermore, $\exp(\Delta t)\in\B(H^1_0((0,1)))$ for each $t\in[0,T]$, and $\exp(\Delta t)$ is a self-adjoint element of $\B(H^1_0((0,1)))$, because
	\begin{align*}
		\langle \exp(t\Delta)h,k\rangle_1 =  \sum_{i}^{}(1+a_i)\langle \exp(t\Delta)h,e_i\rangle\langle e_i,k\rangle
		= \sum_{i}^{}(1+a_i)\exp(-ta_i)\langle h,e_i\rangle\langle e_i,k\rangle,
	\end{align*}
	is symmetric in $h,k\in H^1_0((0,1))$.

	By \cite[Theorem 10.1]{Brezis2011}, the solution $u$ of the heat equation above lies in $C( (0,T];H^1_0((0,1)))$, and in fact $u(\cdot,t)$ has infinitely many continuous derivatives for each $t\in(0,T]$. By \cite[Section 4.1]{Pazy1983}, the solution can be written as $t\mapsto\exp(t\Delta)x^\dagger$.
	Let us define the linear map $g_i:\H\rightarrow\R$ by $g_i(h) = (\exp(t_i\Delta)h)(x_i)$ for each $i$. Since $g_i$ is the composition of the linear and continuous maps $u\mapsto u(\cdot,t_i)$, $C( (0,T]; H^1_0((0,1)))\rightarrow H^1_0((0,1))$ and $f\mapsto f(x_i)$, $H^1_0((0,1))\rightarrow \R$, it follows that $g_i$ is linear and continuous.
	Then, with $G\in\B(\H,\R^n)$ defined by $Gh\coloneqq (g_ih)_{i=1}^n$, and with $\zeta\sim\mathcal{N}(0,\mathcal{C}_{\obs})$ where $\mathcal{C}_\obs=I$, this inverse problem is of the form \eqref{eqn:observation_model}.

	For the prior $\mu_\pr$ on $\H$, we take $\mathcal{N}(0,\mathcal{C}_\pr)$ with $\mathcal{C}_\pr=(-\Delta)^{-s}$ for some $s>\frac{1}{2}$. Thus, $\mathcal{C}_\pr=\sum_{i}^{}a_i^{-s}{e}_i\otimes {e}_i$, which is injective and satisfies $\dom{\mathcal{C}_\pr}=\H$. Furthermore, $\mathcal{C}_\pr\in L_1(\H)$, since $\sum_{i}^{}a_i^{-s} = \pi^{-2s}\sum_{i}^{}i^{-2s}<\infty$.

	Next, we compute $G^*$, $H$ and $\mathcal{C}_\pr^{1/2}H\mathcal{C}_\pr^{1/2}$. Since $\langle \exp(t\Delta)h,k\rangle_1 = \langle h,\exp(t\Delta)k\rangle_1 $ for $h,k\in H^1_0((0,1))$ as shown above, we have for $z\in\R$ and $h\in H^1_0((0,1))$,
	\begin{align}
		\label{eqn:forward_map_component_adjoint}
		\begin{split}
			\langle z,g_i(h)\rangle_\R&=  z(\exp(t_i\Delta)h)(x_i) = z\langle \exp(t_i\Delta)h,\phi(x_i)\rangle_1 = z\langle h,\exp(t_i\Delta)\phi(x_i)\rangle_1\\
			&= z\langle h,\exp(t_i\Delta)\phi(x_i)\rangle + z\langle  \partial_x{h},\partial_x\exp(t_i\Delta)\phi(x_i)\rangle\\
			&= z\langle h,\exp(t_i\Delta)\phi(x_i) - \Delta\exp(t_i\Delta)\phi(x_i)\rangle,
		\end{split}
	\end{align}
	where we use consecutively the definition of the inner product on $\R$, the definition of $g_i$, the definition of $\phi(x_i)$, the fact that $\exp(t\Delta)$ is self-adjoint on $H^1_0((0,1))$, the definition of the $H^1((0,1))$ inner product and integration by parts.
	Hence,
	\begin{align*}
		g_i^*z &=  z\left(\exp(t_i\Delta)(\phi(x_i))-\Delta\exp(t_i\Delta)(\phi(x_i))\right),& &z\in\R,\\
		G^* z =  \sum_{i=1}^{n}g_i^*(z_i) &=  \sum_{i=1}^{n} z_i\left(\exp(t_i\Delta)(\phi(x_i))-\Delta\exp(t_i\Delta)(\phi(x_i))\right), & &z\in\R^n,\\
		H h = G^*G h &= \sum_{i=1}^{n} \left(\exp(t_i\Delta)h\right)(x_i) \bigg(\exp(t_i\Delta)(\phi(x_i))-\Delta\exp(t_i\Delta)(\phi(x_i))\bigg), & &h\in\H.
	\end{align*}
	The term $\exp(t_i\Delta)(\phi(x_i))$ is the solution of the heat equation in which the initial condition is given by the feature $\phi(x_i)\in\H$. 
	We have $\exp(t_i\Delta)e_j = \exp(-a_jt_i)e_j$. Thus, with $b_{i,j}\coloneqq a_j^{-s/2}\exp(-t_ia_j)$, we can write
	\begin{equation*}
    H\mathcal{C}_\pr^{1/2}h = \sum_{i=1}^{n}\sum_{j}^{} b_{i,j}\langle e_j,h\rangle e_j(x_i)\left(\exp(t_i\Delta)(\phi(x_i))-\Delta\exp(t_i\Delta)(\phi(x_i))\right).
    \end{equation*}
	By \eqref{eqn:forward_map_component_adjoint}, it holds for $z\in\R$ and $h\in H^1_0((0,1))$,
	\begin{align*}
		z\langle h,\exp(t_i\Delta)\phi(x_i) - \Delta\exp(t_i\Delta)\phi(x_i)\rangle = z(\exp(t_i\Delta)h)(x_i).
	\end{align*}
	Now, $e_k(x)=\sqrt{2}\sin{(k\pi x)}$ for each $k$, so that $e_k\in H^1_0((0,1))$. Substituting $z\leftarrow 1$ and $h\leftarrow e_k$ in the previous display, we obtain,
	\begin{align*}
		\langle e_k,\exp(t_i\Delta)\phi(x_i) - \Delta\exp(t_i\Delta)\phi(x_i)\rangle = (\exp(t_i\Delta)e_k)(x_i)  = \exp(-t_ia_k) e_k(x_i).
	\end{align*}
	It follows that
	\begin{align*}
		\mathcal{C}_\pr^{1/2}G^*\mathcal{C}_\obs^{-1/2}z &= \mathcal{C}_\pr^{1/2}\sum_{i=1}^{n}z_i\sum_{k}^{}\langle \exp(t_i\Delta)(\phi(x_i))-\Delta\exp(t_i\Delta)(\phi(x_i)),e_k\rangle e_k\\
		&= \mathcal{C}_\pr^{1/2}\sum_{i=1}^{n}z_i\sum_{k}^{}\exp{(-t_i a_k)}e_k(x_i)e_k\\
		&= \sum_{i=1}^{n}\sum_{k}^{}z_ia_k^{-s/2}\exp(-t_ia_k)e_k(x_i)e_k,\quad z\in\R^n,
	\end{align*}
where in the first step we use $\mathcal{C}_\obs=I$, the expression of $G^*$ above, and an expansion of $\exp(t_i\Delta)(\phi(x_i))-\Delta\exp(t_i\Delta)(\phi(x_i))$ in the ONB $(e_k)_k$. Furthermore,
	\begin{align*}
		\mathcal{C}_\pr^{1/2}H\mathcal{C}_\pr^{1/2}h = \sum_{i=1}^{n}\sum_{j,k}^{}b_{i,j}b_{i,k}\langle e_j,h\rangle e_j(x_i)e_k(x_i)e_k= \left( \sum_{j,k}^{}d_{j,k}e_k\otimes e_j\right) h,\quad h\in\H,
	\end{align*}
	where $d_{j,k}=\sum_{i=1}^{n}b_{i,j}b_{i,k}e_j(x_i)e_k(x_i)=\sum_{i=1}^{n}a_j^{-s/2}\exp(-t_ia_j)a_k^{-s/2}\exp(-t_ia_k)e_j(x_i)e_k(x_i)$. The coefficients $(d_{j,k})_{j,k}$ are explicitly available, since $a_i=i^2\pi^2$, $e_i(x)=\sqrt{2}\sin(i\pi x)$ and the observation coordinates $(x_i,t_i)_{i=1}^n$ are all known.

\end{example}

\bibliographystyle{abbrv}
\bibliography{references}

\end{document}